\documentclass{amsart}
\linespread{1.05}
\usepackage[T1]{fontenc}
\usepackage{amsmath,a4wide}
\usepackage{amssymb}
\usepackage{amscd}
\usepackage{epsfig}
\usepackage[T1]{fontenc}
\usepackage{epsfig}
\usepackage{amsmath}
\usepackage{graphicx}
\usepackage{subfigure}
\usepackage{mathrsfs}
\usepackage{epstopdf}
\usepackage{color}
\usepackage{booktabs}
\usepackage{ifpdf}
\usepackage{cite}
\usepackage{amsthm}
\usepackage{amsmath}
\definecolor{myblue}{rgb}{0.2,0,0.9}
\definecolor{blue-violet}{rgb}{0.54, 0.17, 0.89}
\usepackage{enumitem}
\usepackage{algorithm,algorithmic}

\usepackage{pgfplots}
\usepgfplotslibrary{groupplots}
\pgfplotsset{compat=1.12}

% \usepackage{caption}
%\usepackage{subcaption}

% \usepackage{subfig}
% % \captionsetup[subfigure]{font=small}%, labelformat=empty}
% % \captionsetup[subfigure]{font=small,justification=raggedright,singlelinecheck=true,labelformat=empty}
% \captionsetup[subfigure]{labelformat=empty}

\usepackage{tikz}
\usetikzlibrary{arrows,intersections}
\usepackage{boldline}
\usepackage{multirow}
\usepackage[margin=1.32in]{geometry}

\usepackage{varioref}
\usepackage{xr-hyper}
\usepackage{hyperref}
\hypersetup{
	colorlinks,linkcolor=blue,citecolor=blue,filecolor=red,urlcolor=blue}

%%%%%%%%%%%%%%%%%%%%%%%%%%%%%%%%%%%%%%%%

%\def\l{\lambda}

\def\N{\mathbb{N}}

\def\R{\mathbb{R}}

\def\P{\mathbb{P}}
\definecolor{myblue}{rgb}{0.2,0,0.9}
\definecolor{blue_violet}{rgb}{0.54, 0.17, 0.89}
\definecolor{darkgreen}{rgb}{0,0.35,0}

\allowdisplaybreaks

\makeatletter
\DeclareRobustCommand*\cal{\@fontswitch\relax\mathcal}
\makeatother

\newtheorem{thm}{Theorem}[section]
\newtheorem{pro}[thm]{Proposition}

\newtheorem{lem}[thm]{Lemma}
\numberwithin{equation}{section}

%%%%%%%%%%%%%%%%%%%%%%%%%%%%%%%%%%%%%%%%%%%%%%%%%%%%%%%%%%%%
\theoremstyle{definition}
\newtheorem{rem}[thm]{Remark}
\newtheorem{dfn}[thm]{Definition}
\newtheorem{as}[thm]{Assumption}
%%%%%%%%%%%%%%%%%%%%%%%%%%%%%%%%%%%%%%%%%%%%%%%%%%%%%%%%%%%%

\usepackage{xparse}
\makeatletter
\RenewDocumentCommand{\title}{om}{%
	\IfNoValueTF{#1}
	{\gdef\shorttitle{}}
	{\gdef\shorttitle{#1}}%
	\gdef\@title{#2}%
}
\makeatother

%%%%%%%%
\makeatletter
\def\@tocline#1#2#3#4#5#6#7{\relax
	\ifnum #1>\c@tocdepth % then omit
	\else
	\par \addpenalty\@secpenalty\addvspace{#2}%
	\begingroup \hyphenpenalty\@M
	\@ifempty{#4}{%
		\@tempdima\csname r@tocindent\number#1\endcsname\relax
	}{%
		\@tempdima#4\relax
	}%
	\parindent\z@ \leftskip#3\relax \advance\leftskip\@tempdima\relax
	\rightskip\@pnumwidth plus4em \parfillskip-\@pnumwidth
	#5\leavevmode\hskip-\@tempdima
	\ifcase #1
	\or\or \hskip 2em \or \hskip 2em \else \hskip 3em \fi%
	#6\nobreak\relax
	\hfill\hbox to\@pnumwidth{\@tocpagenum{#7}}\par% <---- \dotfill -> \hfill
	\nobreak
	\endgroup
	\fi}
\makeatother
%%%%%%%%%%%%%%

\title{Markov-Nash equilibria in mean-field games \\
	under model uncertainty}
\author{Johannes Langner}
\address{Insitute of Actuarial and Financial Mathematics \& House of Insurance, Leibniz Universität Hannover}
\email{johannes.langner@insurance.uni-hannover.de}

\author{Ariel Neufeld}
\address{Division of Mathematical Sciences, Nanyang Technological University}
\email{ariel.neufeld@ntu.edu.sg}

\author{Kyunghyun Park}
\address{Division of Mathematical Sciences, Nanyang Technological University}
\email{kyunghyun.park@ntu.edu.sg}

\thanks{\textit{Key words:} mean-field games, Nash equilibrium, Markov decision processes, model uncertainty, distributionally robust optimization.}
\thanks{\textit{MSC2020 Subject Classification:} 91A16, 91A10, 90C40, 90C17}
\thanks{\textit{Funding:} J.~Langner is funded by the Deutsche Forschungsgemeinschaft (DFG, German Research Foundation) – 471178162. A.~Neufeld gratefully acknowledges support by the MOE AcRF Tier 2 Grant MOE-T2EP20222-0013. K.~Park acknowledges support of the Presidential Postdoctoral Fellowship of the Nanyang Technological University.}
%D.~Bartl is grateful for financial support through the Austrian Science Fund (FWF) projects ESP-31N and P34743N. }

\date{\today.}

\begin{document}

\begin{abstract}
    We propose and analyze a framework for mean-field Markov games under model uncertainty. In this framework, a state-measure flow describing the collective behavior of a population affects the given reward function as well as the \textit{unknown} transition kernel of the representative agent. The agent's objective is to choose an optimal Markov policy in order to maximize her worst-case expected reward, where worst-case refers to the most adverse scenario among all transition kernels considered to be feasible to describe the unknown true law of the environment. We prove the existence of a mean-field equilibrium under model uncertainty, where the agent chooses the optimal policy that maximizes the worst-case expected reward, and the state-measure flow aligns with the agent's state distribution under the optimal policy and the worst-case transition kernel. Moreover, we prove that for suitable multi-agent Markov games under model uncertainty the optimal policy from the mean-field equilibrium forms an approximate Markov-Nash equilibrium whenever the number of agents is large enough.
%Primary 90C31, 60G65, 60H05; secondary 91G10
\end{abstract}

\vspace*{-0.3cm}
\maketitle

\vspace{-1.5em}
{
	\hypersetup{linkcolor=black}
	\tableofcontents
}

\vspace{-2.5em}
%%%%%%%%%%%%%%%%%%%%%%%%%%%%%%%%%%%%%%%%%%%%%%%%%%%%%%%%%%%%%%%%%%%%%%%%%%%%%%%%%%%%%%%%%%%%%%%%%%%%%%%%%%%%%%
\section{Introduction}\label{sec:intro}
Mean-field games introduced by \cite{lasry2007mean,Huang2006nashCertainty} analyze decision-making and interactions of strategic agents within populations. %By assuming that all the identical agents interact equally each other, and that their interaction converges to a mean\;field limit as the population size goes to infinity, mean field games simplify large-scale dynamic models. 
Under the assumption that all agents of a population have the same transition probabilities and reward function and that their interactions only depend on the empirical distribution of all agents, one can simplify the model by approximating the finite agent game by a suitable mean-field game. This framework has led to a wide range of applications, including in finance and economics (e.g., \cite{carmona2015mean,carmona2017mean,lacker2019mean,carmona2020applications,lauriere2022convergence}), crowd motion dynamics (e.g., \cite{lachapelle2011mean,huang2021dynamic}), and epidemiology (e.g., \cite{aurell2022optimal,elie2020contact}).

%by assuming that all the strategic agents are identical and interact symmetrically, and that their interaction converges to a mean field limit as the number of populations goes to infinite. 
%It has been a viable approximate of multi-agent models with a so-called mean-field equilibrium, which leads to a wide range of applications, for instance to finance and economics (e.g., \cite{carmona2015mean,carmona2017mean,lacker2019mean,carmona2020applications,lauriere2022convergence}), to crowd motion dynamics (e.g., \cite{lachapelle2011mean,huang2021dynamic}), and to epidemiology (e.g., \cite{aurell2022optimal,elie2020contact}), to name but a few. 

%However, that simplification involves a significant presumption that all players have complete knowledge of (possibly random) environment in which they optimize and interact, whereas, in reality, the ground truth on such environment is unknown. 
%To motivate our 
%Arguably, a prominent framework for discrete time MFG models 

As a prominent discrete-time mean-field games model, consider a mean-field Markov game denoted by $(S,A,\mu^o,p,r)$: Let $(S,A)$ be state and action spaces and denote by ${\cal P}(S)$\;and\;${\cal P}(A)$ the set of probability measures on $S$\;and\;$A$, respectively. Furthermore, let $\mu^o\in {\cal P}(S)$ be an initial population distribution, $p:S\times A\times {\cal P}(S)\mapsto{\cal P}(S)$ be a transition kernel, and $r:S\times A\times S \times {\cal P}(S)\mapsto \mathbb{R}$ be a one-step reward function. 
%where ${\cal P}(S)$ is the set of probability measures on $S$, and $p:S\times A\times {\cal P}(S)\mapsto {\cal P}(S)$ be a stochastic kernel (i.e., probability transition) on $S$ given $S\times A\times {\cal P}(S)$ so that for every $t=0,\dots,T-1$ and $(s_t,a_t,\mu_t)\in S \times A \times {\cal P}(S)$, $s_{t+1}$ $p(\cdot|s_t,a_t,\mu_t)$
Assume that a representative agent aims to maximize her total expected reward until the terminal time $T$ by choosing a Markov policy $\pi_{0:T}=(\pi_0,\dots,\pi_{T-1})$ (i.e., a sequence of stochastic kernels $\pi_t:S\mapsto {\cal P}(A)$, $t=0,\dots,T-1$). Given a population measure flow $\mu_{0:T}=(\mu_0,\dots,\mu_{T-1})$ with $\mu_0=\mu^o$ (i.e., a sequence of $\mu_t\in {\cal P}(S)$, $t=0,\dots,T-1$), the central objective the agent faces is to solve the following Markov decision~problem
%Arguably, one of the most common models for state configuration $s_{0:T+1}=(s_0,\dots,s_{T})$ is that 
\begin{align}\label{eq:intro_MDP}
	\sup_{\pi_{0:T}}\mathbb{E}^{\mathbb{P}}\left[\sum_{t=0}^{T-1}r(s_t,a_t,s_{t+1}, \mu_t)\right],
\end{align}
where for given $\pi_{0:T}$, $\mathbb{P}$ is the probability measure (that depends on $\mu_{0:T},\pi_{0:T},$ and $p$) under which the agent's state and action configurations evolve as follows: for every $t= 0, \dots, T-1$
%according to the Markov decision process
\begin{align}\label{eq:intro_kernel}
	s_0\sim \mu_0(\cdot),\quad a_t\sim \pi_t(\cdot|s_t),\quad s_{t+1}\sim p(\cdot|s_t,a_t,\mu_t).
\end{align}

%where $\mathbb{P}(\mu_{0:T},\pi_{0:T})$ is defined on the whole sequential state and action space up to $T$. 

In this setting, a mean-field equilibrium consists of a Markov policy and a measure flow $(\mu^*_{0:T}, \pi^*_{0:T})$  satisfying that $\pi^*_{0:T}$ is a maximizer of \eqref{eq:intro_MDP} given $\mu_{0:T}^*$, and $\mu_{0:T}^*$ is consistent with the state distribution of the agent acting optimally via $\pi_{0:T}^*$, i.e., $\mu_0^*=\mu^o$ and for $t=0,\dots,T-1$ 
\begin{align}\label{eq:intro_consist}
	\mu_{t+1}^{*}(ds_{t+1})=\int_{S\times A} p(ds_{t+1}|s_t,a_t,\mu_t^*)\pi_t^*(da_t|s_t)\mu_t^*(ds_t).
\end{align}
In most cases, a mean-field equilibrium attains an approximate Nash equilibrium for an analogous game with a finite number of agents, known as the so-called Nash certainty equivalence principle \cite{Huang2006nashCertainty,bensoussan2013mean,carmona2018probabilistic,cardaliaguet2010notes}. We refer to \cite{gomes2010discrete,elliott2013discrete,gast2012mean,moon2015discrete,saldi2018markov,saldi2019approximate,Elie2020meanfield} for a few articles studying discrete-time mean-field games similar to the setting $(S,A,\mu^o,p,r)$ described above.

\vspace{0.5em}
Mean-field games commonly involve a significant assumption that the model environment represented by the transition kernel $p$ in the above model $(S,A,\mu^o,p,r)$ is perfectly known to all agents. However, when implemented in practice, the specifics of the model environment are a priori unclear. While some estimation techniques can approximate a ground truth on, e.g., the transition kernel closely, in many cases there exists a margin of misspecification. This might result in an equilibrium that is not consistent with the behavior of large populations in real situations.

As a remedy to {\it model uncertainty}, a number of researchers in various fields have adopted the so-called worst-case (or robust) approach introduced by \cite{gilboa1989maxmin,chen2002ambiguity,epstein1994intertemporal,dow1992uncertainty}. 
Here, worst-case refers to considering the most adverse scenario among all probabilities deemed as feasible to describe the unknown law characterizing the environment. The aim of this article is to propose and analyze a framework for mean-field Markov games under model uncertainty, which can be considered as a robust analog of $(S,A,\mu^o,p,r)$ described in \eqref{eq:intro_MDP}-\eqref{eq:intro_consist}.  
%In our current context, the transition kernel $p$ given in $(S,A,\mu^o,p,r)$ can be replaced by a relevant set of kernels or measures that describe the evolution of multiple Markov decision processes. 

\vspace{0.5em}
To that end, let us describe our mean-field Markov game under model uncertainty, which we denote by $(S,A,\mu^o,\mathfrak{P}_{0:T},r)$: Fix $T\in \mathbb{N}$ and let $(S,A,\mu^o,r)$ be the same as the ones given in $(S,A,\mu^o,p,r)$ described above. Furthermore, let $\mathfrak{P}_{0:T}$ be a sequence of {\it set-valued} maps given for every $t=0,\dots,T-1$ by 
\begin{align}\label{eq:intro_set_val}
	\mathfrak{P}_{t}:S \times A \times \mathcal{P}(S) \ni (s_t, a_t, \mu_t) \twoheadrightarrow \mathfrak{P}_{t}(s_t, a_t, \mu_t) \subseteq {\cal P}(S).
\end{align}
Then given $({\mu}_{0:T},{\pi}_{0:T})$, denote by $\mathcal{Q}({\mu}_{0:T},{\pi}_{0:T})$ the set of all probability measures $\mathbb{P}$ under which there exists 
a sequence of transition kernels $p_{0:T}=(p_0,\dots,p_{T-1})$ satisfying that for every $t=0,\dots,T-1$ and every $(s_t,a_t,\mu_t)\in S \times A\times {\cal P}(S)$
\begin{align}\label{eq:intro_set_val_2}
p_{t}(\cdot | s_t, a_t, \mu_t)\in  \mathfrak{P}_{t}(s_t, a_t, \mu_t),
\end{align} 
and the agent's state and action configurations evolve~as follows: for every $t =0, \dots, T-1$
\begin{align}\label{eq:intro_set_val_3}
	s_0\sim \mu_0(\cdot),\quad a_t\sim \pi_t(\cdot|s_t),\quad s_{t+1}\sim p_t(\cdot|s_t,a_t,\mu_t).
\end{align} 
%so that $\mathbb{P}$ depends on $\mu_{0:T},\pi_{0:T},$ and $p_{0:T}$. 
In other words, instead of fixing a transition kernel $p:S\times A\times {\cal P}(S)\mapsto{\cal P}(S)$, we consider a set valued map $\mathfrak{P}_t:S \times A \times \mathcal{P}(S) \twoheadrightarrow {\cal P}(S)$ where given $(s_t,a_t,\mu_t)\in S \times A \times \mathcal{P}(S)$, each element of the set $\mathfrak{P}_t(s_t,a_t,\mu_t)$ is considered as a candidate probability measure on $S$ derived from the true but unknown transition kernel. This setting is inspired by \cite{neufeld2023markov,neufeld2024non} which analyzed Markov decision problems under model uncertainty (but without a mean-field measure flow).

Now, given $\mu_{0:T}$ with $\mu_0=\mu^o$, the central objective an agent faces under model uncertainty is to solve the following robust (or worst-case) optimization problem
\begin{align}\label{eq:value_intro}
	%V( \mu_{0:T}):=
	V(\mu_{0:T})=\sup_{ \pi_{0:T}}\;\inf_{\mathbb{P}\in {\cal Q}({\mu_{0:T},  \pi_{0:T}})} \mathbb{E}^{\mathbb{P}} \left[\sum_{t=0}^{T-1}r(s_t,a_t,s_{t+1},\mu_t)\right].
\end{align}
We note that the set-valued maps $\mathfrak{P}_{0:T}$ given in \eqref{eq:intro_set_val} induce distributional uncertainty represented by the set $\mathcal{Q}({\mu}_{0:T},{\pi}_{0:T})$, and \eqref{eq:value_intro} and \eqref{eq:intro_MDP} coincide when $\mathfrak{P}_{0:T}$ are singleton-valued.
%There is a significant degree of latitude in formulating uncertainty, while the construction given in \eqref{eq:intro_set_val}-\eqref{eq:intro_set_val_3} is inspired by a general framework introduced by \cite{neufeld2023markov,neufeld2024non} for Markov decision processes under model uncertainty.  

\vspace{0.5em}
In this setting, %introduce a notion of the mean field equilibrium under model uncertainty as follows: 
we say 
\begin{align}\label{eq:intro_MFE}
	({\mu}_{0:T}^*,{\pi}_{0:T}^*,{p}_{0:T}^*)
\end{align}
is a mean-field equilibrium of $(S,A,\mu^o,\mathfrak{P}_{0:T},r)$ (see Definition \ref{dfn:robust_mfe}) if the Markov policy  ${\pi}_{0:T}^*$ is optimal to the robust optimization problem $V(\mu_{0:T}^*)$, the transition kernel ${p}_{0:T}^*$ corresponds to the worst-case kernel of $V(\mu_{0:T}^*)$ under $(\mu_{0:T}^*,{\pi}_{0:T}^*)$, and the state-measure flow $\mu_{0:T}^*$ aligns with the agent's state distribution under $({\pi}_{0:T}^*,{p}_{0:T}^*)$, i.e., $\mu_0^*=\mu^o$ and for every~$t=0,\dots,T-2$,
\begin{align}\label{eq:intro_msr_flow}
	\mu_{t+1}^{*}(ds_{t+1})=\int_{S\times A} p_t^*(ds_{t+1}|s_t,a_t,\mu_t^*)\pi_t^*(da_t|s_t)\mu_t^*(ds_t).
\end{align}

%the tuple $({\pi}_{0:T}^*,{p}_{0:T}^*)$  of the Markov policy and stochastic kernels is robust to \eqref{eq:value_intro} under $\mu_{0:T}^*$ (i.e., $\pi_{0:T}^*$ is a maximizer of the worst-case preference involved in $V(\mu_{0:T}^*)$ and ${p}_{0:T}^*$ is the corresponding worst-case kernel; see Definition \ref{dfn:robust_mfe}\;(i)), 

\vspace{0.5em}
The main contribution of this paper is twofold:
\begin{itemize}[leftmargin=2.em]
	\item [$\cdot$] In Theorem \ref{thm:MFE}, we prove the existence of a mean-field equilibrium $({\mu}_{0:T}^*,{\pi}_{0:T}^*,{p}_{0:T}^*)$ of %the mean field MDP under model uncertainty
	the mean-field Markov game $(S,A,\mu^o,\mathfrak{P}_{0:T},r)$ described in \eqref{eq:intro_set_val}-\eqref{eq:intro_msr_flow}.
	
    \item[$\cdot$] We show in Theorem \ref{thm:MNE0} that the optimal Markov policy $\pi_{0:T}^*$ from the mean-field equilibrium of $(S, A, \mu^o, \mathfrak{P}_{0:T}, r)$ forms an approximate Markov-Nash equilibrium (see Definition~\ref{dfn:MNE}) of a multi-agent Markov game under model uncertainty in the sense that the policy $\pi_{0:T}^*$ is (almost) a maximizer for the worst-case objectives of all agents in the multi-agent Markov game (see \eqref{eq:worst_nash}) whenever the number of agents is large enough.
\end{itemize}

\vspace{0.5em}
As an example, in Section \ref{sec:numeric}, we apply our mean-field Markov game $(S,A,\mu^o,\mathfrak{P}_{0:T},r)$ to crowd motion dynamics under model uncertainty. In this context, the set valued maps given in~\eqref{eq:intro_set_val} are formulated by a Wasserstein-ball around a reference transition kernel (see Definition \ref{dfn:exm:cro}), which aligns with our conditions imposed on the set-valued maps in order to obtain our main results. Moreover, we compute a mean-field equilibrium of the crowd motion dynamics by our iterative scheme (see Algorithm \ref{alg:MFE}).

\vspace{0.5em}
{\it Related literature.} Classic mean-field games (i.e., without uncertainty) are described both in continuous-time (see, e.g., \cite{Huang2006nashCertainty,tembine2013risk,huang2010large,gomes2013continuous,lacker2016general,lacker2022case,lacker2019mean,aurell2022optimal,delarue2020master}) and in discrete-time (see, e.g., \cite{adlakha2015mean,biswas2015mean,gomes2010discrete,gast2012mean,elliott2013discrete,moon2015discrete,moon2016mean,nourian2013linear,saldi2018markov,saldi2019approximate,Elie2020meanfield}); we refer to \cite{carmona2018probabilistic,bensoussan2013mean,gomes2014mean,Lauriere2024meanfield} for survey papers including both settings. We also refer to \cite{gast2011mean,carmona2023model,bauerle2023mean,gu2021mean,gu2023dynamic,motte2022mean} for mean-field control problems in a Markov decision process framework (which corresponds to cooperative models). 

%\vspace{0.5em}
In continuous-time settings, several articles have explored mean-field games under distributional or parametric uncertainty (see., e.g., \cite{moon2016linear,moon2016robust,huang2013mean,bauso2016robust}). Notably, our notion of a mean-field equilibrium under model uncertainty (described in \eqref{eq:intro_MFE}--\eqref{eq:intro_msr_flow}; see also Definition~\ref{dfn:robust_mfe} and Theorem \ref{thm:MFE}) aligns with those found in continuous-time frameworks (see, e.g., \cite[Proposition~3]{bauso2016robust}, \cite[Theorem~3.2]{huang2013mean}), 
where our robust optimization problem \eqref{eq:value_intro} corresponds in their papers to a forward backward system consisting of a Hamilton--Jacobi--Bellman--Isaacs equation, whereas our measure flow \eqref{eq:intro_msr_flow} corresponds in their papers to a Fokker-Planck equation under the associated worst-case measure or parameter. Moreover, \cite[Theorem 6]{moon2016robust} establishes an approximate Nash equilibrium under model uncertainty, which is consistent with ours given in Theorem \ref{thm:MNE0}. To the best of our knowledge, however, there are no known results on mean-field games under model uncertainty in a discrete-time setting or within the framework of Markov decision processes.

%\vspace{0.5em}
While certain proof techniques in our paper bear similarities to \cite{saldi2018markov, saldi2019approximate} which consider mean-field Markov games in a discrete-time setting {\it but without model uncertainty}, the consideration of model uncertainty introduces significant distinctions. Specifically, due to the set-valued maps $\mathfrak{P}_{0:T}$ given in \eqref{eq:intro_set_val}, we cannot directly apply certain existing arguments (including the dynamic programming principle and the fixed point approach). Instead, we establish a robust (i.e., max-min) version of the dynamic programming principle, which constitutes a variant of \cite{neufeld2023markov}. We then propose and study a robust analog of the fixed point approach based on the work of \cite{jovanovic1988anonymous}. 
Moreover, we establish the dynamic programming principle for the multi-agent Markov game under model uncertainty and characterize the worst-case measures appearing in both the mean-field and multi-agent Markov games to establish the existence of an approximate Markov-Nash~equilibrium.

%\vspace{0.5em}
%Finally, for completeness, let us mention that distributionally robust stochastic optimizations within discrete-time setting have found a wide range of applications (see, e.g., \cite{gilboa1989maxmin,delage2010distributionally,carmona2021convergence,carmona2022convergence,mohajerin2018data,blanchet2019quantifying,neufeld2023markov,neufeld2024non,xu2012distributionally,yang2020wasserstein,neufeld2018robust,nutz2016utility,bouchard2015arbitrage}). Moreover, 

%While the framework for robust mean field MDPs introduced in the present article could potentially be applied to mean field control settings, exploring how to formulate model uncertainty in states and actions dynamics involved with both idiosyncratic and common noise is beyond the scope of the present paper and  will be addressed in future research.

%%%%%%%%%%%%%%%%%%%%%%%%%%%%%%%%%%%%%%%%%%%%%%%%%%%%%%%%%%%%%%%%%%%%%%%%%%%%%%%%%%%%%%%%%%%%%%%%%%%%%%%%%%%%%%
\section{Model description}
\subsection{Notation and preliminaries}\label{sec:notat_prelimi}
Throughout this article we work with 
Borel spaces. If $X$ is such a space, we denote by ${\cal B}_X$ its Borel $\sigma$-field and ${\cal P}(X)$ the set of all probability measures on $X$  implicitly assumed to be equipped with the topology induced by the weak convergence, i.e., for any $\mathbb{P}\in {\cal P}(X)$ and any $(\mathbb{P}^n)_{n\in\mathbb{N}}\subseteq {\cal P}(X)$, we have
\begin{align}\label{eq:topology_w}
	\mathbb{P}^n \rightharpoonup \mathbb{P}\;\; \mbox{as $n\rightarrow \infty$}\; \Leftrightarrow \;\; \lim_{n\rightarrow \infty} \int_X f(\omega) \mathbb{P}^n(d\omega) = \int_X f(\omega) \mathbb{P}(d\omega)\;\;\mbox{for any $f \in C_b(X;\mathbb{R})$},
\end{align}
where $C_b(X;\mathbb{R})$ is the set of all continuous and bounded functions from $X$ to $\mathbb{R}$.

If $X$ is compact, the weak topology given in \eqref{eq:topology_w} is equivalent to the topology induced by the $1$-Wasserstein distance $d_{W_1}(\cdot,\cdot)$ which we recall to be the following: For any $\mu,\nu\in {\cal P}(X)$, denote by $\operatorname{Cpl}(\mu,\nu)\subseteq {\cal P}(X\times X)$ the subset of all probability measures on $X\times X$ with first marginal $\mu$ and second marginal $\nu$. Then the $1$-Wasserstein distance between $\mu$ and $\nu$ is defined~by 
\[
d_{W_1}(\mu,\nu):= \inf_{\gamma \in \operatorname{Cpl}(\mu,\nu)} \int_{X\times X} |x-y| \gamma (dx,d y),
\]
where $|\cdot|$ is the Euclidean norm.

In particular, if we further assume that $X$ is a finite subset in a Euclidean space and denote by $n(X)$ its cardinality,  then ${\cal P}(X)$ can be identified with a simplex in $\mathbb{R}^{n(X)}$, i.e., $\mu\in {\cal P}(X)$ can be treated as an $n(X)$-dimensional vector $(w^\mu_{1},\dots,w^\mu_{n(X)})\in \mathbb{R}^{n(X)}$ with nonnegative coordinates $(w_{i}^{\mu})_{i=1,\dots,n(X)}$ which sum up to one. 

For each $t\in\mathbb{N}$, we use the abbreviation $X^t:=X\times \cdots \times X$ for the $t$-times Cartesian product of the set $X$, where we endow $X^{t}$ with the corresponding product topology. In analogy, we use $({\cal P}(X))^{t}$ for the corresponding product of ${\cal P}(X)$. Given a sequence of probability measures $(\mathbb{P}_s,\dots,\mathbb{P}_{t-1})\in ({\cal P}(X))^{s-t}$ and $0\leq s < t$, we use the following abbreviation $\mathbb{P}_{s:t}:=(\mathbb{P}_s,\dots,\mathbb{P}_{t-1}).$ The same convention applies to a sequence of other quantities.

\subsection{Mean-field Markov games under model uncertainty}
We specify what we mean by mean-field Markov games under model uncertainty. Let us consider a representative agent who, at each time $t$, observes a state $s_t$ and takes an action $a_t$, whereas a probability measures $\mu_{t}$ describes the overall population distribution at time $t$. %$of the whole population 
%at time $t$. 
%with states $s_{t}$ taking actions $a_{t}$ and probability measures $\mu_{t}$ describing the overall population distribution %$of the whole population 
%at time $t$. 
 
\begin{dfn}[Mean-field Markov game]\label{dfn:corr} Fix a time horizon $T\in\mathbb{N}$. A mean-field Markov game under model uncertainty, say $(S,A,\mu^{o},\mathfrak{P}_{0:T}, r)$, comprises the following: %elements:
	\begin{enumerate}[leftmargin=3.em]
		\item [(i)] $(S,{\cal B}_S)$ and $(A,{\cal B}_A)$ are Borel spaces for the state and action spaces, respectively.
		\item [(ii)] $\mu^{o}\in {\cal P}(S)$ is a given initial distribution for the initial state, which we denote by $s_0$. 
		\item [(iii)] For every $t=0,\dots,T-1$, $\mathfrak{P}_{t}:S \times A \times \mathcal{P}(S) \ni (s_t, a_t, \mu_t) \twoheadrightarrow \mathfrak{P}_{t}(s_t, a_t, \mu_t) \subseteq {\cal P}(S)$ is a correspondence (i.e., a set-valued map) at time $t$, inducing {distributional uncertainty} in the next-state configuration.
		\item [(iv)] $r: S\times A \times S \times {\cal P}(S) \mapsto \mathbb{R}$ is a one-step Borel-measurable reward function.
	\end{enumerate}
\end{dfn}

We proceed to describe the set of policies and the set of uncertain probability measures. %in the mean field MDP.
\begin{dfn}\label{dfn:corr2}
	Let $(S,A,\mu^o,\mathfrak{P}_{0:T}, r)$ be given in Definition~\ref{dfn:corr}. 
	\begin{enumerate}[leftmargin=3.em]
		\item [(i)] Define by $\Pi$ the set of all sequences of Markov policies $\pi_{0:T}$
		such that for $t=0,\dots,T-1$,  $\pi_t:S \ni s_t\mapsto \pi_t(\cdot |s_t)\in {\cal P}(A)$
		is a so-called Markov kernel. 
		\item [(ii)] 
		Given $({\mu}_{0:T},{\pi}_{0:T})\in ({\cal P}(S))^{T}\times \Pi$ satisfying $\mu_0=\mu^o$, we define by $\mathcal{Q}({\mu}_{0:T},{\pi}_{0:T})\subseteq {\cal P}(S\times (S\times A)^{T})$ the subset of all probability measures $\mathbb{P}:=\mu_0\otimes \mathbb{P}_{({\mu_0,\pi_0,p_0})}\otimes\cdots \otimes \mathbb{P}_{(\mu_{T-1},\pi_{T-1},{p}_{T-1})}$
		such that\footnote{For every $t=0,\dots,T-1$, $\mu_0\otimes \mathbb{P}_{(\mu_0,\pi_0,p_0)}\otimes\cdots \otimes \mathbb{P}_{(\mu_{t},\pi_{t},p_t)}$ denotes an element in $\mathcal{P}(S \times (S \times A)^{t+1})$ satisfying that for every $B \in \mathcal{B}_{S \times (S \times A)^{t+1}}$,
        \begin{align*}
            &\mu_0\otimes \mathbb{P}_{(\mu_0,\pi_0,p_0)}\otimes\cdots \otimes \mathbb{P}_{(\mu_{t},\pi_{t},p_t)}(B)\\ %&:= \int_{S} \int_{S\times A} \cdots \int_{S\times A} {\bf 1}_{\{(s_0,s_1,a_0,\dots,s_{t+1},a_{t})\in B\}} \\
            &\;\; := \int_{S} \int_{S\times A} \cdots \int_{S\times A} {\bf 1}_{\{(s_0,s_1,a_0,\dots,s_{t+1},a_{t})\in B\}}\; \mathbb{P}_{(\mu_{t},\pi_{t},p_t)}(ds_{t+1},da_{t} |s_{t}) \cdots \mathbb{P}_{(\mu_0,\pi_0,p_0)}(ds_1,da_0 |s_0) \mu_0(ds_0).
        \end{align*}}
        for every $t=0,\dots,T-1$,
		\[
		\quad \quad \mathbb{P}_{(\mu_t,\pi_t,p_t)}:S\ni s_t \mapsto \mathbb{P}_{(\mu_t,\pi_t,p_t)}(ds_{t+1},da_t |s_t):=p_{t}(ds_{t+1}| s_t, a_t, \mu_t) \pi_{t}(d a_t| s_t)
		\]
		is a stochastic kernel\footnote{Throughout the paper, a stochastic kernel $p$ on $X_{2}$ given $X_{1}$, for some Borel spaces $X_{1}$ and $X_{2}$, is defined as a Borel-measurable mapping from $X_{1}$ to $\mathcal{P}(X_{2})$.} on $S\times A$ given $S$, and $p_t:S\times A\times {\cal P}(S) \mapsto {\cal P}(S)$ is a stochastic kernel satisfying that for every $(s_t,a_t,\mu_t)\in S \times A\times {\cal P}(S)$,
		\[
		p_{t}(ds_{t+1} | s_t, a_t, \mu_t)\in  \mathfrak{P}_{t}(s_t, a_t, \mu_t).
		\]
	\end{enumerate}
\end{dfn}

Denote by $V:({\cal P}(S))^{T}\ni \mu_{0:T}\mapsto V(\mu_{0:T})\in\mathbb{R}$ the robust optimization problem defined by
\begin{align}\label{dfn:value_mfg}
	V( \mu_{0:T}):= \sup_{ \pi_{0:T} \in \Pi} J(\mu_{0:T},\pi_{0:T}),
\end{align}
where the worst-case objective $J:({\cal P}(S))^{T} \times \Pi\ni(\mu_{0:T},\pi_{0:T})\mapsto J(\mu_{0:T},\pi_{0:T})\in \mathbb{R}$ is given by
\begin{align}\label{dfn:worst_value_mfg}
	J(\mu_{0:T},\pi_{0:T}):=\inf_{\mathbb{P}\in {\cal Q}({ \mu_{0:T},  \pi_{0:T}})} \mathbb{E}^{\mathbb{P}} \left[\sum_{t=0}^{T-1}r(s_t,a_t,s_{t+1},\mu_t)\right].
\end{align}

We now introduce what we refer to as a mean\;field equilibrium under model uncertainty.

\begin{dfn}[Mean-field equilibrium]\label{dfn:robust_mfe}
	We call $({\mu}_{0:T}^*,{\pi}_{0:T}^*,{p}_{0:T}^*)$ a~{mean-field equilibrium} of the mean-field Markov game $(S,A,\mu^o,\mathfrak{P}_{0:T}, r)$ (see Definition \ref{dfn:corr}) if the following conditions~hold:
	\begin{enumerate}[leftmargin=2.5em]
		\item[(i)] $({\pi}^*_{0:T},{p}^*_{0:T})$ are optimal for $V({\mu}_{0:T}^*)$, i.e., $\pi^{*}_{0:T}$ is the optimal Markov policy of $V({\mu}_{0:T}^*)$ and $p^{*}_{0:T}$ is the worst-case transition kernel of $V(\mu^{*}_{0:T})$ under $({\mu}_{0:T}^*,{\pi}^*_{0:T})$, i.e.,
		\begin{align*}
			\begin{aligned}
        		V( \mu_{0:T}^*) = J(\mu_{0:T}^*,\pi_{0:T}^*) &= \sup_{\pi_{0:T}\in \Pi}\mathbb{E}^{\mathbb{P}({\mu_{0:T}^{*},\pi_{0:T},p_{0:T}^*})} 	\left[\sum_{t=0}^{T-1}r(s_t,a_t,s_{t+1}, \mu^{*}_t)\right] \\
                &= \mathbb{E}^\mathbb{P^*} \left[\sum_{t=0}^{T-1}r(s_t,a_t,s_{t+1},\mu_t^*)\right],
    		\end{aligned}
		\end{align*}
		where for every $\pi_{0:T} \in \Pi$,
		\begin{align}\label{eq:worst_arbit_pi}
				\quad \mathbb{P}({\mu_{0:T}^{*},\pi_{0:T},p_{0:T}^*}):= \mu_0^*\otimes \mathbb{P}_{({\mu_0^*,\pi_0,p_0^*})} \otimes \cdots \otimes \mathbb{P}_{(\mu_{T-1}^*,\pi_{T-1},{p}_{T-1}^*)}\in {\cal Q}(\mu_{0:T}^*,\pi_{0:T}),
		\end{align}
		and $\mathbb{P}^{*}:=\mathbb{P}({\mu_{0:T}^{*},\pi_{0:T}^*,p_{0:T}^*})\in {\cal Q}(\mu_{0:T}^*,\pi_{0:T}^*)$ (see Definition~\ref{dfn:corr2}).
		%\color{blue}$$  and $\mathbb{P}^{*}:=\mathbb{P}({\mu_{0:T}^{*},\pi_{0:T}^*,p_{0:T}^*})\in {\cal Q}(\mu_{0:T}^*,\pi_{0:T}^*)$ (see Definition~\ref{dfn:corr2})}.
		%:=\mu_0^*\otimes \mathbb{P}_{({\mu_0^*,\pi_0^*,p_0^*})}\otimes\cdots \otimes \mathbb{P}_{(\mu_{T-1}^*,\pi_{T-1}^*,{p}_{T-1}^*)}\in {\cal Q}(\mu_{0:T}^*,\pi_{0:T}^*)$ (see Definition~\ref{dfn:corr2}).
		\item[(ii)] ${\mu}_{0:T}^*$ satisfies that $\mu_{0}^*(\cdot) = \mu^{o}(\cdot)$ and for every $t = 0, \dots, T-2$,
		\begin{align*}
			\mu_{t+1}^*(\cdot) = \int_{S\times A}p^*_{t}(\cdot | s_{t}, a_{t}, {\mu}_{t}^*) \pi_{t}^*(da_{t} | s_{t}) \mu_{t}^*(ds_{t}).
		\end{align*}
	\end{enumerate}
\end{dfn}

%%%%%%%%%%%%%%%%%%%%%%%%%%%%%%%%%%%%%%%%%%%%%%%%%%%%%%%%%%%%%%%%%%%%%%%%%%%%%%%%%%%%%%%%%%%%%%%%%%%%%%%%%%%%%%
\subsection{Multi-agent Markov games under model uncertainty}\label{subsec:N_agent}
We aim to obtain approximate Markov-Nash equilibria under model uncertainty by using mean-field equilibria under model uncertainty. To that end, in this section, we introduce the framework for multi-agent Markov games under model uncertainty and the notion of their Markov-Nash equilibria.

Let $N\in \mathbb{N}$ be the number of agents and, as before, $S$ and $A$ be the state and action spaces, respectively. 
For $i=1,\dots,N$, denote by $s_{t}^i\in S$ and $a_{t}^i\in A$ the state and action configurations of the agent $i$ at time $t$, respectively. Then we set
\[
\mbox{$\overline s^{N}_t:=(s_{t}^1,\dots,s_{t}^N)\in S^N,\qquad\overline a_t^N:=(a_{t}^1,\dots,a_{t}^N)\in A^N$}
\]
to be the state and action configurations of all $N$ agents at time $t$, respectively, and denote by 
\begin{align}\label{eq:empiric}
	e^{N}(\overline s_t^N) := \frac{1}{N} \sum_{i=1}^{N} \delta_{s_{t}^i}\in {\cal P}(S)
\end{align}
the empirical distribution of $\overline s^{N}_t$, where $\delta_{s}\in {\cal P}(S)$ denotes the Dirac measure at $s \in S$.
%the state configuration at time $t$, where $\delta_{s}\in {\cal P}(S)$ denotes the Dirac measure at $s \in S$.

\begin{dfn}[Multi-agent Markov game] \label{dfn:corr_Nplayer}
    Set $N\in \mathbb{N}$. For each $t=0,\dots,T-1$, let $\mathfrak{P}_{t}:S \times A \times \mathcal{P}(S) \ni (s_t, a_t, \mu_t) \twoheadrightarrow \mathfrak{P}_{t}(s_t, a_t, \mu_t) \subseteq {\cal P}(S)$ be the correspondence at time $t$ given in Definition \ref{dfn:corr}. %
    Then an $N$ agent Markov game under model\;uncertainty,\;say\;$(S,A,{\mu}^{o},\mathfrak{P}_{0:T}^N, r\;|\;N,\mathfrak{P}_{0:T})$,\;comprises the following:
    \begin{enumerate}[leftmargin=3.em]
        \item [(i)] $(S,{\cal B}_S)$ and $(A,{\cal B}_A)$ are Borel spaces for the state and action spaces, respectively.
        
        \item [(ii)] $s_{0}^1,\dots,s_{0}^N$ are independent and identically distributed according to~$\mu^o \in \mathcal{P}(S)$. Furthermore, denote by $\overline \mu^{o,N}(d\overline s_0^N):=\prod_{i=1}^N \mu^o(ds_{0}^i)\in {\cal P}(S^N)$.
        
        \item [(iii)] For every $t=0,\dots,T-1$, set $\mathfrak{P}_t^N:S^N\times A^N \ni (\overline s_t^N,\overline a_t^N)\twoheadrightarrow \mathfrak{P}_t^N(\overline s_t^N,\overline a_t^N) \subseteq {\cal P}(S^N)$
        to be a correspondence at time $t$ so that for every $(\overline s_t^N,\overline a_t^N)\in S^N\times A^N$,
        \begin{align*}
            \quad\qquad  \mathfrak{P}_t^N(\overline s_t^N,\overline a_t^N):= \left\{ \overline{\mathbb{P}}_t^N(d\overline s_{t+1}^N):=\prod_{i=1}^N \mathbb{P}_{t}^i(ds_{t+1}^i)~\Bigg| \begin{aligned}
                &\;\;\mbox{for every $i=1,\dots,N$,} \;\; \\
                &\quad \mathbb{P}_{t}^i(ds_{t+1}^i) \in\mathfrak{P}_{t}(s_{t}^i, a_{t}^i, e^{N}(\overline s_t^N))
            \end{aligned}
            \right\},
        \end{align*}
        where $e^{N}(\cdot)$ is given in \eqref{eq:empiric}.%and  is a correspondence (i.e., a set-valued map) at time $t$, inducing {distributional uncertainty} in the next-state configuration.
		\item [(iv)] $r: S\times A \times S \times {\cal P}(S) \mapsto \mathbb{R}$ is a one-step Borel-measurable reward function.
    \end{enumerate}
\end{dfn}

Next, we introduce the set of Markov policies for the multi-agent model given in Definition~\ref{dfn:corr_Nplayer} and the set of probability measures that induce model uncertainty in the underlying Markov game.
\begin{dfn}\label{dfn:corr_Nplayer2}
    Given $N\in \mathbb{N}$, let $(S,A,{\mu}^{o},\mathfrak{P}_{0:T}^N, r\;|\;N,\mathfrak{P}_{0:T})$ be given in Definition~\ref{dfn:corr_Nplayer}. 
    \begin{enumerate}[leftmargin=3.em]
        \item [(i)] Denote by $\Pi^N$ the $N$-tuple of sequences of Markov policies $\overline \pi_{0:T}^{N}:=\prod_{i=1}^N \pi^i_{0:T}$ %of all agents in the multi agent MDP $(S,A,{\mu}^{o},\mathfrak{P}_{0:T}^N, r\;|\;N,\mathfrak{P}_{0:T})$, 
        defined for every $t= 0, \dots, T-1$ by
        \[
        \overline \pi^{N}_t:S^N\ni \overline s_t^N\mapsto \overline \pi^{N}_t(d\overline a_t^N|\overline s_t^N):= \prod_{i=1}^N \pi_{t}^i(da_{t}^i|s_{t}^i) \in {\cal P}(A^N), %\in {\cal M}(A^N)
        \]
        where for every $i= 1,\dots,N$, $\pi_{t}^i :S\mapsto {\cal P}(A)$ denotes the Markov policy of agent $i$ at time~$t$.
        
        \item [(ii)] Given $\overline\pi_{0:T}^{N}\in \Pi^N$, define by ${\cal Q}^N(\mu^o,\overline \pi_{0:T}^{N}) \subseteq {\cal P}(S^N\times (S^N\times A^N)^T)$ the subset~of~all~probability measures $\overline {\mathbb{P}}^N:= \overline \mu^{o,N} \otimes \overline {\mathbb{P}}^N_{(\overline \pi^{N}_0,\overline p^{N}_0)}\otimes \cdots \otimes \overline {\mathbb{P}}^N_{(\overline \pi^{N}_{T-1},\overline p^{N}_{T-1})}$ such that for $t=0,\dots,T-1$,
        \[
        \qquad \quad \overline {\mathbb{P}}^N_{(\overline\pi^{N}_t,\overline p^{N}_t)}:S^N\ni \overline s_t^N \mapsto \overline {\mathbb{P}}^N_{(\overline \pi^{N}_t,\overline p^{N}_t)}(d\overline s_{t+1}^N,d \overline a_t^N|\overline s_t^N):=\overline p^{N}_{t}(d\overline s^{N}_{t+1} | \overline s^{N}_{t}, \overline a^{N}_{t}) \overline \pi_t^{N}(d\overline a_t^N|\overline s_t^N)%\in {\cal P}(S^N\times A^N) 
        \]
        is a stochastic kernel on $S^N\times A^N$ given $S^N$, where $\overline p_t^{N}:S^N\times A^N \mapsto {\cal P}(S^N)$ satisfies for every $(\overline s_t^N,\overline a_t^N)\in S^N \times A^N$ that 
        \begin{align*}
            \overline p^{N}_{t}(d\overline s^{N}_{t+1} | \overline s^{N}_{t},  \overline a^{N}_{t}) := \prod_{i = 1}^{N} p_{t}^i(ds_{t+1}^i | \overline s^{N}_{t}, \overline a^{N}_{t}) \in \mathfrak{ P}_t^N(\overline s_t^N,\overline a_t^N)
        \end{align*}
        with corresponding stochastic kernels $p_{t}^i:S^N\times A^N \mapsto {\cal P}(S)$, %on $S$ given~$S^N\times A^N$,
        $i= 1, \dots, N$.
    \end{enumerate}
\end{dfn}

Having completed the description of the multi-agent Markov game under model uncertainty, we can proceed to describe the worst-case objective function of the individual agent: Given $N\in \mathbb{N}$,   
%$J_i^N:{\cal P}(S)\times \Pi^N \ni (\mu^o, \overline \pi^N_{0:T})\mapsto J_i^N(\mu^o,\overline \pi^N_{0:T})\in \mathbb{R}$
the worst-case objective function $J_i^N:{\cal P}(S)\times \Pi^N \ni (\mu^o, \overline \pi^N_{0:T})\mapsto J_i^N(\mu^o,\overline \pi^N_{0:T})\in \mathbb{R}$ of agent $i$, $i\in\{1, \dots, N\}$, is given by
\begin{align}\label{eq:worst_nash}
    J^{N}_i(\mu^o,\overline\pi_{0:T}^{N}):= \inf_{\overline{\mathbb{P}}^N \in \mathcal{Q}^{N}(\mu^o,\overline \pi_{0:T}^{N})} \mathbb{E}^{\overline{\mathbb{P}}^N}\left[\sum_{t = 0}^{T-1} r\big(s_{t}^i, a_{t}^i, s_{t+1}^i, e^{N}(\overline{s}_{t}^N)\big)\right].
\end{align}

Finally, we introduce the notion of a Markov-Nash equilibrium for the multi-agent Markov game under model uncertainty.

\begin{dfn}[Markov-Nash equilibria]\label{dfn:MNE}
    Given $N\in \mathbb{N}$, we say $(\pi^{*,1}_{0:T},\dots,\pi^{*,N}_{0:T})$ is a Markov-Nash equilibrium of the $N$ agent Markov game $(S,A,{\mu}^{o},\mathfrak{P}_{0:T}^N, r\;|\;N,\mathfrak{P}_{0:T})$ (see Definition \ref{dfn:corr_Nplayer}) if $\overline {\pi}_{0:T}^{N|*}:= \prod_{i=1}^N{\pi}_{0:T}^{*,i}\in \Pi^N$ satisfies that\footnote{\label{fnote:perturb_i} Denote by $(\overline \pi_{0:T}^{N|*,-i},\pi_{0:T})\in \Pi^N$ for every $t=0,\dots,T-1$, 
    	\[
    	\mbox{$(\overline \pi^{N|*,-i}_{t},\pi_{t}) := \pi_t(da_t^i|s_t^i)\;\prod_{j=1,j\neq i}^N \pi_{t}^{*,j}(da_{t}^j|s_{t}^j).$}
    	\]
    	%$(\overline \pi_{0:T}^{*,N,-i},\pi_{0:T})_j = \pi_{0:T}^{*,j}$ if $j\neq i$; and $(\overline \pi_{0:T}^{*,N,-i},\pi_{0:T})_j = \pi_{0:T}$ if $j = i$.
    } for every $i = 1, \dots, N$ 
    \begin{align*}
        J^{N}_i(\mu^o,\overline \pi_{0:T}^{N|*})  =  \sup_{\pi_{0:T}\in \Pi } J^{N}_i(\mu^o,(\overline \pi_{0:T}^{N|*,-i},\pi_{0:T})). %J^{N,i} (\boldsymbol{\pi}^{(N, *)}_{-i}, \boldsymbol{\pi}_{i})
    \end{align*}
    Moreover, for a given $\varepsilon > 0$, we say $(\pi^{*,1}_{0:T},\dots,\pi^{*,N}_{0:T})$ is an $\varepsilon$-Markov-Nash equilibrium of the $N$ agent Markov game  $(S,A,{\mu}^{o},$ $\mathfrak{P}_{0:T}^N, r\;|\;N,\mathfrak{P}_{0:T})$ if $\overline {\pi}_{0:T}^{N|*}\in \Pi^N$ satisfies for every $i = 1, \dots, N$~that
    \begin{align*}
        J^{N}_i(\mu^o,\overline \pi_{0:T}^{N|*})  + \varepsilon \geq \sup_{\pi_{0:T}\in \Pi } J^{N}_i(\mu^o,(\overline \pi_{0:T}^{N|*,-i},\pi_{0:T})).
    \end{align*}
\end{dfn}

%%%%%%%%%%%%%%%%%%%%%%%%%%%%%%%%%%%%%%%%%%%%%%%%%%%%%%%%%%%%%%%%%%%%%%%%%%%%%%%%%%%%%%%%%%%%%%%%%%%%%%%%%%%%%%
\section{Main results}
\subsection{Dynamic programming}\label{sec:dpp} We first present some tailored dynamic programming results that will be useful for proving the existence of a mean-field equilibrium under model uncertainty.

\begin{as}\label{as:msr} 
	$(S,A,\mu^o,\mathfrak{P}_{0:T},r)$ given in Definition \ref{dfn:corr} satisfies the following conditions:
	\begin{enumerate}%[leftmargin=2.em]
		\item [(i)] $S$ and $A$ are finite subsets of a (possibly different) Euclidean space.
		\item [(ii)] For every $t=0,\dots,T-1$, $\mathfrak{P}_{t}$ is non-empty, {convex-valued}, compact-valued, and continuous.\footnote{A correspondence between topological spaces is continuous if it is both lower- and upper-hemicontinuous (see, e.g., \cite[Definition 17.2, p.~558]{CharalambosKim2006infinite}).} Furthermore, there exists a constant $C_{\mathfrak{P}_t} > 0$ such that for every $s_{t} \in S$, $a_{t} \in A$, $\mu_{t}, \tilde{\mu}_{t} \in \mathcal{P}(S)$ and for every $\mathbb{P}\in \mathfrak{P}_{t}(s_t, a_t, \mu_t)$, there exists $\tilde{\P} \in \mathfrak{P}_{t}(s_t, a_t, \tilde{\mu}_t)$ satisfying $d_{W_1}(\P, \tilde{\P}) \leq C_{\mathfrak{P}_t} d_{W_1}(\mu_t, \tilde{\mu}_t)$.

		\item [(iii)] $r$ is bounded and Lipschitz continuous in ${\cal P}(S)$, in the sense that there exists some constant $C_r>0$, $L_r>0$ such that
		for every $s_{t}, s_{t+1} \in S$, $a_{t} \in A$, and $\mu_{t}, \tilde \mu_{t} \in \mathcal{P}(S)$, 
		$|r(s_t,a_t,s_{t+1},\mu_t)|\leq C_r$ and $| r(s_t, a_t, s_{t+1}, \mu_t) - r(s_t, a_t, s_{t+1}, \tilde{\mu}_t) | \leq L_{r} d_{W_1}(\mu_t ,\tilde{\mu}_t).$
	\end{enumerate}
\end{as}

Let us formulate a sequence of auxiliary mappings $\widehat{V}_{0:T}$ backwards recursively as follows: for $t = T-1, \dots, 0$, define $\widehat V_{t}:S\times ({\cal P}(S))^{T-t}\mapsto \mathbb{R}$ by setting for every $(s_t,\mu_{t:T})\in S\times ({\cal P}(S))^{T-t}$
\begin{align}\label{eq:DPP_maximin2}
	\widehat V_{t}(s_t,\mu_{t:T}):=\sup_{\pi \in \mathcal{P}(A)} \int_{A} \widehat{J}_{t}(s_t, a_t,\mu_{t:T}) \pi(da_t),
\end{align}
where $\widehat {J}_{t}: S\times A \times ({\cal P}(S))^{T-t}\mapsto \mathbb{R}$ is defined as follows: for\;every\;$(s_{T-1},a_{T-1},\mu_{T-1})\in S\times A \times {\cal P}(S)$
\begin{align}\label{eq:DPP_min2}
	\widehat{J}_{T-1}(s_{T-1}, a_{T-1}, \mu_{T-1}):=  \inf_{\P \in \mathfrak{P}_{T-1}(s_{T-1}, a_{T-1}, \mu_{T-1})} \int_{S}  r(s_{T-1}, a_{T-1}, s_T, \mu_{T-1}) \P(ds_T),
\end{align} 
whereas if $t\leq T-2$, we set for every $(s_t,a_t,\mu_{t:T})\in S\times A \times ({\cal P}(S))^{T-t}$
\begin{align}\label{eq:DPP_min1}
	\widehat{J}_{t}(s_t,a_t,\mu_{t:T}):=  \inf_{\P \in \mathfrak{P}_{t}(s_t, a_t, \mu_t)} \int_{S} \left( r(s_t, a_t, s_{t+1}, \mu_t) + \widehat{V}_{t+1}(s_{t+1}, \mu_{t+1:T})\right) \P(ds_{t+1}),
\end{align}
with $\mathfrak{P}_{0:T}$ given in Definition \ref{dfn:corr}. 

Finally, we define $\widehat{V}:({\cal P}(S))^{T}\mapsto \mathbb{R}$ by setting for every ${\mu}_{0:T}\in ({\cal P}(S))^{T}$
\begin{align} \label{eq:V-1}
	\widehat{V}({\mu}_{0:T}) := \int_{S} \widehat{V}_{0}(s_0, {\mu}_{0:T}) \mu_{0}(ds_0).
\end{align}

\begin{lem}\label{lem:dpp_berge}
	Suppose that Assumption~\ref{as:msr} is satisfied.
    Let $\widehat{V}_{0:T}$ and $\widehat{J}_{0:T}$ be given in \eqref{eq:DPP_maximin2} and \eqref{eq:DPP_min2}--\eqref{eq:DPP_min1}, respectively. Then the following statements hold for every $t=0,\dots,T-1$.
	\begin{itemize}
		\item[(i)] (Minimizer of $\widehat{J}_{t}$)  There exists a measurable selector
        \begin{equation*}
            \widehat p_t \colon S \times A \times  ({\cal P}(S))^{T-t}\ni (s_t, a_t, \mu_{t:T}) \mapsto \widehat {p}_t(\cdot |s_t, a_t, \mu_{t:T}) \in \mathfrak{P}_t(s_t,a_t,\mu_t)
        \end{equation*}
		satisfying that if $t = T-1$, then for every $(s_{T-1}, a_{T-1}, \mu_{T-1})\in S \times A \times  {\cal P}(S)$ 
		\begin{align}\label{eq:minimizer2}
			\qquad \widehat{J}_{T-1}(s_{T-1}, a_{T-1}, \mu_{T-1})=\int_{S}  r(s_{T-1}, a_{T-1}, s_{T}, \mu_{T-1}) \widehat p_{T-1}(ds_T | s_{T-1}, a_{T-1}, \mu_{T-1}),
		\end{align}
		whereas if $t\leq T-2$, then for every $(s_t, a_t, \mu_{t:T})\in S\times A \times ({\cal P}(S))^{T-t}$
		\begin{align}\label{eq:minimizer}
			\qquad \quad  \widehat{J}_t(s_t,a_t,\mu_{t:T})=\int_{S} \left(r(s_t, a_t, s_{t+1}, \mu_t) + \widehat{V}_{t+1}(s_{t+1}, {\mu}_{t+1:T})\right) \widehat p_t(ds_{t+1} | s_t, a_t, \mu_{t:T}).
		\end{align}

		\item[(ii)] (Maximizer of $\widehat{V}_{t}$) There exists a measurable selector
		\begin{equation*}
		    \widehat \pi_t:S\times ({\cal P}(S))^{T-t} \ni (s_t,\mu_{t:T}) \mapsto \widehat \pi_t(\cdot |s_t,\mu_{t:T}) \in \mathcal{P}(A)
		\end{equation*}
		satisfying that for every $(s_t,\mu_{t:T}) \in S \times (\mathcal{P}(S))^{T-t}$
		\begin{align}\label{eq:maximizer}
			\widehat{V}_{t}(s_t,\mu_{t:T})=\int_{A} \widehat{J}_{t}(s_t, a_t,\mu_{t:T}) \widehat \pi_t (da_t|s_t,\mu_{t:T}).
		\end{align}
	\end{itemize}
\end{lem}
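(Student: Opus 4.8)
The plan is to prove both statements simultaneously by backward induction on $t$, since the definitions of $\widehat{V}_t$ and $\widehat{J}_t$ are intertwined, and to rely throughout on Berge's maximum theorem (see, e.g., \cite[Theorem~17.31]{CharalambosKim2006infinite}) for the existence of measurable (indeed continuous) selectors. The key recurring observation is that, since $S$ and $A$ are finite by Assumption~\ref{as:msr}(i), the spaces $\mathcal{P}(S)$ and $\mathcal{P}(A)$ are compact simplices in Euclidean space and the weak topology coincides with the $W_1$-topology, so every boundedness/Lipschitz hypothesis becomes a genuine continuity statement on a compact domain. I will first establish, as an auxiliary claim propagated through the induction, that each $\widehat{V}_t$ and each $\widehat{J}_t$ is bounded and continuous (in fact Lipschitz in the measure arguments), because this is precisely what is needed to invoke Berge's theorem at the next step down.

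The base case is $t = T-1$. For the minimizer in (i): the integrand $(s_{T-1},a_{T-1},s_T,\mu_{T-1}) \mapsto r(s_{T-1},a_{T-1},s_T,\mu_{T-1})$ is bounded and continuous, hence $\P \mapsto \int_S r\,\P(ds_T)$ is continuous on $\mathcal{P}(S)$; the correspondence $\mathfrak{P}_{T-1}$ is non-empty, compact-valued, and continuous by Assumption~\ref{as:msr}(ii). Berge's maximum theorem then yields that $\widehat{J}_{T-1}$ is continuous and that the argmin correspondence is non-empty, compact-valued, and upper-hemicontinuous, so by the Kuratowski--Ryll-Nardzewski measurable selection theorem (or, since everything is continuous here, a continuous selection) there is a measurable selector $\widehat{p}_{T-1}$ satisfying \eqref{eq:minimizer2}; boundedness of $\widehat{J}_{T-1}$ follows from $|r|\le C_r$. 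For the maximizer in (ii) at $t=T-1$: now $(s_{T-1},a_{T-1}) \mapsto \widehat{J}_{T-1}(s_{T-1},a_{T-1},\mu_{T-1})$ is continuous and bounded, $\mathcal{P}(A)$ is a fixed compact set (a constant correspondence, trivially continuous), so Berge again gives continuity of $\widehat{V}_{T-1}$ and a measurable selector $\widehat{\pi}_{T-1}$ satisfying \eqref{eq:maximizer}; in fact the supremum over $\mathcal{P}(A)$ is attained at a Dirac mass at a maximizing action, which also shows boundedness. I will also record here the Lipschitz-in-$\mu_{T-1}$ estimate for $\widehat{J}_{T-1}$ and $\widehat{V}_{T-1}$: using Assumption~\ref{as:msr}(ii) to move a near-optimal $\P \in \mathfrak{P}_{T-1}(s,a,\mu)$ to some $\tilde\P \in \mathfrak{P}_{T-1}(s,a,\tilde\mu)$ with $d_{W_1}(\P,\tilde\P)\le C_{\mathfrak{P}_{T-1}}d_{W_1}(\mu,\tilde\mu)$, combined with the Lipschitz bound on $r$ and Kantorovich--Rubinstein duality to control $|\int r\,d\P - \int r\,d\tilde\P|$, gives the desired Lipschitz constant.

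For the inductive step $t \le T-2$, assume $\widehat{V}_{t+1}$ is bounded and continuous (and Lipschitz in its measure arguments). Then $(s_t,a_t,s_{t+1},\mu_{t:T}) \mapsto r(s_t,a_t,s_{t+1},\mu_t) + \widehat{V}_{t+1}(s_{t+1},\mu_{t+1:T})$ is bounded and continuous, so $\P \mapsto \int_S (\cdots)\,\P(ds_{t+1})$ is jointly continuous in $(\P, s_t, a_t, \mu_{t:T})$; applying Berge's theorem to the minimization over the continuous, compact-valued correspondence $(s_t,a_t,\mu_t)\mapsto\mathfrak{P}_t(s_t,a_t,\mu_t)$ (with $\mu_{t+1:T}$ as inert parameters) gives continuity of $\widehat{J}_t$ and, via measurable selection, the selector $\widehat{p}_t$ with \eqref{eq:minimizer}. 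Feeding $\widehat{J}_t$ into the maximization over the constant correspondence $\mathcal{P}(A)$ and applying Berge once more yields continuity of $\widehat{V}_t$ and the selector $\widehat{\pi}_t$ with \eqref{eq:maximizer}. The Lipschitz-in-$\mu_{t:T}$ bound for $\widehat{V}_t$ propagates exactly as in the base case, now also absorbing the Lipschitz constant of $\widehat{V}_{t+1}$ into the constant $C_{\mathfrak{P}_t}$ term. I expect the main obstacle to be bookkeeping rather than conceptual: one must carefully track that Berge's theorem applies with the correct split into ``choice variables'' versus ``parameters'' at each stage (the correspondence $\mathfrak{P}_t$ depends only on $(s_t,a_t,\mu_t)$, not on the tail $\mu_{t+1:T}$, so the latter must be carried along as a continuous parameter), and that the measurable-selection step is legitimate — here it is clean because finiteness of $S$ and $A$ makes all spaces nice Borel (indeed compact metric) spaces, so one may even take continuous selectors. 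A secondary technical point is verifying that the $W_1$-continuity hypotheses in Assumption~\ref{as:msr} genuinely deliver continuity of the relevant integral functionals; this is where compactness of $S$ and Kantorovich--Rubinstein duality are used.
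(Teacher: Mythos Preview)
Your proposal is correct and follows essentially the same approach as the paper: backward induction on $t$, with Berge's maximum theorem supplying continuity of $\widehat J_t$ and $\widehat V_t$ together with measurable selectors at each step, and a Lipschitz-in-$\mu$ estimate (obtained by transporting a near-optimal $\P$ via Assumption~\ref{as:msr}(ii) and controlling the integral difference through a coupling) propagated through the recursion. The paper merely packages your inductive step as a separate auxiliary lemma (assuming \eqref{eq:Ct} for $\widehat V_{t+1}$ and deducing the selectors and the same bounds for $\widehat J_t,\widehat V_t$), but the content and the order of the argument are identical to what you outline.
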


\begin{rem} \label{rem:CalPTil}
	Berge's maximum theorem (see, e.g., \cite[Theorem~17.31]{CharalambosKim2006infinite}), as presented in the proof of Lemma \ref{lem:dpp_berge}, ensures the existence of measurable selectors $\widehat{p}_{0:T}$ and $\widehat{\pi}_{0:T}$, as well as the following under the assumption therein: for every $t\leq T-2$, the correspondence $\widehat{\mathfrak{P}}_{t} : S \times A \times (\mathcal{P}(S))^{T-t}\ni (s_t, a_t, \mu_{t:T})  \twoheadrightarrow \widehat{\mathfrak{P}}_{t}  (s_t,a_t,\mu_{t:T})\subseteq {\cal P}(S)$ defined by
	\begin{align*}
		\widehat{\mathfrak{P}}_{t} (s_t, a_t, \mu_{t:T}):= \left\{
		\P \in \mathfrak{P}_{t}(s_t, a_t, \mu_{t}) \left|
		\begin{aligned}
		& \int_{S} \Big(r(s_t, a_t, s_{t+1}, \mu_{t})+\widehat{V}_{t+1}(s_{t+1}, {\mu}_{t+1:T})\Big) \P(ds_{t+1})   \\
		&\;\;=  \widehat{J}_{t}(s_t, a_t, {\mu}_{t:T})
		\end{aligned}
		\right.\right\}
	\end{align*}
	is non-empty, compact-valued, and upper-hemicontinuous (see %\cite[Theorem A.2\; (ii),\;(iii)]{neufeld2023markov}).
    \cite[Theorem 17.31\;(2.),\;(3.)]{CharalambosKim2006infinite}).
    Furthermore, since $\mathfrak{P}_t$ is convex-valued (see Assumption \ref{as:msr}\;(ii)), so is $\widehat{\mathfrak{P}}_{t}$. These observations will be used in Section~\ref{sec:MF_main_thm}.
\end{rem}

As a consequence of Lemma~\ref{lem:dpp_berge}, we obtain the following dynamic programming principle result. %which will be instrumental in obtaining the mean-field equilibrium under model uncertainty in Section~\ref{sec:main_MNE}.

\begin{pro} \label{pro:dpp}
    Suppose that Assumption~\ref{as:msr} is satisfied. Let $\widehat{V}_{0:T}$ and $\widehat{J}_{0:T}$ be given in\;\eqref{eq:DPP_maximin2} and\;\eqref{eq:DPP_min2}--\eqref{eq:DPP_min1},\;respectively.\;Given $\tilde \mu_{0:T}\in ({\cal P}(S))^{T}$,\;the following hold for every $t=0,\dots,T-1$:
    \begin{itemize}
        \item [(i)] There exists a stochastic kernel $p_{t}^*:S\times A\times {\cal P}(S)\mapsto {\cal P}(S)$ so that if $t=T-1$, then for every $(s_{T-1},a_{T-1})\in S\times A$
        \begin{align}\label{eq:minimizer2_given}
            \qquad \widehat{J}_{T-1}(s_{T-1},a_{T-1},\tilde \mu_{T-1})=\int_{S}  r(s_{T-1}, a_{T-1}, s_{T},\tilde \mu_{T-1})p_{T-1}^*(ds_T|s_{T-1},a_{T-1}, \tilde\mu_{T-1}),
        \end{align}
        whereas if $t \leq T-2$, then for every $(s_t,a_t)\in S\times A$
        \begin{align}\label{eq:minimizer_given}
            \quad \quad \widehat{J}_t(s_t,a_t,\tilde \mu_{t:T})=\int_{S} \left(r(s_t, a_t, s_{t+1},\tilde \mu_t) + \widehat{V}_{t+1}(s_{t+1},\tilde {\mu}_{t+1:T})\right) p_t^*(ds_{t+1} | s_t, a_t,\tilde \mu_t).
        \end{align}
        Furthermore, there exists a Markov policy $\pi_{t}^*:S\mapsto {\cal P}(A)$ so that for every $s_t\in S$
        \begin{align}\label{eq:maximizer_given}
            \widehat{V}_{t}(s_t,\tilde {\mu}_{t:T})=\int_{A} \widehat{J}_{t}(s_t, a_t, \tilde\mu_{t:T}) \pi_t^* (da_t|s_t).
        %\pi_t^*:X \ni x \mapsto \pi_t^*(\cdot|x):= \widetilde{\pi}_t(\cdot|x,\mu_{t:T})\in {\cal M}({\cal A}).
        \end{align} 
        \item [(ii)] Let $p_{0:T}^*$ and $\pi_{0:T}^*$ be defined as in (i). Define $\mathbb{P}^*(\tilde\mu_{0:T})\in {\cal Q}(\tilde \mu_{0:T},\pi^*_{0:T}) $ by
        \[
        \qquad \qquad\mathbb{P}^*(\tilde\mu_{0:T}) := \mathbb{P}({ \tilde\mu_{0:T},\pi_{0:T}^*,p_{0:T}^*}) := \tilde\mu_0\otimes \mathbb{P}_{({ \tilde\mu_0,\pi_0^*,p_0^*})}\otimes\cdots \otimes \mathbb{P}_{( \tilde\mu_{T-1},\pi_{T-1}^*,{p}_{T-1}^*)}.%\in {\cal Q}( \mu_{0:T},\pi^*_{0:T}).
        \]
        Then $V(\tilde\mu_{0:T})$ given in \eqref{dfn:value_mfg} is equal to $\widehat{V}(\tilde\mu_{0:T})$ given in \eqref{eq:V-1}, and $(\pi_{0:T}^*,p_{0:T}^*)$ are optimal for $V(\tilde \mu_{0:T})$, i.e.,
        \begin{align}\label{eq:verification}
            \begin{aligned}
                \qquad V(\tilde \mu_{0:T}) = J(\tilde\mu_{0:T},\pi^*_{0:T}) &= \sup_{\pi_{0:T}\in \Pi}\mathbb{E}^{\mathbb{P}({ \tilde\mu_{0:T},\pi_{0:T},p_{0:T}^*})} 	\left[\sum_{t=0}^{T-1}r(s_t,a_t,s_{t+1}, \tilde\mu_t)\right] \\
                &= \mathbb{E}^{\mathbb{P}^*(\tilde \mu_{0:T})}\left[\sum_{t=0}^{T-1}r(s_t,a_t,s_{t+1}, \tilde\mu_t)\right],
            \end{aligned}
        \end{align}
        with $\mathbb{P}({ \tilde\mu_{0:T},\pi_{0:T},p_{0:T}^*}) := \tilde\mu_0\otimes \mathbb{P}_{({ \tilde\mu_0,\pi_0,p_0^*})}\otimes\cdots \otimes \mathbb{P}_{( \tilde\mu_{T-1},\pi_{T-1},{p}_{T-1}^*)}\in {\cal Q}(\tilde\mu_{0:T},\pi_{0:T})$.
    \end{itemize}
\end{pro}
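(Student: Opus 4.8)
The plan is to obtain part (i) almost immediately from Lemma~\ref{lem:dpp_berge} by ``freezing'' the future measure coordinates, and then to prove the verification statement (ii) by a single sandwiched backward induction along the time horizon.

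\textbf{Part (i).} Lemma~\ref{lem:dpp_berge} furnishes Borel-measurable selectors $\widehat p_t(\cdot\,|\,s_t,a_t,\mu_{t:T})\in\mathfrak P_t(s_t,a_t,\mu_t)$ attaining the infimum in $\widehat J_t$ (cf.\ \eqref{eq:minimizer2}--\eqref{eq:minimizer}) and $\widehat\pi_t(\cdot\,|\,s_t,\mu_{t:T})\in\mathcal P(A)$ attaining the supremum in $\widehat V_t$ (cf.\ \eqref{eq:maximizer}), for \emph{every} value of the arguments. Given the fixed flow $\tilde\mu_{0:T}$, set $\pi_t^*(\cdot\,|\,s_t):=\widehat\pi_t(\cdot\,|\,s_t,\tilde\mu_{t:T})$, and for $t\le T-2$ set $p_t^*(\cdot\,|\,s_t,a_t,\mu_t):=\widehat p_t(\cdot\,|\,s_t,a_t,\mu_t,\tilde\mu_{t+1:T})$, with $p_{T-1}^*:=\widehat p_{T-1}$. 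These are Borel kernels (compositions of Borel maps), $\pi_t^*$ defines an element of $\Pi$, and crucially $p_t^*(\cdot\,|\,s_t,a_t,\mu_t)\in\mathfrak P_t(s_t,a_t,\mu_t)$ for \emph{every} $(s_t,a_t,\mu_t)$ — exactly what is needed for the measure $\mathbb P(\tilde\mu_{0:T},\pi_{0:T},p^*_{0:T})$ assembled from $(\tilde\mu_{0:T},\pi_{0:T},p^*_{0:T})$ to lie in $\mathcal Q(\tilde\mu_{0:T},\pi_{0:T})$ for any $\pi_{0:T}\in\Pi$ (Definition~\ref{dfn:corr2}). Evaluating \eqref{eq:minimizer2}--\eqref{eq:maximizer} at $\mu_t=\tilde\mu_t$ (so that $\mu_{t:T}=\tilde\mu_{t:T}$) yields exactly \eqref{eq:minimizer2_given}--\eqref{eq:maximizer_given}, proving (i).

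\textbf{Part (ii).} Fix an arbitrary $\pi_{0:T}\in\Pi$ and an arbitrary sequence of admissible kernels $p_{0:T}$ (i.e.\ $p_t(\cdot\,|\,s_t,a_t,\mu_t)\in\mathfrak P_t(s_t,a_t,\mu_t)$ for all arguments, so that $\mathbb P(\tilde\mu_{0:T},\pi_{0:T},p_{0:T})\in\mathcal Q(\tilde\mu_{0:T},\pi_{0:T})$). I claim, by backward induction on $t=T-1,\dots,0$, that
\[
\mathbb E^{\mathbb P(\tilde\mu_{0:T},\pi^*_{0:T},p_{0:T})}\!\left[\sum_{s=t}^{T-1}r(s_s,a_s,s_{s+1},\tilde\mu_s)\,\Big|\,s_t\right]\ \ge\ \widehat V_t(s_t,\tilde\mu_{t:T})\ \ge\ \mathbb E^{\mathbb P(\tilde\mu_{0:T},\pi_{0:T},p^*_{0:T})}\!\left[\sum_{s=t}^{T-1}r(s_s,a_s,s_{s+1},\tilde\mu_s)\,\Big|\,s_t\right],
\]
with \emph{both} inequalities being equalities when $(\pi_{0:T},p_{0:T})=(\pi^*_{0:T},p^*_{0:T})$. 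For the base case $t=T-1$ one uses that $p^*_{T-1}(\cdot\,|\,s,a,\tilde\mu_{T-1})$ attains $\widehat J_{T-1}(s,a,\tilde\mu_{T-1})$ while any admissible $p_{T-1}(\cdot\,|\,s,a,\tilde\mu_{T-1})\in\mathfrak P_{T-1}(s,a,\tilde\mu_{T-1})$ gives $\int_S r\,p_{T-1}\ge\widehat J_{T-1}$, then that $\widehat V_{T-1}(s,\tilde\mu_{T-1})=\sup_{\pi\in\mathcal P(A)}\int_A\widehat J_{T-1}\,\pi(da)$ with the supremum attained at $\pi^*_{T-1}$ by \eqref{eq:maximizer_given}. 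For the inductive step one invokes the Markov (product) structure of the measures in play: since every $\mu$-slot is frozen to the deterministic flow $\tilde\mu_{0:T}$, the realized state process is an honest Markov chain and the reward-to-go conditional on $s_{t+1}$ is a deterministic function of $s_{t+1}$; combining this with the induction hypothesis, with $\int_S(r+\widehat V_{t+1})\,p_t\ge\widehat J_t$ resp.\ $\int_S(r+\widehat V_{t+1})\,p^*_t=\widehat J_t$ (the latter by \eqref{eq:minimizer_given}, valid at $\mu_t=\tilde\mu_t$), and with the definition of $\widehat V_t$ as a supremum over $\mathcal P(A)$ attained at $\pi^*_t$, advances the claim to time $t$.

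\textbf{Conclusion and main obstacle.} Taking expectations over $s_0\sim\tilde\mu_0$ at $t=0$ and using \eqref{eq:V-1}, the displayed sandwich gives $\mathbb E^{\mathbb P(\tilde\mu_{0:T},\pi^*_{0:T},p_{0:T})}[\sum_t r]\ge\widehat V(\tilde\mu_{0:T})\ge\mathbb E^{\mathbb P(\tilde\mu_{0:T},\pi_{0:T},p^*_{0:T})}[\sum_t r]$ for all admissible $p_{0:T}$ and all $\pi_{0:T}\in\Pi$, with equalities at $(\pi^*_{0:T},p^*_{0:T})$, where $\mathbb P^*(\tilde\mu_{0:T})=\mathbb P(\tilde\mu_{0:T},\pi^*_{0:T},p^*_{0:T})$. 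The left side yields $J(\tilde\mu_{0:T},\pi^*_{0:T})=\inf_{\mathbb P\in\mathcal Q(\tilde\mu_{0:T},\pi^*_{0:T})}\mathbb E^{\mathbb P}[\sum_t r]=\widehat V(\tilde\mu_{0:T})$ with the infimum attained by $\mathbb P^*(\tilde\mu_{0:T})$; the right side yields $\sup_{\pi_{0:T}\in\Pi}\mathbb E^{\mathbb P(\tilde\mu_{0:T},\pi_{0:T},p^*_{0:T})}[\sum_t r]=\widehat V(\tilde\mu_{0:T})=\mathbb E^{\mathbb P^*(\tilde\mu_{0:T})}[\sum_t r]$; and since $\mathbb P(\tilde\mu_{0:T},\pi_{0:T},p^*_{0:T})\in\mathcal Q(\tilde\mu_{0:T},\pi_{0:T})$ we also get $J(\tilde\mu_{0:T},\pi_{0:T})\le\widehat V(\tilde\mu_{0:T})$ for every $\pi_{0:T}$, hence $V(\tilde\mu_{0:T})=\sup_{\pi_{0:T}}J(\tilde\mu_{0:T},\pi_{0:T})=\widehat V(\tilde\mu_{0:T})=J(\tilde\mu_{0:T},\pi^*_{0:T})$. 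Assembling these identities gives precisely \eqref{eq:verification}. The only delicate point is the interchange of conditional expectation with the recursive min/sup in the inductive step: one must check that under each product measure the reward-to-go is genuinely a function of the current state (which holds because actions, transitions, and the measure argument $\tilde\mu_t$ are all Markov/deterministic), and that the minimizing identities \eqref{eq:minimizer2_given}--\eqref{eq:minimizer_given} — which are asserted only at $\mu_t=\tilde\mu_t$ — are enough, because along any realized trajectory the $\mu$-argument is always the frozen $\tilde\mu_t$; everything else is bookkeeping.
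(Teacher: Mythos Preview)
Your proposal is correct and follows essentially the same approach as the paper: part (i) is obtained by freezing the future measure coordinates in the selectors from Lemma~\ref{lem:dpp_berge} (exactly as the paper does), and part (ii) is a backward induction exploiting the optimality of $p^*_t$ in $\widehat J_t$ and of $\pi^*_t$ in $\widehat V_t$. The only cosmetic difference is that you run both inequalities simultaneously as a single sandwich (with conditional expectations given $s_t$), whereas the paper treats the two directions in separate passes with unconditional expectations; the substance is identical.
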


The proofs of Lemma \ref{lem:dpp_berge} and Proposition \ref{pro:dpp} are presented in Section \ref{sec:proof:dpp:MFG}.

Next, we revisit the multi-agent Markov game given in Definitions \ref{dfn:corr_Nplayer} and~\ref{dfn:corr_Nplayer2} to obtain the corresponding dynamic programming principle result. This will be useful in Section \ref{sec:main_MNE} for determining the worst-case transition kernel for any given Markov policy.

Set $N\in \mathbb{N}$, and let $\overline \pi_{0:T}^N\in \Pi^N$ and $i\in \{1,\dots,N\}$. Define a sequence of mappings $\widehat{J}_{0:T,i}^{N}(\cdot,\cdot;\overline\pi^N_{0:T})$ backwards recursively as follows: Define for every $(\overline s^N_{T-1},\overline a^N_{T-1})\in S^N\times A^N$
\begin{align}\label{eq:DPP_min_Nash0}
	\widehat{J}_{T-1,i}^{N}(\overline s^N_{T-1},\overline a^N_{T-1};\overline\pi^N_{0:T}):=  \inf_{\P \in \mathfrak{P}_{T-1}^{N}(\overline s_{T-1}^{N}, \overline a_{T-1}^{N})} \int_{S^{N}} r\big(s_{T-1}^{i}, a_{T-1}^{i}, s^{i}_{T}, e^{N}(\overline s_{T-1}^{N})\big)  \P(d\overline s^{N}_{T}),
\end{align}
and for $t\leq T-2$, define for every $(\overline s^N_{t},\overline a^N_{t})\in S^N\times A^N$
\begin{align}\label{eq:DPP_min_Nash}
	\begin{aligned}
		\widehat{J}_{t,i}^{N} (\overline s^N_{t},\overline a^N_{t};\overline\pi^N_{0:T})
		&:=  \inf_{\P \in \mathfrak{P}_{t}^{N}(\overline s^N_{t},\overline a^N_{t})} \int_{S^{N}}\bigg( r\big(s_{t}^{i}, a_{t}^{i}, s^{i}_{t+1}, e^{N}(\overline s_t^{N})\big) \bigg.\\
		&\bigg. \quad\quad + \int_{A^N}\widehat{J}_{t+1,i}^{N} (\overline s^N_{t+1},\overline a^N_{t+1};\overline\pi^N_{0:T})\overline \pi_{t+1}^N(d\overline a_{t+1}^N|\overline s_{t+1}^N)\bigg) \P(d\overline s^{N}_{t+1}),
	\end{aligned}
\end{align}
with $\mathfrak{P}^N_{0:T}$ given in Definition \ref{dfn:corr_Nplayer}. 

%Finally, we can define ${\cal V}_T^{N}: {\cal M}(X) \ni \mu_0 \mapsto {\cal V}_T^{N}(\mu_0)$ by
%\begin{align}
%	{\cal V}^{N,i}(\mu_0):= \int_{X^N} {\cal V}_0^{N,i}(x_0^N) \mu_0^N(dx_0^N).
%\end{align}

\begin{lem}\label{lem:dpp_berge_nash}
	Suppose that Assumption~\ref{as:msr} is satisfied. Set $N\in \mathbb{N}$, and let $\overline \pi_{0:T}^N\in \Pi^N$ and $i\in \{1,\dots,N\}$. Furthermore, let $\widehat {J}_{0:T,i}^{N} (\cdot,\cdot;\overline\pi^N_{0:T})$ be given in \eqref{eq:DPP_min_Nash0} and~\eqref{eq:DPP_min_Nash}. 
	Then for every $t=0,\dots,T-1$, there exists a measurable selector (i.e., minimizer for $\widehat{J}^N_{t,i}(\cdot,\cdot;\overline\pi^N_{0:T})$) 
	\[
		\widehat{p}_{(t,i,\overline \pi^N_{0:T})}: S^N \times A^N \ni (\overline s_t^N, \overline a_t^N)\mapsto \widehat{p}_{(t,i,\overline \pi^N_{0:T})}(\cdot|\overline s_t^N,\overline a_t^N) \in \mathfrak{P}_t^N (\overline s_t^N,\overline a_t^N)
	\]
	%$\widehat{p}_{(t,i,\overline \pi^N_{0:T})}: S^N \times A^N \ni (\overline s_t^N, \overline a_t^N)\mapsto \widehat{p}_{(t,i,\overline \pi^N_{0:T})}(\cdot|\overline s_t^N,\overline a_t^N) \in \mathfrak{P}_t^N (\overline s_t^N,\overline a_t^N)$ 
	satisfying that if $t=T-1$, then for every $(s_{T-1}^{N},a_{T-1}^N)\in S^N\times A^N$
	\begin{align*}%\label{eq:m_select_nash_2}
		\begin{aligned}
			\widehat{J}_{T-1,i}^{N}(\overline s^N_{T-1},\overline a^N_{T-1};\overline\pi^N_{0:T})=\int_{S^{N}} r\big(s_{T-1}^{i}, a_{T-1}^{i}, s^{i}_{T}, e^{N}(\overline s_{T-1}^{N})\big) \widehat{p}_{(T-1,i,\overline \pi^N_{0:T})}(d\overline s_{T}^N| \overline s_{T-1}^N,\overline a_{T-1}^N),
		\end{aligned}
	\end{align*}
	whereas if $t\leq T-2$, then for every $(\overline s_t^N,\overline a_t^N)\in S^N\times A^N$
	\begin{align*}
		\begin{aligned}
			\widehat{J}_{t,i}^{N} (\overline s^N_{t},\overline a^N_{t};\overline\pi^N_{0:T})=&\int_{S^{N}}\bigg( r\big(s_{t}^{i}, a_{t}^{i}, s^{i}_{t+1}, e^{N}(\overline s_t^{N})\big) 
		\bigg.\\%+\left. \int_{A^N}{\cal J}_{t+1,i}^{N} (\overline s^N_{t+1},\overline a^N_{t+1};\overline\pi^N_{0:T})\overline \pi_{t+1}^N(d\overline a_{t+1}^N|\overline s_{t+1}^N)\right) \widehat{p}_{(t,i,\overline \pi^N_{0:T})}(d\overline s_{t+1}^N| \overline s_t^N,\overline a_t^N) \\
			&\quad\quad+\left. \int_{A^N}\widehat{J}_{t+1,i}^{N} (\overline s^N_{t+1},\overline a^N_{t+1};\overline\pi^N_{0:T})\overline \pi_{t+1}^N(d\overline a_{t+1}^N|\overline s_{t+1}^N)\right) \widehat{p}_{(t,i,\overline \pi^N_{0:T})}(d\overline s_{t+1}^N| \overline s_t^N,\overline a_t^N).
		\end{aligned}
	\end{align*}
\end{lem}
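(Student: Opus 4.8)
The plan is to argue exactly as in the proof of Lemma~\ref{lem:dpp_berge}(i), working backwards in time $t=T-1,\dots,0$, but with the single-agent data $(S,A,\mathfrak{P}_t)$ replaced by the multi-agent data $(S^N,A^N,\mathfrak{P}^N_t)$ and the fixed policy profile $\overline\pi^N_{0:T}$ plugged into the recursion. A convenient simplification is that, by Assumption~\ref{as:msr}(i), $S^N\times A^N$ is a \emph{finite} set; hence every map out of it is Borel-measurable and every real-valued function on it is (trivially) continuous, so there is no genuine measurability-of-selector or continuity-in-parameter issue to resolve. What remains is (a) to check that for each fixed $(\overline s^N_t,\overline a^N_t)$ the infimum defining $\widehat{J}^N_{t,i}$ is attained on $\mathfrak{P}^N_t(\overline s^N_t,\overline a^N_t)$, and (b) to run the backwards induction keeping the integrand being minimized a bounded, Borel-measurable function.

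For (a) I would first record the regularity of $\mathfrak{P}^N_t$. The empirical-measure map $e^N:S^N\to\mathcal{P}(S)$ is continuous, and by Assumption~\ref{as:msr}(ii) the correspondence $\mathfrak{P}_t$ is non-empty and compact-valued; since the product-measure map $\Phi:(\mathcal{P}(S))^N\ni(\mathbb{P}^1,\dots,\mathbb{P}^N)\mapsto\bigotimes_{i=1}^N\mathbb{P}^i\in\mathcal{P}(S^N)$ is continuous and
\[
\mathfrak{P}^N_t(\overline s^N_t,\overline a^N_t)=\Phi\Big(\prod_{i=1}^N\mathfrak{P}_t(s^i_t,a^i_t,e^N(\overline s^N_t))\Big),
\]
the set $\mathfrak{P}^N_t(\overline s^N_t,\overline a^N_t)$ is the continuous image of a non-empty compact set (compact by Tychonoff), hence non-empty and compact in $\mathcal{P}(S^N)$ for the weak topology.

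For (b), at $t=T-1$ the objective $\mathbb{P}\mapsto\int_{S^N}r\big(s^i_{T-1},a^i_{T-1},s^i_T,e^N(\overline s^N_{T-1})\big)\mathbb{P}(d\overline s^N_T)$ is affine in $\mathbb{P}$ with integrand bounded by $C_r$ (Assumption~\ref{as:msr}(iii)), hence weakly continuous; by the previous paragraph the feasible set is non-empty and compact, so the infimum is attained, and choosing a minimizer for each $(\overline s^N_{T-1},\overline a^N_{T-1})$ gives the selector $\widehat{p}_{(T-1,i,\overline\pi^N_{0:T})}$, with $\widehat{J}^N_{T-1,i}(\cdot,\cdot;\overline\pi^N_{0:T})$ bounded by $C_r$. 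Inductively, if $\widehat{J}^N_{t+1,i}(\cdot,\cdot;\overline\pi^N_{0:T})$ is bounded, then the map $\overline s^N_{t+1}\mapsto r\big(s^i_t,a^i_t,s^i_{t+1},e^N(\overline s^N_t)\big)+\int_{A^N}\widehat{J}^N_{t+1,i}(\overline s^N_{t+1},\overline a^N_{t+1};\overline\pi^N_{0:T})\,\overline\pi^N_{t+1}(d\overline a^N_{t+1}|\overline s^N_{t+1})$ is bounded and Borel-measurable (the integral being well defined since $\widehat{J}^N_{t+1,i}$ is a function on the finite set $S^N\times A^N$); hence $\mathbb{P}\mapsto\int_{S^N}(\cdots)\mathbb{P}(d\overline s^N_{t+1})$ is affine and weakly continuous, the infimum over the non-empty compact $\mathfrak{P}^N_t(\overline s^N_t,\overline a^N_t)$ is attained, and selecting a minimizer yields $\widehat{p}_{(t,i,\overline\pi^N_{0:T})}$ with $\widehat{J}^N_{t,i}(\cdot,\cdot;\overline\pi^N_{0:T})$ again bounded. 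This closes the induction.

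The only step needing real care — and the sole structural difference from Lemma~\ref{lem:dpp_berge} — is establishing that $\mathfrak{P}^N_t(\overline s^N_t,\overline a^N_t)$ is non-empty and compact, i.e., that forming the $N$-fold product of measures and composing with $e^N$ preserves these properties; this is exactly what the continuity of $\Phi$ together with Tychonoff's theorem provide. Note also that, in contrast to Lemma~\ref{lem:dpp_berge}(ii), no maximization over actions enters here since $\overline\pi^N_{0:T}$ is held fixed, so only the minimizer part is needed; if one prefers a uniform treatment, the same argument can be packaged through Berge's maximum theorem (\cite[Theorem~17.31]{CharalambosKim2006infinite}) as in the proof of Lemma~\ref{lem:dpp_berge}, but over the finite parameter space $S^N\times A^N$ it reduces to the Weierstrass argument above.
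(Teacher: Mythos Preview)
Your proposal is correct and follows essentially the same approach as the paper: both exploit the finiteness of $S^N\times A^N$ to trivialize continuity and measurability, establish that the integrand is continuous in $\overline p^N_t$, and then appeal to the attainment of the infimum over the compact feasible set (the paper packages this through Berge's theorem as in Lemma~\ref{lem:dpp_berge} and Lemma~\ref{lem:Vt}, while you spell out the underlying Weierstrass argument). Your explicit verification that $\mathfrak{P}^N_t(\overline s^N_t,\overline a^N_t)$ is non-empty and compact via Tychonoff and the continuity of the product-measure map is a detail the paper leaves implicit.
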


As a consequence of Lemma~\ref{lem:dpp_berge_nash}, we obtain the following result.

\begin{pro}\label{pro:dpp_Nash}
	Suppose that Assumption~\ref{as:msr} is satisfied. For every $i \in \{1, \dots, N\}$, initial distribution $\mu^{o}$, and $\overline \pi_{0:T}^{N}\in  \Pi^N$, let $\widehat{p}_{(0:T,i,\overline \pi^N_{0:T})}$ be given in Lemma~\ref{lem:dpp_berge_nash}. Then
	\[
		\overline {\mathbb{P}}^N ({\mu^o, \overline \pi^N_{0:T},\widehat{p}_{(0:T,i,\overline \pi^N_{0:T})}}):= \overline \mu^{o,N}\otimes \overline{ \mathbb{P}}^N_{(\overline \pi^{N}_0,\widehat{p}_{(0,i,\overline \pi^N_{0:T})})}\otimes \cdots \otimes \overline{\mathbb{P}}^N_{(\overline \pi^{N}_{T-1},\widehat{p}_{(T-1,i,\overline \pi^N_{0:T})})}\in {\cal Q}^N(\mu^o,\overline \pi^N_{0:T})
	\] 
	is the worst-case measure for $J^{N}_i(\mu^o,\overline \pi_{0:T}^{N})$ (given in \eqref{eq:worst_nash}), i.e., 
	\begin{align*}
		\begin{aligned}
    		J^{N}_i(\mu^o,\overline \pi_{0:T}^{N}) &= \mathbb{E}^{\overline {\mathbb{P}}^N ({\mu^o,\overline \pi^N_{0:T}},\widehat{p}_{(0:T,i,\overline \pi^N_{0:T})} )}\left[\sum_{t = 0}^{T-1} r\big(s^{i}_{t}, a^{i}_{t}, s^{i}_{t+1}, e^{N}(\overline s_t^N)\big)\right]\\
    		&=\int_{S^N}\int_{A^N} \widehat{J}_{0,i}^{N} (\overline s_0^N,\overline a_0^N) \overline \pi_0^N(d\overline a_0^N|\overline s_0^N) \overline \mu^{o,N}(d \overline s_0^N),%=\widehat J^{N}_i(\mu^o,\overline \pi_{0:T}^{N}),
    	\end{aligned}
    \end{align*}
    with $\widehat{J}_{0,i}^{N}$ given in \eqref{eq:DPP_min_Nash}.
\end{pro}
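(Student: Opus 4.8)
The plan is the standard verification argument for robust finite-horizon dynamic programming, carried out on the $N$-agent state space $S^{N}$: first bound $J^{N}_{i}(\mu^{o},\overline\pi^{N}_{0:T})$ from below by $\int_{S^{N}}\int_{A^{N}}\widehat{J}^{N}_{0,i}(\overline s^{N}_{0},\overline a^{N}_{0};\overline\pi^{N}_{0:T})\,\overline\pi^{N}_{0}(d\overline a^{N}_{0}\mid\overline s^{N}_{0})\,\overline\mu^{o,N}(d\overline s^{N}_{0})$ for every feasible measure, and then show this lower bound is attained at the measure assembled from the minimizing selectors of Lemma~\ref{lem:dpp_berge_nash}. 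For the lower bound, fix an arbitrary $\overline{\mathbb{P}}^{N}\in\mathcal{Q}^{N}(\mu^{o},\overline\pi^{N}_{0:T})$ with associated Markov kernels $\overline p^{N}_{0:T}$ as in Definition~\ref{dfn:corr_Nplayer2}~(ii), and set $\mathcal{F}_{t}:=\sigma(\overline s^{N}_{0},\overline a^{N}_{0},\dots,\overline s^{N}_{t},\overline a^{N}_{t})$. The product structure of $\overline{\mathbb{P}}^{N}$ shows that the $\overline{\mathbb{P}}^{N}$-conditional law of $(\overline s^{N}_{t+1},\overline a^{N}_{t+1})$ given $\mathcal{F}_{t}$ equals $\overline p^{N}_{t}(d\overline s^{N}_{t+1}\mid\overline s^{N}_{t},\overline a^{N}_{t})\,\overline\pi^{N}_{t+1}(d\overline a^{N}_{t+1}\mid\overline s^{N}_{t+1})$, so each conditional expectation below collapses to a measurable function of $(\overline s^{N}_{t},\overline a^{N}_{t})$; I would then establish, by backward induction on $t=T-1,\dots,0$, the $\overline{\mathbb{P}}^{N}$-a.s.\ estimate
\[
\mathbb{E}^{\overline{\mathbb{P}}^{N}}\!\left[\sum_{s=t}^{T-1} r\big(s^{i}_{s},a^{i}_{s},s^{i}_{s+1},e^{N}(\overline s^{N}_{s})\big)\,\Big|\,\mathcal{F}_{t}\right]\;\geq\;\widehat{J}^{N}_{t,i}(\overline s^{N}_{t},\overline a^{N}_{t};\overline\pi^{N}_{0:T}).
\]
The case $t=T-1$ follows at once from $\overline p^{N}_{T-1}(\cdot\mid\cdot)\in\mathfrak{P}^{N}_{T-1}(\cdot)$ and the definition \eqref{eq:DPP_min_Nash0} of $\widehat{J}^{N}_{T-1,i}$ as an infimum over that set; for the inductive step one conditions on $\mathcal{F}_{t+1}$, substitutes the induction hypothesis, integrates out $\overline a^{N}_{t+1}\sim\overline\pi^{N}_{t+1}(\cdot\mid\overline s^{N}_{t+1})$ and then $\overline s^{N}_{t+1}\sim\overline p^{N}_{t}(\cdot\mid\overline s^{N}_{t},\overline a^{N}_{t})$, and finally uses $\overline p^{N}_{t}(\cdot\mid\cdot)\in\mathfrak{P}^{N}_{t}(\cdot)$ together with the recursion \eqref{eq:DPP_min_Nash}. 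Evaluating at $t=0$ and integrating against $\overline\pi^{N}_{0}$ and $\overline\mu^{o,N}$ gives the asserted lower bound for this $\overline{\mathbb{P}}^{N}$, hence also for the infimum $J^{N}_{i}(\mu^{o},\overline\pi^{N}_{0:T})$; boundedness of $r$ (Assumption~\ref{as:msr}~(iii)) makes all expectations finite and the manipulations legitimate.

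For the reverse inequality I would first note that $\overline{\mathbb{P}}^{N}(\mu^{o},\overline\pi^{N}_{0:T},\widehat{p}_{(0:T,i,\overline\pi^{N}_{0:T})})$ is indeed a member of $\mathcal{Q}^{N}(\mu^{o},\overline\pi^{N}_{0:T})$: by Lemma~\ref{lem:dpp_berge_nash} each $\widehat{p}_{(t,i,\overline\pi^{N}_{0:T})}$ is a measurable stochastic kernel with values in $\mathfrak{P}^{N}_{t}(\overline s^{N}_{t},\overline a^{N}_{t})$, and by Definition~\ref{dfn:corr_Nplayer}~(iii) every element of $\mathfrak{P}^{N}_{t}(\overline s^{N}_{t},\overline a^{N}_{t})$ automatically has the product form $\prod_{j=1}^{N}p^{j}_{t}$ required by Definition~\ref{dfn:corr_Nplayer2}~(ii) (in particular $\mathcal{Q}^{N}(\mu^{o},\overline\pi^{N}_{0:T})\neq\emptyset$). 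Running the same backward induction as above, but now with $\overline p^{N}_{t}$ replaced by $\widehat{p}_{(t,i,\overline\pi^{N}_{0:T})}$, the defining property of these selectors in Lemma~\ref{lem:dpp_berge_nash} turns every inequality into an equality, so the tail conditional expectations equal $\widehat{J}^{N}_{t,i}(\overline s^{N}_{t},\overline a^{N}_{t};\overline\pi^{N}_{0:T})$ exactly and, at $t=0$, $\mathbb{E}^{\overline{\mathbb{P}}^{N}(\mu^{o},\overline\pi^{N}_{0:T},\widehat{p}_{(0:T,i,\overline\pi^{N}_{0:T})})}[\sum_{t=0}^{T-1}r(\dots)]$ equals $\int_{S^{N}}\int_{A^{N}}\widehat{J}^{N}_{0,i}(\overline s^{N}_{0},\overline a^{N}_{0};\overline\pi^{N}_{0:T})\,\overline\pi^{N}_{0}(d\overline a^{N}_{0}\mid\overline s^{N}_{0})\,\overline\mu^{o,N}(d\overline s^{N}_{0})$. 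Combined with the lower bound, this common value is $J^{N}_{i}(\mu^{o},\overline\pi^{N}_{0:T})$, the infimum defining $J^{N}_{i}$ is attained at $\overline{\mathbb{P}}^{N}(\mu^{o},\overline\pi^{N}_{0:T},\widehat{p}_{(0:T,i,\overline\pi^{N}_{0:T})})$, and both displayed identities of the proposition follow.

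I expect the only genuinely delicate points to be the Markovian disintegration of $\overline{\mathbb{P}}^{N}=\overline\mu^{o,N}\otimes\overline{\mathbb{P}}^{N}_{(\overline\pi^{N}_{0},\overline p^{N}_{0})}\otimes\cdots$ — one must verify carefully that each conditional expectation reduces to a function of $(\overline s^{N}_{t},\overline a^{N}_{t})$, which is exactly what makes the backward recursion \eqref{eq:DPP_min_Nash} applicable — and the check that the per-agent worst-case selector $\widehat{p}_{(t,i,\overline\pi^{N}_{0:T})}$ really yields a \emph{feasible} element of $\mathcal{Q}^{N}$, which hinges on the product structure built into the correspondence $\mathfrak{P}^{N}_{t}$ in Definition~\ref{dfn:corr_Nplayer}~(iii). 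Everything else is a routine transcription of the robust finite-horizon dynamic programming principle to the $N$-agent state space.
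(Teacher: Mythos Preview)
Your proposal is correct and follows essentially the same approach as the paper: the paper's proof of this proposition simply refers back to the verification argument of Proposition~\ref{pro:dpp}~(ii), which is precisely the backward-induction lower bound for an arbitrary feasible measure followed by equality along the selector-induced measure that you spell out in detail. The delicate points you flag (the Markovian disintegration and the product-form feasibility of $\widehat{p}_{(t,i,\overline\pi^{N}_{0:T})}$) are handled implicitly in the paper by the structure of Definitions~\ref{dfn:corr_Nplayer} and~\ref{dfn:corr_Nplayer2}, so your explicit checks are, if anything, more thorough than the paper's.
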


The proofs of Lemma \ref{lem:dpp_berge_nash} and Proposition \ref{pro:dpp_Nash} can be found in Section \ref{sec:proof:dpp:MFE}.
%%%%%%%%%%%%%%%%%%%%%%%%%%%%%%%%%%%%%%%%%%%%%%%%%%%%%%%%%%%%%%%%%%%%%%%%%%%%%%%%%%%%%%%%%%%%%%%%%%%%%%%%%%%%%%
\subsection{Existence of mean-field equilibrium}\label{sec:MF_main_thm}

Using the results of the dynamic programming principle derived for the mean-field Markov game in Section~\ref{sec:dpp}, along with Kakutani's fixed point theorem (see, e.g., \cite[Corollary 17.55, p.~583]{CharalambosKim2006infinite}), we will demonstrate the existence of a mean-field equilibrium under model uncertainty in Theorem~\ref{thm:MFE}.

\begin{dfn} \label{dfn:FixPntEq}
    Set $\Xi := (\mathcal{P}(S \times A))^{T}.$ For $\nu_{0:T} \in \Xi$ and $t=0,\dots,T-1$, denote by $\nu_{t,S}$ the marginal of $\nu_t\in \mathcal{P}(S \times A)$ on $S$, i.e., $\nu_{t,S}(\cdot) := \nu_t(\cdot \times A)\in {\cal P}(S)$. Furthermore, denote~by 
    \[
    \pi_t^\nu:S\ni s_t\mapsto \pi^\nu_t(\cdot|s_t)\in {\cal P}(A)
    \]
    the disintegrating kernel of $\nu_t$ with respect to $\nu_{t,S}$, i.e., $\nu_t(ds_t,da_t)=\pi_t^{\nu}(da_t|s_t)\nu_{t,S}(ds_t).$
\end{dfn}

\begin{dfn} \label{dfn:FixPntEq2}
    Let $\Xi$ be given in Definition \ref{dfn:FixPntEq}. Let $\widehat{\mathfrak{P}}_{0:T-1}$ be given in Remark \ref{rem:CalPTil}.
    Furthermore, let $\widehat{J}_{0:T}$ be given in \eqref{eq:DPP_min2} and \eqref{eq:DPP_min1}. Define the following correspondences:
    \begin{enumerate}[leftmargin=3.em]
    	\item [(i)] ${\cal C}:\Xi \ni\nu_{0:T}\twoheadrightarrow {\cal C}(\nu_{0:T})\subseteq  \Xi $ is defined by
    	\begin{align*}
    		\qquad
    		\begin{aligned}
    			\mathcal{C}(\nu_{0:T}) := \bigg\{\tilde{\nu}_{0:T} \in \Xi \;\Big|\;& \tilde{\nu}_{0, S} = \mu^o\;\text{and for every}\;t = 0, \dots,{T-2},\;\;\text{there exists}\\
    			&\; p_{t}^{\tilde{\nu}} : S \times A \times (\mathcal{P}(S))^{T-t} \ni (s_t,a_t,\mu_{t:T}) \mapsto p_{t}^{\tilde{\nu}}(\cdot|s_t,a_t,\mu_{t:T}) \in \mathcal{P}(S)\\
    			&\; \text{s.t. for every $(s_t,a_t)\in S\times A$, $ p_{t}^{\tilde{\nu}}(\cdot | s_t, a_t, \nu_{t:T,S}) \in \widehat{\mathfrak{P}}_{t}(s_t, a_t, \nu_{t:T,S})$} \\
    			&\;\text{and} \;\tilde{\nu}_{t+1, S}(\cdot) = \int_{S \times A}  p_{t}^{\tilde{\nu}}(\cdot | s_t, a_t, \nu_{t:T, S}) \nu_{t}(ds_t, da_t)\bigg\},
    		\end{aligned}
    	\end{align*}
    	and ${\cal B}:\Xi \ni \nu_{0:T} \twoheadrightarrow {\cal B}(\nu_{0:T})\subseteq \Xi$ is defined by 
    	\begin{align*}
    		\mathcal{B}(\nu_{0:T}) := \bigg\{\tilde{\nu}_{0:T} \in \Xi \;\Big|\; \text{for every } t = 0, \dots, T-1, \; \tilde{\nu}_{t}(D_{t}(\nu_{t:T})) = 1\bigg\},
    	\end{align*}
    	where $D_{t}(\nu_{t:T}) := \big\{(s_t, a_t) \in S \times A\;|\;\max_{a_t' \in A} \widehat{J}_{t}(s_t, a_t', \nu_{t:T, S}) = \widehat{J}_{t}(s_t, a_t, \nu_{t:T, S})\big\}.$
    	\item [(ii)] $\Gamma : \Xi \ni \nu_{0:T}  \twoheadrightarrow \Gamma (\nu_{0:T}) \subseteq \Xi $ is defined by
    	\begin{align*}
    		\Gamma (\nu_{0:T}) := {\cal C}(\nu_{0:T}) \cap {\cal B}(\nu_{0:T}).
    	\end{align*}
    	We say $\nu_{0:T} \in \Xi$ is a {\it fixed point} of $\Gamma$ if $\nu_{0:T} \in \Gamma(\nu_{0:T})$.
    \end{enumerate}
\end{dfn}

\begin{pro} \label{pro:FixPntEq}
    Suppose that Assumption \ref{as:msr} is satisfied. Then the following hold:
    \begin{itemize}
    	\item [(i)] The correspondence $\Gamma$ given in Definition \ref{dfn:FixPntEq2}\;(ii) is non-empty and convex-valued.
    	\item [(ii)] The graph of $\Gamma$, i.e. $\mathrm{Gr}(\Gamma) := \{({\nu}_{0:T}, {\xi}_{0:T}) \in \Xi \times \Xi~|~{\xi}_{0:T} \in \Gamma({\nu}_{0:T})\}$, is closed.
    	\item [(iii)] There exists a fixed point $\nu_{0:T}^* \in \Xi$ of $\Gamma$, i.e., $\nu_{0:T}^* \in \Gamma(\nu_{0:T}^*)$.
    \end{itemize}
\end{pro}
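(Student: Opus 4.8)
\textbf{Proof plan for Proposition \ref{pro:FixPntEq}.}

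The plan is to verify the hypotheses of Kakutani's fixed point theorem (in the form of \cite[Corollary 17.55]{CharalambosKim2006infinite}) for the correspondence $\Gamma = \mathcal{C}\cap\mathcal{B}$ on the compact convex set $\Xi = (\mathcal{P}(S\times A))^T$, which is compact and convex because $S\times A$ is finite, so that $\mathcal{P}(S\times A)$ is a simplex in a Euclidean space. Thus (iii) will follow from (i) and (ii) together with the observation that $\Xi$ is a non-empty compact convex subset of a locally convex Hausdorff space.

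For (i), I would treat $\mathcal{C}$ and $\mathcal{B}$ separately and then intersect. For $\mathcal{C}(\nu_{0:T})$: non-emptiness follows by building $\tilde\nu_{0:T}$ forward in time, starting from $\tilde\nu_{0,S} = \mu^o$ with any disintegration, and at each step choosing the measurable selector $p_t^{\tilde\nu}(\cdot\,|\,s_t,a_t,\nu_{t:T,S})$ of the correspondence $\widehat{\mathfrak{P}}_t$ — which is non-empty by Remark \ref{rem:CalPTil} (this uses Berge's theorem applied within the proof of Lemma \ref{lem:dpp_berge}) — and pushing $\nu_t$ forward to define $\tilde\nu_{t+1,S}$, then picking an arbitrary disintegrating kernel to get $\tilde\nu_{t+1}\in\mathcal{P}(S\times A)$; the action-coordinate of $\tilde\nu_{0:T}$ is free, so convexity of $\mathcal{C}(\nu_{0:T})$ reduces to checking that a convex combination of two admissible flows remains admissible, which holds because $\widehat{\mathfrak{P}}_t$ is convex-valued (Remark \ref{rem:CalPTil}) and the consistency constraint $\tilde\nu_{t+1,S} = \int p_t^{\tilde\nu}(\cdot)\,\nu_t$ is affine in $(p_t^{\tilde\nu}, \tilde\nu_{t+1,S})$ — one takes the corresponding convex combination of the selectors. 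For $\mathcal{B}(\nu_{0:T})$: the set $D_t(\nu_{t:T})$ is non-empty for each $s_t$ since $A$ is finite (the max over $a_t'\in A$ is attained), and it is measurable in $s_t$ because $\widehat J_t$ is continuous (by Lemma \ref{lem:dpp_berge} / Berge); non-emptiness of $\mathcal{B}(\nu_{0:T})$ follows by placing, for each $t$, any measurable selector $s_t\mapsto a_t^*(s_t)\in D_t(\nu_{t:T})$ together with any marginal on $S$, and convexity is immediate since $\{\tilde\nu_t : \tilde\nu_t(D_t(\nu_{t:T}))=1\}$ is convex for each $t$. Finally, $\Gamma(\nu_{0:T})=\mathcal{C}(\nu_{0:T})\cap\mathcal{B}(\nu_{0:T})$ is non-empty because one can do both constructions simultaneously: build the $S$-marginals forward via the selectors of $\widehat{\mathfrak{P}}_t$ as in $\mathcal{C}$, and then in the disintegration step choose the kernel to be supported on $D_t$ (possible since $D_t(\nu_{t:T})$ has full $\tilde\nu_{t,S}$-measure once we demand it), and convexity of the intersection is inherited from convexity of each piece.

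For (ii), closedness of $\mathrm{Gr}(\Gamma)$: suppose $(\nu^n_{0:T},\xi^n_{0:T})\to(\nu_{0:T},\xi_{0:T})$ with $\xi^n_{0:T}\in\Gamma(\nu^n_{0:T})$; I must show $\xi_{0:T}\in\mathcal{C}(\nu_{0:T})\cap\mathcal{B}(\nu_{0:T})$. For the $\mathcal{B}$-part, the constraint $\xi^n_t(D_t(\nu^n_{t:T}))=1$, i.e. $\widehat J_t(s_t,a_t,\nu^n_{t:T,S}) = \max_{a'}\widehat J_t(s_t,a',\nu^n_{t:T,S})$ $\xi^n_t$-a.s., passes to the limit because on the finite set $S\times A$ this is just the statement that $\xi^n_t$ charges only maximizing actions, and the function $(s_t,a_t,\mu_{t:T})\mapsto \widehat J_t - \max_{a'}\widehat J_t$ is continuous (Lemma \ref{lem:dpp_berge}/Berge), so $\{(s_t,a_t,\mu_{t:T}) : \text{equality holds}\}$ is closed and weak convergence $\xi^n_t\rightharpoonup\xi_t$ together with $\nu^n_{t:T,S}\to\nu_{t:T,S}$ preserves the support condition. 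For the $\mathcal{C}$-part, the delicate point is the existence of the selector $p_t^{\xi}$ in the limit: from each $\xi^n_{0:T}\in\mathcal{C}(\nu^n_{0:T})$ extract a selector $p_t^{n}(\cdot\,|\,\cdot,\cdot,\nu^n_{t:T,S})\in\widehat{\mathfrak{P}}_t$, and use upper hemicontinuity and compact-valuedness of $\widehat{\mathfrak{P}}_t$ (Remark \ref{rem:CalPTil}) together with finiteness of $S\times A$ to pass to a convergent subsequence $p_t^n(\cdot\,|\,s_t,a_t,\nu^n_{t:T,S})\to p_t(\cdot\,|\,s_t,a_t)$ with limit in $\widehat{\mathfrak{P}}_t(s_t,a_t,\nu_{t:T,S})$; then the affine consistency relation $\xi^n_{t+1,S}(\cdot)=\int p_t^n(\cdot\,|\,s_t,a_t,\nu^n_{t:T,S})\,\nu^n_t(ds_t,da_t)$ passes to the limit since $S\times A$ is finite (the integral is a finite sum, and all terms converge), yielding $\xi_{t+1,S}(\cdot)=\int p_t(\cdot\,|\,s_t,a_t)\,\nu_t(ds_t,da_t)$ with the selector $p_t$ extended arbitrarily (measurably) off the relevant slice, so $\xi_{0:T}\in\mathcal{C}(\nu_{0:T})$.

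I expect the main obstacle to be the bookkeeping around the $\mathcal{C}$-part of the closed-graph property, specifically producing a limiting measurable selector lying in $\widehat{\mathfrak{P}}_t(\cdot,\cdot,\nu_{t:T,S})$ and checking the consistency constraint survives the limit; finiteness of $S$ and $A$ makes this genuinely manageable — weak convergence on $\mathcal{P}(S)$ is convergence of finitely many coordinates, correspondences are subsets of a simplex, and integrals are finite sums — but one must be careful that it is the $S$-marginals (not the full measures on $S\times A$) that are constrained by $\mathcal{C}$, and that the selector's dependence on the exogenous argument $\nu_{t:T,S}$ rather than on a free $\mu_{t:T}$ is handled consistently throughout. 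The non-emptiness of $\Gamma(\nu_{0:T})$ (as opposed to $\mathcal{C}$ and $\mathcal{B}$ individually) also requires the small observation that the forward $\mathcal{C}$-construction leaves enough freedom in each disintegration step to additionally satisfy the $\mathcal{B}$-support condition, which is true precisely because $D_t(\nu_{t:T})$ is non-empty for every $s_t$.
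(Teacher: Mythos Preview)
Your proposal is correct and follows essentially the same route as the paper: Kakutani on the compact convex simplex $\Xi$, with (i) handled by combining the forward $\mathcal{C}$-construction with the optimal-action disintegration for $\mathcal{B}$ (the paper does this by taking any $\tilde\nu\in\mathcal{C}(\nu)$ and replacing its disintegration by $\widehat\pi_t(\cdot\,|\,s_t,\nu_{t:T,S})$, which is exactly your ``do both constructions simultaneously''), convexity via convex-valuedness of $\widehat{\mathfrak{P}}_t$ and affinity of the consistency constraint, and the $\mathcal{C}$-part of (ii) by extracting subsequential limits of the selectors using upper hemicontinuity of $\widehat{\mathfrak{P}}_t$ and finiteness of $S\times A$.

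The one place you diverge is the $\mathcal{B}$-part of (ii). You argue directly from finiteness: if $(s,a)$ is an atom of the limit $\xi_t$, then eventually it is an atom of $\xi_t^n$, hence lies in $D_t(\nu^n_{t:T})$, and continuity of $\widehat J_t$ and of the finite max carries the argmax condition to the limit. This is valid and is the cleanest argument in the present finite setting. The paper instead adapts the more general machinery of Saldi et al., introducing the approximating closed sets $E_t^M=\{(s,a):\max_{a'}\widehat J_t\ge \widehat J_t+\varepsilon_M\}$, showing $\mathbf{1}_{E_t^M\cap D_t^n}\to 0$ continuously, and concluding via the Portmanteau theorem. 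That argument is designed to survive beyond finite $S\times A$, but in this paper's setting your shortcut is both correct and simpler.
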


Using a fixed point $\nu_{0:T}^* \in \Xi$ of $\Gamma$ together with the measurable selectors given in Lemma~\ref{lem:dpp_berge}, we obtain the following main theorem.
%construct a mean-field equilibrium.
\begin{thm} \label{thm:MFE}
	Let $(S,A,\mu^o,\mathfrak{P}_{0:T},r)$ be the mean-field Markov game under model uncertainty given in Definition \ref{dfn:corr}. Suppose that Assumption \ref{as:msr} is satisfied. %Let $\mu_0\in {\cal M}_p(X)$ be an initial state measure. 
	Then there exists a mean-field equilibrium $({\mu}_{0:T}^*,{\pi}_{0:T}^*,{p}_{0:T}^*)$ 
	of $(S,A,\mu^o,\mathfrak{P}_{0:T},r)$ 
	(see Definition \ref{dfn:robust_mfe}). 
\end{thm}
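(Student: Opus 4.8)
The plan is to extract a mean-field equilibrium from a fixed point $\nu_{0:T}^* \in \Xi$ of the correspondence $\Gamma$, whose existence is guaranteed by Proposition \ref{pro:FixPntEq}\,(iii). First I would set $\mu_{0:T}^* := \nu_{0:T,S}^*$, i.e. take the state-marginals of the fixed point as the candidate state-measure flow; by the defining property of $\mathcal{C}$ we immediately get $\mu_0^* = \nu_{0,S}^* = \mu^o$. Next, I would apply Proposition \ref{pro:dpp} with $\tilde\mu_{0:T} = \mu_{0:T}^*$ to obtain stochastic kernels $p_{0:T}^*$ (the minimizers/worst-case kernels) and a Markov policy $\pi_{0:T}^*$ (the maximizer), which by Proposition \ref{pro:dpp}\,(ii) are already optimal for $V(\mu_{0:T}^*)$ in the sense required by Definition \ref{dfn:robust_mfe}\,(i); that verifies condition (i) of the mean-field equilibrium directly. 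It remains to verify the consistency condition (ii), namely that $\mu_{t+1}^*(\cdot) = \int_{S\times A} p_t^*(\cdot\,|\,s_t,a_t,\mu_t^*)\,\pi_t^*(da_t|s_t)\,\mu_t^*(ds_t)$ for $t = 0,\dots,T-2$.

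The crux is to reconcile two things: the consistency relation built into $\mathcal{C}(\nu_{0:T}^*)$ uses the disintegrating kernel $\pi_t^{\nu^*}$ of $\nu_t^*$ and some selector $p_t^{\tilde\nu}$ valued in $\widehat{\mathfrak{P}}_t$, whereas Definition \ref{dfn:robust_mfe}\,(ii) demands the specific $\pi_t^*$ and $p_t^*$ coming from Proposition \ref{pro:dpp}. The key observation is the role of $\mathcal{B}(\nu_{0:T}^*)$: since $\nu_{0:T}^*$ is a fixed point, $\nu_t^*(D_t(\nu_{t:T}^*)) = 1$, so $\nu_t^*$ — and hence its disintegration $\pi_t^{\nu^*}(\cdot|s_t)$ for $\nu_{t,S}^*$-a.e. $s_t$ — is supported on the argmax set of $a_t' \mapsto \widehat{J}_t(s_t, a_t', \mu_{t:T}^*)$. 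Since $\pi_t^*$ from Proposition \ref{pro:dpp} is also a maximizer of $\widehat V_t(s_t,\mu_{t:T}^*) = \int_A \widehat J_t(s_t,a_t,\mu_{t:T}^*)\,\pi_t^*(da_t|s_t)$, both $\pi_t^{\nu^*}$ and $\pi_t^*$ integrate $\widehat J_t$ to the same value $\widehat V_t$, so one can replace $\pi_t^{\nu^*}$ by $\pi_t^*$ inside the objective without changing the value; what one actually needs, though, is that the \emph{measure flow} generated is the same. Here I would argue that one is free to \emph{choose} the disintegrating kernel to \emph{be} $\pi_t^*$ — more precisely, one should run the recursion the other way: define $\mu_0^* := \mu^o$, and for $t = 0,\dots,T-2$ inductively set $\mu_{t+1}^*(\cdot) := \int_{S\times A} p_t^*(\cdot|s_t,a_t,\mu_t^*)\,\pi_t^*(da_t|s_t)\,\mu_t^*(ds_t)$, and then show that $\nu_{0:T}^*$ with $\nu_t^*(ds_t,da_t) := \pi_t^*(da_t|s_t)\,\mu_t^*(ds_t)$ lies in $\Gamma(\nu_{0:T}^*)$ so that the fixed point we pick has exactly this structure — alternatively, show that from \emph{any} fixed point the quantities defined via Proposition \ref{pro:dpp} automatically satisfy (ii), using that $p_t^*$ takes values in $\widehat{\mathfrak{P}}_t$ (the set on which $\widehat J_t$ attains its infimum) just like the selector $p_t^{\tilde\nu}$ in the definition of $\mathcal{C}$, and that on $D_t$ the kernels $\pi_t^{\nu^*}$ and $\pi_t^*$ are interchangeable.

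Concretely, I would proceed as follows. Fix a fixed point $\nu_{0:T}^*$; write $\mu_{t}^* := \nu_{t,S}^*$ and $\pi_t^{\nu^*}$ for the disintegration. Invoke Proposition \ref{pro:dpp}(i) at $\tilde\mu_{0:T} = \mu_{0:T}^*$ to get $(p_{0:T}^*, \pi_{0:T}^*)$; these satisfy \eqref{eq:minimizer2_given}--\eqref{eq:maximizer_given}. From $\nu_{0:T}^* \in \mathcal{B}(\nu_{0:T}^*)$, for each $t$ and $\nu_{t,S}^*$-a.e. $s_t$ the support of $\pi_t^{\nu^*}(\cdot|s_t)$ lies in $D_t(\nu_{t:T}^*)$, so $\int_A \widehat J_t(s_t,a_t,\mu_{t:T}^*)\,\pi_t^{\nu^*}(da_t|s_t) = \max_{a_t'} \widehat J_t(s_t,a_t',\mu_{t:T}^*) = \widehat V_t(s_t,\mu_{t:T}^*) = \int_A \widehat J_t(s_t,a_t,\mu_{t:T}^*)\,\pi_t^*(da_t|s_t)$. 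The final step is to show the measure flow defined via $(\pi_{0:T}^*, p_{0:T}^*)$ coincides with $\mu_{0:T}^*$; for this I would note that $\nu_{0:T}^* \in \mathcal{C}(\nu_{0:T}^*)$ provides \emph{some} kernel realizing the flow update, and then argue that because the problem is set up so that the selectors from Proposition \ref{pro:dpp} can themselves be taken as the witnesses in the definition of $\mathcal{C}$ (both $p_t^*(\cdot|s_t,a_t,\mu_{t:T}^*)$ and $p_t^{\tilde\nu}$ lie in $\widehat{\mathfrak{P}}_t(s_t,a_t,\mu_{t:T}^*)$, and $\pi_t^*$ is a legitimate disintegrating-kernel choice on $D_t$), the fixed-point relation forces $\mu_{t+1}^*(\cdot) = \int_{S\times A} p_t^*(\cdot|s_t,a_t,\mu_t^*)\,\pi_t^*(da_t|s_t)\,\mu_t^*(ds_t)$. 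I expect the main obstacle to be precisely this identification step: carefully matching the abstract witnesses appearing in $\mathcal{C}$ and $\mathcal{B}$ with the concrete selectors $p_{0:T}^*, \pi_{0:T}^*$ of Proposition \ref{pro:dpp}, and handling the measure-zero ambiguity in disintegration so that the two candidate flows agree exactly rather than merely yielding equal objective values; this likely requires re-selecting the fixed point's disintegrating kernel to be $\pi_t^*$ (valid since they agree $\mu_t^*$-a.s. on the relevant support) and then reading off condition (ii) from the $\mathcal{C}$-membership of $\nu_{0:T}^*$.
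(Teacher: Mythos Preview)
Your overall strategy---extracting the equilibrium from a fixed point $\nu^*_{0:T}$ of $\Gamma$---is correct and is what the paper does. However, the direction you take in the final step has a genuine gap. You propose to take the selectors $(\pi^*_{0:T}, p^*_{0:T})$ produced by Proposition~\ref{pro:dpp} (which satisfy condition~(i) by construction) and then argue they also satisfy the consistency condition~(ii). But the membership $\nu^*_{0:T} \in \mathcal{C}(\nu^*_{0:T})$ only asserts the \emph{existence} of \emph{some} kernel $p^{\nu^*}_t(\cdot\,|\,s_t,a_t,\nu^*_{t:T,S}) \in \widehat{\mathfrak{P}}_t(s_t,a_t,\nu^*_{t:T,S})$ generating the flow; the particular selector $p^*_t$ from Proposition~\ref{pro:dpp} is in general a \emph{different} element of the (convex, possibly non-singleton) argmin set $\widehat{\mathfrak{P}}_t$, and can yield a different next-state marginal. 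Likewise, two policies both supported on $D_t(\nu^*_{t:T})$ need not coincide as measures---they only integrate $\widehat J_t$ to the same value---so your claim that $\pi^*_t$ and $\pi^{\nu^*}_t$ ``agree $\mu^*_t$-a.s.\ on the relevant support'' is unjustified. There is no mechanism by which the fixed-point relation \emph{forces} your specific $p^*_t$ and $\pi^*_t$ to realize the flow $\mu^*_{t+1}$.

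The paper resolves this by reversing the direction: it takes as the equilibrium pair precisely the witnesses coming from the fixed-point structure. Namely, it sets $\overline{p}^{\nu^*}_t := p^{\nu^*}_t$ for $t \le T-2$ (patched with the Lemma~\ref{lem:dpp_berge} selector $\widehat{p}_{T-1}$ at the terminal step) and $\overline{\pi}^{\nu^*}_t := \pi^{\nu^*}_t$ on the support of $\nu^*_{t,S}$ (patched with $\widehat{\pi}_t(\cdot\,|\,s_t,\nu^*_{t:T,S})$ off the support). These satisfy condition~(ii) \emph{by construction}: the flow relation is literally the content of $\nu^*_{0:T} \in \mathcal{C}(\nu^*_{0:T})$ together with the disintegration $\nu^*_t(ds_t,da_t) = \overline{\pi}^{\nu^*}_t(da_t|s_t)\,\nu^*_{t,S}(ds_t)$. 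What then needs to be checked is condition~(i), i.e.\ that these witnesses are themselves optimal. This is the content of Lemmas~\ref{lem:CnuOpt} and~\ref{lem:BnuOpt}: $\overline{p}^{\nu^*}_t$ attains $\widehat{J}_t$ because $p^{\nu^*}_t \in \widehat{\mathfrak{P}}_t$ by definition of $\mathcal{C}$, and $\overline{\pi}^{\nu^*}_t$ attains $\widehat{V}_t$ for \emph{every} $s_t$ because $\nu^*_{0:T} \in \mathcal{B}(\nu^*_{0:T})$ forces $\pi^{\nu^*}_t$ to be supported on the argmax where $\mu^*_t$ has mass, while the patch $\widehat{\pi}_t$ handles the null set. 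In short: start from the flow-consistent objects and prove optimality, not the reverse.
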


The proofs of Proposition \ref{pro:FixPntEq} and Theorem \ref{thm:MFE} can be found in Section \ref{sec:proof:MF_main_thm}.

%%%%%%%%%%%%%%%%%%%%%%%%%%%%%%%%%%%%%%%%%%%%%%%%%%%%%%%%%%%%%%%%%%%%%%%%%%%%%%%%%%%%%%%%%%%%%%%%%%%%%%%%%%%%%%
\subsection{Existence of approximate Markov-Nash equilibrium}\label{sec:main_MNE}

Fix a mean-field equilibrium $({\mu}_{0:T}^*,{\pi}_{0:T}^*,{p}_{0:T}^*)$ of the mean-field Markov game $(S,A,\mu^o,\mathfrak{P}_{0:T},r)$ (whose existence is ensured by Theorem~\ref{thm:MFE} under the assumption therein).

In the following, we demonstrate that under certain assumptions, the optimal policy $\pi^{*}_{0:T}$ of the mean-field equilibrium constitutes an approximate Markov-Nash equilibrium of the multi-agent Markov game given in Definitions \ref{dfn:corr_Nplayer} and \ref{dfn:corr_Nplayer2}. To that end, we first introduce some key notions related to worst-case measures describing the multi-agent Markov game for a given policy.

\begin{dfn}[Worst-case measures]\label{dfn:worst_msr}
	Let $(\pi^{(N)}_{0:T})_{N\in \mathbb{N}}\subseteq \Pi$ be a sequence of arbitrary Markov policies. For every $N\in \mathbb{N}$ and $i\in\{1,\dots,N\}$, we introduce the following.
	\begin{itemize}[leftmargin=3.em]
		\item [(i)] Denote~by 
		\[
			{\mathbb{P}}^{*|(N)}:=\mathbb{P}({ \mu_{0:T}^*,\pi_{0:T}^{(N)},p_{0:T}^{*}})\in {\cal Q}( \mu_{0:T}^*,\pi^{(N)}_{0:T}),
		\]
		%${\mathbb{P}}^{*|(N)}:=\mathbb{P}({ \mu_{0:T}^*,\pi_{0:T}^{(N)},p_{0:T}^{*}})\in {\cal Q}( \mu_{0:T}^*,\pi^{(N)}_{0:T})$ 
		where $\mathbb{P}({ \mu_{0:T}^*,\pi_{0:T}^{(N)},p_{0:T}^{*}})$ is given in~\eqref{eq:worst_arbit_pi}. Moreover, if $\pi^{(N)}_{0:T}=\pi^{*}_{0:T}$, %by setting $\overline{\pi}_{0:T}^{N|*}:=\overline{\pi}_{0:T,i}^{N|\circ }$ and  $\overline{p}^{N|*}_{0:T,i}:=\overline{p}^{N|\circ}_{0:T,i}$ (see (ii)), 
		%\begin{align}\label{eq:app_opt}
		%\quad \overline{\pi}_{0:T}^{*,N}:=\overline{\pi}_{0:T,i}^{\diamond,N},\quad \overline{p}^{*,N}_{0:T,i}:=\overline{p}^{\diamond,N}_{0:T,i},
		%\end{align}
		%so that it holds $\overline{\mathbb{P}}_i^{*,N}=\overline {\mathbb{P}}^N ({\mu^o, \overline \pi^{*,N}_{0:T}},\overline{p}^{*,N}_{0:T,i})$.
		then we denote by
		\begin{align*}%\label{eq:worst_MFG_Nash}
			%\begin{aligned}
				\quad{\mathbb{P}}^{*}:=\mathbb{P}^*( \mu_{0:T}^*)={\mathbb{P}}^{*|(N)}%=\mathbb{P}({ \mu_{0:T}^*,\pi_{0:T}^{\diamond,N},p_{0:T}^{*}})
				\in {\cal Q}( \mu_{0:T}^*,\pi^{*}_{0:T})%,\qquad \overline{\mathbb{P}}_i^{N|*}:=\overline{\mathbb{P}}_i^{N|\circ}%=\overline{\mathbb{P}}^N ({\mu^o,\overline \pi^{*,N}_{0:T}},\overline{p}^{*,N}_{0:T,i})%=\overline {\mathbb{P}}^N ({\mu^o, \overline \pi^{\diamond,N}_{0:T,i}},\overline{p}^{\diamond,N}_{0:T,i})
				%= \overline \mu^{o,N}\otimes \overline{ \mathbb{P}}^N_{(\overline \pi^{*,N}_0,\overline{p}^{*,N}_{0,i})}\otimes \cdots \otimes \overline{\mathbb{P}}^N_{(\overline \pi^{*,N}_{T-1},\overline{p}^{*,N}_{T-1,i})}
				%\in {\cal Q}^N(\mu^o,\overline \pi^{N|*}_{0:T})
			%\end{aligned}
		\end{align*}
		the worst-case measure for $V(\mu_{0:T}^*)$ (see Proposition~\ref{pro:dpp}~(ii)). %and the worst-case measure for $J^{N}_i(\mu^o,\overline \pi_{0:T}^{N|*})$, respectively.

		\item [(ii)] For every $t\in \{0,\dots,T-1\}$, denote by %a Markov policy $\overline{\pi}_{t,i}^{\diamond,N}$ and a stochastic kernel $\overline {p}^{\diamond,N}_{t,i}$~by
		\begin{align*}%\label{eq:app_opt_sub}
			\qquad\begin{aligned}
				\overline{\pi}_{t,i}^{N|(N)}&:S^N\ni \overline s_t^N\mapsto \overline \pi^{N|(N)}_{t,i}(d\overline a_t^N|\overline s_t^N):= \pi_t^{(N)}(da_t^i|s_t^i)\prod_{j=1,j\neq i}^N \pi_{t}^*(da_{t}^j|s_{t}^j),\\
				\overline{p}^{N|(N)}_{t,i}&:S^N \times A^N \ni (\overline s_t^N, \overline a_t^N)\mapsto 
				\overline{p}^{N|(N)}_{t,i}(d\overline s_{t+1}^N|\overline s_t^N,\overline a_t^N):=
				\widehat{p}_{(t,i,\overline \pi^{N|(N)}_{0:T,i})} (d\overline s_{t+1}^N|\overline s_t^N,\overline a_t^N)
			\end{aligned}
		\end{align*}
		a Markov policy and a stochastic kernel,\;respectively,\;where $\widehat{p}_{(t,i,\overline \pi^{N|(N)}_{0:T,i})} $ is 
		defined as in Lemma~\ref{lem:dpp_berge_nash} with respect to $\overline \pi^{N|(N)}_{0:T,i}$. Moreover, let $\overline{\mathbb{P}}_i^{N|(N)}\in{\cal Q}^N(\mu^o,\overline \pi_{0:T,i}^{N|(N)})$~be given by 
		%we denote by 
		\begin{align*}%\label{eq:worst_MFG_Nash_ptb}
			\qquad\quad\overline{\mathbb{P}}_i^{N|(N)}:=\overline {\mathbb{P}}^N ({\mu^o, \overline \pi^{N|(N)}_{0:T,i}},\overline{p}^{N|(N)}_{0:T,i}):= \overline \mu^{o,N}\otimes \overline{ \mathbb{P}}^{N}_{(\overline \pi^{N|(N)}_{0,i},\overline{p}^{N|(N)}_{0,i})}\otimes \cdots \otimes \overline{\mathbb{P}}^{N}_{(\overline \pi^{N|(N)}_{T-1,i},\overline{p}^{N|(N)}_{T-1,i})}%\in {\cal Q}^N(\mu^o,\overline \pi^{*,N}_{0:T})
		\end{align*}
		so that it is the worst-case measure for $J^{N}_i(\mu^o,\overline \pi_{0:T,i}^{N|(N)})$ given in \eqref{eq:worst_nash} (see Proposition~\ref{pro:dpp_Nash}).
		\end{itemize}
\end{dfn}

The notions introduced in the following, which elaborate on certain laws and stochastic kernels for the one-step reward function $r:S\times A\times S \times {\cal P}(S)\mapsto \mathbb{R}$ under the worst-case measures (described above), will be used in Propositions~\ref{pro:ConGNtGam_ptb} and~\ref{pro:MNE2}. 
%that lead to Theorem~\ref{thm:MNE0}, the main result of this section.
\begin{dfn}[Laws and kernels under worst-case measures]\label{dfn:joint_laws}
	Let $(\pi^{(N)}_{0:T})_{N\in \mathbb{N}}\subseteq \Pi$ be a sequence of arbitrary Markov~policies. For every $N\in \mathbb{N}$ and $i\in\{1,\dots,N\}$, we define the following: Let ${\mathbb{P}}^{*|(N)}\in {\cal Q}( \mu_{0:T}^*,\pi^{(N)}_{0:T})$, ${\mathbb{P}}^{*}\in {\cal Q}( \mu_{0:T}^*,\pi^{*}_{0:T})$, and $\overline {\mathbb{P}}_i^{N|(N)}\in{\cal Q}^N(\mu^o,\overline \pi_{0:T,i}^{N|(N)})$ be given in Definition~\ref{dfn:worst_msr}. Then for every $t=0,\dots,T-1$, 
	\begin{itemize}[leftmargin=3.em]
		\item [(i)] %denote by $L^{*|\circ}_{t}(\cdot)\in {\cal P}(S)$ and ${L}^{N|\circ}_{t,i}(\cdot)\in {\cal P}(S)$ the law of ${s}_t$ under ${\mathbb{P}}^{*|\circ}$ and of ${s}_t^{i}$ under $\overline {\mathbb{P}}_i^{N|\circ}$, respectively, where ${\mathbb{P}}^{*|\circ}$ and $\overline {\mathbb{P}}_i^{N|\circ}$ (depending on~$\pi^{\circ}_{0:T}$) are given in Definition \ref{dfn:worst_msr}\;(i)\;and\;(ii). %in \eqref{eq:mfg_ptb_msr} and \eqref{eq:worst_MFG_Nash_ptb}. 
		Denote by %${\cal M}_t^{\diamond,N},M_{t,i}^{\diamond,N}\in {\cal P}(S\times A)$ by 
		\begin{align*}%\label{eq:joint_sa_ptb}
				\mathbb{M}_t^{*|(N)}(ds_t,da_t)\in {\cal P}(S\times A),\qquad
				\mathbb{M}_{t,i}^{N|(N)}(ds_t,da_t)\in {\cal P}(S\times A)
		\end{align*}
		the {law} of $(s_t,a_t)$ under ${\mathbb{P}}^{*|(N)}$ and the law of $(s_t^i,a_t^i)$ under $\overline {\mathbb{P}}_i^{N|(N)}$, respectively, at time $t$. Moreover, if $\pi^{(N)}_{0:T}=\pi^{*}_{0:T}$, then %we use the following notation. %By setting ${L}_{0:T}^*:={L}^{*|\circ}_{0:T}$ % and ${L}^{*, N}_{0:T,i}:={L}^{\diamond,N}_{0:T,i}$ 
		%(see (i)), 
		for every $t=0,\dots,T-1$ set
		\begin{align*}%\label{eq:joint_sa}
			\quad \mathbb{M}_t^*(ds_t,da_t):=\mathbb{M}_t^{*|(N)}(ds_t,da_t)\in {\cal P}(S\times A)%\qquad %\pi_t^*(da_t|s_t) \mathcal{L}^{*}_{t}(ds_t)\in {\cal P}(S\times A),\\
			%{M}_{t,i}^{*, N}(ds_t,da_t):={M}_{t,i}^{\diamond,N}(ds_t,da_t)%\in {\cal P}(S\times A),
		\end{align*}
		to be the law of $(s_t,a_t)$ under ${\mathbb{P}}^{*}$.
		
        \item [(ii)] %denote by\footnote{Denote by $\overline{s}_t^{N,-i}:=(s_t^1,\dots,s_{t}^{i-1},s_t^{i+1},\dots,s_t^N)\in S^{N-1}$ the whole agents' state configurations except for the agent $i$'s state $s_t^i$ at time $t$. The same convention applies to $\overline{a}_t^{N,-i}\in A^{N-1}$.} $\overline {{L}}^{N|\circ,-i}_{t,i}:S\ni s \mapsto \overline {{L}}^{N|\circ,-i}_{t,i}(\cdot|s)\in{\cal P}(S^{N-1})$ a stochastic kernel on $S^{N-1}$ given $S$ so that $\overline {{L}}^{N|\circ,-i}_{t,i}(\cdot|s)$ is the {conditional} law of $\overline{s}_{t}^{N,-i}$ under $\overline {\mathbb{P}}_i^{N|\circ}$ given $s_t^i=s\in S$. Moreover, 
        Denote by
		\[
			\mathbb{K}^{N|(N)}_{t,i}:S\times A \ni (s_t,a_t)\mapsto \mathbb{K}^{N|(N)}_{t,i}(ds_{t+1},d\mu_t|s_t,a_t)\in {\cal P}(S\times {\cal P}(S))
		\]
		the stochastic kernel on $S\times {\cal P}(S)$ given $S\times A$ so that 
		%for every $(s_t,a_t)\in S\times A$, 
		%\footnote{As in Footnote \ref{fnote:perturb_i}, the same convention applies to $(\overline{s}_t^{N,-i},s)\in S^N$ and $(\overline{a}_t^{N,-i},a)\in A^N$.} 
		%\begin{align*}%\label{eq:joint_smu_ptb}
		%	\qquad \begin{aligned}
		%		{K}^{N|\circ}_{t,i}(ds_{t+1},d\mu_t|s_t,a_t) %&:= p^{N|\circ,i}_{t,i}\big(ds_{t+1}|(\overline{s}_t^{N,-i},s_t),(\overline{a}_t^{N,-i},a_t)\big) \overline {\pi}_t^{*,N-1}(d\overline{a}_t^{N,-i}|\overline{s}_t^{N,-i})\\
				%&\quad\quad \delta_{e^N((\overline{s}_t^{N,-i},s_t))}(d\mu_t)\overline {{L}}^{N|\circ,-i}_{t,i}(d\overline{s}_t^{N,-i}|s_t)
		%	\end{aligned}
		%\end{align*}
		$\mathbb{K}^{N|(N)}_{t,i}(ds_{t+1},d\mu_t|s_t,a_t) $ is the conditional law of $(s_{t+1}^{i},e^N(\overline{s}_t^{N}))$ given $(s_t^i,a_t^i)=(s_t,a_t)\in S\times A$ under $\overline{\mathbb{P}}^{N|(N)}_i$ at time $t$.
		\item [(iii)] Let $\mathbb{Q}_{t}^{*|(N)},\mathbb{Q}_{t}^{N|(N)} \in {\cal P}(S\times A\times S \times {\cal P}(S))$ be given by\footnote{Denote by $\delta_{\mu_t^*}\in {\cal P}(\mathcal{P}(S))$ the Dirac measure on ${\cal P}(S)$ at $\mu_t^*\in{\cal P}(S)$.}
		\begin{align*}%\label{eq:joint_law_4arg_ptb}
			\begin{aligned}
				{\mathbb{Q}}_{t}^{*|(N)}(ds_t, da_t, ds_{t+1}, d\mu_t) &:= p^*_t(ds_{t+1}|s_t,a_t,\mu_t)\;\delta_{\mu_t^*}(d\mu_t)\;\mathbb{M}_{t}^{*|(N)}(ds_t, da_t),\\
				\mathbb{Q}^{N|(N)}_{t,i}(ds_t, da_t, ds_{t+1}, d\mu_t) &:= \mathbb{K}^{N|(N)}_{t,i}(ds_{t+1}, d\mu_{t} | s_t, a_t)\; \mathbb{M}^{N|(N)}_{t,i}(ds_t, da_t),
			\end{aligned}
		\end{align*}
		%with the fixed mean field equilibrium $({\mu}_{0:T}^*,{\pi}_{0:T}^*,{p}_{0:T}^*)$, 
		so that 
		\begin{itemize}
			\item [$\cdot$] ${\mathbb{Q}}_{t}^{*|(N)}$ is the law of $(s_t,a_t,s_{t+1},\mu_t)$ under ${\mathbb{P}}^{*|(N)}$ at time $t$ with $\mu_{t}=\mu_{t}^*$.
			\item [$\cdot$]   $\mathbb{Q}^{N|(N)}_{t,i}$ is the law of $(s_t^i,a_t^i,s_{t+1}^i,e^{N}(\overline s_t^N))$ under $\overline {\mathbb{P}}_i^{N|(N)}$ at time $t$. 
		\end{itemize}
		Moreover, if $\pi_{0:T}^{(N)}=\pi^{*}_{0:T}$, we let $\mathbb{Q}_{t}^*\in {\cal P}(S\times A\times S \times {\cal P}(S))$ be given~by %$\mathbb{Q}_{t}^{*},\mathbb{Q}_{t}^{N} \in {\cal P}(S\times A\times S \times {\cal P}(S))$ be given~by 
		%\comment{Again, I don't think we use $\mathbb{Q}^{N}_{t}$ anywhere, delete? Otherwise also change to $\mathbb{Q}^{*, N}_{t}$?}
		\begin{align*}
				\mathbb{Q}_{t}^*:={\mathbb{Q}}_{t}^{*|(N)}
		\end{align*}
		so that it is the law of $(s_t,a_t,s_{t+1},\mu_t)$ under ${\mathbb{P}}^{*}$ at time $t$ with $\mu_{t}=\mu_{t}^*$.
	\end{itemize}
\end{dfn}
%\comment{I don't think we explicitely use those three (${L}^{*, N}_{t}, {M}_{t}^{*, N}, {K}^{*, N}_{t}$) anywhere(?), in the proofs below only really show the case where $\pi$ is not necessarily optimal, could just delete? I think we only need $\mathcal{K}^{*}_{t}$.}
In Remark \ref{rem:laws_kernels_explicit} (see Section \ref{sec:pro:ConGNtGam_ptb}), we provide explicit characterizations for the laws and stochastic kernels described in Definition \ref{dfn:joint_laws}.
\begin{rem}\label{rem:identical_structure}
	Let $(\pi^{(N)}_{0:T})_{N\in \mathbb{N}}\subseteq \Pi$ be a sequence of arbitrary Markov~policies. 
    %For every $N \in \mathbb{N}$, since $\mathfrak{P}^N_{0:T}$ given in Definition \ref{dfn:corr_Nplayer}\;(iii) is defined by the identical correspondence $\mathfrak{P}_{0:T}$ with respect to all the arguments $i\in \{1,\dots,N\}$, by the formulation of $\overline{\pi}_{0:T,i}^{N|(N)}$ given in Definition\;\ref{dfn:worst_msr}\;(ii), all the sequences of the laws $\mathbb{M}_{0:T,i}^{N|(N)}$ and kernels $\mathbb{K}^{N|(N)}_{0:T,i}$ (given in Definition\;\ref{dfn:joint_laws}\;(i),\;(ii)) are identical with respect to $i\in \{1,\dots,N\}$; hence, all the laws $\mathbb{Q}^{N|(N)}_{0:T,i}$ are identical as well.
    For every $N \in \mathbb{N}$, %the correspondence $\mathfrak{P}^N_{0:T}$ (as given in Definition~\ref{dfn:corr_Nplayer}~(iii)) is defined identically by $\mathfrak{P}_{0:T}$ for all $i \in \{1, \dots, N\}$. 
    by the definition of $\mathfrak{P}^N_{0:T}$ and $\overline{\pi}_{0:T,i}^{N|(N)}$ (given in  Definition\;\ref{dfn:corr_Nplayer}\;(iii) and Definition\;\ref{dfn:worst_msr}\;(ii), respectively), all of the laws $\mathbb{M}_{0:T,i}^{N|(N)}$ and kernels $\mathbb{K}^{N|(N)}_{0:T,i}$ (given in Definition~\ref{dfn:joint_laws}~(i),~(ii)) are identical for each $i \in \{1, \dots, N\}$. Consequently, all the laws $\mathbb{Q}^{N|(N)}_{0:T,i}$ are also identical.
	%all the worst-case measures $\{\overline{\mathbb{P}}_i^{N|\circ}\}_{i=1,\dots,N}$ for the $N$ agents are identical. %(up to permutation of the arguments~$i$). 
	Therefore, for every $t=0,\dots,T-1$ we simplify their notations as follows: for every $i=1,\dots,N$
	\begin{align*}%\label{eq:unif_law_kernel}
		\mathbb{M}_{t}^{N|(N)}:= \mathbb{M}_{t,i}^{N|(N)},\qquad \mathbb{K}^{N|(N)}_{t}:=\mathbb{K}^{N|(N)}_{t,i},\qquad \mathbb{Q}^{N|(N)}_t:= \mathbb{Q}^{N|(N)}_{t,i}.
	\end{align*}
	%\begin{itemize}[leftmargin=3.em]
	%	\item [$\cdot$] ${L}^{\diamond,N}_{t}:={L}^{\diamond,N}_{t,i}\subseteq {\cal P}(S)$ is the law of $s_{t}^i$ under $\overline {\mathbb{P}}_i^{\diamond,N}$,
	%	\item [$\cdot$] ${ M}_{t}^{\diamond,N}(ds_t,da_t):= {M}_{t,i}^{N}(ds_t,da_t)\in {\cal P}(S\times A)$ is the law of $(s_t^i,a_t^i)$ under $\overline {\mathbb{P}}_i^{\diamond,N}$,
	%	\item [$\cdot$] ${K}^{\diamond,N}_{t}:S\times A \ni (s_t,a_t)\mapsto {K}^{\diamond,N}_t(\cdot,\cdot|s_t,a_t):={K}^{\diamond,N}_{t,i}(\cdot,\cdot|s_t,a_t)\in {\cal P}(S\times {\cal P}(S))$ is the stochastic kernel on $S\times {\cal P}(S)$ given $S\times A$  so that ${K}_{t}^{\diamond,N}(\cdot,\cdot|s_t,a_t)$ is the conditional joint law of $(s_{t+1}^{i},e^N(\overline{s}_t^{N}))$ under $\overline{\mathbb{P}}^{\diamond,N}_i$ given $(s_t^i,a_t^i)=(s_t,a_t)\in S\times A$.
	%\end{itemize}
	%In analogy, if $\pi_{0:T}^{\diamond,N}=\pi^{*}_{0:T}$, for every $t=0,\dots,T-1$ we denote as follow: for every $i=1,\dots,N$,
	%\[
	%{L}^{*, N}_{t}:={L}^{*, N}_{t,i},\quad {M}_{t}^{*, N}:= {M}_{t,i}^{*, N},\quad {K}^{*, N}_{t}:={K}^{*, N}_{t,i}.
	%\]
\end{rem}

We impose the following conditions on the stochastic kernels $\mathbb{K}^{N,N}_{0:T}$ given in Remark\;\ref{rem:identical_structure}.
%The following assumption will allow us to link the expected rewards of the mean-field MDP and the multi-agent MDP, a connection that is crucial for proving Theorem~\ref{thm:MNE0}.
\begin{as}\label{as:weak_conv_ptb}
	For any $(\pi^{(N)}_{0:T})_{N\in \mathbb{N}}\subseteq \Pi$,  the following holds: %Denote for every $N\in\mathbb{N}$ by $K_{0:T}^{N|(N)}:=K_{0:T}^{N|\circ}$ the sequence of stochastic kernels under $\pi^\circ_{0:T}=\pi^{(N)}_{0:T}$ given in Remark \ref{rem:identical_structure}. 
	for every $t=0,\dots,T-1$ and $(s_t,a_t)\in S\times A$, as~$N\rightarrow\infty$,
	\[
	\quad \mathbb{K}^{N|(N)}_{t}(ds_{t+1}, d\mu_t | s_t,a_t) \rightharpoonup p^*_t(ds_{t+1}|s_t,a_t,\mu_t)\;\delta_{\mu_t^*}(d\mu_t),
	\]
	where $({\mu}_{0:T}^*,{\pi}_{0:T}^*,{p}_{0:T}^*)$ is the (fixed) mean-field equilibrium.
\end{as}

\begin{rem} \label{rem:weak_conv_ptb}
    Under the Nash Certainty Equivalence Principle, the decentralized game without model uncertainty can be reduced to a single-agent decision (see, e.g., \cite{Huang2006nashCertainty}). The state evolution of a representative agent should be consistent with the total population behavior. To extend this idea to our framework under model uncertainty, we need to ensure the following.
    
    %To provide an interpretation of $(S,A,\mu^o,\mathfrak{P}_{0:T},r)$ in relation to $(S,A,{\mu}^{o},\mathfrak{P}_{0:T}^N, r\;|\;N,\mathfrak{P}_{0:T})$, we need some consistency between the two. 
    From an agent's perspective in $(S,A,{\mu}^{o},\mathfrak{P}_{0:T}^N, r\;|\;N,\mathfrak{P}_{0:T})$, under `any' state and action,  
    her behavior should converge to the representative agent's behavior in $(S,A,\mu^o,\mathfrak{P}_{0:T},r)$. Additionally, the behavior of the rest of the population, modeled via the empirical distribution, should converge to the population's behavior in $(S,A,\mu^o,\mathfrak{P}_{0:T},r)$ (i.e., the state-measure flow $\mu_{0:T}^*$). For a sequence of arbitrary policies $(\pi^{(N)}_{0:T})_{N\in \mathbb{N}}\subseteq \Pi$, we observe that as $N \to \infty$, the influence of an individual agent's state and action on the overall population becomes increasingly negligible. Since every other agent follows the mean field equilibrium policy $\pi^*_{0:T}$ (see Definition \ref{dfn:worst_msr}\;(ii)), the overall state distribution in $(S,A,{\mu}^{o},\mathfrak{P}_{0:T}^N, r\;|\;N,\mathfrak{P}_{0:T})$ should still converge to the state distribution in the mean-field equilibrium, regardless of the state and action the one individual agent might be~in. %\comment{Perhaps, we can reduce/make sentences in concise...?}
    
    %\vspace{0.5em}
    If the agent also chooses the mean-field equilibrium policy, i.e., $\pi^{(N)}_{0:T} := \pi^*_{0:T}$, we need to ensure that the state evolution of a representative agent is consistent with the total population behavior as $N \to \infty$. By the definition of the mean-field equilibrium given in Definition~\ref{dfn:robust_mfe}~(ii), we obtain such consistency exactly there. Hence, Assumption~\ref{as:weak_conv_ptb} guarantees that as $N$ grows larger, both the individual and total population behaviors in $(S,A,{\mu}^{o},\mathfrak{P}_{0:T}^N, r\;|\;N,\mathfrak{P}_{0:T})$ converge to a state under which the Nash Certainty Equivalence Principle will hold.
\end{rem}

%Finally, let us introduce some measures %$\mathbb{Q}_{t}^{*|(N)},\mathbb{Q}_{t}^{N|(N)} \in {\cal P}(S\times A\times S \times {\cal P}(S))$ 
%in ${\cal P}(S\times A\times S \times {\cal P}(S))$ that describe the worst-case measures for the one-step reward function, which will be useful in the following propositions.
%Let $(\pi^{(N)}_{0:T})_{N \in \mathbb{N}}\subseteq \Pi$ be an arbitrary sequence. 
%For every $N \in \N$, let %$({L}_{0:T}^N)_{N\in \mathbb{N}}$,  
%${M}_{0:T}^{N|(N)}$, ${ K}^{N|(N)}_{0:T}$, and ${\cal M}_{0:T}^{*|(N)}$ be given in Remark \ref{rem:identical_structure}. For every $t=0,\dots,T-1$, %and $N\in \mathbb{N}$, 

%representing the joint law of $(s_t, a_t, s_{t+1}, \mu_t)$ and $(s_t^i, a_t^i, s_{t+1}^i, e^N(\overline{s}_t^N))$, respectively. Here, for all $t = 0, \dots, T-1$, $(s_t, a_t, s_{t+1}) \sim \mathbb{P}^* \in {\cal Q}(\mu^*_{0:T}, \pi^*_{0:T})$, $\mu_t \sim \delta_{\mu_t^*}$ (see \eqref{eq:worst_MFG}), and $(s_t^i, a_t^i, s_{t+1}^i, e^N(\overline{s}_t^N)) \sim \overline{\mathbb{P}}_i^{*,N} \in {\cal Q}^N(\mu^o, \overline{\pi}^{*,N}_{0:T})$ (see \eqref{eq:worst_MFG_Nash}) for every $i = 1, \dots, N$.

Proposition~\ref{pro:ConGNtGam_ptb} allows us to connect the expected one-step rewards of $(S,A,\mu^o,\mathfrak{P}_{0:T},r)$ and $(S,A,{\mu}^{o},\mathfrak{P}_{0:T}^N, r\;|\;N,\mathfrak{P}_{0:T})$ by using the laws and kernels given in Definition \ref{dfn:joint_laws} and Remark~\ref{rem:identical_structure}. 
\begin{pro} \label{pro:ConGNtGam_ptb}
	Suppose that Assumptions \ref{as:msr} and \ref{as:weak_conv_ptb} are satisfied. Let $(\pi^{(N)}_{0:T})_{N\in \mathbb{N}}\subseteq \Pi$~be a sequence of arbitrary Markov~policies. Moreover, for every $N\in \mathbb{N}$, let $\mathbb{Q}_{0:T}^{*|(N)},\mathbb{Q}^{N|(N)}_{0:T} \subseteq {\cal P}(S\times A\times S \times {\cal P}(S))$ be given in Definition \ref{dfn:joint_laws}\;(iii) and Remark \ref{rem:identical_structure}, respectively. Then for every $t=0,\dots,T-1$, the following holds: for every $g\in C_b(S\times A \times S \times {\cal P}(S))$ %as $N\rightarrow \infty$,
	\begin{align}\label{eq:conv_4arg_ptb}
		\lim_{N \to \infty} \left| \mathbb{E}^{{\mathbb{Q}}_{t}^{N|(N)}}\big[ g(s_t,a_t,s_{t+1},\mu_t)  \big]-\mathbb{E}^{{\mathbb{Q}}_{t}^{*|(N)}}\big[ g(s_t,a_t,s_{t+1},\mu_t)  \big] \right| = 0.
	\end{align} 
\end{pro}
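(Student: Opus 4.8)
The plan is to reduce the claimed convergence \eqref{eq:conv_4arg_ptb} to two ingredients: (a) the asymptotic agreement, as $N\to\infty$, of the time-$t$ laws $\mathbb{M}_t^{N|(N)}$ and $\mathbb{M}_t^{*|(N)}$ on the finite set $S\times A$; and (b) Assumption~\ref{as:weak_conv_ptb}, which controls the one-step kernels. Throughout I would exploit that $S$ and $A$ are finite (Assumption~\ref{as:msr}~(i)): probability measures on $S$, on $S\times A$, etc.\ are finite-dimensional vectors, weak convergence on these spaces coincides with convergence in total variation $\|\cdot\|_{TV}$, and every stochastic kernel between such spaces is a $\|\cdot\|_{TV}$-contraction, even on signed measures.

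First I would establish, by induction on $t$, that $\lim_{N\to\infty}\|\mathbb{M}_t^{N|(N)}-\mathbb{M}_t^{*|(N)}\|_{TV}=0$. The base case $t=0$ is immediate: both laws equal $\pi_0^{(N)}(da_0|s_0)\,\mu^o(ds_0)$, since $\mu_0^*=\mu^o$ and, in both models, the initial state has law $\mu^o$ and the initial action is drawn from $\pi_0^{(N)}$. For the step from $t$ to $t+1$, I would use the explicit descriptions of $\mathbb{M}_t^{N|(N)}$, $\mathbb{M}_t^{*|(N)}$ and $\mathbb{K}_t^{N|(N)}$ from Remark~\ref{rem:laws_kernels_explicit} to write each of the two time-$(t+1)$ laws in the one-step form ``state transition, followed by the common action kernel $\pi_{t+1}^{(N)}$'': on the mean-field side the state transition is $p_t^*(\cdot | s_t,a_t,\mu_t^*)$, while on the $N$-agent side it is $q_t^N(\cdot | s_t,a_t)$, the $S$-marginal of $\mathbb{K}_t^{N|(N)}(\cdot | s_t,a_t)$, which is precisely the conditional law of $s_{t+1}^i$ given $(s_t^i,a_t^i)$ under $\overline{\mathbb{P}}_i^{N|(N)}$. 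Dropping the common (contracting) action kernel and splitting the difference of the two state marginals at time $t+1$ as
\[
\int q_t^N(\cdot | s_t,a_t)\,\big(\mathbb{M}_t^{N|(N)}-\mathbb{M}_t^{*|(N)}\big)(ds_t,da_t)\;+\;\int\big(q_t^N(\cdot | s_t,a_t)-p_t^*(\cdot | s_t,a_t,\mu_t^*)\big)\,\mathbb{M}_t^{*|(N)}(ds_t,da_t),
\]
the first term has $\|\cdot\|_{TV}\le\|\mathbb{M}_t^{N|(N)}-\mathbb{M}_t^{*|(N)}\|_{TV}\to0$ by the induction hypothesis, and the second has $\|\cdot\|_{TV}\le\max_{(s_t,a_t)\in S\times A}\|q_t^N(\cdot | s_t,a_t)-p_t^*(\cdot | s_t,a_t,\mu_t^*)\|_{TV}$, which tends to $0$: by Assumption~\ref{as:weak_conv_ptb} and continuity of the projection $S\times\mathcal{P}(S)\to S$, $q_t^N(\cdot | s_t,a_t)$ converges weakly --- hence, $S$ being finite, in total variation --- to $p_t^*(\cdot | s_t,a_t,\mu_t^*)$ for each of the finitely many $(s_t,a_t)$.

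With the marginal estimate in hand, I would conclude as follows. Using the product forms of $\mathbb{Q}_t^{N|(N)}$ and $\mathbb{Q}_t^{*|(N)}$ (Definition~\ref{dfn:joint_laws}~(iii)), set $\phi_t^N(s,a):=\int_{S\times\mathcal{P}(S)}g(s,a,s',\mu)\,\mathbb{K}_t^{N|(N)}(ds',d\mu | s,a)$ and $\phi_t(s,a):=\int_S g(s,a,s',\mu_t^*)\,p_t^*(ds' | s,a,\mu_t^*)$, both bounded by $\|g\|_\infty$, so that
\[
\mathbb{E}^{\mathbb{Q}_t^{N|(N)}}[g]-\mathbb{E}^{\mathbb{Q}_t^{*|(N)}}[g]=\int(\phi_t^N-\phi_t)\,d\mathbb{M}_t^{N|(N)}+\Big(\int\phi_t\,d\mathbb{M}_t^{N|(N)}-\int\phi_t\,d\mathbb{M}_t^{*|(N)}\Big).
\]
The second summand is at most $\|g\|_\infty\,\|\mathbb{M}_t^{N|(N)}-\mathbb{M}_t^{*|(N)}\|_{TV}\to0$ by the first step. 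For the first summand, for each fixed $(s,a)$ the function $(s',\mu)\mapsto g(s,a,s',\mu)$ belongs to $C_b(S\times\mathcal{P}(S))$, so Assumption~\ref{as:weak_conv_ptb} yields $\phi_t^N(s,a)\to\phi_t(s,a)$; since $S\times A$ is finite this convergence is uniform, hence $\int(\phi_t^N-\phi_t)\,d\mathbb{M}_t^{N|(N)}\to0$. Combining the two estimates gives \eqref{eq:conv_4arg_ptb}.

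I expect the main obstacle to be the bookkeeping in the induction step, namely justifying rigorously that the one-dimensional time-$(t+1)$ marginal of agent $i$'s process under the interacting (hence non-Markovian) measure $\overline{\mathbb{P}}_i^{N|(N)}$ factors as ``push forward by the effective one-step kernel $q_t^N$, then by $\pi_{t+1}^{(N)}$''. This is exactly where one must carefully use the explicit formulas of Remark~\ref{rem:laws_kernels_explicit} together with the fact that $\mathbb{K}_t^{N|(N)}$ is, by construction, the conditional law of $(s_{t+1}^i,e^N(\overline{s}_t^N))$ given $(s_t^i,a_t^i)$. Everything else reduces to elementary finite-space estimates.
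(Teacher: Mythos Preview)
Your proposal is correct and follows essentially the same approach as the paper: an induction on $t$ showing that the time-$t$ marginals on $S\times A$ (equivalently on $S$) agree asymptotically, combined with Assumption~\ref{as:weak_conv_ptb} for the one-step kernels, via the standard ``split the difference into marginal part plus kernel part'' argument. The only cosmetic difference is that the paper packages this splitting into an auxiliary lemma (Lemma~\ref{lem:MarCndCon}) stated in terms of weak convergence and applies it twice, whereas you carry out the same estimates inline using the total-variation norm, which is legitimate here precisely because $S$ and $A$ are finite.
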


%introduced in Definitions~\ref{dfn:joint_laws} and \ref{dfn:joint_laws_ptb}. 
As a consequence, we obtain the following.

\begin{pro}\label{pro:MNE2}
	Suppose that Assumptions \ref{as:msr} and \ref{as:weak_conv_ptb} are satisfied. Let $(\pi^{(N)}_{0:T})_{N\in \mathbb{N}}\subseteq \Pi$ be a sequence of arbitrary Markov~policies. For every $N\in \mathbb{N}$, let $J_1^N(\mu^o,\overline{\pi}_{0:T,1}^{N|(N)})$ be the worst-case objective function of the agent 1 under $(\mu^o,\overline{\pi}_{0:T,1}^{N|(N)})$ (see Definition \ref{dfn:worst_msr}\;(ii)) and let ${\mathbb{P}}^{*|(N)}\in  {\cal Q}( \mu_{0:T}^*,\pi^{(N)}_{0:T})$ be given in Definition \ref{dfn:worst_msr}\;(i). Then it holds that 
	\begin{align*}
		\lim_{N \to \infty} \Bigg|J_1^N(\mu^o,\overline{\pi}_{0:T,1}^{N|(N)})- \mathbb{E}^{\mathbb{P}^{*|(N)}} 	\Bigg[\sum_{t=0}^{T-1}r(s_t,a_t,s_{t+1}, \mu_t^*)
		\Bigg] \Bigg|=0.
	\end{align*}
\end{pro}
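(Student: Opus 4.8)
\textbf{Proof proposal for Proposition~\ref{pro:MNE2}.}
The plan is to express both quantities as sums over time of expected one-step rewards and then apply Proposition~\ref{pro:ConGNtGam_ptb} term by term. First I would invoke Proposition~\ref{pro:dpp_Nash} to identify $J_1^N(\mu^o,\overline{\pi}_{0:T,1}^{N|(N)})$ with the expectation of $\sum_{t=0}^{T-1} r(s_t^1,a_t^1,s_{t+1}^1,e^N(\overline s_t^N))$ under the worst-case measure $\overline{\mathbb{P}}_1^{N|(N)}$ described in Definition~\ref{dfn:worst_msr}~(ii). By the definition of the law $\mathbb{Q}_{t,1}^{N|(N)} = \mathbb{Q}_t^{N|(N)}$ in Definition~\ref{dfn:joint_laws}~(iii) and Remark~\ref{rem:identical_structure}, each summand equals $\mathbb{E}^{\mathbb{Q}_t^{N|(N)}}[r(s_t,a_t,s_{t+1},\mu_t)]$, so
\[
J_1^N(\mu^o,\overline{\pi}_{0:T,1}^{N|(N)}) = \sum_{t=0}^{T-1} \mathbb{E}^{\mathbb{Q}_t^{N|(N)}}\big[r(s_t,a_t,s_{t+1},\mu_t)\big].
\]
Symmetrically, using the structure of $\mathbb{P}^{*|(N)}$ and the definition of $\mathbb{Q}_t^{*|(N)}$ (which forces $\mu_t = \mu_t^*$ via the Dirac mass $\delta_{\mu_t^*}$), I would write
\[
\mathbb{E}^{\mathbb{P}^{*|(N)}}\Big[\sum_{t=0}^{T-1} r(s_t,a_t,s_{t+1},\mu_t^*)\Big] = \sum_{t=0}^{T-1} \mathbb{E}^{\mathbb{Q}_t^{*|(N)}}\big[r(s_t,a_t,s_{t+1},\mu_t)\big].
\]

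With both sides decomposed this way, the difference inside the absolute value is bounded by $\sum_{t=0}^{T-1} \big| \mathbb{E}^{\mathbb{Q}_t^{N|(N)}}[r(s_t,a_t,s_{t+1},\mu_t)] - \mathbb{E}^{\mathbb{Q}_t^{*|(N)}}[r(s_t,a_t,s_{t+1},\mu_t)] \big|$. Since $T$ is a fixed finite horizon, it suffices to show each of these $T$ terms vanishes as $N\to\infty$. Here I would apply Proposition~\ref{pro:ConGNtGam_ptb} with the test function $g = r$; the only point requiring care is that $r$ must lie in $C_b(S\times A\times S\times \mathcal{P}(S))$. Under Assumption~\ref{as:msr}~(i), $S$ and $A$ are finite, so continuity of $r$ in its first three (discrete) arguments is automatic, and Assumption~\ref{as:msr}~(iii) gives Lipschitz continuity in $\mathcal{P}(S)$ together with boundedness; hence $r \in C_b(S\times A\times S\times \mathcal{P}(S))$ and Proposition~\ref{pro:ConGNtGam_ptb} applies directly. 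Passing the limit inside the finite sum then yields the claim.

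The main obstacle is really the bookkeeping in the first paragraph: one must verify that the marginal laws appearing in Proposition~\ref{pro:dpp_Nash} and in Definition~\ref{dfn:robust_mfe}~(i) genuinely coincide with the $\mathbb{Q}_t^{N|(N)}$ and $\mathbb{Q}_t^{*|(N)}$ of Definition~\ref{dfn:joint_laws}~(iii). For the $N$-agent side this is exactly the content of Remark~\ref{rem:identical_structure} (the laws $\mathbb{Q}_{t,i}^{N|(N)}$ are independent of $i$ because every agent's kernel has the product form built from the same $\mathfrak{P}_t$ and every other agent plays $\pi_t^*$), combined with the tower property applied to the disintegration $\overline{\mathbb{P}}_1^{N|(N)} = \overline\mu^{o,N}\otimes \overline{\mathbb{P}}^N_{(\cdots)}\otimes\cdots$. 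For the mean-field side, the product structure $\mathbb{P}^{*|(N)} = \mu_0^*\otimes \mathbb{P}_{(\mu_0^*,\pi_0^{(N)},p_0^*)}\otimes\cdots$ makes the one-step law of $(s_t,a_t,s_{t+1})$ under $\mathbb{P}^{*|(N)}$ equal to $p_t^*(ds_{t+1}|s_t,a_t,\mu_t^*)\pi_t^{(N)}(da_t|s_t)\mathbb{M}_t^{*|(N)}(ds_t)$-ish, which is precisely how $\mathbb{Q}_t^{*|(N)}$ is defined; I would note this identification is already recorded in Definition~\ref{dfn:joint_laws}~(iii) ("$\mathbb{Q}_t^{*|(N)}$ is the law of $(s_t,a_t,s_{t+1},\mu_t)$ under $\mathbb{P}^{*|(N)}$ with $\mu_t=\mu_t^*$"), so no new computation is needed. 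Once these identifications are in place, everything else is the elementary finite-sum limit argument sketched above.
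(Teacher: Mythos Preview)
Your proposal is correct and follows essentially the same approach as the paper: decompose both quantities into sums of one-step expected rewards via the laws $\mathbb{Q}_t^{N|(N)}$ and $\mathbb{Q}_t^{*|(N)}$, then apply Proposition~\ref{pro:ConGNtGam_ptb} termwise with $g=r\in C_b(S\times A\times S\times\mathcal{P}(S))$ and conclude by the triangle inequality over the finite horizon. The paper's proof is slightly terser in that it does not explicitly cite Proposition~\ref{pro:dpp_Nash} (the identification of $J_1^N$ with the expectation under $\overline{\mathbb{P}}_1^{N|(N)}$ is taken as built into Definition~\ref{dfn:worst_msr}~(ii)), but your more detailed bookkeeping is entirely sound.
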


The proofs of Proposition \ref{pro:ConGNtGam_ptb} and \ref{pro:MNE2} are presented in Section \ref{sec:pro:ConGNtGam_ptb}.

\begin{rem}\label{rem:MNE2}
	Since $V(\mu_{0:T}^*)= \mathbb{E}^{\mathbb{P}^{*}(\mu_{0:T}^*)} [\sum_{t=0}^{T-1}r(s_t,a_t,s_{t+1}, \mu_t^*)]$ (see Proposition \ref{pro:dpp}\;(ii)), 
	\[
	\lim_{N \to \infty} J_1^N(\mu^o,\overline{\pi}_{0:T}^{N|*})=V(\mu_{0:T}^*)
	\]
	follows directly from Proposition \ref{pro:MNE2} (with $\overline{\pi}_{0:T}^{N|*}$ defined in \eqref{eq:MN_approx}).
\end{rem}

%Using Proposition~\ref{pro:ConGNtGam_ptb}, we can link the expected rewards of the mean-field game and the $N$-player game in both scenarios mentioned earlier. %: when all players use the mean-field equilibrium policy and when one player deviates to a different policy. 
Combining Propositions \ref{pro:ConGNtGam_ptb} and \ref{pro:MNE2} with the optimality of $\pi^{*}_{0:T}$ in the mean-field equilibrium (see Definition \ref{dfn:robust_mfe}\;(i)), %thus yielding an expected reward that is greater than or equal to the expected reward obtainable through any arbitrary policy $\pi^{\diamond, N}_{0:T} \subseteq \Pi$, %(see Definition~\ref{dfn:joint_laws_ptb}), 
we conclude in Theorem \ref{thm:MNE0} that the Markov policy $\pi^{*}_{0:T}$ forms an approximate Markov-Nash equilibrium. The corresponding proof can be found in Section \ref{sec:thm:MNE0}.

%This result is presented in our second main theorem below.
\begin{thm} \label{thm:MNE0}
	Suppose that Assumptions \ref{as:msr}\;and\;\ref{as:weak_conv_ptb} are satisfied.\;Then\;for\;any\;given\;$\varepsilon > 0$, there exists $N(\varepsilon)\in \mathbb{N}$ such that for each $N \geq N(\varepsilon)$, $(\pi^*_{0:T},\cdots,\pi^*_{0:T})$ is an $\varepsilon$-Markov-Nash equilibrium of $(S,A,{\mu}^{o},\mathfrak{P}_{0:T}^N, r\;|\;N,\mathfrak{P}_{0:T})$ (see Definition~\ref{dfn:MNE}), i.e.,  $\overline {\pi}_{0:T}^{N|*}\in \Pi^N$ defined for every $t=0,\dots,T-1$ by
	\begin{align}\label{eq:MN_approx}
	\mbox{$\overline{\pi}_{t}^{N|*}:S^N\ni \overline s_t^N\mapsto \overline \pi^{N|*}_{t}(d\overline a_t^N|\overline s_t^N):=\prod_{j=1}^N \pi_{t}^*(da_{t}^j|s_{t}^j)$}
	\end{align}
	satisfies that for every $i = 1, \dots, N$, $J^{N}_i(\mu^o,\overline \pi_{0:T}^{N|*})  + \varepsilon \geq \sup_{\pi_{0:T}\in \Pi } J^{N}_i(\mu^o,(\overline \pi_{0:T}^{N|*,-i},\pi_{0:T})).$
	%.
\end{thm}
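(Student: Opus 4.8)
The plan is to establish the $\varepsilon$-Markov-Nash property by showing that no agent can improve her worst-case objective by more than $\varepsilon$ through a unilateral deviation, and that this follows from combining the optimality of $\pi^*_{0:T}$ in the mean-field equilibrium with the asymptotic equivalence results of Propositions \ref{pro:MNE2} and \ref{pro:ConGNtGam_ptb}. First, by the symmetry of the $N$-agent game (all agents share the same dynamics and reward structure, and all agents other than the deviating one use $\pi^*_{0:T}$), it suffices to treat the case $i=1$: more precisely, $J^N_i(\mu^o,(\overline\pi_{0:T}^{N|*,-i},\pi_{0:T}))$ has the same value for all $i$ when the deviating policy $\pi_{0:T}$ is fixed, so I would fix $i=1$ throughout and write the deviating policy as $\pi^{(N)}_{0:T}$, where for each $N$ the policy $\pi^{(N)}_{0:T} \in \Pi$ is chosen to be a near-maximizer of $\pi_{0:T}\mapsto J^N_1(\mu^o,(\overline\pi_{0:T}^{N|*,-1},\pi_{0:T}))$, say within $1/N$ of the supremum. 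Note $\overline\pi^{N|(N)}_{0:T,1}=(\overline\pi^{N|*,-1}_{0:T},\pi^{(N)}_{0:T})$ in the notation of Definition \ref{dfn:worst_msr}\;(ii).

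Next I would run the following chain of (in)equalities, valid up to errors that vanish as $N\to\infty$. By Proposition \ref{pro:MNE2} applied to the sequence $(\pi^{(N)}_{0:T})_{N\in\mathbb{N}}$,
\[
J^N_1(\mu^o,\overline\pi_{0:T,1}^{N|(N)}) = \mathbb{E}^{\mathbb{P}^{*|(N)}}\Big[\sum_{t=0}^{T-1} r(s_t,a_t,s_{t+1},\mu_t^*)\Big] + o(1),
\]
where $\mathbb{P}^{*|(N)}=\mathbb{P}(\mu_{0:T}^*,\pi_{0:T}^{(N)},p_{0:T}^*)\in\mathcal{Q}(\mu_{0:T}^*,\pi^{(N)}_{0:T})$. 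Now I invoke the optimality of $(\pi^*_{0:T},p^*_{0:T})$ for $V(\mu_{0:T}^*)$ from Definition \ref{dfn:robust_mfe}\;(i) (equivalently Proposition \ref{pro:dpp}\;(ii)): the middle equality there states that $V(\mu_{0:T}^*)=\sup_{\pi_{0:T}\in\Pi}\mathbb{E}^{\mathbb{P}(\mu_{0:T}^*,\pi_{0:T},p_{0:T}^*)}[\sum_t r(s_t,a_t,s_{t+1},\mu_t^*)]$, so in particular the displayed expectation with $\pi^{(N)}_{0:T}$ is $\le V(\mu_{0:T}^*)$. On the other hand, applying Proposition \ref{pro:MNE2} to the constant sequence $\pi^{(N)}_{0:T}\equiv\pi^*_{0:T}$ (or invoking Remark \ref{rem:MNE2} directly) gives $J^N_1(\mu^o,\overline\pi_{0:T}^{N|*})=V(\mu_{0:T}^*)+o(1)$. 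Chaining these,
\[
\sup_{\pi_{0:T}\in\Pi} J^N_1(\mu^o,(\overline\pi_{0:T}^{N|*,-1},\pi_{0:T})) \le J^N_1(\mu^o,\overline\pi_{0:T,1}^{N|(N)}) + \tfrac1N \le V(\mu_{0:T}^*) + o(1) = J^N_1(\mu^o,\overline\pi_{0:T}^{N|*}) + o(1).
\]
Hence for $N$ large enough the right-hand side is at most $J^N_1(\mu^o,\overline\pi_{0:T}^{N|*})+\varepsilon$; by symmetry the same bound holds for every $i$, which is exactly the $\varepsilon$-Markov-Nash condition.

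The one subtlety I would be careful about — and which I expect to be the main obstacle — is the uniformity of the $o(1)$ terms over the choice of deviating policies. Proposition \ref{pro:MNE2} is stated for an arbitrary sequence $(\pi^{(N)}_{0:T})_{N\in\mathbb{N}}$, so the convergence there is automatically along whatever sequence of near-optimal deviations I pick; this is why it is essential to phrase the argument in terms of a sequence of near-maximizers rather than trying to get a bound uniform over all $\pi_{0:T}$ simultaneously for fixed $N$. Concretely, if the conclusion failed, there would be $\varepsilon_0>0$ and a subsequence $N_k\to\infty$ together with deviating policies $\pi^{(k)}_{0:T}$ with $J^{N_k}_1(\mu^o,(\overline\pi_{0:T}^{N_k|*,-1},\pi^{(k)}_{0:T})) > J^{N_k}_1(\mu^o,\overline\pi_{0:T}^{N_k|*}) + \varepsilon_0$; extending $(\pi^{(k)}_{0:T})$ to a full sequence (padding with $\pi^*_{0:T}$ at indices outside $\{N_k\}$) and applying Propositions \ref{pro:MNE2} and \ref{rem:MNE2} along it contradicts the two-sided estimate above. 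I would present the argument in this contradiction form to make the handling of the sequence-dependent error terms transparent, and I would remark explicitly that Assumption \ref{as:weak_conv_ptb} (needed to invoke Proposition \ref{pro:MNE2}) holds for \emph{any} sequence of policies, which is precisely what licenses this step.
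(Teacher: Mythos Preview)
Your proposal is correct and follows essentially the same approach as the paper's proof: reduce to $i=1$ by symmetry, pick a sequence of near-optimal deviations $\pi^{(N)}_{0:T}$, apply Proposition~\ref{pro:MNE2} to that sequence to replace $J^N_1$ by the mean-field expectation under $\mathbb{P}^{*|(N)}$, bound the latter by $V(\mu^*_{0:T})$ via the saddle-point property in Definition~\ref{dfn:robust_mfe}\,(i)/Proposition~\ref{pro:dpp}\,(ii), and close the loop with Remark~\ref{rem:MNE2}. The paper uses an $\varepsilon/3$-near-maximizer where you use $1/N$, and presents the chain directly rather than in contradiction form, but these are cosmetic differences; your discussion of why the sequence-level statement of Proposition~\ref{pro:MNE2} suffices (avoiding a uniform-in-$\pi$ bound) is exactly the point and matches the paper's implicit reasoning.
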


%\comment{Fix position of the algorithm so it's not above Theorem 3.18? Should check after deleting all comments etc}

%%%%%%%%%%%%%%%%%%%%%%%%%%%%%%%%%%%%%%%%%%%%%%%%%%%%%%%%%%%%%%%%%%%%%%%%%%%%%%
\section{Numerical example: Crowd motion under model uncertainty}\label{sec:numeric}
%\subsubsection{Iterative Scheme} 
Based on Proposition~\ref{pro:dpp} and Theorem~\ref{thm:MFE}, we derive an iterative scheme that allows to compute approximately a mean-field equilibrium $({\mu}_{0:T}^*,{\pi}_{0:T}^*,{p}_{0:T}^*)$ of $(S,A,\mu^o,\mathfrak{P}_{0:T},r)$. We provide a pseudo-code in Algorithm~\ref{alg:MFE} to show how it can be implemented.\footnote{All the numerical experiments have been performed with the following hardware configurations: a Macbook Air with Apple M1 chip, 8 GBytes of memory, and Mac OS 13.0. All the codes are provided in the following link: \url{https://github.com/JoLa2606/robust_MFE/}}

The algorithm proceeds as follows: Starting with given $\mu_{0:T}^* \in (\mathcal{P}(S))^{T}$, we apply the dynamic programming results as described in \eqref{eq:DPP_maximin2}--\eqref{eq:V-1} to derive the worst-case kernels $p^{*}_{0:T}$ and optimal Markov policies $\pi^{*}_{0:T}$ for $V(\mu^*_{0:T})$ (see Proposition~\ref{pro:dpp}). Next, we update $\mu^*_{0:T}$ by constructing a new sequence of state measures in the sense of Definition~\ref{dfn:robust_mfe}~(ii). This process is iterated until we attain a fixed point $({\mu}_{0:T}^*,{\pi}_{0:T}^*,{p}_{0:T}^*)$ in the sense of Proposition \ref{pro:FixPntEq} and Theorem~\ref{thm:MFE}. Note that as $S$ and $A$ are finite, in line with Assumption\;\ref{as:msr}\;(i), we will construct the corresponding probability measures by interpreting them as elements of a simplex in an Euclidean space.

\begin{algorithm}[t]
	\caption{An iteretative scheme for mean-field equilibrium (MFE) under model uncertainty} \label{alg:MFE}
	\begin{algorithmic}[1]
		{\footnotesize
			\STATE \textbf{Input:} $(S,A)$ with $n(S),n(A)<\infty$ (satisfying Assumption \ref{as:msr}\;(i)), $\mu^o\in {\cal P}(S)$ (i.e., initial distribution),  \\
			%\STATE  
							\hskip4.em $(\mathfrak{P}_{0:T},r)$ (satisfying Assumption \ref{as:msr}\;(ii),\;(iii)), and $\mu^{*}_{0:T} \in (\mathcal{P}(S))^{T}$  (a~priori arbitrarily chosen);
			\STATE \textbf{Function} $\mathrm{MFE}\big(\mu_{0:T}^*;\;S,A,\mu^o,\mathfrak{P}_{0:T},r\big)$\textbf{:} \\
			\STATE  \hskip1.5em Set $\mu^{*}_{0}:=\mu^o$; \\
			\STATE  \hskip1.5em \textbf{while} $\mu^{*}_{1:T}$ still changes\\
			\STATE      \hskip3em \textbf{for} $t = T-1$ \textbf{to} $0$ \\
			\STATE          \hskip4.5em \textbf{for} $i = 1$ \textbf{to} $n(S)$ \\
			\STATE              \hskip6em \textbf{for} $j = 1$ \textbf{to} $n(A)$ \\
			\STATE                  \hskip7.5em \textbf{if} $t = T-1$ \\
			\STATE                      \hskip9em Compute $p^{*}_{T-1}(\cdot \mid s_{i}, a_{j}, \mu^{*}_{T-1}) \in \mathfrak{P}_{T-1}(s_{i}, a_{j}, \mu^{*}_{T-1})$ so that \\
			%\STATE                      
			\hskip9em $\widehat{J}_{T-1}(s_{i}, a_{j}, \mu^{*}_{T-1}) = \sum_{s \in S} r(s_{i}, a_{j}, s, \mu^{*}_{T-1}) p^{*}_{T-1}(s \mid s_{i}, a_{j}, \mu^{*}_{T-1}) $;
			%\inf_{\P \in \mathfrak{P}_{T-1}(s_{i}, a_{j}, \mu^{*}_{T-1})} \sum_{s \in S} \P(s) r(s_{i}, a_{j}, s, \mu^{*}_{T-1})$; \\
			\STATE                  \hskip7.5em \textbf{else} \\
			\STATE                      \hskip9em Compute $p^{*}_{t}(\cdot \mid s_{i}, a_{j}, \mu^{*}_{t}) \in \mathfrak{P}_{t}(s_{i}, a_{j}, \mu^{*}_{t})$ so that \\
			                      \hskip9em $\widehat{J}_{t}(s_{i}, a_{j}, \mu^{*}_{t:T}) = \sum_{s \in S} \big(r(s_{i}, a_{j}, s, \mu^{*}_{t}) + \widehat{V}_{t+1}(s_{t+1}, \mu^{*}_{t+1:T})\big)p^{*}_{t}(s \mid s_{i}, a_{j}, \mu^{*}_{t}) $; \\
			\STATE              \hskip6em \textbf{end} \\
			\STATE              \hskip6em Compute $\pi^{*}_{t}(\cdot \mid s_{i}) \in \mathcal{P}(A)$ so that $\widehat V_{t}(s_{i},\mu^{*}_{t:T}) =  \sum_{a \in A} \widehat{J}_{t}(s_{i}, a, \mu^{*}_{t:T})\pi^{*}_{t}(a | s_{i}) $;\\
			%\STATE              \hskip6em $\widehat V_{t}(s_{i},\mu^{*}_{t:T}) = \sup_{\pi \in \mathcal{P}(A)} \sum_{a \in A} \pi(a) \widehat{J}_{t}(s_{i}, a, \mu^{*}_{t:T})$;
			\STATE          \hskip4.5em \textbf{end} \\
			\STATE      \hskip3em \textbf{end} \\
			\STATE      \hskip3em \textbf{for} $t = T-2$ \textbf{to} $0$ \\
			%\STATE          \hskip4.5em \textbf{for} $i = 1$ \textbf{to} $m$ \\
			\STATE              \hskip4.5em Update $\mu^{*}_{t+1}$ so that $\mu^{*}_{t+1}(s_{i}) := \sum_{s \in S} \sum_{a \in A} p^{*}_{t}(s_{i} | s, a, \mu^{*}_{t})\pi^{*}_{t}(a | s) \mu^{*}_{t}(s) $ $\;\;\forall i=1,\dots,n(S)$; \\
			%\STATE          \hskip4.5em \textbf{end} \\
			\STATE      \hskip3em \textbf{end} \\
			\STATE  \hskip1.5em \textbf{end} \\
			\STATE  \hskip1.5em \textbf{Return} $(\mu^{*}_{0:T},\pi^{*}_{0:T},p^{*}_{0:T}$)
		}
	\end{algorithmic}
\end{algorithm}

%%%%%%%%%%%%%%%%%%%%%%%%%%%%%%%%%%%%%%%%%%%%%%%%%%%%%%%%%%%%%%%%%%%%%%%%%%%%%%
%\subsubsection{Crowd Motion}\label{subsec:crowd}

\vspace{0.5em}
We consider the following model, which can be found in \cite[Section 5.7]{Lauriere2024meanfield} and is inspired by the model studied in \cite{Elie2020meanfield}, and extend it by allowing for model uncertainty.

%\newpage
\begin{dfn} \label{dfn:exm:cro}
    Let $S := \{0, 1, \dots, 4\}$ and $A := \{-1, 0, 1\}$ be state and action spaces, respectively. Furthermore, let $T := 2$ be the time horizon, and let $\lambda \geq 0$ and $c > 0$ be given. Agents can decide to move along the one-dimensional (1D) grid world $S$ in both directions or stay where they are; we model these actions by {\it left $= -1$, stay $= 0$}, or {\it right $= 1$}. 
    \begin{enumerate}[leftmargin=2.5em]
        \item[(i)] For every $t=0,1$, define $\mathfrak{P}_t^{\lambda}:S \times A \times \mathcal{P}(S)\ni (s_t,a_t,\mu_t) \twoheadrightarrow \mathfrak{P}_t^{\lambda}(s_t,a_t,\mu_t)\subseteq {\cal P}(S)$ by
        \[
        	\mathfrak{P}_t^{\lambda}(s_t,a_t,\mu_t):=\Big\{\P \in \mathcal{P}(S) \;\Big|\; d_{{W}_{1}}\big(\P, p^{o}(\cdot | s_t,a_t,\mu_t)\big) \leq \lambda\Big\},
        \]
        where $d_{{W}_{1}}(\cdot,\cdot)$ is the $1$-Wasserstein distance on $S$ and $p^{o} \colon S \times A \times \mathcal{P}(S)\ni (s_t,a_t,\mu_t) \mapsto  p^{o}(\cdot|s_t,a_t,\mu_t)\in\mathcal{P}(S)$ is a reference stochastic kernel on $S$ given $S \times A \times \mathcal{P}(S)$ so that under $p^{o}(\cdot | s_t,a_t,\mu_t)$, $s_{t+1}$ satisfies %the following dynamics:
        \begin{align*}
        	s_{t+1} =
        	\begin{cases}
        		s_{t} + a_{t} + \varepsilon_{t+1} &\text{if}\;\; s_{t} + a_{t} + \varepsilon_{t+1} \in S,\\
        		s_{t} &\text{else,}
        	\end{cases}
        \end{align*}
        where $\varepsilon_{t+1}$ is independently identically distributed according to a uniform distribution with values in $A$.
        %so that $\mathfrak{P}_t^{\lambda}(s,a,\mu)$ is a Wasserstein ball with the radius $\lambda$ around
        \item[(ii)] Define $r: S \times A \times S \times\mathcal{P}(S)\mapsto \mathbb{R}$ by setting for every $(s, a, \hat s, \mu)\in S \times A \times S \times\mathcal{P}(S)$,
        \begin{align*}
            r(s, a, \hat s, \mu) := \Big(1 - \frac{1}{2} \lvert \hat s - 2 \rvert\Big) - \frac{\lvert a \rvert}{4} - \log\big(\mu (\hat s) + c\big).
        \end{align*}
    \end{enumerate}
\end{dfn}

\begin{lem} \label{lem:exm:cro}
    Under the setup given in Definition~\ref{dfn:exm:cro}, let $\lambda \geq 0$ and $c > 0$ be given. Then, the set-valued maps $\mathfrak{P}_{0:T}^{\lambda}$ and the one-step reward function $r$ satisfy Assumption~\ref{as:msr}\;(ii) and (iii).
    %the following hold: 
    %\begin{enumerate}
    %    \item[(i)] 
    %\end{enumerate}
\end{lem}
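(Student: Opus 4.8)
\textbf{Proof proposal for Lemma \ref{lem:exm:cro}.}

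The plan is to verify the three requirements of Assumption~\ref{as:msr}\;(ii) for the Wasserstein-ball correspondence $\mathfrak{P}_t^{\lambda}$ together with the modulus-of-continuity estimate, and then to check Assumption~\ref{as:msr}\;(iii) for the reward~$r$. Since $S=\{0,\dots,4\}$ is finite, ${\cal P}(S)$ is a compact simplex, the $1$-Wasserstein metric metrizes weak convergence, and all correspondences can be treated as subsets of a fixed Euclidean simplex; I will use this identification throughout.

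\emph{Step 1: basic structural properties of $\mathfrak{P}_t^{\lambda}$.} Fix $(s_t,a_t,\mu_t)$. The set $\mathfrak{P}_t^{\lambda}(s_t,a_t,\mu_t)$ is the closed Wasserstein ball of radius $\lambda$ around $p^o(\cdot|s_t,a_t,\mu_t)$ intersected with the simplex ${\cal P}(S)$. It is non-empty since it contains $p^o(\cdot|s_t,a_t,\mu_t)$ itself. It is convex because $d_{W_1}(\cdot,q)$ is a convex function of its first argument (the Wasserstein distance is jointly convex in its two arguments) and ${\cal P}(S)$ is convex, so the sublevel set intersected with the simplex is convex. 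It is compact because it is a closed subset (sublevel set of the continuous map $\P\mapsto d_{W_1}(\P,p^o(\cdot|s_t,a_t,\mu_t))$) of the compact set ${\cal P}(S)$.

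\emph{Step 2: continuity of $\mathfrak{P}_t^{\lambda}$.} I would establish upper- and lower-hemicontinuity separately, exploiting that the reference kernel $p^o$ is continuous in $(s_t,a_t,\mu_t)$ — and in fact, since $S$ and $A$ are finite and discrete, continuity in $\mu_t$ alone is what matters; from the explicit formula $s_{t+1}=s_t+a_t+\varepsilon_{t+1}$ (projected onto $S$) with $\varepsilon_{t+1}$ uniform on $A$, the kernel $p^o(\cdot|s_t,a_t,\mu_t)$ does not depend on $\mu_t$ at all, hence is (trivially) continuous. For upper-hemicontinuity: if $(s_t^n,a_t^n,\mu_t^n)\to(s_t,a_t,\mu_t)$ and $\P^n\in\mathfrak{P}_t^{\lambda}(s_t^n,a_t^n,\mu_t^n)$ with $\P^n\rightharpoonup\P$, then since eventually $(s_t^n,a_t^n)=(s_t,a_t)$ and $p^o$ is continuous, passing to the limit in $d_{W_1}(\P^n,p^o(\cdot|s_t^n,a_t^n,\mu_t^n))\le\lambda$ gives $d_{W_1}(\P,p^o(\cdot|s_t,a_t,\mu_t))\le\lambda$ by continuity of $d_{W_1}$; since ${\cal P}(S)$ is compact this shows the graph is closed, which gives upper-hemicontinuity. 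For lower-hemicontinuity: given $\P\in\mathfrak{P}_t^{\lambda}(s_t,a_t,\mu_t)$ and $(s_t^n,a_t^n,\mu_t^n)\to(s_t,a_t,\mu_t)$, I construct $\P^n\in\mathfrak{P}_t^{\lambda}(s_t^n,a_t^n,\mu_t^n)$ converging to $\P$ by moving $\P$ toward the (possibly shifted) center, e.g. $\P^n := (1-\theta_n)\P+\theta_n p^o(\cdot|s_t^n,a_t^n,\mu_t^n)$ with $\theta_n\downarrow0$ chosen so that $d_{W_1}(\P^n,p^o(\cdot|s_t^n,a_t^n,\mu_t^n))\le\lambda$ — using convexity of $d_{W_1}$ and $d_{W_1}(\P,p^o(\cdot|s_t,a_t,\mu_t))\le\lambda$ together with the convergence of the centers, such $\theta_n$ exists, and $\P^n\to\P$. (Again, because $p^o$ is independent of $\mu_t$ and the discrete convergence forces $(s_t^n,a_t^n)$ eventually constant, one may simply take $\P^n=\P$ for large $n$, which trivializes this step.)

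\emph{Step 3: the Wasserstein modulus condition.} Since $p^o(\cdot|s_t,a_t,\mu_t)$ does not depend on $\mu_t$, for any $s_t,a_t$, $\mu_t,\tilde\mu_t$ and any $\P\in\mathfrak{P}_t^{\lambda}(s_t,a_t,\mu_t)$ we have $\mathfrak{P}_t^{\lambda}(s_t,a_t,\mu_t)=\mathfrak{P}_t^{\lambda}(s_t,a_t,\tilde\mu_t)$, so we may take $\tilde\P:=\P$ and obtain $d_{W_1}(\P,\tilde\P)=0\le C_{\mathfrak{P}_t}\,d_{W_1}(\mu_t,\tilde\mu_t)$ for any $C_{\mathfrak{P}_t}>0$; in particular the condition holds with, say, $C_{\mathfrak{P}_t}=1$. (If one prefers to keep the argument robust to a $\mu_t$-dependent reference kernel, one would instead assume $p^o$ is $L_{p^o}$-Lipschitz in $\mu_t$ w.r.t.\ $d_{W_1}$ and take $\tilde\P$ to be the pushforward of $\P$ under an optimal transport map from $p^o(\cdot|s_t,a_t,\mu_t)$ to $p^o(\cdot|s_t,a_t,\tilde\mu_t)$, giving $d_{W_1}(\P,\tilde\P)\le d_{W_1}(p^o(\cdot|s_t,a_t,\mu_t),p^o(\cdot|s_t,a_t,\tilde\mu_t))\le L_{p^o}d_{W_1}(\mu_t,\tilde\mu_t)$, while $\tilde\P$ still lies in the $\lambda$-ball around the shifted center by the triangle inequality.)

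\emph{Step 4: properties of the reward.} For boundedness, with $\hat s\in\{0,\dots,4\}$ we have $0\le 1-\tfrac12|\hat s-2|\le1$, $0\le\tfrac{|a|}{4}\le\tfrac14$, and $\mu(\hat s)\in[0,1]$ so $\log(\mu(\hat s)+c)\in[\log c,\log(1+c)]$; hence $|r|\le C_r$ for a constant $C_r$ depending only on $c$. For Lipschitz continuity in $\mu$ w.r.t.\ $d_{W_1}$: only the term $-\log(\mu(\hat s)+c)$ depends on $\mu$, and the map $\mu\mapsto\mu(\hat s)=\int_S \mathbf 1_{\{\hat s\}}\,d\mu$ is Lipschitz from $({\cal P}(S),d_{W_1})$ to $\mathbb{R}$ — here I would note that on the finite space $S$ with the Euclidean metric, $d_{W_1}$ dominates (a constant times) the total-variation distance, which in turn controls $|\mu(\hat s)-\tilde\mu(\hat s)|$; concretely $|\mu(\hat s)-\tilde\mu(\hat s)|\le\|\mu-\tilde\mu\|_{TV}\le C\,d_{W_1}(\mu,\tilde\mu)$ since all points of $S$ are at distance $\ge1$ apart. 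Since $t\mapsto\log(t+c)$ is $\tfrac1c$-Lipschitz on $[0,\infty)$, we get $|r(s,a,\hat s,\mu)-r(s,a,\hat s,\tilde\mu)|=|\log(\mu(\hat s)+c)-\log(\tilde\mu(\hat s)+c)|\le\tfrac1c|\mu(\hat s)-\tilde\mu(\hat s)|\le\tfrac{C}{c}\,d_{W_1}(\mu,\tilde\mu)=:L_r\,d_{W_1}(\mu,\tilde\mu)$.

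\emph{Main obstacle.} Steps 1 and 4 are routine. The only place requiring genuine care is the \emph{lower}-hemicontinuity in Step 2 and the \emph{modulus} estimate in Step 3 when one does not want to exploit the accidental $\mu_t$-independence of $p^o$; in the general Wasserstein-ball setting these rely on the observation that the $W_1$-ball correspondence inherits lower-hemicontinuity and the Lipschitz-selection property from the continuity (resp.\ Lipschitz continuity) of its center, via convex combinations toward the center and via optimal transport maps between the two centers respectively. In the present concrete model, however, $p^o$ depends only on $(s_t,a_t)$ which ranges over a finite discrete set, so all these continuity statements collapse to triviality once $N$-indexed sequences are eventually constant in their $(s_t,a_t)$-coordinates, and the whole lemma reduces to Step~1 plus Step~4.
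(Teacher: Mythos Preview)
Your proposal is correct and follows essentially the same route as the paper: you both exploit that $p^o$ is independent of $\mu_t$ so that $\mathfrak{P}_t^\lambda(s_t,a_t,\mu_t)=\mathfrak{P}_t^\lambda(s_t,a_t,\tilde\mu_t)$, which trivializes the modulus condition (take $\tilde\P=\P$) and reduces the hemicontinuity arguments to the eventually-constant-sequence observation on the finite set $S\times A$; the reward bounds and the Lipschitz estimate via $|\mu(\hat s)-\tilde\mu(\hat s)|\le C\,d_{W_1}(\mu,\tilde\mu)$ are handled the same way (the paper in fact uses $C=1$, which is valid since $\mathbf 1_{\{\hat s\}}$ is $1$-Lipschitz on $S$). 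Your additional remarks on how to handle a $\mu_t$-dependent reference kernel go beyond what the paper does but are not needed here.
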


The proof of the above lemma can be found in Appendix \ref{sec:apdx}.

\begin{rem}
    The one-step reward $r$ is designed to encourage the agent to move toward the center while avoiding overly crowded areas. Additionally, it discourages unnecessary movement unless it is beneficial. The parameter $c$ allows to model the degree of aversion of crowds. According to the reference kernel $p^{o}$, the agent can either remain in her current position or move to one of the adjacent positions. Moreover, the random disturbance $\varepsilon_{t+1}$ may influence the dynamics, representing scenarios such as a concert where people prefer to be near the center but also wish to avoid excessively crowded spots. Agents try to move around in front of the stage based on their own actions but can also be randomly pushed around by the crowd.
    % navigate around the stage based on their actions but may also be unpredictably jostled by the crowd.
    %The reward function encourages the player to move towards the center while avoiding crowded areas. Additionally, it discourages unnecessary movement when it is not beneficial. Via the parameter $c$ we can model how important it is for the player to avoid crowds. Using the reference transition kernel, the player chooses to stay at the current position or to move to one of the neighboring positions. Furthermore, a random disturbance potentially affects the dynamics. This can be interpreted as a situation where there is a concert, and people generally want to be in the center but also want to avoid overly crowded areas. They try to move around in front of the stage based on their own actions but can also be randomly pushed around by the crowd.
\end{rem}

\begin{figure}[t]
	%\flushbottom
	\centering
	\subfigure[Values for $V(\mu^*_{0:2})$.]{
		\label{fig:V} \includegraphics[scale=0.285]{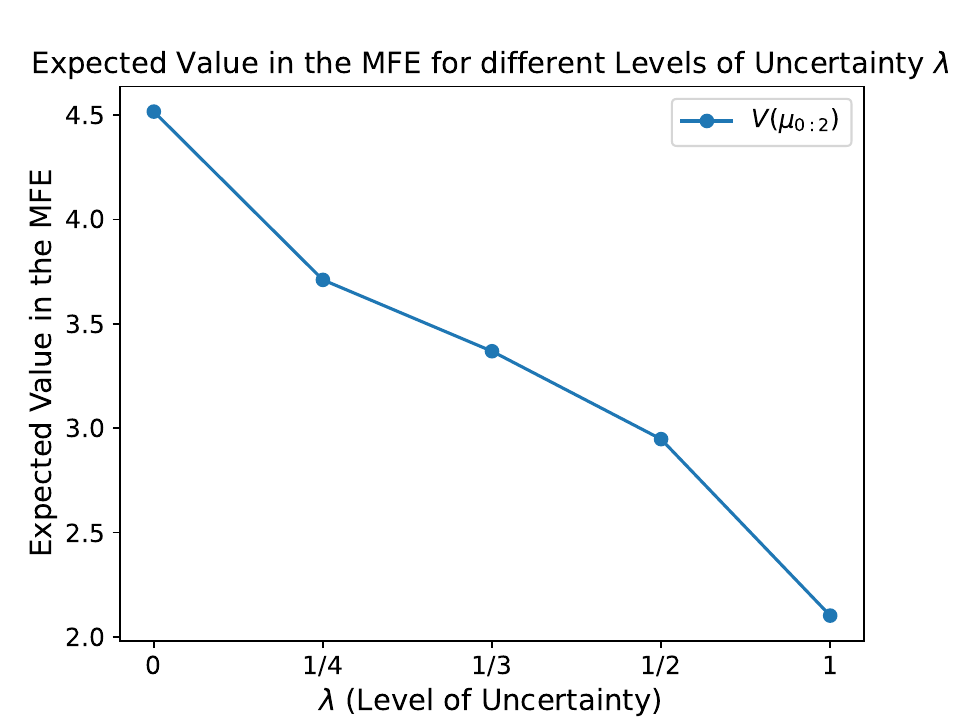}}
	\subfigure[Weights of $\mu^*_1$.]{
		\label{fig:mu1} \includegraphics[scale=0.285]{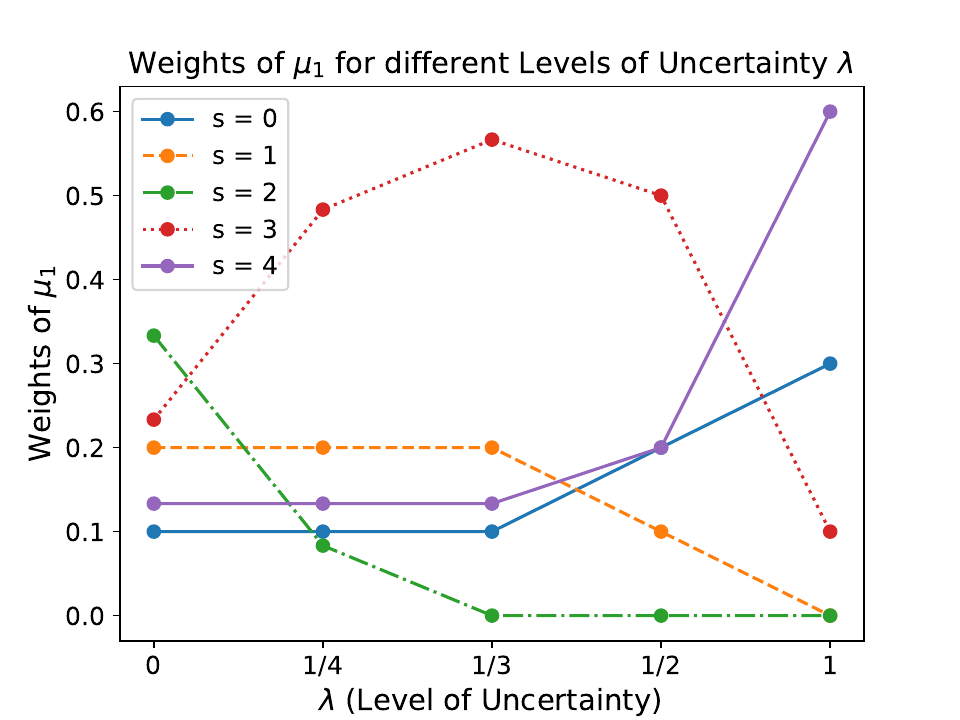}}
	\subfigure[Weights of $\mu^*_2$.]{
		\label{fig:mu2} \includegraphics[scale=0.285]{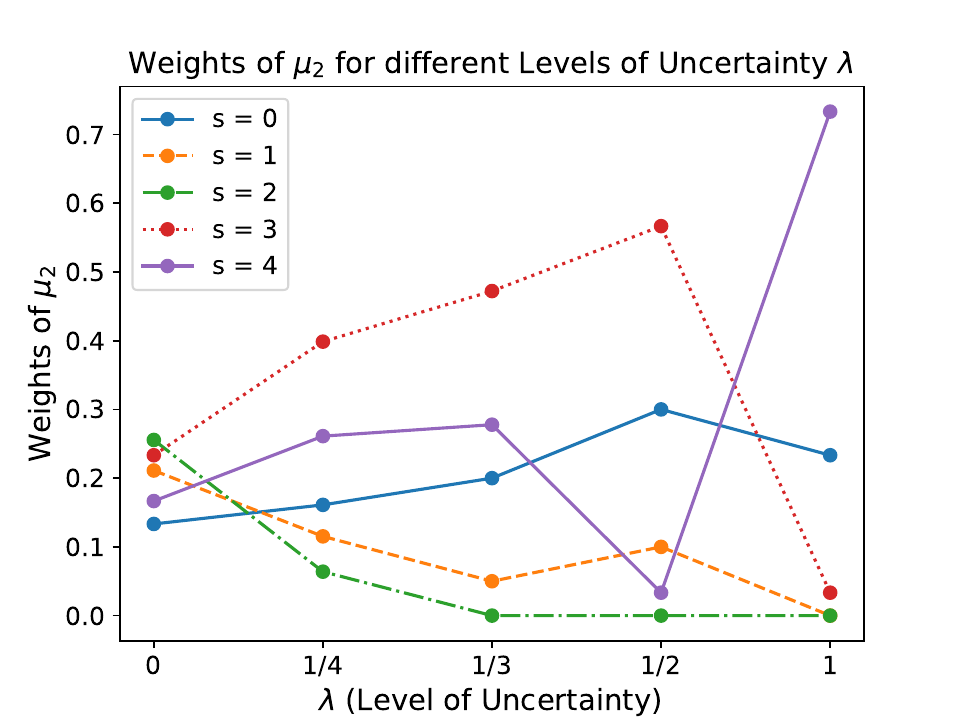}}
	\caption{Sensitivity of values for $V(\mu^*_{0:2})$ and weights of $\mu_{0:2}^*$ with respect to uncertainty level $\lambda$ with given $\mu^o=\mu^*_0=(0.2, 0.1, 0.05, 0.25, 0.4)$. }
	\label{fig:mu}
\end{figure}

Explicitly, we fix $c = 10^{-7}$ and consider different levels of uncertainty $\lambda \in \left\{0, \frac{1}{4}, \frac{1}{3}, \frac{1}{2}, 1\right\}$. Let $\mu^o =(w^{\mu^o}_{0},\dots,w^{\mu^o}_{4}) = (0.2, 0.1, 0.05, 0.25, 0.4)$ be the initial state distribution. 
%\comment{Pls make font size of ticks, legendre, axis labels bigger in all figures (We could delete the top title of each figure). If possible, pls save all figures in PDF or EPS.}
%$\mu_{1}^* = \mu_{2}^* = \{0.2, 0.2, 0.2, 0.2,0.2\}$ and then use Algorithm~\ref{alg:MFE} to obtain a robust mean-field equilibrium under varying levels of uncertainty $\lambda$.
\begin{figure}[t]
	\centering
	\subfigure[{\scriptsize~Weights~of~$p^{*}_{0}(\cdot|1, -1, \mu_{0}^*)$.}]{
		\label{fig:wp0_1_-1} \includegraphics[scale=0.21]{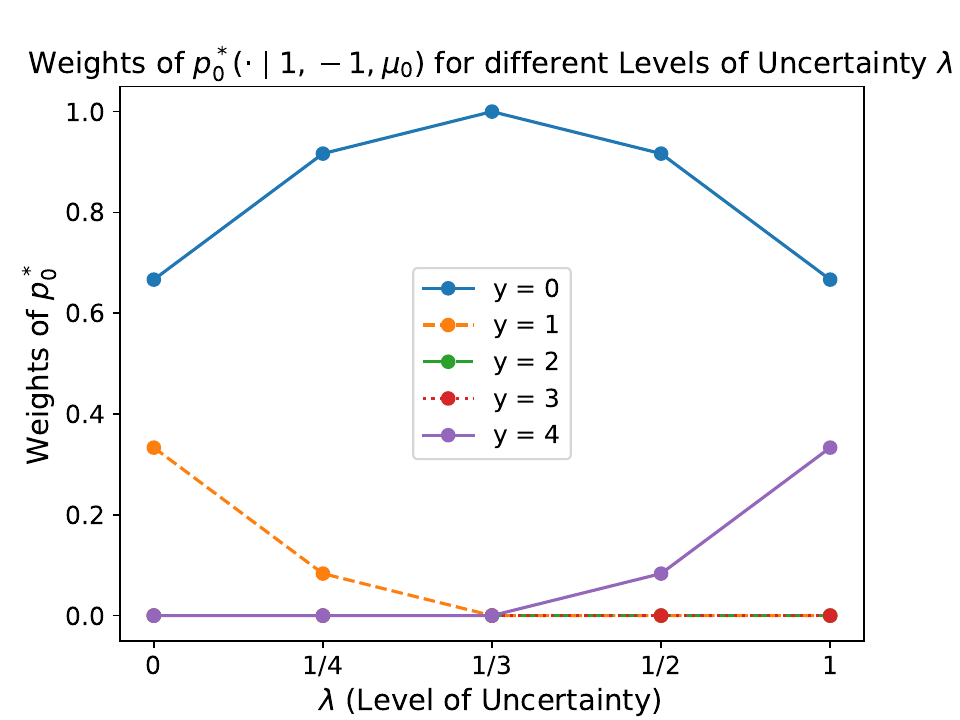}}\;
	\subfigure[{\scriptsize~Weights~of~$p^{*}_{0}(\cdot | 2, 0, \mu_{0}^*)$.}]{
		\label{fig:wp0_2_0} \includegraphics[scale=0.21]{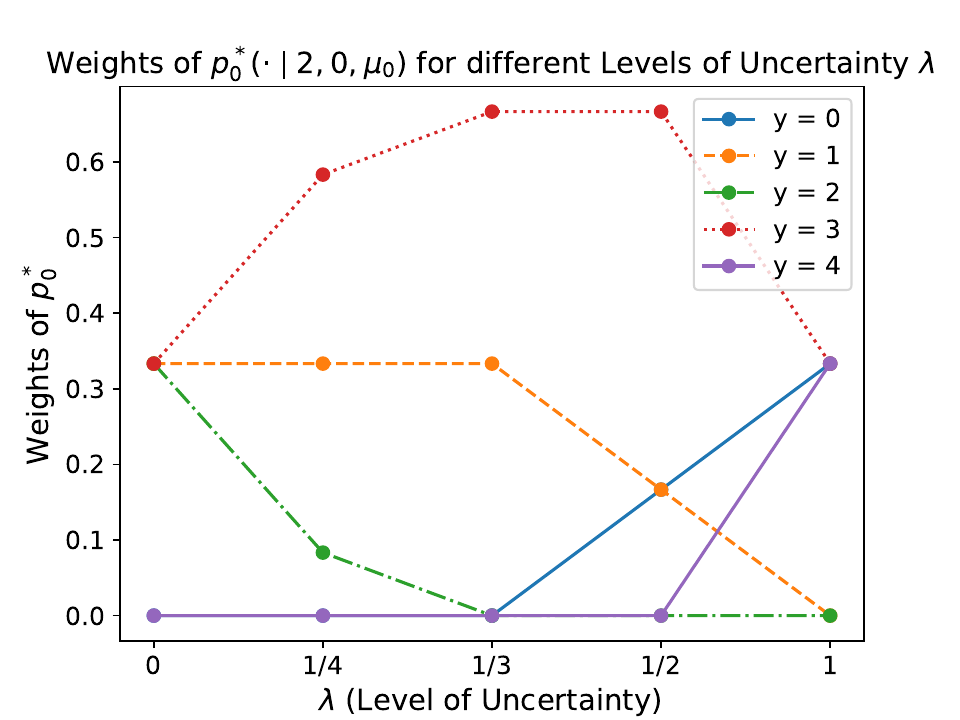}}\;
	\subfigure[{\scriptsize~Weights~of~$p^{*}_{1}(\cdot | 1, 0, \mu_{1}^*)$.}]{
		\label{fig:wp1_1_0} \includegraphics[scale=0.21]{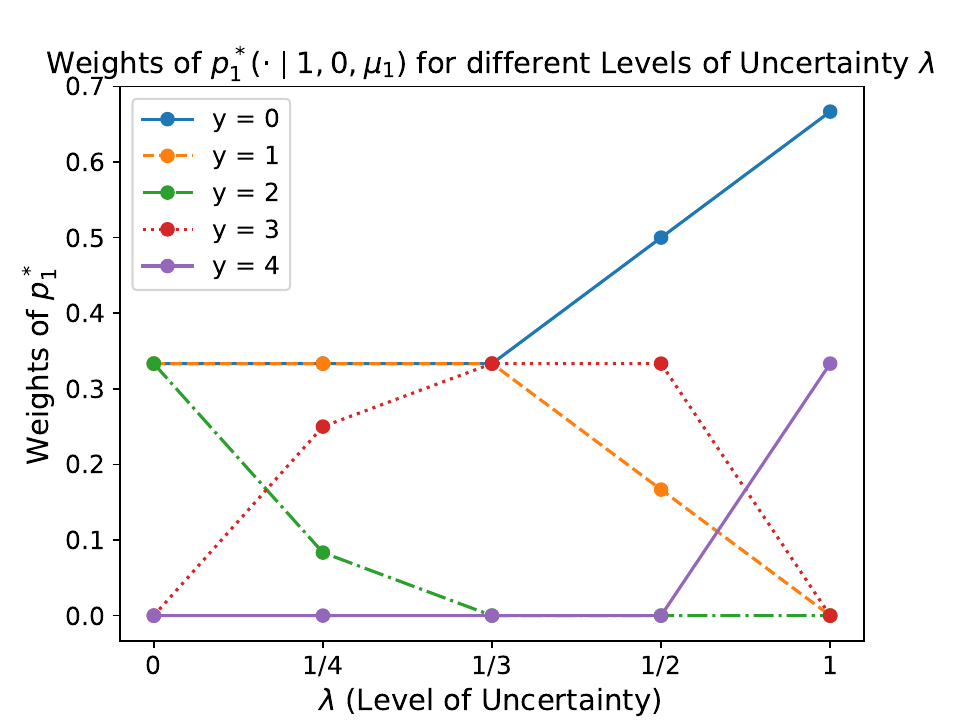}}\;
	\subfigure[{\scriptsize~Weights~of~$p^{*}_{1}(\cdot | 4, 1, \mu_{1}^*)$.}]{
		\label{fig:wp1_4_1} \includegraphics[scale=0.21]{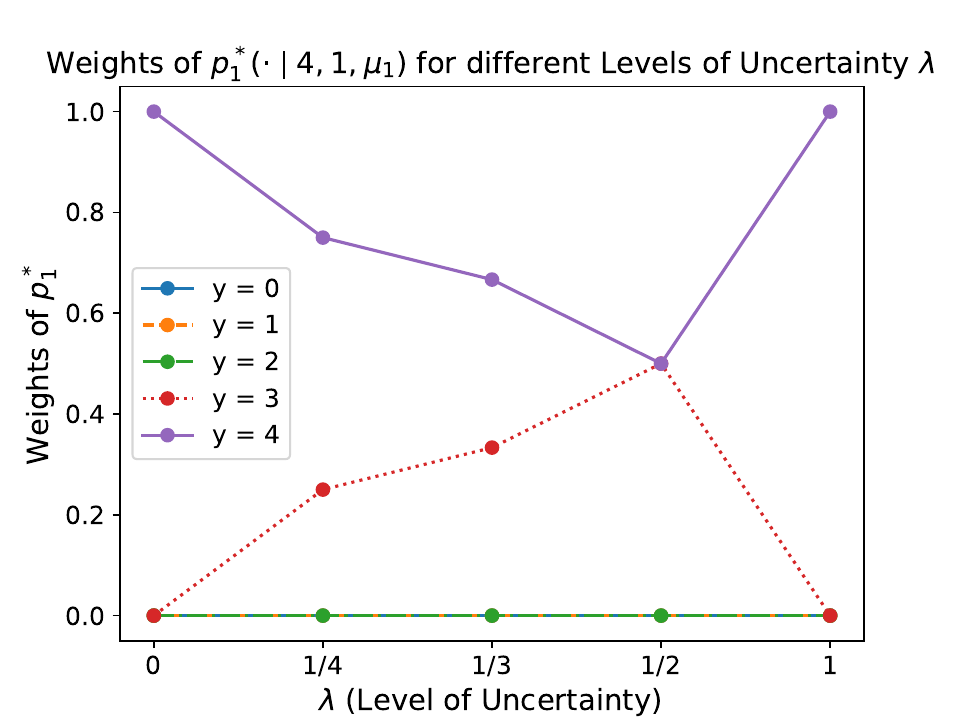}}
	\subfigure[{\scriptsize~Weights~of~$\pi_0^*(\cdot \;|\;0)$.}]{
		\label{fig:wpi0_0} \includegraphics[scale=0.21]{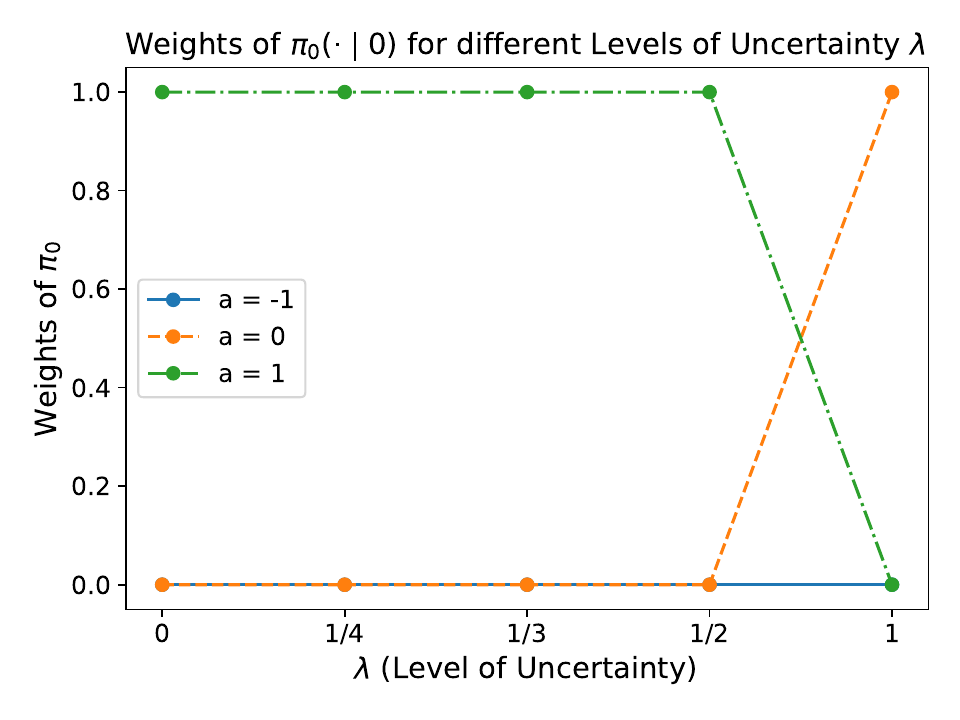}}\;
	\subfigure[Weights of $\pi_0^*(\cdot \;|\;3)$.]{
		\label{fig:wpi0_3} \includegraphics[scale=0.21]{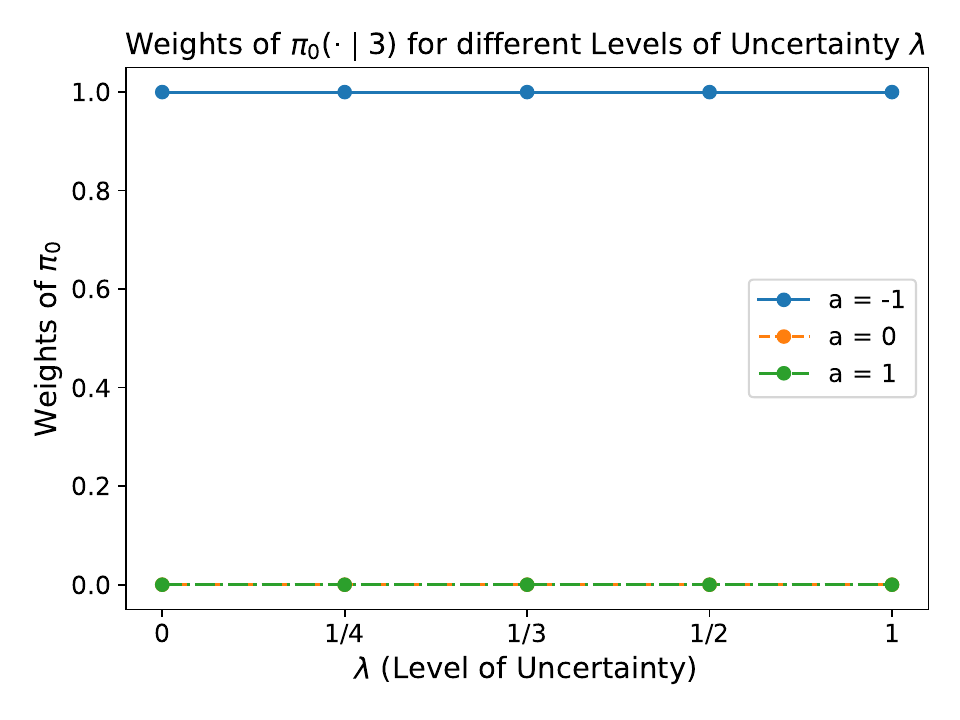}}\;
	\subfigure[Weights of $\pi_1^*(\cdot \;|\;2)$.]{
		\label{fig:wpi1_2} \includegraphics[scale=0.21]{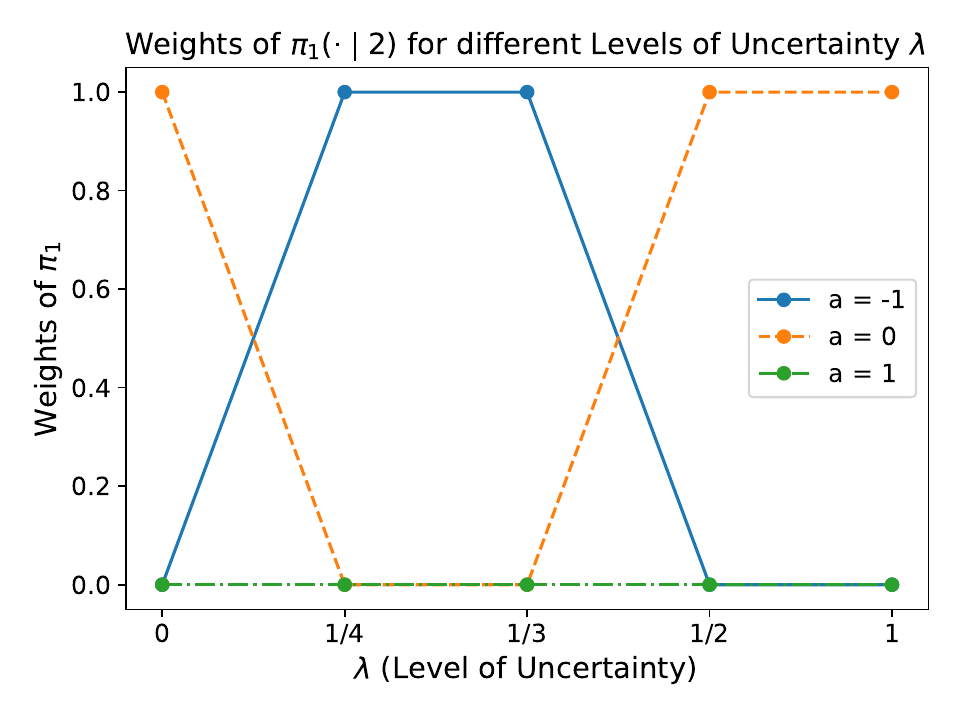}}\;
	\subfigure[Weights of $\pi_1^*(\cdot \;|\;3)$.]{
		\label{fig:wpi1_3} \includegraphics[scale=0.21]{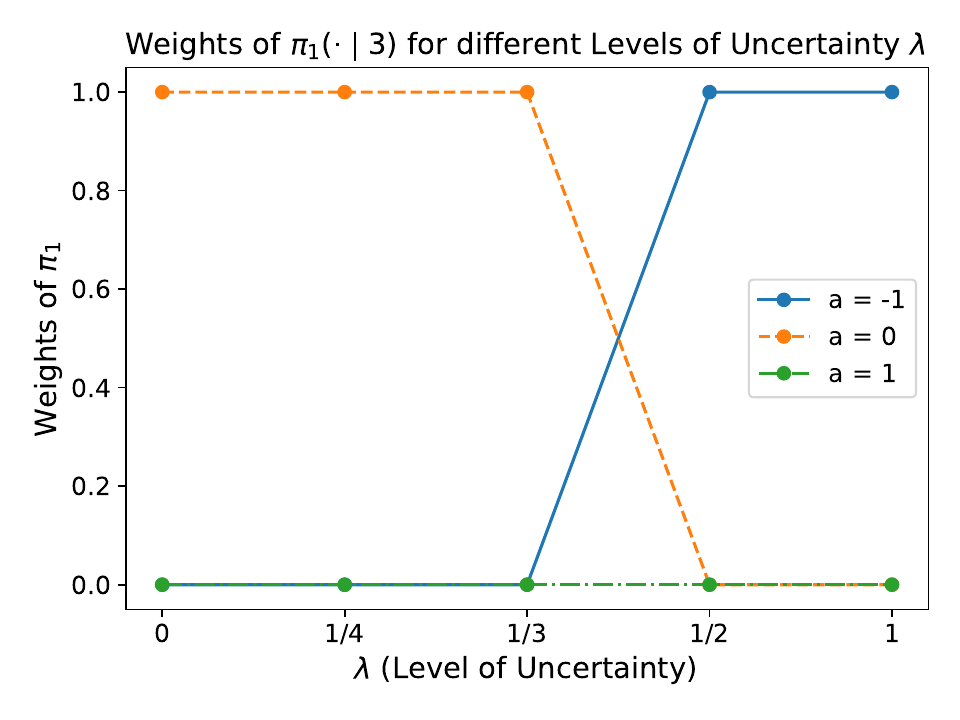}}
	\caption{Sensitivity of $(\pi^*_{0:2},p^*_{0:2})$ with respect to uncertainty level  $\lambda$.}
	\label{fig:wp1}
\end{figure}

Fig.\;\ref{fig:mu}(a) shows that the expected value $V(\mu_{0:2}^*)$ decreases as the uncertainty $\lambda$ increases, which is expected since a higher uncertainty level entails a potentially worse scenario. 

Examining the state-flow measure $\mu_1^*$ at $t=1$, in Fig.\;\ref{fig:mu}(b) we observe that in the absence of model uncertainty, the majority of the weight is concentrated at the center position $s = 2$, with some weight distributed to the adjacent positions $s = 1$ and $s = 3$. The least weight is found at the extreme positions $s = 0$ and $s = 4$. This distribution can be interpreted that most individuals move towards the center, while a few choose to remain at the sides to avoid overcrowding. Whereas if the level of uncertainty increases, the distribution shifts, resulting in more weight being moved away from the center and an increase in the mass at $s = 3$. With large uncertainty, the mass is almost entirely shifted to the boundaries, $s = 0$ and $s = 4$. Similar effects are observed in the state-flow measure $\mu_2^*$ at $t=2$, as shown in Fig.\;\ref{fig:mu}(c).

Fig.\;\ref{fig:wp1} shows the sensitivity of the optimal pair $(\pi_{0:2}^*,p_{0:2}^*)$ in the mean-field equilibrium with respect to uncertainty level $\lambda.$

Although it is hard to interpret the sensitivity of the worst-case stochastic kernels $p^*_{0:2}$ shown in Fig.\;\ref{fig:wp1}\;(a)--(d), we can at least observe that our model uncertainty framework described in Definition\;\ref{dfn:exm:cro}\;(i) is working non-trivially.

Without model uncertainty, i.e. $\lambda=0$, the strategy $\pi_0^*$ at time $t=0$ makes the agent move to the center $s=2$ as the center is not crowded yet, as shown in Fig.\;\ref{fig:wp1}\;(e),\;(f). Indeed, we have seen in Fig.\;\ref{fig:mu}(b) that the weight of $\mu_1^*$ at $s=2$ is dominant. On the other hand, to avoid the crowd at time $t=1$, it becomes beneficial to stay at $s = 3$ rather than trying to move to the center $s=2$ while those already at the center remain there, as shown in Fig.\;\ref{fig:wp1}\;(g),\;(h).

%As shown in Figure~\ref{fig:wpi1}, to avoid the crowd it becomes optimal to stay at $s = 3$ rather than trying to move to the center, while those already at the center remain there. 
As the uncertainty level increases, we observe some interesting effects. In Fig.~\ref{fig:wp1}\;(a)--(d), similar developments are observed across all presented scenarios for the worst-case kernels $p^{*}_{0}$ and $p^{*}_{1}$. With increasing uncertainty, the probability of getting shifted to overly crowded areas, particularly to $s = 4$, increases. In Fig.~\ref{fig:wp1}(e), the optimal strategy shifts from attempting to move towards the center, $s=2$, to staying at $s = 0$, i.e., avoiding movement to the right. Fig.~\ref{fig:wp1}(g) shows a similar effect: although being in the center is highly beneficial, the optimal strategy $\pi_1^*(\cdot\mid2)$ becomes to resist moving to the crowded areas ($s = 3$ and $s = 4$). In Fig.~\ref{fig:wp1}(h), to avoid staying in the overly crowded area $s = 3$ or moving to $s = 4$, $\pi_1^*(\cdot\mid3)$ changes in order to try to move towards the~center.

%%%%%%%%%%%%%%%%%%%%%%%%%%%%%%%%%%%%%%%%%%%%%%%%%%%%%%%%%%%%%%%%%%%%%%%%%%%%%%%%%%%%%%%%%%%%%%%%%%%%%%%%%%%%%%
\section{Proof of results in Section \ref{sec:dpp}}\label{sec:proof:dpp}
\subsection{Proof of Lemma \ref{lem:dpp_berge} and Proposition \ref{pro:dpp}}\label{sec:proof:dpp:MFG}
\begin{lem} \label{lem:Vt}
	Suppose that Assumption \ref{as:msr} is satisfied. Let $\widehat{V}_{0:T}$ be given in \eqref{eq:DPP_maximin2}. Fix any $t\in \{0,1,\dots,T-2\}$ and assume that there exist some constants $\widehat{C}_{t+1} \geq 1$ and $\widehat{L}_{t+1} > 0$ such that for every $s_{t+1} \in S$ and every $\mu_{t+1:T}, \tilde \mu_{t+1:T} \in (\mathcal{P}(S))^{T-t-1}$, it holds that
	\begin{align} \label{eq:Ct}
		\begin{aligned}
			\lvert \widehat{V}_{t+1}(s_{t+1},\mu_{t+1:T}) \rvert&\leq \widehat C_{t+1}, \\ %(1 + | x |^{p}),\\
			\left\lvert \widehat{V}_{t+1}(s_{t+1},\mu_{t+1:T}) - \widehat{V}_{t+1}(s_{t+1}, \tilde{\mu}_{t+1:T}) \right\rvert &\leq \widehat L_{t+1}\sum_{u=t+1}^{T-1} d_{W_1}(\mu_u,\tilde{\mu}_u). %\Big( | x - \hat{x} | + \sum_{s=t+1}^{T-1} d_{W_p}(\mu_s,\hat{\mu}_s) \Big).
		\end{aligned}
	\end{align}
	Then the following hold: 
	\begin{itemize}
		\item [(i)] $\widehat{J}_t$ given in \eqref{eq:DPP_min1} is continuous on $S\times A \times ({\cal P}(S))^{T-t}$. Furthermore, there exists a measurable selector $\widehat p_t: S \times A \times  ({\cal P}(S))^{T-t}\ni (s_t, a_t, \mu_{t:T}) \mapsto \widehat {p}_t(\cdot |s_t, a_t, \mu_{t:T})%=:\mathbb{P}_t^*(\cdot|x,a,\mu_0) 
		\in \mathfrak{P}_t(s_,a_t,\mu_t)$ satisfying~\eqref{eq:minimizer}. %for every $(x, a, \nu)\in X\times A \times {\cal M}(X)$,
		
		\item [(ii)] There exists a constant $\widehat{K}_t>0$ such that %for every $(x,a,\nu)$,\;$(\tilde{x},\tilde{a},\tilde{\nu}) \in X\times A \times  \mathcal{M}_p(X)$, it holds
		for every $s_{t} \in S$, $a_{t} \in A$, and every $\mu_{t:T}, \tilde \mu_{t:T} \in (\mathcal{P}(S))^{T-t}$, $|\widehat {J}_t(s_t, a_t, \mu_{t:T})-\widehat{J}_t(s_t, a_t, \tilde \mu_{t:T})|\leq \widehat{K}_t \sum_{u=t}^{T-1} {d_{W_{1}}(\mu_u, \tilde{\mu}_u)}.$
		\item[(iii)] $\widehat{V}_{t}$ is continuous on $S\times ({\cal P}(S))^{T-t}$. Furthermore, there exists a measurable selector $\widehat \pi_t:S\times {\cal P}(S)^{T-t} \ni (s_t,\mu_{t:T}) \mapsto \widehat \pi_t(\cdot |s_t,\mu_{t:T}) \in \mathcal{P}(A)$ satisfying \eqref{eq:maximizer}.
		
		\item[(iv)] There exist some constants $\widehat C_{t} \geq 1$ and $\widehat L_{t} > 0$ such that for every $s_{t} \in S$ and every $\mu_{t:T}, \tilde \mu_{t:T} \in (\mathcal{P}(S))^{T-t}$,
		\begin{align*}
				\qquad \lvert \widehat {V}_{t}(s_t, \mu_{t:T}) \rvert \leq \widehat C_{t},\qquad %(1 + | x |^{p}),\\
				\left\lvert \widehat {V}_{t}(s_t, \mu_{t:T}) - \widehat{V}_{t}(s_t, \tilde \mu_{t:T}) \right\rvert \leq \widehat L_{t}  \sum_{u=t}^{T-1} d_{W_{1}}(\mu_u, \tilde {\mu}_u).
		\end{align*}
	\end{itemize}
\end{lem}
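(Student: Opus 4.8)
The plan is to run the backward induction: fix $t\in\{0,\dots,T-2\}$ and, granting the bounds \eqref{eq:Ct} on $\widehat V_{t+1}$, establish (i)--(iv) at level $t$. I would use repeatedly that $S,A$ are finite (Assumption~\ref{as:msr}~(i)): then $\mathcal P(S),\mathcal P(A)$ are compact simplices in Euclidean space, weak convergence is coordinatewise convergence, every real function $h$ on $S$ is Lipschitz with constant $\le\bar L_S\|h\|_\infty$ where $\bar L_S:=(\min_{s\ne s'\in S}|s-s'|)^{-1}$, and ``continuity on $S\times A\times(\mathcal P(S))^{T-t}$'' amounts to continuity in the measure arguments. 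For (i), set $g_t(s_t,a_t,s_{t+1},\mu_{t:T}):=r(s_t,a_t,s_{t+1},\mu_t)+\widehat V_{t+1}(s_{t+1},\mu_{t+1:T})$; by Assumption~\ref{as:msr}~(iii) and \eqref{eq:Ct} this is bounded by $C_r+\widehat C_{t+1}$ and jointly continuous, so (since $S$ is finite) $(\P,s_t,a_t,\mu_{t:T})\mapsto\int_S g_t\,d\P=\sum_{s_{t+1}\in S}g_t(\cdot)\,\P(\{s_{t+1}\})$ is jointly continuous. Regarding $(s_t,a_t,\mu_{t:T})\twoheadrightarrow\mathfrak P_t(s_t,a_t,\mu_t)$ as a correspondence on the enlarged domain — non-empty, compact-valued, continuous by Assumption~\ref{as:msr}~(ii) and continuity of the projection to $(s_t,a_t,\mu_t)$ — Berge's maximum theorem \cite[Theorem~17.31]{CharalambosKim2006infinite}, applied to the \emph{minimization} of $\int_S g_t\,d\P$, gives continuity of $\widehat J_t$ and a non-empty, compact-valued, upper-hemicontinuous argmin correspondence; a Borel-measurable selector $\widehat p_t$ satisfying \eqref{eq:minimizer} then follows from a measurable selection theorem (e.g.\ Kuratowski--Ryll-Nardzewski, the argmin map having closed values in the Polish space $\mathcal P(S)$).

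For (ii), the computational heart, I would fix $s_t,a_t$ and $\mu_{t:T},\tilde\mu_{t:T}$ and assume (by symmetry) $\widehat J_t(s_t,a_t,\mu_{t:T})\ge\widehat J_t(s_t,a_t,\tilde\mu_{t:T})$. Let $\tilde\P\in\mathfrak P_t(s_t,a_t,\tilde\mu_t)$ attain the infimum defining $\widehat J_t(s_t,a_t,\tilde\mu_{t:T})$ (attained, a continuous function being minimized over a compact set), and use the quantitative matching condition in Assumption~\ref{as:msr}~(ii) to pick $\P\in\mathfrak P_t(s_t,a_t,\mu_t)$ with $d_{W_1}(\P,\tilde\P)\le C_{\mathfrak P_t}d_{W_1}(\mu_t,\tilde\mu_t)$. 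Using $\P$ as a competitor and that $\tilde\P$ is optimal, $\widehat J_t(s_t,a_t,\mu_{t:T})-\widehat J_t(s_t,a_t,\tilde\mu_{t:T})\le\int_S[g_t(\cdot,\mu_{t:T})-g_t(\cdot,\tilde\mu_{t:T})]\,d\P+\int_S g_t(\cdot,\tilde\mu_{t:T})\,d(\P-\tilde\P)$; the first term is $\le L_rd_{W_1}(\mu_t,\tilde\mu_t)+\widehat L_{t+1}\sum_{u=t+1}^{T-1}d_{W_1}(\mu_u,\tilde\mu_u)$ by \eqref{eq:Ct} and Assumption~\ref{as:msr}~(iii), and the second, since $s_{t+1}\mapsto g_t(s_t,a_t,s_{t+1},\tilde\mu_{t:T})$ has sup-norm $\le C_r+\widehat C_{t+1}$ hence Lipschitz constant $\le\bar L_S(C_r+\widehat C_{t+1})$, is $\le\bar L_S(C_r+\widehat C_{t+1})\,d_{W_1}(\P,\tilde\P)\le\bar L_S(C_r+\widehat C_{t+1})C_{\mathfrak P_t}\,d_{W_1}(\mu_t,\tilde\mu_t)$ by Kantorovich--Rubinstein duality. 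This gives (ii) with $\widehat K_t:=\max\{L_r+\bar L_S(C_r+\widehat C_{t+1})C_{\mathfrak P_t},\ \widehat L_{t+1}\}$.

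For (iii) and (iv): since $A$ is finite and the objective in \eqref{eq:DPP_maximin2} is linear over the simplex $\mathcal P(A)$, the supremum is attained at a vertex, so $\widehat V_t(s_t,\mu_{t:T})=\max_{a_t\in A}\widehat J_t(s_t,a_t,\mu_{t:T})$; this is continuous (a maximum of finitely many continuous functions), and $\widehat\pi_t(s_t,\mu_{t:T}):=\delta_{\widehat a_t(s_t,\mu_{t:T})}$ — with $\widehat a_t$ the, say, lexicographically first maximizer, which is measurable because $\widehat J_t$ is — satisfies \eqref{eq:maximizer}. The integrand defining $\widehat J_t$ is bounded by $C_r+\widehat C_{t+1}$, so $|\widehat V_t|\le\widehat C_t:=C_r+\widehat C_{t+1}$, which is $\ge1$ since $\widehat C_{t+1}\ge1$; and $|\max_a x_a-\max_a y_a|\le\max_a|x_a-y_a|$ combined with (ii) yields $|\widehat V_t(s_t,\mu_{t:T})-\widehat V_t(s_t,\tilde\mu_{t:T})|\le\widehat L_t\sum_{u=t}^{T-1}d_{W_1}(\mu_u,\tilde\mu_u)$ with $\widehat L_t:=\widehat K_t$.

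I expect the main obstacle to be the bookkeeping behind the Berge-theorem application in (i): recasting $\mathfrak P_t$ as a continuous correspondence on the product domain $S\times A\times(\mathcal P(S))^{T-t}$, checking joint continuity of the integral functional, and extracting a Borel-measurable minimizer from the upper-hemicontinuous argmin map. A secondary delicate point is combining the quantitative matching condition of Assumption~\ref{as:msr}~(ii) with Kantorovich--Rubinstein duality in (ii) so that the ``transported minimizer'' error is absorbed into a $C_{\mathfrak P_t}d_{W_1}(\mu_t,\tilde\mu_t)$ term; everything else reduces to routine estimates together with the finiteness of $S$ and $A$.
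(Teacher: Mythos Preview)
Your proposal is correct and follows essentially the same route as the paper's proof: Berge's maximum theorem for (i) and (iii), the transported-minimizer argument combined with a Wasserstein/Lipschitz bound for (ii), and direct estimates for (iv). Your treatment of (iii)--(iv) is slightly more streamlined than the paper's---you exploit the finiteness of $A$ to write $\widehat V_t=\max_{a_t\in A}\widehat J_t$ directly and use $|\max_a x_a-\max_a y_a|\le\max_a|x_a-y_a|$, whereas the paper reapplies Berge to the map $G(s_t,\mu_{t:T},\pi)=\int_A\widehat J_t\,d\pi$ and uses an optimizer-swap argument for the Lipschitz bound---but the substance is identical.
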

\begin{proof}
	We start by proving (i). %We claim that the map ${\cal V}^\mu_{t}:X\times {\cal M}_p(X)\rightarrow \mathbb{R}$ is continuous. 
	To that end, set 
	\[
	{\cal S}:=\left\{(s_t,a_t,\mu_{t:T},p_t)\Big |(s_t,a_t,\mu_{t:T})\in S \times A \times (\mathcal{P}(S))^{T-t},p_t\in \mathfrak{P}_t(s_t,a_t,\mu_t)\right\}
	\]
	and 
	define an auxiliary map $F:{\cal S}\ni (s_t,a_t,\mu_{t:T},p_t) \mapsto F(s_t,a_t,\mu_{t:T},p_t)\in\R$ %$F:  X \times A \times \mathcal{M}(X) \times {\cal P}_t(x,a,\nu)\in (x,a,\nu,\mathbb{P}) \to F (x,a,\nu,\mathbb{P})\in\R$ by
	by
	\begin{align*}
		%F:  X \times A \times \mathcal{M}(X) \times {\cal P}_t(x,a,\nu) \to \R
		F(s_t,a_t,\mu_{t:T},p_t) := \int_{S} \left( r(s_t, a_t, s_{t+1}, \mu_t) + \widehat{V}_{t+1}(s_{t+1}, \mu_{t+1:T})\right) p_t(ds_{t+1}).
		%E_{\P}[r(x, a, \cdot, \mu_{t}) + V_{t+1}(\cdot, \boldsymbol{\mu}_{t+1})].
		%\{(x, a, \mu, \P) \mid x \in X, a \in A, \boldsymbol{\mu}_{t} \in \mathcal{M}_{1}(X)^{T-t}, \P \in \mathcal{P}_{t}(x, a, \mu_{t})\} \to \R
	\end{align*}
	Then we consider a sequence $(s_t^{n}, a_t^{n}, {\mu}_{t:T}^{ n}, p_t^{n})_{n\in \mathbb{N}} \subseteq {\cal S}$ such that $(s_t^{n}, a_t^{n})\rightarrow(s_t^\star, a_t^\star),$ ${\mu}_u^{ n}\rightharpoonup\mu_u^\star$ (for every $u=t,\dots,T-1$), {and} $p_t^{n} \rightharpoonup p_t^\star$ as $n \to \infty$,  with some $(s_t^\star, a_t^\star, {\mu}^\star_{t:T}, p_t^\star)\in {\cal S}$.
	
	By the triangle inequality, for every $n\in\mathbb{N}$,
	\begin{align*}
		&\lvert F(s_t^{n}, a_t^{n}, {\mu}_{t:T}^{ n}, p_t^{n}) - F(s_t^\star, a_t^\star, {\mu}^\star_{t:T}, p_t^\star) \rvert \\
		&\;\; \leq \lvert F(s_t^\star, a_t^\star, {\mu}_{t:T}^\star, p_t^{n})  - F(s_t^\star, a_t^\star, {\mu}^\star_{t:T}, p_t^\star) \rvert + \lvert F(s_t^{n}, a_t^{n}, {\mu}_{t:T}^{ n}, p_t^{n})  - F(s_t^\star, a_t^\star, {\mu}^\star_{t:T}, p_t^{n})   \rvert=: \operatorname{I}^n+ \operatorname{II}^n.
	\end{align*}
	We will show that $\operatorname{I}^n$ and $\operatorname{II}^n$ vanish as $n\rightarrow \infty$.
	
	From Assumption \ref{as:msr}\;(i),\;(iii), and \eqref{eq:Ct}, it follows that $r(s_t^\star, a_t^\star, \cdot, \mu_t^\star) + \widehat{V}_{t+1}(\cdot, \mu_{t+1:T}^\star)$ are continuous and bounded in $S$, i.e., for every $s_{t+1}\in S$,  $|r(s_t^\star, a_t^\star, s_{t+1}, \mu_t^\star) + \widehat{V}_{t+1}(s_{t+1}, \mu^\star_{t+1:T})|\leq (C_r+\widehat C_{t+1}).$ %(1+|x|^p+|a|^p+|y|^p).$ 
	Furthermore, since $p_t^{n} \rightharpoonup p_t^\star$ as $n\rightarrow \infty$, we obtain that $\lim_{n \to \infty}\operatorname{I}^n = 0$. 
	
	It remains to show the limit of $\operatorname{II}^n$. By Assumption~\ref{as:msr}~(i), $S$ and $A$ are finite. Hence, there exists $N \in \N$ such that for all $n \geq N$, $(s_t^{n}, a_t^{n}) = (s_t^\star, a_t^\star)$. By Assumption \ref{as:msr}\;(iii) and \eqref{eq:Ct}, for every $n \geq N$,
	\begin{align*}
		\operatorname{II}^n 
		&\leq \int_{X}\bigg( \Big |r(s_t^{\star}, a_t^{\star}, s_{t+1}, \mu_t^n) - r(s_t^\star, a_t^\star, s_{t+1}, \mu_t^\star)\Big|\Big. \\
		&\quad\qquad +\Big|\widehat{V}_{t+1}(s_{t+1}, \mu_{t+1:T}^n)-\widehat {V}_{t+1}(s_{t+1}, \mu_{t+1:T}^\star)\Big| \bigg)p^{n}(ds_{t+1}) \\
		&\leq  L_{r} d_{W_{1}}(\mu_t^{n}, \mu_t^\star)+ \widehat L_{t+1}\sum_{u=t+1}^{T-1} d_{W_1}(\mu_u^n,{\mu}_u^\star) . %+ L_{t+1} \sum_{s = t+1}^{T-1} d_{W_{1}}(\mu_{s, n}, \mu_{s}) = 0.
	\end{align*}
	The limit ${\mu}_u^{ n} \rightharpoonup\mu_u^\star$ (for every $u=t,\dots,T-1$) ensures that 
	$\operatorname{II}^n$ vanishes as $n\rightarrow \infty$. Therefore, the map  $F:{\cal S} \to\R$ is continuous.  
	
	Since $\mathfrak{P}_{t}$ is non-empty, compact-valued, and continuous (see Assumption~\ref{as:msr}\;(ii)) and the map $F$ is continuous, an application of Berge's maximum theorem (see, e.g., \cite[Theorem~17.31]{CharalambosKim2006infinite}) ensures the continuity of $\widehat {J}_{t}$ and the existence of the measurable selector $\widehat p_t:S \times A \times  ({\cal P}(S))^{T-t}\ni (s_t, a_t, \mu_{t:T}) \mapsto \widehat p_t(\cdot |s_t, a_t, \mu_{t:T})%=:\mathbb{P}_t^*(\cdot|x,a,\mu_0) 
	\in \mathfrak{P}_t(s_t,a_t,\mu_t)$ satisfying~\eqref{eq:minimizer}.
	
	\vspace{0.5em}
	\noindent Now let us prove (ii). To that end, denote by $\tilde \P := \widehat p_t(\cdot| s_t, a_t, \tilde \mu_{t:T}) \in \mathfrak{P}_{t}(s_t, a_t, \tilde{\mu}_t)$ where $\widehat p_t$ denotes the measurable selector given in Lemma \ref{lem:Vt}\;(i). Furthermore, by Assumption \ref{as:msr}\;(ii), we can choose ${\mathbb{P}}\in \mathfrak{P}_t(s_t,a_t,\mu_t)$ such that the following hold:  
	\begin{align}\label{eq:est1}
		d_{W_{1}}({\P}, \tilde{\P}) \leq L_{\mathfrak{P}_t} d_{W_{1}}(\mu_t, \tilde{\mu}_t),
	\end{align}
	and 
	\begin{align*}
		\begin{aligned}
			\widehat {J}_{t}(s_t,a_t,\mu_{t:T})-\widehat {J}_{t}(s_t, a_t,\tilde {\mu}_{t:T})
			&\leq\int_{S} \left(r({s}_t,{a_t}, s_{t+1},{\mu}_t) + \widehat{V}_{t+1}(s_{t+1}, {\mu}_{t+1:T})\right) \P(d s_{t+1})\\
			&\quad - \int_{S} \left(r(s_t, a_t, \tilde s_{t+1},\tilde{\mu}_t)+ \widehat{V}_{t+1}(\tilde s_{t+1}, \tilde{\mu}_{t+1:T})\right) \tilde\P(d\tilde s_{t+1})\\
			&=:\operatorname{B}({{\P}, \tilde{\P}}).
		\end{aligned}
	\end{align*}
	
	Furthermore, since for every\footnote{We refer to Section\;\ref{sec:notat_prelimi} for the definition of $\operatorname{Cpl}({\P}, \tilde{\P})$.} $\gamma \in \operatorname{Cpl}({\P}, \tilde{\P})$, by Assumption \ref{as:msr}\;(i),\;(iii), and \eqref{eq:Ct}, we have
	\begin{align*}
    	\operatorname{B}({{\P}, \tilde{\P}}) &= \int_{S\times S} \Big(r({s}_t,{a_t}, s_{t+1},{\mu}_t)-r(s_t, a_t, \tilde s_{t+1},\mu_t) + r({s}_t,{a_t}, \tilde s_{t+1},{\mu}_t)-r(s_t, a_t, \tilde s_{t+1},\tilde{\mu}_t) \Big.\\
    	&\quad \quad \quad\quad\Big. + \widehat{V}_{t+1}(s_{t+1}, {\mu}_{t+1:T})-\widehat{V}_{t+1}(\tilde s_{t+1}, \mu_{t+1:T})\Big.\\
    	&\quad \quad \quad\quad \left. + \widehat{V}_{t+1}(\tilde s_{t+1}, {\mu}_{t+1:T})-\widehat{V}_{t+1}(\tilde s_{t+1}, \tilde{\mu}_{t+1:T})\right)\gamma(ds_{t+1},d\tilde{s}_{t+1}) \\
        &\leq \int_{S\times S} \Big(L'_{r} \lvert s_{t+1} - \tilde s_{t+1} \rvert + L_{r} d_{W_{1}}(\mu_{t}, \tilde \mu_{t}) \Big.\\
        &\quad \quad \quad\quad \Big.+ \widehat{L}'_{t+1} \lvert s_{t+1} - \tilde s_{t+1} \rvert + \widehat{L}_{t+1} \sum_{u = t+1}^{T-1} d_{W_{1}}(\mu_{u}, \tilde \mu_{u})\Big)\gamma(ds_{t+1},d\tilde{s}_{t+1}),
	\end{align*}
    where $L'_{r}, \widehat{L}'_{t+1} > 0$ can be chosen appropriately thanks to Assumption\;\ref{as:msr}\;(i). 
    
    It thus holds that
	\begin{align*}
		\begin{aligned}
			\operatorname{B}({{\P}, \tilde{\P}})
			&\leq \widehat{K}_t\bigg(\sum_{u=t}^{T-1}d_{W_1}(\mu_u,\tilde \mu_u)+\inf_{\gamma \in\operatorname{Cpl}({\P}, \tilde{\P})} \int_{S\times S}|s_{t+1}-\tilde{s}_{t+1}|\gamma(ds_{t+1},d\tilde{s}_{t+1})\bigg)\\
			&= \widehat{K}_t \bigg(\sum_{u=t}^{T-1}d_{W_1}(\mu_u,\tilde \mu_u)+d_{W_1}({\P}, \tilde{\P})\bigg).
		\end{aligned}
	\end{align*}
	where $\widehat{K}_t:=(L_r + L'_r + \widehat L_{t+1} + \widehat L'_{t+1})>0$.
	%we used Jensen’s inequality for the last inequality.
	
	Combined with \eqref{eq:est1}, this ensure that
    \begin{align*}
        \widehat {J}_{t}(s_t,a_t,\mu_{t:T})-\widehat {J}_{t}(s_t, a_t,\tilde {\mu}_{t:T}) \leq \widehat{K}_t(1+L_{{\mathfrak{P}}_t}) \sum_{u=t}^{T-1}d_{W_1}(\mu_u,\tilde \mu_u).
    \end{align*}
	
	Using the same arguments as those used in the above upper bound, we can obtain the lower bound $\widehat {J}_{t}(s_t,a_t,\mu_{t:T})-\widehat {J}_{t}(s_t, a_t,\tilde {\mu}_{t:T}) \geq -\widehat{K}_t(1+L_{{\mathfrak{P}}_t}) \sum_{u=t}^{T-1}d_{W_1}(\mu_u,\tilde \mu_u),$ by using the same constant $\widehat{K}_t>0$. This completes the proof.
	\vspace{0.5em}
	
	The proof of part (iii) follows from similar arguments as those used in the proof of (i). We define a map ${G}:S\times ({\cal P}(S))^{T-t}\times {\cal P}(A)\ni (s_t,\mu_{t:T},\pi_t)\rightarrow G(s_t,\mu_{t:T},\pi_t)\in \mathbb{R}$ by
	\[
	G(s_t,\mu_{t:T},\pi_t):= \int_{A} \widehat {J}_{t}(s_t, a_t,\mu_{t:T}) \pi(da_t).
	\]
	Then we consider a sequence $(s_t^n,\mu^n_{t:T},\pi_t^n)_{n\in \mathbb{N}} \subseteq S\times ({\cal P}(S))^{T-t}\times {\cal P}(A)$ such that $s_t^n\rightarrow s_t^\star$ ${\mu}_u^{ n}\rightharpoonup\mu_u^\star$ (for every $s=t,\dots,T-1$),  and $\pi_t^{n} \rightharpoonup \pi_t^\star$, as $n \to \infty$ with some $(s_t^\star, {\mu}_{t:T}^\star, \pi_t^\star)\in S\times {\cal P}(S)^{(T-t)}\times {\cal P}(A)$. 
	
	By the triangle inequality, for every $n\in\mathbb{N}$,
	\begin{align*}
		&\lvert G (s_t^n,\mu^n_{t:T},\pi_t^n) - G (s_t^\star,\mu^\star_{t:T},\pi_t^\star) \rvert \\
		&\;\; \leq \lvert G (s_t^\star,\mu^\star_{t:T}, \pi_t^{n}) - G (s_t^\star,\mu^\star_{t:T},\pi_t^\star) \rvert+ \lvert G (s_t^{n},\mu^{n}_{t:T},\pi_t^{n}) - G (s_t^\star, \mu^\star_{t:T}, \pi_t^n) \rvert\\
		&\;\; =: \operatorname{III}^n+\operatorname{IV}^n.
	\end{align*}
	We will show that $\operatorname{III}^n$ and $\operatorname{IV}^n$ vanish as $n\rightarrow \infty$. 
	
	Since $\widehat {J}_{t}(s_t^\star,\cdot,\mu^\star_{0:T-t})$ is continuous on $A$ (see Lemma \ref{lem:Vt}\;(i)) and the action space $A$ is finite (see Assumption \ref{as:msr}\;(i)), the limit $\pi_t^{n} \rightharpoonup \pi_t^\star$ ensures that $\operatorname{III}^n$ vanishes as $n\rightarrow \infty$.
	
	Furthermore, as $S$ is also finite (see Assumption~\ref{as:msr}~(i)), there exists $N \in \N$ such that for every $n \geq N$ we have $s^{n}_{t} = s^{\star}_{t}$. By Lemma \ref{lem:Vt}\;(ii), we then have for every $n \geq N$,
	\begin{align*}
		\operatorname{IV}^n \leq \int_{A} \left| \widehat{J}_{t}(s_t^{\star}, a_t,\mu^{n}_{t:T}) - \widehat{J}_{t}(s_t^\star, a_t,\mu^{\star}_{t:T}) \right| \pi_t^n(da_t)\leq  \widehat{K}_t \sum_{u=t}^{T-1} {d_{W_{1}}(\mu_u^n, {\mu}^\star_u)}.
	\end{align*}
	Combined with the limit ${\mu}_i^{ n} \rightharpoonup \mu_u^\star$ (for every $u=t,\dots,T-1$), this ensures that $\operatorname{IV}^n$ vanish as $n\rightarrow \infty$. Therefore, the map $G$ is continuous. 
	
	Since $\mathcal{P}(A)$ is compact (noting that $A$ is finite) and $G$ is continuous, an application of Berge's maximum theorem ensures the continuity of $\widehat{V}_{t}$ and the existence of the measurable selector $\widehat \pi_t:S\times ({\cal P}(S))^{T-t} \ni (s_t,\mu_{t:T}) \mapsto \widehat \pi_t(\cdot | s_t,\mu_{t:T}) \in \mathcal{P}(A)$ satisfying \eqref{eq:maximizer}.
	\vspace{0.5em}
	
	Lastly we prove the part (iv). By Assumption \ref{as:msr}\;(i),\;(iii), and \eqref{eq:Ct}, 
	\begin{align*}
		\lvert \widehat{V}_{t}(s_t, \mu_{t:T}) \rvert  &\leq \sup_{\pi \in \mathcal{P}(A)} \int_{A} \inf_{\P \in \mathfrak{P}_{t}(s_t, a_t, \mu_t)} \int_{X} \big(\lvert r(s_t, a_t, s_{t+1}, \mu_t) \rvert + \lvert \widehat {V}_{t+1}(s_{t+1}, \mu_{t+1:T}) \rvert \big) \P(dy) \pi(da) \\
		&\leq C_{r}+\widehat C_{t+1}.
	\end{align*}
	By letting $\widehat C_t:= C_{r}+\widehat C_{t+1}$, we have $\lvert \widehat{ V}_{t}(s_t, \mu_{t:T}) \rvert \leq \widehat C_{t}$.
	
	\vspace{0.5em}
	To have the other estimates, denote by ${\pi}:=\widehat \pi_t(\cdot |s_t,\mu_{t:T})\in {\cal P}(A)$ where $\widehat \pi_t$ is the measurable selector given in Lemma \ref{lem:Vt}\;(iii). Then since $\pi$ is not necessarily a maximizer for $\widehat{V}_t(s_t,\tilde{\mu}_{t:T})$ but for $\widehat{V}_{t}(s_t, \mu_{t:T})$, it holds
	\begin{align}
			\widehat{V}_{t}(s_t, \mu_{t:T})- \widehat{V}_t(s_t,\tilde{\mu}_{t:T}) \leq \int_A \left(\widehat {J}_{t}(s_t, a_t,\mu_{t:T}) -\widehat {J}_{t}(s_t,  a_t,\tilde \mu_{t:T}) \right) \pi(da_t).
	\end{align}
	Further, by Lemma \ref{lem:Vt}\;(ii), $\int_A \widehat {J}_{t}(s_t, a_t,\mu_{t:T}) -\widehat {J}_{t}(s_t,  a_t,\tilde \mu_{t:T}) \pi(da_t)\leq \widehat {K}_t \sum_{u=t}^{T-1} {d_{W_{1}}(\mu_u, \tilde{\mu}_u)}$,
	which leads to the upper bound estimates with letting $\widehat L_t:=\widehat{K}_t$. 
	
	Using the same arguments as those used in the above estimates, we can have $\widehat{V}_{t}(s_t, \mu_{t:T})- \widehat{V}_t(s_t,\tilde{\mu}_{t:T})\geq- \widehat L_t\sum_{s=t}^{T-1} {d_{W_{1}}(\mu_u, \tilde{\mu}_u)}$, with the same constant $\widehat {L}_t>0$. This completes the~proof.
\end{proof}

\begin{proof}[Proof of Lemma \ref{lem:dpp_berge}]
	We will prove the parts (i)\;and\;(ii) together. First we claim that when  $t=T-1$, there exists a measurable selector $\widehat p_{T-1}:S \times A \times  {\cal P}(S)\ni (s_{T-1}, a_{T-1}, \mu_{T-1}) \mapsto \widehat p_{T-1}(\cdot |s_{T-1}, a_{T-1}, \mu_{T-1})%=:\mathbb{P}_t^*(\cdot|x,a,\mu_0) 
	\in \mathfrak{P}_{T-1}(s_{T-1},a_{T-1},\mu_{T-1})$ satisfying \eqref{eq:minimizer2}. Indeed, since $\widehat{J}_{T-1}$ has a simple integrand $r(\cdot,\cdot,\cdot,\cdot)$ (see \eqref{eq:DPP_min2}), the same arguments as for the proof of Lemma~\ref{lem:Vt}\;(i) (applying Berge's maximum theorem), but with respect to the map %$F:{\cal S}\ni (s_{T-1},a_{T-1},\mu_{T-1},{p}_{T-1})\mapsto F(s_{T-1},a_{T-1},\mu_{T-1},{p}_{T-1})\in \mathbb{R}$
    $F:{\cal S} \to \mathbb{R}$ given by 
	\[
	F(s_{T-1},a_{T-1},\mu_{T-1},{p}_{T-1}) := \int_{S}  r(s_{T-1}, a_{T-1}, s_{T}, \mu_{T-1}) p_{T-1}(ds_{T}),
	\]
	with ${\cal S}=\{(s_{T-1},a_{T-1},\mu_{T-1})\in S \times A \times \mathcal{P}(S),\;p_{T-1}\in\mathfrak{P}_{T-1}(s_{T-1},a_{T-1},\mu_{T-1})\}$, ensure the existence of the selector $\widehat{p}_{T-1}$.
	
	\vspace{0.5em}
	Analogously, when $t=T-1$, there exists a measurable selector $\widehat {\pi}_{T-1}:S\times  {\cal P}(S)\ni (s_{T-1}, \mu_{T-1}) \mapsto \widehat \pi_{T-1}(\cdot |s_{T-1}, \mu_{T-1})%=:\mathbb{P}_t^*(\cdot|x,a,\mu_0) 
	\in {\cal P}(A)$ satisfying \eqref{eq:maximizer}.  Indeed, we first claim that there is $\widehat{K}_{T-1}>0$ such that for every $s_{T-1} \in S$, $a_{T-1} \in A$, $\mu_{T-1}, \tilde \mu_{T-1} \in \mathcal{P}(S)$, it holds that
	\begin{align}\label{eq:est_J}
		|\widehat {J}_{T-1}(s_{T-1},a_{T-1},\mu_{T-1})-\widehat {J}_{T-1}(s_{T-1}, a_{T-1},\tilde \mu_{T-1})|\leq \widehat{K}_{T-1}  {d_{W_{1}}(\mu_{T-1}, \tilde{\mu}_{T-1})}.
	\end{align}
	By the existence of $\widehat p_{T-1}$ satisfying \eqref{eq:minimizer2}, the arguments devoted for the proof of Lemma~\ref{lem:Vt}\;(ii) using $\widehat p_{T-1}$ and Assumptions~\ref{as:msr}\;(i),\;(iii)\ ensure that we have $\widehat{K}_{T-1}>0$ satisfying \eqref{eq:est_J}.
	
	By \eqref{eq:est_J}, we can use the same arguments presented for the proof of Lemma~\ref{lem:Vt}\;(iii) using Berge's maximum theorem to have the existence of the measurable selector $\widehat {\pi}_{T-1}$ satisfying~\eqref{eq:maximizer}. 
	
	\vspace{0.5em}  
	So far we have proven (i) and (ii) for the case $t=T-1$. The other cases (i.e., $t\leq T-2$) can be proven by applying Lemma \ref{lem:Vt} under the condition of the existence of constants $\widehat{C}_{T-1} \geq 1$, $\widehat{L}_{T-1} > 0$ such that
	for every $s_{T-1} \in S$ and $\mu_{T-1}, \tilde \mu_{T-1} \in \mathcal{P}(S)$, it holds 
	\begin{align*}
		\begin{aligned}
			\lvert \widehat {V}_{T-1}(s_{T-1},\mu_{T-1}) \rvert&\leq \widehat C_{T-1},\\
			\left\lvert \widehat {V}_{T-1}(s_{T-1},\mu_{T-1}) - \widehat{V}_{T-1}(s_{T-1}, \tilde{\mu}_{T-1}) \right\rvert &\leq \widehat L_{T-1}d_{W_1}(\mu_{T-1},\tilde{\mu}_{T-1}).
		\end{aligned}
	\end{align*}
	
	By the existence of $\widehat {p}_{T-1}$ and $\widehat \pi_{T-1}$ and the estimates given in \eqref{eq:est_J}, we can use the same arguments presented for the proof of Lemma~\ref{lem:Vt}\;(iv) to obtain those constants satisfying~the above estimates. 
\end{proof}

\begin{proof}[Proof of Proposition \ref{pro:dpp}]
	By the existence of $\widehat{p}_{0:T}$ and $\widehat{\pi}_{0:T}$ given in Lemma \ref{lem:dpp_berge}, it is straightforward to prove the part (i). Indeed for every $t=0,\dots,T-1$, we can define sequences of stochastic kernels by for every $(s_t,a_t,\mu_{t})\in S\times A \times {\cal P}(S)$,
	\begin{align*}
		p_t^*(\cdot | s_t,a_t,\mu_t):=\left\{
		\begin{aligned}
			&\widehat {p}_t(\cdot | s_t,a_t,\mu_t,\tilde \mu_{t+1:T})\quad &&\mbox{if}\;\;t\leq T-2; \\
			& \widehat p_{t}(\cdot|s_t,a_t,\mu_{t})\quad &&\mbox{if}\;\;t=T-1,%\in \mathcal{P}_{T-1}(x, a, \mu_{T-1}),
		\end{aligned}
		\right.
	\end{align*}
	and for every $s_t\in S$,
	\[
	\pi_t^*(\cdot|s_t):= \widehat{\pi}_t(\cdot|s_t,\tilde \mu_{t:T}).
	\]
	By the optimality of $\widehat{p}_{0:T}$ and $\widehat{\pi}_{0:T}$ (see \eqref{eq:minimizer2}-\eqref{eq:maximizer}), ${p}^*_{0:T}$ and ${\pi}^*_{0:T}$  constructed above satisfy \eqref{eq:minimizer2_given}-\eqref{eq:maximizer_given}.  
	
	\vspace{0.5em}
	\noindent Now let us prove (ii). Let $\overline{\mathbb{P}}:=\tilde \mu_0\otimes  \mathbb{P}_{(\tilde\mu_0,\pi_0^*,p_0)}\otimes \cdots \otimes { \mathbb{P}}_{(\tilde\mu_{T-1},\pi_{T-1}^*,p_{T-1})}\in {\cal Q}(\tilde \mu_{0:T},\pi^*_{0:T})$ and denote by for every $t=1,\dots,T-1$, $\overline{\mathbb{P}}_{0:t} := \tilde \mu_0\otimes  \mathbb{P}_{(\tilde\mu_0,\pi_0^*,p_0)}\otimes \cdots \otimes { \mathbb{P}}_{(\tilde\mu_{t},\pi_{t}^*,p_{t})}$ and $\overline{ \mathbb{P}}_0=\tilde\mu_0$. 
	
	Note that by the definitions of $\widehat {V}_{0:T}$ and $\widehat{J}_{0:T}$ given in \eqref{eq:DPP_maximin2}-\eqref{eq:DPP_min1} and the optimality of $\pi^*_{0:T}$ given in \eqref{eq:maximizer_given},
	\begin{align}\label{eq:cmpris1}
		\begin{aligned}
			&\mathbb{E}^{\overline{ \mathbb{P}}}\Big[r(s_{T-1}, a_{T-1}, s_T, \tilde \mu_{T-1}) \Big]\\
			&\;\;=\mathbb{E}^{\overline{ \mathbb{P}}_{0:T-1}} \left[\int_{S\times A} r(s_{T-1}, a_{T-1}, s_T, \tilde \mu_{T-1}){ \mathbb{P}}_{(\tilde\mu_{T-1},\pi_{T-1}^*,p_{T-1})}(ds_T,da_{T-1}|s_{T-1})  \right]\\
			& \;\; \geq \mathbb{E}^{\overline{ \mathbb{P}}_{0:T-1}}\left[ \int_A \widehat{J}_{T-1}(s_{T-1},a_{T-1},\tilde\mu_{T-1}) \pi^*_{T-1}(da_{T-1}|s_{T-1})\right]
			= \mathbb{E}^{\overline{ \mathbb{P}}}\Big[ \widehat{V}_{T-1}(s_{T-1},\tilde\mu_{T-1}) \Big],
		\end{aligned}
	\end{align}
	and that for every $t\leq T-2$, 
	\begin{align}\label{eq:cmpris2}
		\begin{aligned}
			&\mathbb{E}^{\overline{ \mathbb{P}}}\Big[r(s_{t}, a_{t}, s_{t+1}, \tilde \mu_{t})+ \widehat{V}_{t+1}(s_{t+1}, \tilde\mu_{t+1:T})\Big]\\
			&\;\;= \mathbb{E}^{\overline{ \mathbb{P}}_{0:t}}\left[\int_{S\times A} \left(r(s_{t}, a_{t}, s_{t+1}, \tilde \mu_{t})+ \widehat{V}_{t+1}(s_{t+1}, \tilde\mu_{t+1:T}) \right){ \mathbb{P}}_{(\tilde\mu_{t},\pi_{t}^*,p_{t})}(ds_{t+1},da_{t}|s_{t})  \right] \\
			&\;\;\geq \mathbb{E}^{\overline{ \mathbb{P}}_{0:t}}\left[ \int_A \widehat{J}_{t}(s_{t},a_{t},\tilde\mu_{t:T}) \pi^*_{t}(da_{t}|s_{t})\right] = \mathbb{E}^{\overline{ \mathbb{P}}}\Big[\widehat{V}_{t}(s_{t},\tilde\mu_{t:T})\Big].
		\end{aligned}
	\end{align}
	
	By \eqref{eq:cmpris1} and \eqref{eq:cmpris2}, we hence have
	\begin{align*}
		\begin{aligned}
			\mathbb{E}^{\overline{ \mathbb{P}}} \left[\sum_{t=0}^{T-1}r(s_t,a_t,s_{t+1},\tilde\mu_t)\right] &= \mathbb{E}^{\overline{ \mathbb{P}}} \left[\sum_{t=0}^{T-2}r(s_t,a_t,s_{t+1},\tilde\mu_t)+r(s_{T-1}, a_{T-1}, s_T, \tilde\mu_{T-1}) \right]\\
			&\geq  \mathbb{E}^{\overline{ \mathbb{P}}} \left[\sum_{t=0}^{T-2}r(s_t,a_t,s_{t+1},\tilde\mu_t)+\widehat{V}_{T-1}(s_{T-1},\tilde\mu_{T-1}) \right]\\
			&\geq \cdots \geq \mathbb{E}^{\overline{ \mathbb{P}}} \Big[ \widehat{V}_0(s_{0},\tilde\mu_{0:T}) \Big]= \widehat{ V}(\tilde\mu_{0:T}).
		\end{aligned}
	\end{align*}
	Since $\overline{ \mathbb{P}}$ is arbitrary in ${\cal Q}(\tilde\mu_{0:T},\pi^*_{0:T})$, we have
	\begin{align*}
		\begin{aligned}
			\inf_{\overline\P\in {\cal Q}(\tilde\mu_{0:T},\pi^*_{0:T})} \mathbb{E}^{\overline \P} \left[\sum_{t=0}^{T-1}r(s_t,a_t,s_{t+1},\tilde\mu_t)\right] \geq \widehat{V}(\tilde\mu_{0:T})= \mathbb{E}^{\mathbb{P}^*(\tilde\mu_{0:T})} \left[\sum_{t=0}^{T-1}r(s_t,a_t,s_{t+1},\tilde\mu_t)\right],
		\end{aligned}
	\end{align*}
	with $\mathbb{P}^*(\tilde\mu_{0:T}):=\mathbb{P}({ \tilde\mu_{0:T},\pi_{0:T}^*,p_{0:T}^*})=  \tilde\mu_0\otimes \mathbb{P}_{({ \tilde\mu_0,\pi_0^*,p_0^*})}\otimes\cdots \otimes \mathbb{P}_{( \tilde\mu_{T-1},\pi_{T-1}^*,{p}_{T-1}^*)} \in {\cal Q}(\tilde\mu_{0:T},\pi^*_{0:T})$.  Furthermore, since $\pi_{0:T}^* \in \Pi$, we hence have $V( \tilde\mu_{0:T})\geq  \widehat {V}(\tilde\mu_{0:T})$.
	
	\vspace{0.5em}
	Let $\pi_{0:T}\in \Pi$ and $\underline{\mathbb{P}}:=\tilde \mu_0\otimes  \mathbb{P}_{(\tilde\mu_0,\pi_0,p_0^*)}\otimes \cdots \otimes { \mathbb{P}}_{(\tilde\mu_{T-1},\pi_{T-1},p_{T-1}^*)}\in {\cal Q}(\tilde \mu_{0:T},\pi_{0:T})$ and denote by for every $t=1,\dots,T-1$, $\underline{\mathbb{P}}_{0:T}:=\tilde \mu_0\otimes  \mathbb{P}_{(\tilde\mu_0,\pi_0,p_0^*)}\otimes \cdots \otimes { \mathbb{P}}_{(\tilde\mu_{t},\pi_{t},p_{t}^*)}$ and $\underline{ \mathbb{P}}_0=\tilde\mu_0$. 
	
	From the definitions of $\widehat {V}_{0:T}$ and $\widehat{J}_{0:T}$ given in \eqref{eq:DPP_maximin2}-\eqref{eq:DPP_min1}  and the optimality of $p^*_{0:T}$ given in~\eqref{eq:minimizer2_given} and \eqref{eq:minimizer_given}, it follows that
	\begin{align}\label{eq:cmpris3}
		\begin{aligned}
			&\mathbb{E}^{\underline{ \mathbb{P}}}\Big[r(s_{T-1}, a_{T-1}, s_T, \tilde \mu_{T-1}) \Big]\\
			&\;\;=\mathbb{E}^{\underline{ \mathbb{P}}_{0:T-1}}  \left[\int_{S\times A} r(s_{T-1}, a_{T-1}, s_T, \tilde \mu_{T-1}){ \mathbb{P}}_{(\tilde\mu_{T-1},\pi_{T-1},p^*_{T-1})}(ds_T,da_{T-1}|s_{T-1})  \right]\\
			& \;\; = \mathbb{E}^{\underline{ \mathbb{P}}_{0:T-1}}\left[ \int_A \widehat{J}_{T-1}(s_{T-1},a_{T-1},\tilde \mu_{T-1}) \pi_{T-1}(da_{T-1}|x_{T-1})\right]
			\leq  \mathbb{E}^{\underline{ \mathbb{P}}}\Big[ \widehat{V}_{T-1}(s_{T-1},\tilde \mu_{T-1}) \Big],
		\end{aligned}
	\end{align}
	and that for every $t\leq T-2$, 
	\begin{align}\label{eq:cmpris4}
		\begin{aligned}
			&\mathbb{E}^{\underline{ \mathbb{P}}}\Big[r(s_{t}, a_{t}, s_{t+1}, \tilde \mu_{t})+ \widehat{V}_{t+1}(s_{t+1}, \tilde\mu_{t+1:T})\Big]\\
			&\;\;= \mathbb{E}^{\underline{ \mathbb{P}}_{0:t}}\left[\int_{S\times A} \left(r(s_{t}, a_{t}, s_{t+1}, \tilde \mu_{t})+ \widehat{V}_{t+1}(s_{t+1}, \tilde\mu_{t+1:T}) \right){ \mathbb{P}}_{(\tilde\mu_{t},\pi_{t},p^*_{t})}(ds_{t+1},da_{t}|s_{t})  \right] \\
			&\;\;= \mathbb{E}^{\underline{ \mathbb{P}}_{0:t}}\left[ \int_A \widehat{J}_{t}(s_{t},a_{t},\tilde\mu_{t:T}) \pi_{t}(da_{t}|s_{t})\right] \leq  \mathbb{E}^{\underline{ \mathbb{P}}}\Big[\widehat {V}_{t}(s_{t},\tilde \mu_{t:T})\Big].
		\end{aligned}
	\end{align} 
	
	By \eqref{eq:cmpris3} and \eqref{eq:cmpris4}, we hence have
	\begin{align*}
		\begin{aligned}
			\mathbb{E}^{\underline{ \mathbb{P}}} \left[\sum_{t=0}^{T-1}r(s_t,a_t,s_{t+1},\tilde\mu_t)\right] &= \mathbb{E}^{\underline{ \mathbb{P}}} \left[\sum_{t=0}^{T-2}r(s_t,a_t,s_{t+1},\tilde\mu_t)+r(s_{T-1}, a_{T-1}, s_T, \tilde\mu_{T-1}) \right]\\
			&\leq  \mathbb{E}^{\underline{ \mathbb{P}}} \left[\sum_{t=0}^{T-2}r(s_t,a_t,s_{t+1},\tilde\mu_t)+\widehat{V}_{T-1}(s_{T-1},\tilde\mu_{T-1}) \right]\\
			&\leq \cdots \leq  \mathbb{E}^{\underline{ \mathbb{P}}} \Big[ \widehat{V}_0(s_{0},\tilde\mu_{0:T}) \Big]= \widehat{V}(\tilde\mu_{0:T}),
		\end{aligned}
	\end{align*}
	which ensures that
	\begin{align}\label{eq:opw_1}
		\inf_{\mathbb{P}\in{\cal Q}(\tilde\mu_{0:T},\pi_{0:T})}\mathbb{E}^{{\mathbb{P}}} \left[\sum_{t=0}^{T-1}r(s_t,a_t,s_{t+1},\tilde\mu_t)\right]\leq \mathbb{E}^{\underline{ \mathbb{P}}} \left[\sum_{t=0}^{T-1}r(s_t,a_t,s_{t+1},\tilde\mu_t)\right]  \leq  \widehat{V}(\tilde\mu_{0:T}).
	\end{align}
	Since $\pi_{0:T}$ is arbitrary in $\Pi$, we have $V( \tilde\mu_{0:T})\leq  \widehat{V}(\tilde\mu_{0:T})$.
	
	\vspace{0.5em}
	It remains to show the equality of $V(\Tilde{\mu}_{0:T})$ to the supremum in \eqref{eq:verification}. Since the last inequality given in \eqref{eq:opw_1} holds for any $\pi_{0:T}\in \Pi$ (with recalling $\underline{\mathbb{P}}=\mathbb{P}(\tilde{\mu}_{0:T},\pi_{0:T},p^*_{0:T})=\tilde \mu_0\otimes  \mathbb{P}_{(\tilde\mu_0,\pi_0,p_0^*)}\otimes \cdots \otimes { \mathbb{P}}_{(\tilde\mu_{T-1},\pi_{T-1},p_{T-1}^*)}\in {\cal Q}(\tilde \mu_{0:T},\pi_{0:T})$), it follows that
	\[
	\sup_{\pi\in \Pi}\mathbb{E}^{\mathbb{P}(\tilde{\mu}_{0:T},\pi_{0:T},p^*_{0:T})} \left[\sum_{t=0}^{T-1}r(s_t,a_t,s_{t+1},\tilde\mu_t)\right]  \leq \mathbb{E}^{\mathbb{P}^*(\tilde \mu_{0:T})} 	\left[\sum_{t=0}^{T-1}r(s_t,a_t,s_{t+1}, \tilde\mu_t)\right]= V(\tilde{\mu}_{0:T})
	\] 
	where the last equality follows from above (i.e., $V(\tilde{\mu}_{0:T})= \widehat{V}(\tilde\mu_{0:T})$). 
	
    On the other hand, since $\pi^*_{0:T}\in \Pi$ and $\mathbb{P}^*(\tilde \mu_{0:T})=\mathbb{P}(\tilde{\mu}_{0:T},\pi_{0:T}^*,p^*_{0:T})=\tilde \mu_0\otimes  \mathbb{P}_{(\tilde\mu_0,\pi_0^*,p_0^*)}\otimes \cdots \otimes { \mathbb{P}}_{(\tilde\mu_{T-1},\pi_{T-1}^*,p_{T-1}^*)}\in {\cal Q}(\tilde \mu_{0:T},\pi_{0:T}^*)$, the above inequality establishes equality. This completes the~proof.
\end{proof}

%%%%%%%%%%%%%%%%%%%%%%%%%%%%%%%%%%%%%%%%%%%%%%%%%%%%%%%%%%%%%%%%%%%%%%%%%%%%%%%%%%%%%%%%%%%%%%%%%%%%%%%%%%%%%%
\subsection{Proof of Lemma \ref{lem:dpp_berge_nash} and Proposition \ref{pro:dpp_Nash}}\label{sec:proof:dpp:MFE}

\begin{proof}[Proof of Lemma \ref{lem:dpp_berge_nash}]
    Fix $\overline\pi^N_{0:T} \in \Pi^{N}$, let $t \leq T-1$ and set 
	\[
	{\cal S}:=\left\{(\overline s^N_{t}, \overline a^N_{t}, \overline p^{N}_t) \, \Big | \,(\overline s^N_{t}, \overline a^N_{t})\in S^{N} \times A^{N}, \overline{p}^{N}_t \in \mathfrak{P}^{N}_t(\overline s^N_{t}, \overline a^N_{t})\right\}.
	\]
	Define an auxiliary map $F:{\cal S}\ni (\overline s^N_{t}, \overline a^N_{t}, \overline p^{N}_t) \mapsto F(\overline s^N_{t}, \overline a^N_{t}, \overline p^{N}_t)\in\R$ by
	\begin{align*}
	    F(\overline s^N_{t}, \overline a^N_{t}, \overline p^N_{t}) := \int_{S^{N}} f(\overline s^N_{t}, \overline a^N_{t}, \overline{s}^{N}_{t+1}) \overline{p}^{N}_{t}(d\overline s_{t+1}^N),
	\end{align*}
    where if $t = T-1$, then we set
    \begin{align*}
        f(\overline s^N_{t}, \overline a^N_{t}, \overline{s}^{N}_{t+1}) := r\big(s_{t}^{i}, a_{t}^{i}, s^{i}_{t+1}, e^{N}(\overline s_t^{N})\big),
    \end{align*}
    whereas if $t\leq T-2$, then we set
    \begin{align*}
        f(\overline s^N_{t}, \overline a^N_{t}, \overline{s}^{N}_{t+1}) := r\big(s_{t}^{i}, a_{t}^{i}, s^{i}_{t+1}, e^{N}(\overline s_t^{N})\big) + \int_{A^N}\widehat{J}_{t+1,i}^{N} (\overline s^N_{t+1},\overline a^N_{t+1};\overline\pi^N_{0:T})\overline \pi_{t+1}^N(d\overline a_{t+1}^N|\overline s_{t+1}^N).
    \end{align*}
    Since both $S^{N}$ and $A^{N}$ are finite, $F$ is continuous in $(\overline{s}^{N}_{t},\overline{a}^{N}_{t})$. Again, by the finiteness of $S^{N}$ and $A^{N}$, we get that $f$ is continuous. Hence, $F$ is continuous in $\overline p^{N}_{t}$. From here, we can follow the same ideas as presented in the proofs of Lemma~\ref{lem:dpp_berge} and Lemma~\ref{lem:Vt} to prove the result.
\end{proof}

\begin{proof}[Proof of Proposition \ref{pro:dpp_Nash}]
	We can use the same approach as presented in the proof of Proposition~\ref{pro:dpp}~(ii) to show that
    \begin{align*}
		\begin{aligned}
    		J^{N}_i(\mu^o,\overline \pi_{0:T}^{N})&= \inf_{\overline{\mathbb{P}}^N \in \mathcal{Q}^{N}(\mu^o,\overline \pi_{0:T}^{N})} \mathbb{E}^{\overline{\mathbb{P}}^N}\left[\sum_{t = 0}^{T-1} r\big(s_{t}^i, a_{t}^i, s_{t+1}^i, e^{N}(\overline{s}_{t}^N)\big)\right]\\
            &=\mathbb{E}^{\overline {\mathbb{P}}_i^N ({\mu^o,\overline \pi^N_{0:T}},\widehat{p}_{(0:T,i,\overline \pi^N_{0:T})} )}\left[\sum_{t = 0}^{T-1} r\big(s^{i}_{t}, a^{i}_{t}, s^{i}_{t+1}, e^{N}(\overline s_t^N)\big)\right],
    	\end{aligned}
    \end{align*}
    where the second equality follows by definition of $\widehat{J}_{0,i}^{N}$.
\end{proof}

%%%%%%%%%%%%%%%%%%%%%%%%%%%%%%%%%%%%%%%%%%%%%%%%%%%%%%%%%%%%%%%%%%%%%%%%%%%%%%%%%%%%%%%%%%%%%%%%%%%%%%%%%%%%%%
\section{Proof of results in Section \ref{sec:MF_main_thm}}\label{sec:proof:MF_main_thm}
\subsection{Preliminary lemmas}
Let us provide some simple observations that play an instrumental role in the proof of Proposition \ref{pro:FixPntEq} and Theorem \ref{thm:MFE}.

Let us begin with a measurable extension of mappings into stochastic kernels defined on probability spaces.  The proof can be found in Appendix \ref{sec:apdx}.
%into $X\times A \times  (\mathcal{M}_p(X))^{T-t}$.
\begin{lem} \label{lem:ExtKer}
	Suppose that Assumption~\ref{as:msr} is satisfied. Let $t \in \{0, \dots, T-1\}$ and $\tilde \mu_{t:T} \in (\mathcal{P}(S))^{T-t}$. Furthermore, let $p_{t} \colon S \times A\ni (s_t,a_t) \mapsto  p_t(\cdot|s_t,a_t)\in \mathcal{P}(S)$ be a mapping. %(which is not necessarily Borel-measurable). 
	Then there exists a Borel-measurable mapping (i.e., stochastic kernel) $\overline {p}_{t} : S \times A \times (\mathcal{P}(S))^{T-t}\ni(s_t,a_t,\mu_{t:T}) \mapsto \overline{p}_t(\cdot|s_t,a_t,\mu_{t:T})\in \mathcal{P}(S)$ such that for every $(s_t,a_t) \in S\times A$
	\begin{align*}
		\overline{p}_{t}(\cdot | s_t, a_t, \hat \mu_{t:T}) = p_{t}(\cdot | s_t, a_t).
	\end{align*}
\end{lem}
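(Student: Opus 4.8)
The plan is to construct $\overline{p}_t$ explicitly by "freezing" the given mapping $p_t$ at the fixed measure flow $\tilde\mu_{t:T}$ and extending it measurably to all of $(\mathcal{P}(S))^{T-t}$. Since $S$ and $A$ are finite by Assumption~\ref{as:msr}(i), the space $S\times A$ is finite, so any mapping $p_t:S\times A\to\mathcal{P}(S)$ is automatically Borel-measurable, and there is no regularity to preserve in the first two arguments. The only real content is to produce a jointly Borel-measurable map in all arguments $(s_t,a_t,\mu_{t:T})$ that agrees with $p_t(\cdot|s_t,a_t)$ when $\mu_{t:T}=\tilde\mu_{t:T}$.

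First I would handle the trivial dependence on the measure argument. A clean choice is to set $\overline{p}_t(\cdot|s_t,a_t,\mu_{t:T}):=p_t(\cdot|s_t,a_t)$ for every $(s_t,a_t,\mu_{t:T})\in S\times A\times(\mathcal{P}(S))^{T-t}$, i.e., make $\overline p_t$ constant in $\mu_{t:T}$. This map is Borel-measurable because it is the composition of the projection $(s_t,a_t,\mu_{t:T})\mapsto(s_t,a_t)$ (continuous, hence Borel) with $p_t$ (Borel since its domain is finite), and it visibly satisfies $\overline p_t(\cdot|s_t,a_t,\tilde\mu_{t:T})=p_t(\cdot|s_t,a_t)$. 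Strictly speaking one should check that a map into $\mathcal{P}(S)$ defined this way is a stochastic kernel in the sense of the paper's footnote, i.e., Borel-measurable from $S\times A\times(\mathcal{P}(S))^{T-t}$ to $\mathcal{P}(S)$; this is immediate from the factorization just described. If one instead wants a construction that genuinely uses $\tilde\mu_{t:T}$ (e.g.\ to later patch with a correspondence-valued selector on a measurable set), I would write $\overline p_t(\cdot|s_t,a_t,\mu_{t:T}) := \mathbf{1}_{\{\mu_{t:T}=\tilde\mu_{t:T}\}}\,p_t(\cdot|s_t,a_t) + \mathbf{1}_{\{\mu_{t:T}\neq\tilde\mu_{t:T}\}}\,q_t(\cdot|s_t,a_t,\mu_{t:T})$ for any fixed stochastic kernel $q_t$ (e.g.\ a measurable selector of $\mathfrak{P}_t$, whose existence is guaranteed under Assumption~\ref{as:msr}(ii) via Berge/measurable selection as in Lemma~\ref{lem:dpp_berge}); the singleton $\{\tilde\mu_{t:T}\}$ is closed hence Borel in $(\mathcal{P}(S))^{T-t}$, so this is still Borel-measurable.

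There is essentially no main obstacle here: the lemma is a bookkeeping statement that the paper needs only to phrase downstream arguments (e.g.\ in Definition~\ref{dfn:FixPntEq2} and the proof of Proposition~\ref{pro:FixPntEq}) in terms of genuine stochastic kernels $p_t^{\tilde\nu}:S\times A\times(\mathcal{P}(S))^{T-t}\to\mathcal{P}(S)$ rather than plain mappings on $S\times A$. The only point requiring a word of care is confirming Borel-measurability of the extension into $\mathcal{P}(S)$ with its weak topology — which, since $S$ is finite so that $\mathcal{P}(S)$ is a simplex in $\mathbb{R}^{n(S)}$, reduces to measurability of finitely many coordinate functions, and each coordinate of $\overline p_t$ is either constant in $\mu_{t:T}$ or a finite sum of indicators of Borel sets times measurable functions. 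I would therefore present the constant-in-$\mu_{t:T}$ construction as the proof, verify the factorization through the projection and $p_t$, note the defining equality at $\tilde\mu_{t:T}$, and remark that the finiteness of $S,A$ makes all measurability claims automatic.
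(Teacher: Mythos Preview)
Your proposal is correct. In fact, your primary construction---taking $\overline{p}_t(\cdot\mid s_t,a_t,\mu_{t:T}):=p_t(\cdot\mid s_t,a_t)$ for all $\mu_{t:T}$---is simpler than what the paper does and fully suffices for the lemma as stated. The paper instead uses the piecewise construction you mention as an alternative: it fixes a measurable selector $p'_t$ from Lemma~\ref{lem:dpp_berge}(i) (so $p'_t(\cdot\mid s_t,a_t,\mu_{t:T})\in\mathfrak{P}_t(s_t,a_t,\mu_t)$) and sets $\overline{p}_t$ equal to $p_t$ on the singleton $\{\tilde\mu_{t:T}\}$ and to $p'_t$ elsewhere, then verifies Borel-measurability by decomposing preimages. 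The paper's choice has the side effect that the extension lands in $\mathfrak{P}_t(s_t,a_t,\mu_t)$ off the singleton, which may be convenient for how the lemma is invoked later (e.g.\ in the proof of Proposition~\ref{pro:FixPntEq}(ii)), but the lemma itself only asserts that $\overline{p}_t$ takes values in $\mathcal{P}(S)$, and for that your constant-in-$\mu_{t:T}$ extension is both valid and more elementary.
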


The following two lemmas link the correspondences ${\cal C}$,\;${\cal B}$ (given in Definition \ref{dfn:FixPntEq2}\;(i)) into the dynamic programming results given in Lemma \ref{lem:dpp_berge} (and Proposition \ref{pro:dpp}). 
\begin{lem}\label{lem:CnuOpt}
	Suppose that Assumption \ref{as:msr} is satisfied. Let $\tilde{\nu}_{0:T}\in {\cal C}({\nu}_{0:T})$ and denote by $ p^{\tilde{\nu}}_{0:T-1}$ the corresponding kernels enabling $\tilde{\nu}_{0:T}\in {\cal C}(\nu_{0:T})$ (see Definition \ref{dfn:FixPntEq2}\;(i)). For every $t=0,\dots,T-1$, define $\overline{p}_t^{\tilde{\nu}}:S\times A \times {\cal P}(S)\ni (s_t,a_t,\mu_t)\mapsto \overline{p}_t^{\tilde{\nu}}(\cdot|s_t,a_t,\mu_t)\in {\cal P}(S)$~by 
	\begin{align*}
		\overline{p}_t^{\tilde{\nu}}(\cdot|s_t,a_t,\mu_t):= \left\{
		\begin{aligned}
			&{p}_t^{\tilde{\nu}}(\cdot|s_t,a_t,\mu_t,\nu_{t+1:T,S})\quad  &&\mbox{if}\;\;t\leq T-2; \\
			&\widehat{p}_t(\cdot|s_t,a_t,\mu_t) \quad &&\mbox{if}\;\;t=T-1,
		\end{aligned}
		\right.
	\end{align*}
	where $\widehat{p}_t$ is the stochastic kernel given in Lemma \ref{lem:dpp_berge}\;(i). Then for every $(s_t,a_t)\in S\times A$, $\overline{p}_t^{\tilde{\nu}}(\cdot |s_t,a_t,\nu_{t,S})$ is optimal for $\widehat{J}_t(s_t,a_t,\nu_{t:T,S})$, i.e., if $t=T-1$,
	\begin{align*}
			\int_{S}  r(s_{T-1}, a_{T-1}, s_T, \nu_{T-1,S})\overline{p}_{T-1}^{\tilde{\nu}}(ds_{T}|s_{T-1},a_{T-1},\nu_{T-1,S})=\widehat{J}_{T-1}(s_{T-1},a_{T-1},\nu_{T-1,S}),
	\end{align*}
    whereas if $t\leq T-2$,
	\begin{align*}\int_{S} \left(r(s_t, a_t, s_{t+1}, \nu_{t,S}) + \widehat {V}_{t+1}(s_{t+1},\nu_{t+1:T,S})\right) \overline p_t^{\tilde{\nu}} (ds_{t+1} |s_t, a_t,  \nu_{t,S})=\widehat {J}_t(s_t,a_t,\nu_{t:T,S}).
	\end{align*}
\end{lem}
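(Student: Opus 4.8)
The plan is to unwind the definition of the correspondence $\widehat{\mathfrak{P}}_t$ from Remark~\ref{rem:CalPTil} and observe that membership in $\widehat{\mathfrak{P}}_t$ is precisely the statement of optimality we want. First I would treat the terminal case $t = T-1$: here $\overline{p}_{T-1}^{\tilde\nu}(\cdot\,|\,s_{T-1},a_{T-1},\mu_{T-1}) = \widehat p_{T-1}(\cdot\,|\,s_{T-1},a_{T-1},\mu_{T-1})$ by definition, and Lemma~\ref{lem:dpp_berge}~(i) (specifically \eqref{eq:minimizer2}) says exactly that $\widehat p_{T-1}(\cdot\,|\,s_{T-1},a_{T-1},\nu_{T-1,S})$ attains the infimum defining $\widehat J_{T-1}(s_{T-1},a_{T-1},\nu_{T-1,S})$. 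So the terminal case is immediate from the construction.

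For $t \leq T-2$, the key point is that by Definition~\ref{dfn:FixPntEq2}~(i), the kernel $p_t^{\tilde\nu}$ witnessing $\tilde\nu_{0:T} \in \mathcal{C}(\nu_{0:T})$ satisfies, for every $(s_t,a_t) \in S\times A$,
\[
p_t^{\tilde\nu}(\cdot\,|\,s_t,a_t,\nu_{t:T,S}) \in \widehat{\mathfrak{P}}_t(s_t,a_t,\nu_{t:T,S}).
\]
Then I would simply recall the definition of $\widehat{\mathfrak{P}}_t$ from Remark~\ref{rem:CalPTil}: an element $\mathbb{P} \in \mathfrak{P}_t(s_t,a_t,\mu_t)$ lies in $\widehat{\mathfrak{P}}_t(s_t,a_t,\mu_{t:T})$ if and only if
\[
\int_S \bigl(r(s_t,a_t,s_{t+1},\mu_t) + \widehat V_{t+1}(s_{t+1},\mu_{t+1:T})\bigr)\,\mathbb{P}(ds_{t+1}) = \widehat J_t(s_t,a_t,\mu_{t:T}).
\]
Applying this with $\mu_{t:T} = \nu_{t:T,S}$ and $\mathbb{P} = p_t^{\tilde\nu}(\cdot\,|\,s_t,a_t,\nu_{t:T,S}) = \overline p_t^{\tilde\nu}(\cdot\,|\,s_t,a_t,\nu_{t,S})$ yields precisely the claimed identity. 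The only bookkeeping is to note that $\overline p_t^{\tilde\nu}(\cdot\,|\,s_t,a_t,\nu_{t,S})$, by the case $t\leq T-2$ of its definition, equals $p_t^{\tilde\nu}(\cdot\,|\,s_t,a_t,\nu_{t,S},\nu_{t+1:T,S}) = p_t^{\tilde\nu}(\cdot\,|\,s_t,a_t,\nu_{t:T,S})$, so the arguments line up.

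There is essentially no obstacle here: the lemma is a definitional repackaging, and the mild subtlety is only to keep the argument slots of $p_t^{\tilde\nu}$ straight (it takes $(s_t,a_t,\mu_{t:T})$, whereas $\overline p_t^{\tilde\nu}$ takes $(s_t,a_t,\mu_t)$ with the remaining measure arguments hard-coded to $\nu_{t+1:T,S}$) and to invoke the correct clause of Lemma~\ref{lem:dpp_berge}~(i) for the terminal step. I would also remark in passing that $\overline p_t^{\tilde\nu}$ as defined is indeed a stochastic kernel on $S$ given $S\times A\times\mathcal{P}(S)$, since it is obtained from the Borel-measurable kernels $p_t^{\tilde\nu}$ and $\widehat p_t$ by fixing some of the arguments, but this is not strictly needed for the stated conclusion and can be omitted or mentioned only briefly.
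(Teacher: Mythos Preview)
Your proposal is correct and follows essentially the same approach as the paper: the terminal case is handled directly by Lemma~\ref{lem:dpp_berge}\,(i), and for $t\leq T-2$ the membership $p_t^{\tilde\nu}(\cdot\,|\,s_t,a_t,\nu_{t:T,S})\in\widehat{\mathfrak{P}}_t(s_t,a_t,\nu_{t:T,S})$ from Definition~\ref{dfn:FixPntEq2}\,(i) together with the definition of $\widehat{\mathfrak{P}}_t$ in Remark~\ref{rem:CalPTil} gives the claimed identity. The paper's proof is the same argument, just written more tersely.
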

\begin{proof}
	It is straightforward to show the case where $t=T-1$ by the optimality of $\widehat{p}_{T-1}(={p}_{T-1}^{\tilde{\nu}})$ presented in Lemma \ref{lem:dpp_berge}\;(i). For the case where $t\leq T-2$, since for every $(s_t,a_t)\in S\times A$
	\[
	\overline{p}_t^{\tilde{\nu}}(\cdot|s_t,a_t,\nu_{t,S})=p_t^{\hat{\nu}}(\cdot|s_t,a_t,\nu_{t:T,S})\in \widetilde {\mathfrak{P}}_t(s_t,a_t,\nu_{t:T,S})
	\]
	(see Definition \ref{dfn:FixPntEq2}\;(i) and Remark \ref{rem:CalPTil}), $\overline{p}_t^{\tilde{\nu}}(\cdot|s_t,a_t,\nu_{t,S})$ is optimal for $\widehat{J}_t(s_t,a_t,\nu_{t:T,S})$. This completes the proof.
\end{proof}

\begin{lem}\label{lem:BnuOpt}
	Suppose that Assumption \ref{as:msr} is satisfied. Let $\nu_{0:T}$,~$\tilde{\nu}_{0:T} \in \Xi$ and denote by $\pi^{\tilde{\nu}}_{0:T}$ the disintegrating kernels of $\tilde{\nu}_{0:T}$ (see Definition \ref{dfn:FixPntEq}). Furthermore, denote for every $t=0,\dots,T-1$ by $\tilde w_{t}(s)$ the weight of the measure $\tilde \nu_{t, S}(\cdot)$ at each point $s \in S$ (i.e., $\sum_{s\in S}\tilde w_{t}(s)=1$ with $\tilde w_{t}(s)\geq 0$ for $s\in S$). Then the following hold:
	\begin{itemize}
		\item [(i)] $\tilde{\nu}_{0:T}\in {\cal B}(\nu_{0:T})$ (see Definition \ref{dfn:FixPntEq2}\;(i)) if and only if for every $t=0,\dots,T-1$ and $s_t\in S$ such that $\tilde w_{t}(s_t)>0$, %$\pi_t^{\hat{\nu}}(\cdot|x)$ is optimal for ${\cal V}_t(x,\nu_{0:T,X})$, i.e., 
		$ \pi_t^{\tilde \nu}(\cdot|s_t)\in {\cal P}(A)$ is optimal for $\widehat{V}_{t}(s_t,{\nu}_{t:T,S})$ (see \eqref{eq:DPP_maximin2}).
		%it holds $\int_{A} {\cal J}_{t}(x, a,\mu_{t:T}) \pi_t^{\hat{\nu}}(da|x)= {\cal V}_{t}(x,\mu_{t:T})$.
		\item [(ii)] Let $\tilde{\nu}_{0:T}\in {\cal B}(\nu_{0:T})$. For every $t=0,\dots,T-1$, define $\overline \pi^{\tilde \nu}_t:S\ni s_t \mapsto \overline \pi^{\tilde \nu}_t(\cdot|s_t)\in{\cal P}(A)$ by 
		\begin{align}\label{eq:policy_const}
			\overline \pi^{\tilde \nu}_t(\cdot|s_t):=\left\{
			\begin{aligned}
				& \pi_t^{\hat{\nu}}(\cdot|s_t)\quad &&\mbox{if}\;\;\tilde w_{t}(s_t)>0; \\
				&\widehat \pi_t(\cdot|s_t,\nu_{t:T,S}) &&\mbox{else},
			\end{aligned}
			\right.
		\end{align}
		where $\widehat \pi_t$ is the measurable selector given  in Lemma \ref{lem:dpp_berge}. Then it holds
		\begin{align}\label{eq:disinteg_reinteg}
			\tilde{\nu}_t(ds_t,da_t)= \overline \pi^{\tilde \nu}_t(da_t|s_t) \tilde \nu_{t,S}(ds_t).
		\end{align}
		Furthermore, $\overline \pi_t^{\tilde \nu}(\cdot|s_t)$ is optimal for $\widehat{V}_{t}(s_t,{\nu}_{t:T,S})$ for every $s_t\in S$. %, i.e., $\int_{A} {\cal J}_{t}(x, a,{\nu}_{t:T,X}) \overline \pi^{\hat \nu}_t(da|x)={\cal V}_{t}(x,{\nu}_{t:T,X})$. 
		%\item [(iii)] Let $ddd$
	\end{itemize}

	%$\hat{\nu}_{t}(D_{t}(\nu_{t:T})) = 1$ for all $t = 0, \dots, T-1$.
	%for the given $\nu_{0:T}$ in the sense of Theorem~\ref{pro:dpp} 
\end{lem}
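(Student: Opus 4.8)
The plan is to unwind the definitions of the correspondence $\mathcal{B}$ and of the set $D_t(\nu_{t:T})$ and connect them to the maximizer characterization \eqref{eq:DPP_maximin2} of $\widehat{V}_t$. For part (i), first recall that $\tilde{\nu}_{0:T}\in\mathcal{B}(\nu_{0:T})$ means $\tilde{\nu}_t(D_t(\nu_{t:T}))=1$ for every $t$, where $D_t(\nu_{t:T})=\{(s_t,a_t)\,:\,\max_{a_t'\in A}\widehat{J}_t(s_t,a_t',\nu_{t:T,S})=\widehat{J}_t(s_t,a_t,\nu_{t:T,S})\}$. Using the disintegration $\tilde{\nu}_t(ds_t,da_t)=\pi_t^{\tilde\nu}(da_t|s_t)\tilde{\nu}_{t,S}(ds_t)$ from Definition \ref{dfn:FixPntEq}, the mass condition $\tilde{\nu}_t(D_t(\nu_{t:T}))=1$ is equivalent to $\int_S \pi_t^{\tilde\nu}(\{a_t:(s_t,a_t)\in D_t(\nu_{t:T})\}|s_t)\,\tilde{\nu}_{t,S}(ds_t)=1$; since $S$ is finite and the integrand is bounded by $1$, this holds iff for every $s_t$ with $\tilde w_t(s_t)>0$ we have $\pi_t^{\tilde\nu}(\{a_t:(s_t,a_t)\in D_t(\nu_{t:T})\}|s_t)=1$, i.e.\ $\pi_t^{\tilde\nu}(\cdot|s_t)$ is supported on the $\widehat{J}_t(s_t,\cdot,\nu_{t:T,S})$-maximizing actions. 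Finally, a probability measure $\pi\in\mathcal{P}(A)$ achieves $\widehat{V}_t(s_t,\nu_{t:T,S})=\sup_{\pi\in\mathcal{P}(A)}\int_A \widehat{J}_t(s_t,a_t,\nu_{t:T,S})\pi(da_t)$ precisely when it is supported on the set of maximizers of $a_t\mapsto\widehat{J}_t(s_t,a_t,\nu_{t:T,S})$ (again using finiteness of $A$, so the supremum is attained and equals $\max_{a_t}\widehat{J}_t$). Combining these equivalences gives (i).

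For part (ii), I would first check \eqref{eq:disinteg_reinteg}: the kernel $\overline\pi_t^{\tilde\nu}$ is defined to coincide with the disintegrating kernel $\pi_t^{\tilde\nu}$ on $\{s_t:\tilde w_t(s_t)>0\}$ and to be an arbitrary measurable selector $\widehat\pi_t(\cdot|s_t,\nu_{t:T,S})$ off that set. Since $\tilde{\nu}_{t,S}$ assigns zero mass to $\{s_t:\tilde w_t(s_t)=0\}$, modifying the kernel there does not change the reconstructed measure, so $\overline\pi_t^{\tilde\nu}(da_t|s_t)\tilde{\nu}_{t,S}(ds_t)=\pi_t^{\tilde\nu}(da_t|s_t)\tilde{\nu}_{t,S}(ds_t)=\tilde{\nu}_t(ds_t,da_t)$ — this is a one-line argument. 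For the optimality claim: on $\{s_t:\tilde w_t(s_t)>0\}$, $\overline\pi_t^{\tilde\nu}(\cdot|s_t)=\pi_t^{\tilde\nu}(\cdot|s_t)$ is optimal for $\widehat{V}_t(s_t,\nu_{t:T,S})$ by part (i); on the complement, $\overline\pi_t^{\tilde\nu}(\cdot|s_t)=\widehat\pi_t(\cdot|s_t,\nu_{t:T,S})$, which satisfies \eqref{eq:maximizer} and is therefore optimal for $\widehat{V}_t(s_t,\nu_{t:T,S})$ by construction in Lemma \ref{lem:dpp_berge}\,(ii). Hence $\overline\pi_t^{\tilde\nu}(\cdot|s_t)$ is optimal for every $s_t\in S$.

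I do not anticipate a serious obstacle here; the lemma is essentially a bookkeeping statement. The one point requiring a little care is the "only if" direction of (i): one must argue that a measure achieving the supremum in \eqref{eq:DPP_maximin2} must be concentrated on the argmax set rather than merely have the right integral, which is immediate because $A$ is finite (so $\widehat{J}_t(s_t,\cdot,\nu_{t:T,S})$ attains its max and any action strictly below the max given positive probability would strictly decrease the integral). A second minor point is measurability of the set-valued selection $s_t\mapsto\{a_t:(s_t,a_t)\in D_t(\nu_{t:T})\}$ needed to make sense of "$\pi_t^{\tilde\nu}(\cdot|s_t)$ supported on it for $\tilde{\nu}_{t,S}$-a.e.\ $s_t$"; since $S$ and $A$ are finite this is automatic. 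Everything else reduces to elementary manipulations of disintegrations over finite spaces.
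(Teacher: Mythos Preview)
Your proposal is correct and follows essentially the same approach as the paper's proof: both unwind the definition of $\mathcal{B}$ via the disintegration $\tilde\nu_t(ds_t,da_t)=\pi_t^{\tilde\nu}(da_t|s_t)\tilde\nu_{t,S}(ds_t)$ and reduce (i) to the elementary fact (valid since $A$ is finite) that $\pi\in\mathcal{P}(A)$ attains $\max_{a}\widehat{J}_t(s_t,a,\nu_{t:T,S})$ iff it is supported on the argmax set, and both argue (ii) by splitting into the two cases in the definition of $\overline\pi_t^{\tilde\nu}$. The paper spells out the direction ``optimal $\Rightarrow$ supported on the argmax'' via an explicit contradiction (constructing an auxiliary $\pi_t'$ concentrated on the argmax), whereas you state it directly; this is a purely presentational difference.
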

\begin{proof}%[Proof of Lemma \ref{lem:BnuOpt}]
	We start by proving the statement (i). Suppose $\tilde{\nu}_{0:T}\in {\cal B}(\nu_{0:T})$. Fix any $t=0,\dots,T-1$. Then since $\tilde{\nu}_{t}(D_{t}(\nu_{t:T})) = 1$, %with $D_{t}(\nu_{t:T})= \{(x, a) \in X \times A : \max_{a' \in A} \mathcal{J}_{t}(x, a', \nu_{t:T, X}) = \mathcal{J}_{t}(x, a, \nu_{t:T, X})\}.$, 
	\begin{align*}
		1 &= \int_{S}\int_A  \mathbf{1}_{\{(s_t,a_t)\in D_{t}(\nu_{t:T})\}} \pi^{\hat{\nu}}_{t}(da_t| s_t) \tilde \nu_{t, S}(ds_t) \\
        &= \sum_{s_t\in S}\tilde w_{t}(s_t) \pi^{\tilde{\nu}}_{t}\big(\{a_t\in A|(s_t,a_t)\in D_{t}(\nu_{t:T})\} \big\vert s_t\big).
	\end{align*}
	This implies that for every $s_t\in S$ such that $\tilde w_{t}(s_t)>0$, $\pi^{\tilde{\nu}}_{t}\big(\{a_t\in A|(s_t,a_t)\in D_{t}(\nu_{t:T})\} \big\vert s_t\big)=1$.

	%This implies that $\pi_{t}(D_{t} \mid x, \boldsymbol{\nu}_{t, 1}) = 1$ for all $x \in X$. 
	We hence have that for every $s_t \in S$ such that $\tilde w_{t}(s_t)>0$, it holds
	\begin{align}\label{eq:equal_distinteg}
		\begin{aligned}
			%\mathcal{V}_{t}(x, \nu_{t:T, X}) %&= \sup_{\pi \in \mathcal{M}(A)} \int_{A} \mathcal{J}_{t}(x, a, \nu_{t:T, X}) \pi(da) \\
			\int_{A} \widehat{J}_{t}(s_t, a_t, \nu_{t:T, S}) \pi^{\tilde{\nu}}_{t}(da_t| s_t)&=\int_{A} \widehat{J}_{t}(s_t, a_t, \nu_{t:T, S}) {\bf 1}_{\{a_t\in A|(s_t,a_t)\in D_{t}(\nu_{t:T})\}}\pi^{\tilde{\nu}}_{t}(da_t| s_t) \\
			&= \int_{A} \max_{a_t' \in A} \widehat{J}_{t}(s_t, a_t', \nu_{t:T, S}){\bf 1}_{\{a_t\in A|(s_t,a_t)\in D_{t}(\nu_{t:T})\}}\pi^{\tilde{\nu}}_{t}(da_t|s_t) \\
			&= \max_{a_t' \in A} \widehat{J}_{t}(s_t, a_t', \nu_{t:T, S}).
			%&= \sup_{\pi \in \mathcal{M}_{1}(A)} \int_{A} \max_{a' \in A} \mathcal{J}_{t}(x, a', \nu_{t:T, X}) \pi(da) \\
			%&\geq \sup_{\pi \in \mathcal{M}(A)} \int_{A} \mathcal{J}_{t}(x, a, \nu_{t:T, X}) \pi(da_{t}) = \mathcal{V}_{t}(x, \nu_{t:T, X}).
		\end{aligned}
	\end{align}
	Furthermore, since 
	\[
	\max_{a_t' \in A}\widehat{J}_{t}(s_t, a_t', \nu_{t:T, S})\geq\sup_{\pi \in \mathcal{P}(A)} \int_{A} \widehat{J}_{t}(s_t, a_t, \nu_{t:T, S}) \pi(da_{t}) = \widehat{V}_{t}(s_t, \nu_{t:T, S}),
	\]
	it follows from $\pi^{\tilde{\nu}}_{t}(da_t| s_t) \in {\cal P}(A)$ and \eqref{eq:equal_distinteg} that $\pi_t^{\tilde{\nu}}(\cdot|s_t)$ is optimal for $\widehat{V}_t(s_t,\nu_{0:T,S})$.

	%for all $x \in X$, which means that $\pi^{\hat{\nu}}_{t}$ is optimal for all $t = 0, \dots, T-1$ and hence, $\boldsymbol{\pi}$ will be optimal in the sense of Theorem~\ref{pro:dpp}. \textcolor{red}{(Here I mean that we can extend this kernel from the $\pi^{*}$-form to the $\Tilde{\pi}$-form by Lemma \ref{lem:ExtKer}.)} \\
	
	\vspace{0.5em}
	Now suppose that for every $t=0,\dots,T-1$ and $s_t\in S$ such that $\tilde w_{t}(s_t)>0$, $\pi_t^{\tilde{\nu}}(\cdot|s_t)$ is optimal for $\widehat{V}_t(s_t,\nu_{0:T,S})$.  Assume that there exists some $t\leq T-1$ such that $\tilde{\nu}_{t}(D_{t}(\nu_{t:T})) < 1$. 
	
	Set $S' := \big\{s_t \in S \big| \pi^{\tilde{\nu}}_{t}(\{a_t\in A|(s_t,a_t)\in D_{t}(\nu_{t:T})\}| s_t) < 1\;\mbox{and}\;\tilde w_{t}(s_t)>0\big\}$, which is non-empty (due to $\tilde{\nu}_{t}(D_{t}(\nu_{t:T})) < 1$). Define for every $s_t \in S'$ by 
	\[
		A'(s_t) := \{a_t \in A | (s_t, a_t) \not\in D_{t}(\nu_{t:T})\}.
	\]

	Let $s_t \in S'$ and denote by $w_{t,s_t}(a_t)$ the weight of $\pi_t^{\tilde{\nu}}(\cdot|s_t)$ at $a_t \in A$.
	We now define $\pi_t' \in{\cal P}(A)$ by for every Borel set $E\in {\cal B}_A$, %(with ${\cal B}(A)$ denoting Borel set on $A$)
	\begin{align}\label{eq:auxi_pi}
		\pi_{t}'(E) =\sum_{a_t \in A } \frac{w_{t,s_t}(a_t)}{1 - \sum_{a_t' \in A'(s_t)} w_{t,s_t}(a_t')} \mathbf{1}_{\{a_t\in E\setminus A'(s_t)\}}.
	\end{align}
	%where $w_{a}(x)$ is the weight of $\pi^{\hat{\nu}}(\cdot \mid x)$ at point $a \in A$ given $x \in X$.

	Then since $\pi^{\tilde{\nu}}_{t}(\{a_t\in A|\widehat{J}_{t}(s_t, a_t, \nu_{t:T, S})< \max_{a_t' \in A}\widehat{J}_{t}(s_t, a_t', \nu_{t:T, S})\}| s_t) > 0$ (due to $s_t\in S'$), 
	\begin{align}\label{eq:auxi_inequal}
		\begin{aligned}
		\int_{A} \widehat{J}_{t}(s_t, a_t, \nu_{t:T, S}) \pi^{\tilde{\nu}}_{t}(da_t|s_t) &< \int_{A} \max_{a_t' \in A} \widehat{J}_{t}(s_t, a_t', \nu_{t:T,S}) \pi^{\tilde{\nu}}_{t}(da_t|s_t)\\
		&= \max_{a_t' \in A}\widehat{J}_{t}(s_t, a_t', \nu_{t:T, S}).
		\end{aligned}
	\end{align}
	Furthermore, since $\pi'_{t}(A'(s_t))=\pi'_{t}\big(\{a_t\in A \big| \widehat{J}_{t}(s_t, a_t, \nu_{t:T, S})< \max_{a_t' \in A}\widehat{J}_{t}(s_t, a_t', \nu_{t:T, S})\}\big)= 0$,
	\begin{align*}
		\max_{a_t' \in A} \widehat{J}_{t}(s_t, a_t', \nu_{t:T, S}) &= \int_{A} \max_{a_t' \in A} \widehat{J}_{t}(s_t, a_t', \nu_{t:T, S}) \pi_{t}'(da_t) \\
		&= \int_{A} \widehat{J}_{t}(s_t, a_t, \nu_{t:T, S}) \pi_{t}'(da_t) \leq \widehat{V}_{t}(s_t, \nu_{t:T, S}).
	\end{align*}
	Combining this with \eqref{eq:auxi_inequal} implies that $\int_{A} \widehat{J}_{t}(s_t, a_t, \nu_{t:T, S}) \pi^{\tilde{\nu}}_{t}(da_t|s_t) <\widehat{V}_{t}(s_t, \nu_{t:T, S})$, which is a contradiction to the optimality of $\pi_t^{\tilde \nu}(\cdot|s_t)$ for $\widehat{V}_t(s_t,\nu_{0:T,S})$.

	Thus, $\tilde{\nu}_{t}(D_{t}(\nu_{t:T})) = 1$ for every $t=0,\dots, T-1$, i.e., $\tilde{\nu}_{0:T}\in {\cal B}(\nu_{0:T})$.
	
	\vspace{0.5em}
	\noindent Now let us prove (ii). By the construction given in \eqref{eq:policy_const}, it is straightforward to see that \eqref{eq:disinteg_reinteg} holds. Hence it remains to show the optimality of $\overline \pi^{\tilde \nu}_t(\cdot|s_t)$ for $\widehat{V}_t(s_t,\nu_{t:T,S})$ for every $s_t\in S$. 
	
	Let $s_t\in S$ be such that $\tilde w_{t}(s_t)>0$. Then Lemma \ref{lem:BnuOpt}\;(i) ensures that $\overline \pi^{\tilde \nu}_t(\cdot|s_t)=\pi_t^{\tilde \nu}(\cdot|s_t)$ is optimal for $\widehat{V}_t(s_t,\mu_{t:T,S})$. For the other case where $s_t\in S$ with $\tilde w_{t}(s_t)=0$, since $\overline \pi^{\tilde \nu}_t(\cdot|s_t)=\widehat \pi_t(\cdot|s_t,\nu_{t:T,S})$, the optimality of $\widehat \pi_t$ given in Lemma \ref{lem:dpp_berge}\;(ii) ensures that $\overline \pi^{\tilde \nu}_t(\cdot|s_t)$ is optimal for $\widehat{V}_t(s_t,\mu_{t:T,S})$. This completes the proof.
\end{proof}

%%%%%%%%%%%%%%%%%%%%%%%%%%%%%%%%%%%%%%%%%%%%%%%%%%%%%%%%%%%%%%%%%%%%%%%%%%%%%%%%%%%%%%%%%%%%%%%%%%%%%%%%%%%%%%
\subsection{Proof of Proposition \ref{pro:FixPntEq}}
\begin{proof}[Proof of Proposition \ref{pro:FixPntEq}\;(i)]  
	We first note that by the existence of $\widehat p_{0:T}$ given in Lemma~\ref{lem:dpp_berge}\;(i), ${\cal C}$ (given in Definition \ref{dfn:FixPntEq2}\;(i)) is non-empty. 
	
	We claim that $\Gamma$ is non-empty. To that end, let $\nu_{0:T} \in \Xi$ and choose an arbitrary $\tilde \nu_{0:T} \in \mathcal{C}(\nu_{0:T})$. %(since ${\cal C}$ is non-empty). 
	Now for every $t=0,\dots,T-1$, set 
	\[
	\tilde \nu'_{t}(ds_t, da_t) := \widehat \pi_{t}(da_t |s_t, \nu_{t:T,S}) \tilde \nu_{t, S}(ds_t),
	\]
	where $\widehat \pi_t$ is the measurable selector given in Lemma~\ref{lem:dpp_berge}\;(ii).

	Then since $\tilde \nu'_{t,S}(\cdot)=\tilde \nu_{t, S}(\cdot)$ and $\tilde \nu_{0:T} \in \mathcal{C}(\nu_{0:T})$, it is clear that $\tilde \nu'_{0:T}\in {\cal C}(\nu_{0:T})$. Hence it remains to show that $\tilde \nu'_{0:T} \in {\cal B}(\nu_{0:T})$. Indeed, since the disintegrating kernel $\pi_t^{\tilde{\nu}'}(\cdot|s_t)$ equals $\widehat \pi_{t}(\cdot | s_t, \nu_{t:T,X})$ for every $s_t\in S$, $\pi_t^{\tilde{\nu}'}(\cdot|s_t)$ is optimal for $\widehat{V}_t(s_t,\nu_{t:T,S})$ for every $s_t\in S$.  From this, Lemma~\ref{lem:BnuOpt}\;(i) ensures the claim to hold. 
	
	%Hence, $\hat \nu'_{0:T} \in {\cal C}(\nu_{0:T})\cap {\cal B}(\nu_{0:T})=\Gamma(\nu_{0:T})$. Therefore, $\Gamma$ is non-empty.

	\vspace{0.5em}
	Next we claim that $\Gamma$ is convex-valued. Let $\nu_{0:T}\in \Xi$, $\nu'_{0:T}, \nu''_{0:T} \in \Gamma(\nu_{0:T})$, and $\lambda \in (0, 1)$. For every $t=0,\dots,T-1$, define $\tilde \nu_t \in {\cal P}(S\times A)$ by 
	\[
	\tilde \nu_{t}(ds_t,da_t) := \lambda \nu'_{t}(ds_t,da_t) + (1 - \lambda) \nu''_{t}(ds_t,da_t).
	\]
	
	We claim that $\tilde \nu_{0:T} \in \Gamma (\nu_{0:T})$. Since it is straightforward to see that $\tilde \nu_{0:T}\in {\cal B}(\nu_{0:T})$, we will show that $\tilde \nu_{0:T} \in {\cal C}(\nu_{0:T})$.

	It is clear that $\tilde \nu_{0,S}= \lambda \nu'_{0,S}+(1-\lambda)\nu''_{0,S}= \mu^o$ (since $\nu'_{0,S}=\nu''_{0,S}=\mu^o$; see Definition~\ref{dfn:FixPntEq2}\;(i)).  Denote by $p^{\nu'}_{0:T-1}$ and $p^{\nu''}_{0:T-1}$ the sequences of kernels enabling $\nu'_{0:T}\in {\cal C}(\nu_{0:T})$ and $\nu''_{0:T}\in {\cal C}(\nu_{0:T})$ respectively. 
	
	Then for every $t=0,\dots,T-2$, we define $p^{\tilde {\nu}}_t : S\times A\times ({\cal P}(S))^{T-t}\rightarrow {\cal P}(S)$ by for every $(s_t,a_t,\mu_{t:T})\in S\times A\times ({\cal P}(S))^{T-t}$,
	\[
	p^{\tilde {\nu}}_t (\cdot|s_t,a_t,\mu_{t:T}):=  \lambda p^{\nu'}_{t}(\cdot | s_t, a_t, \mu_{t:T})+(1-\lambda) p^{\nu''}_{t}(\cdot | s_t, a_t, \mu_{t:T}).
	\]
	
	Note that for every $t=0,\dots,T-2$, $p^{\nu'}_{t}(\cdot | s_t, a_t, \nu_{t:T,S})$, $p^{\nu''}_{t}(\cdot | s_t, a_t, \nu_{t:T,S}) \in \widehat {\mathfrak{P}}_t(s_t,a_t,\nu_{t:T,S})$ for every $(s_t,a_t)\in S \times A$ and $ \widehat {\mathfrak P}_t$ is convex-valued (see Remark \ref{rem:CalPTil}). Therefore, for every $t=0,\dots,T-2$, it holds that $p^{\tilde {\nu}}_t (\cdot|s_t,a_t,\nu_{t:T,S})\in \widetilde {\mathfrak P}_t(s_t,a_t,\nu_{t:T,S})$ for every $(s_t,a_t)\in S\times A$.
	
	Furthermore, it also holds for every $t=0,\dots,T-2$ that 
	\begin{align*}
		\tilde \nu_{t+1,S}(\cdot)=  \lambda \nu'_{t+1,S}(\cdot) + (1 - \lambda) \nu''_{t+1,S}(\cdot)= \int_{S \times A} p^{\tilde {\nu}}_t (\cdot|s_t,a_t,\nu_{t:T,S}) \nu_{t}(ds_t, da_t).
	\end{align*}
	We hence have that  $\tilde \nu_{0:T} \in \mathcal{C}(\nu_{0:T})$. This completes the proof.
	%it follows that $p_{t}(\cdot \mid x, a, \boldsymbol{\xi}_{t}) \in \mathcal{P}^{*}_{t}(x, a, \boldsymbol{\xi}_{t})$ and $\pi_{t}(\cdot \mid x, \boldsymbol{\xi}_{t}) \in \Pi^{*}_{t}(x, a, \boldsymbol{\xi}_{t})$ for all $x \in X$, $a \in A$, and $\boldsymbol{\xi}_{t} \in \mathcal{M}_{1}(X)^{T-t}$. Hence, $\big((\boldsymbol{\pi}, \boldsymbol{p}), \boldsymbol{\nu}_{0, 1}\big)$ is a robust mean-field equilibrium by definition.
\end{proof}

\begin{proof}[Proof of Proposition \ref{pro:FixPntEq}\;(ii)]	Let $({\nu}_{0:T}^{n}, {\xi}_{0:T}^{n})_{n \in \N} \subseteq \Xi \times \Xi$ be a sequence such that for every $n\in \mathbb{N}$, ${\xi}_{0:T}^{n} \in \Gamma({\nu}_{0:T}^{n})$ and that for every $t=0,\dots,T-1$ as $n \to \infty$,
	\begin{align}\label{eq:conv_msr}
		{\nu}_{t}^{n} \rightharpoonup
		{\nu}_{t}^{\star }, \quad {\xi}_{t}^{n} \rightharpoonup {\xi}_{t}^{\star},
	\end{align}
	with some $ ({\nu}_{0:T}^{\star }, {\xi}_{0:T}^{\star}) \in \Xi \times \Xi$.  
	
	To prove $\mathrm{Gr}(\Gamma)$ is closed, it is sufficient to prove that ${\xi}^\star_{0:T} \in \Gamma({\nu}_{0:T}^{\star })$. 
	
	\vspace{0.5em}
	\noindent {\it Step 1.}~We show that ${\xi}^\star_{0:T} \in {\cal C}({\nu}_{0:T}^{\star })$. %By Remark \ref{rem:Ext_kernel}, for every $n \in \N$ and every $t =0,\dots,T-1$, there exists disintegrating kernels $p^{v^{(n)}}\ni X\times A\times {\cal M}_p(X) \rightarrow {\cal M}_p(X)$ and $\pi^{\nu^{(n)}}:X \rightarrow {\cal M}(A)$ such that 
	%\begin{equation} \label{eq:Xi(n)C}
	%	\xi^{(n)}_{t+1, 1}(\cdot) = \int_{X} \int_{A} p_{t}^{(n)}(\cdot \mid x, a, \boldsymbol{\nu}^{(n)}_{t, 1}) \pi_{t}^{(n)}(da \mid x, \boldsymbol{\nu}^{(n)}_{t, 1}) \nu^{(n)}_{t, 1}(dx),
	%\end{equation}
	%where it holds that
	%\begin{align*}
	%	\nu^{(n)}_{t}(dx, da, dy) = p_{t}^{v^{(n)}}(dy | x, a, {\nu}_{t,X}^{(n)}) \pi_{t}^{v^{(n)}}(da | x) \nu^{(n)}_{t, X}(dx).
	%\end{align*}
	%For notational simplicity, set $p^{(n)}_t := p_{t}^{\nu^{(n)}}$ and $\pi^{(n)}:= \pi_t^{\nu^{(n)}}$.
	Since $\xi_{0,S}^n=\mu^o$ for every $n\in\mathbb{N}$ (due to ${\xi}_{0:T}^{n} \in {\cal C}({\nu}_{0:T}^{n})$), by \eqref{eq:conv_msr} it holds that $\xi_{0,S}^\star =\mu^o$.
		
	For every $n\in \mathbb{N}$, let $p_{0:T-1}^{\xi^n}$ be a sequence of kernels enabling ${\xi}_{0:T}^{n} \in {\cal C}({\nu}_{0:T}^{n})$ (see Definition~\ref{dfn:FixPntEq2}\;(i)). For notational simplicity, set $p_{0:T-1}^{n}:=p_{0:T-1}^{\xi^n}$.
	
	Then for every $n\in \mathbb{N}$ and $t=0,\dots,T-2$, it holds that %Since ${\xi}_{0:T}^{(n)} \in \Gamma({\nu}_{0:T}^{(n)})$ for every $n\in \mathbb{N}$, there exists a sequence of stochastic kernels $p_{0:T-1}^{\xi^n}$ such that 
	\begin{align} \label{eq:Xi(n)C}
		\xi^{n}_{t+1, S}(\cdot) = \int_{S \times A} p_{t}^{n}(\cdot | s_t, a_t, \nu^{n}_{t:T, S}) \nu^{n}_{t}(ds_t, da_t),
	\end{align}
	(due to ${\xi}_{0:T}^{n} \in {\cal C}({\nu}_{0:T}^{n})$) and that for every $(s_t, a_t) \in S \times A$,
	\begin{align}\label{eq:conv_msr1_1}
		\mathbb{P}_{t,s_t,a_t}^n:=p^{n}_{t}(\cdot | s_t, a_t, \nu^{n}_{t:T, S}) \in \widehat {\mathfrak{P}}_{t}(s_t, a_t, \nu^{n}_{t:T,S}).
	\end{align}
	
	\vspace{0.5em}
	Fix any $t\in \{0,\dots,T-2\}$. Let $(s_t,a_t)\in S\times A$. %Set $\mathbb{P}^n_{t,x,a}:=p^{n}_{t}(\cdot | x, a, \nu^{n}_{t:T, X})$ for every $n\in \mathbb{N}$. 
	Since $\mathbb{P}^n_{t,s_t,a_t}\in  \widehat {\mathfrak{P}}_{t}(s_t, a_t, \nu^{n}_{t:T,S})$ for every $n\in \mathbb{N}$ and for every $u=t,\dots,T-1$, $\nu^{n}_{u,S} \rightharpoonup \nu_{u,S}^\star$ as $n\rightarrow\infty$ (see \eqref{eq:conv_msr}), the compact-valueness and upper-hemicontinuity of the correspondence $ \widehat {\mathfrak{P}}_{t}$  (see Remark \ref{rem:CalPTil}) ensure that there exist a subsequence $(\P^{n_{k}}_{t, s_t, a_t})_{k \in \N}$ and some $\P_{t, s_t, a_t} \in \widehat {\mathfrak{P}}_{t}(s_t, a_t, \nu_{t:T,S}^\star)$ such that 
	\begin{align}\label{eq:conv_msr2}
		\P^{n_{k}}_{t, s_t, a_t} \rightharpoonup \P_{t, s_t, a_t}\quad \mbox{as $k\rightarrow\infty$}
	\end{align}
	(see \cite[Theorem 17.20]{CharalambosKim2006infinite}). Since both $S$ and $A$ are finite (see Assumption \ref{as:msr}\;(i)), by using the same arguments presented for \eqref{eq:conv_msr2} a finite number of times, we can and do choose a subsequence $(\P^{n_{k}}_{t, s_t, a_t})_{k \in \N}$  of the one in \eqref{eq:conv_msr1_1} and have $(\mathbb{P}_{t,s_t,a_t})_{(s_t,a_t)\in S\times A}$ (for notational simplicity, we do not relabel that sequence) for which \eqref{eq:conv_msr2} holds with $\mathbb{P}_{t,x,a}\in \widehat {\mathfrak{P}}_{t}(s_t, a_t, \nu_{t:T, S}^\star)$ for every $(s_t, a_t) \in S \times A$.
	
	From this, we can define a mapping 
	\begin{align}\label{eq:conv_msr3}
		p^\star_{t} : S \times A\ni (s_t, a_t)   \mapsto p^\star_{t} (\cdot |s_t,a_t):= \mathbb{P}_{t, s_t, a_t} \in \widehat{\mathfrak{P}}_{t}(s_t, a_t, \nu_{t:T, S}^\star).
	\end{align}
	Lemma~\ref{lem:ExtKer} enables to extend $p^\star_{t}$ as a stochastic kernel $\overline {p}^\star_{t} : S \times A \times (\mathcal{P}(S))^{T-t}\ni(s_t,a_t,\mu_{t:T}) \mapsto \overline{p}^\star_t(\cdot|s_t,a_t,\mu_{t:T})\in \widehat{\mathfrak{P}}_{t}(s_t, a_t, \mu_{t:T})$ such that for every $(s_t,a_t) \in S\times A$, it holds
	\begin{align}\label{eq:conv_msr4}
		\overline{p}_{t}^\star(\cdot | s_t, a_t, \nu_{t:T,S}^\star) = p_{t}^\star(\cdot | s_t, a_t).
	\end{align}
	
	By the consecutive constructions given in \eqref{eq:conv_msr3} and \eqref{eq:conv_msr4}, the limit \eqref{eq:conv_msr2} together with~\eqref{eq:conv_msr1_1} ensures that for every $(s_t, a_t) \in S \times A$, as $k\rightarrow \infty$,
	\begin{align}\label{eq:conv_msr5}
		p^{n_k}_{t}(\cdot | s_t, a_t, \nu^{n_k}_{t:T, S}) \rightharpoonup \overline{p}_{t}^\star(\cdot | s_t, a_t, \nu_{t:T,S}^\star).
	\end{align}
	
	Now we claim that as $k\rightarrow \infty$, 
	\begin{align}\label{eq:conv_claim}
		\int_{S \times A} p_{t}^{n_k}(\cdot | s_t, a_t, \nu^{n_k}_{t:T, S}) \nu^{n_k}_{t}(ds_t, da_t) \rightharpoonup \int_{S \times A} \overline p_{t}^{\star}(\cdot | s_t, a_t, \nu^{\star}_{t:T, S}) \nu^{\star}_{t}(ds_t, da_t).
	\end{align}
	
	To that end, for every $k\in \mathbb{N}$ denote by $w_t^{n_k}(s_t, a_t)$ and $w_t^\star(s_t, a_t)$ the weights of $\nu^{n_k}_{t}$ and $\nu_{t}^\star$ at $(s_t, a_t) \in S \times A$ and by $w^{n_k}_{t,s_t,a_t}(s_{t+1})$ and $\overline w^\star_{t,s_t,a_t}(s_{t+1})$ the weights of $p_{t}^{n_k}(ds_{t+1} | s_t, a_t, \nu^{n_k}_{t:T, S})$ and $\overline p^\star_{t}(ds_{t+1} |s_t, a_t, \nu_{t:T, S})$ at $s_{t+1} \in S$.  Then by \eqref{eq:conv_msr} and \eqref{eq:conv_msr5} (since $S$ and $A$ are finite; see Assumption \ref{as:msr}\;(i)), it holds that for every $s_t, s_{t+1} \in S$ and $a_t \in A$, as $k\rightarrow \infty$,
	\begin{align}\label{eq:conv_msr6}
		w_t^{n_k}(s_t, a_t)\rightarrow w_t^\star(s_t, a_t), \quad w^{n_k}_{t,s_t,a_t}(s_{t+1}) \rightarrow \overline w^\star_{t,s_t,a_t}(s_{t+1}).
	\end{align}
	
	Let $g:S\rightarrow \mathbb{R}$ be any mapping (which is obviously in $C_b(S;\mathbb{R})$ as $S$ is finite). Then since for every $k\in \mathbb{N}$
	\begin{align*}
	&\int_{S \times A} \int_{S} g(s_{t+1}) p_{t}^{n_k}(ds_{t+1} | s_t, a_t, \nu^{n_k}_{t:T, S}) \nu^{n_k}_{t}(ds_t, da_t) \\
	&\quad =\sum_{(s_t, a_t) \in S \times A} 	w_t^{n_k}(s_t, a_t) \sum_{s_{t+1} \in S} w^{n_k}_{t,s_t,a_t}(s_{t+1}) g(s_{t+1}),
	\end{align*}
	from \eqref{eq:conv_msr6} (together with the finiteness of $S$ and $A$), it follows that
	\begin{align*}
		&\lim_{k \to \infty} \int_{S \times A} \int_{S} g(s_{t+1}) p_{t}^{n_k}(ds_{t+1} | s_t, a_t, \nu^{n_k}_{t:T, S}) \nu^{n_k}_{t}(ds_t, da_t) \\
		&\quad=  \sum_{(s_t, a_t) \in S \times A}  w_t^\star(s_t, a_t) \sum_{s_{t+1} \in S} \overline w^\star_{t,s_t,a_t}(s_{t+1}) g(s_{t+1}) \\
		&\quad= \int_{S \times A} \int_{S} g(s_{t+1}) \overline p_{t}^{\star}(ds_{t+1} | s_t, a_t, \nu^{\star}_{t:T, S}) \nu^{\star}_{t}(ds_t, da_t),
	\end{align*}
	which ensures the claim given in \eqref{eq:conv_claim} to hold.
	
	Using \eqref{eq:conv_claim} together with \eqref{eq:Xi(n)C} and \eqref{eq:conv_msr}, we hence have that 
	\begin{align*}
		\xi_{t+1, S}^\star(\cdot) = \int_{S \times A} \overline p_{t}^\star(\cdot | s_t, a_t, \nu^\star_{t:T, S}) \nu^{\star}_{t}(ds_t, da_t),
	\end{align*}
	where we recall that $\overline p_{t}^\star$ satisfies \eqref{eq:conv_msr4} for every $(s_t,a_t)\in S\times A$. Since this holds for any $t=0,\dots,T-2$,  we hence have that $\xi_{0:T}^\star \in \mathcal{C}(\nu_{0:T}^\star)$. 

	\vspace{0.5em}
	\noindent {\it Step 2.}~It remains to show that $\xi_{0:T}^\star\in \mathcal{B}(\nu_{0:T}^\star)$. Here we follow the arguments of the proof for \cite[Proposition 3.9.]{saldi2018markov}. For every $t = 0, \dots, T-1$ and $n\in \mathbb{N}$, set $D_{t}^\star := D_{t}(\nu^\star_{t:T})$ and $D^{n}_{t} := D_{t}(\nu^{n}_{t:T})$ so that $\xi^{n}_{t}(D^{n}_{t}) = 1$ (because $\xi^n_{0:T}\in {\cal B}(\nu^n_{0:T})$; see Definition~\ref{dfn:FixPntEq2}\;(i)).
	
	Fix any $t=0,\dots,T-1$. Let $(s_{t}^n)_{n \in \N} \subseteq S$ and $s_t\in S$ be such that $s_{t}^n \rightarrow s_t$ as $n\rightarrow\infty$. Since $\widehat{J}_{t}(\cdot,\cdot, \nu^{n}_{t:T, S}) \colon S \times A \to \R$ converges continuously\footnote{Suppose $g$ and $(g_{n})_{n \in \N}$ are measurable functions on a metric space $E$. The sequence $(g_{n})_{n \in \N}$ is said to converge to $g$ continuously if $\lim_{n \to \infty} g_{n}(e_{n}) = g(e)$ for any sequence $(e_{n})_{n \in \N}$ with $e_{n} \to e \in E$.} to $\widehat{J}_{t}(\cdot,\cdot, \nu_{t:T,S}^\star ) \colon S \times A \to \R$ (by Lemma~\ref{lem:Vt}\;(i) and \eqref{eq:conv_msr}) and the action space $A$ is finite,  it holds that
	\begin{align}\label{eq:conv_conti_max_J}
		\lim_{n \to \infty} \max_{a_t \in A} \widehat{J}_{t}(s_{t}^n, a_t, \nu^{n}_{t:T, S}) = \max_{a_t \in A} \widehat{J}_{t}(s_t, a_t, \nu^\star_{t:T, S}),
	\end{align}
	which implies that $\max_{a_t \in A} \widehat{J}_{t}(\cdot, a_t, \nu^{n}_{t:T, S})$ converges continuously to $\max_{a_t \in A} \widehat{J}_{t}(\cdot, a_t, \nu^\star_{t:T, S})$. 
	
	For every $M \in \mathbb{N}$, set 
	\begin{align}\label{eq:e_M_seq}
		E_{t}^{M} := \left\{(s_t, a_t)\in S\times A  \Big| \max_{a_t' \in A} \widehat{J}_{t}(s_t, a_t', \nu^\star_{t:T, S}) \geq \widehat{J}_{t}(s_t, a_t, \nu^\star_{t:T, X}) + \varepsilon_M\right\}
	\end{align}
	to be a closed subset\;where $(\varepsilon_M)_{M\in \mathbb{N}}\subseteq (0, \infty)$ is a decreasing sequence so that $\lim_{M\to \infty}\varepsilon_M=0$.
	%$\varepsilon_M \downarrow0$\;as\;$M \to \infty$.
	
	Then since $(D^\star_{t})^{c} = \bigcup_{M = 1}^{\infty} E_{t}^{M}$ and $E_{t}^{M} \subset E_{t}^{M+1}$ for every $M\in \mathbb{N}$, the monotone convergence theorem implies that for every $n\in \mathbb{N}$,
	\begin{align*}
		\begin{aligned}
			1 - \xi^{n}_{t}(D_{t}^\star \cap D^{n}_{t}) &= \xi^{n}_{t}(D^{n}_{t}) - \xi^{n}_{t}(D^\star_{t} \cap D^{n}_{t})\\
			& = \xi^{n}_{t}((D^\star_{t})^{c}  \cap D^{n}_{t}) = \liminf_{M \to \infty} \xi^{n}_{t}(E_{t}^{M} \cap D^{n}_{t}).
		\end{aligned}
	\end{align*}
	This ensures that
	\begin{align}\label{eq:conv_msr_flow}
		\begin{aligned}
			1 &= \limsup_{n \to \infty} \liminf_{M \to \infty} \bigg\{\xi^{n}_{t}(D_{t}^\star \cap D^{n}_{t}) + \xi^{n}_{t}(E_{t}^{M} \cap D^{n}_{t})\bigg\} \\
			&\leq \liminf_{M \to \infty} \limsup_{n \to \infty} \bigg\{\xi^{n}_{t}(D_{t}^\star \cap D^{n}_{t}) + \xi^{n}_{t}(E_{t}^{M} \cap D^{n}_{t})\bigg\}.
		\end{aligned}
	\end{align}
	
	We claim that for every $M\in \mathbb{N}$, 
	\begin{align}\label{eq:forza_claim}
		\limsup_{n \to \infty} \xi^{n}_{t}(E_{t}^{M} \cap D^{n}_{t})= \limsup_{n \to \infty}\int_{S\times A}\mathbf{1}_{\{(s_t,a_t)\in E_{t}^{M} \cap D^{n}_{t} \}}  \xi_t^n(ds_t,da_t) = 0.
	\end{align}
	%let us evaluate the limit of $ \xi^{n}_{t}(E_{t}^{M} \cap D^{n}_{t})$ (i.e., the second term in the last line of \eqref{eq:conv_msr_flow}) as $n \to \infty$. 

	Fix any $M\in \mathbb{N}$. We firstly show that $\mathbf{1}_{\{(s_t,a_t)\in E_{t}^{M} \cap D^{n}_{t} \}}:S\times A \mapsto \mathbb{R}$ converges continuously to $0$ as $n\rightarrow \infty$. Let $(s_{t}^n, a_{t}^n)_{n\in \mathbb{N}}$ be a sequence such that $(s_{t}^n, a_{t}^n) \to (s_t^\star, a_t^\star) \in E_{t}^{M}$ as $n\rightarrow \infty$. Then by \eqref{eq:conv_conti_max_J} and \eqref{eq:e_M_seq},
	\begin{align*}
		\begin{aligned}
			\lim_{n \to \infty} \max_{a_t \in A} \widehat{J}_{t}(x_{t}^n, a_t, \nu^{n}_{t:T, S}) &= \max_{a_t \in A} \widehat{J}_{t}(s_t^\star, a_t, \nu^\star_{t:T, S}) \\
			&\geq \widehat{J}_{t}(s_t^\star, a_t^\star, \nu^\star_{t:T, S}) + \varepsilon_{M} \\
			&= \lim_{n \to \infty} \widehat{J}_{t}(s_{t}^n, a_{t}^n, \nu^{n}_{t:T,S}) + \varepsilon_{M}.
		\end{aligned}
	\end{align*}
	Hence, for sufficiently large $n$, we have $\max_{a_t \in A} \widehat{J}_{t}(s_{t}^n, a_t, \nu^{n}_{t:T, S}) > \widehat{J}_{t}(s_{t}^n, a_t^{n}, \nu^{n}_{t:T,S})$ which implies that $(s_{t}^n, a_{t}^n) \not\in D^{n}_{t}$. Hence we have that $\mathbf{1}_{\{(s_t,a_t)\in E_{t}^{M} \cap D^{n}_{t} \}}$ converges continuously to $0$ as $n \to \infty$.
	
	From this and the limit ${\xi}_{t}^{n} \rightharpoonup {\xi}_{t}^{\star}$ as $n\rightarrow \infty$ (see \eqref{eq:conv_msr}), an application of \cite[Theorem 3.3]{serfozo1982convergence} ensures the claim given in \eqref{eq:forza_claim} to hold for every $M\in \mathbb{N}$. 
	
	Combining this with \eqref{eq:conv_msr_flow}, we have 
	\begin{align*}
		1 \leq \limsup_{n \to \infty} \xi^{n}_{t}(D_{t}^\star \cap D^{n}_{t}) \leq \limsup_{n \to \infty} \xi^{n}_{t}(D_{t}^\star).
	\end{align*}
	Furthermore, since $D_{t}^\star$ is closed, the Portmanteau theorem (see e.g., \cite[Theorem 2.1]{billingsley2013convergence}, \cite[Theorem 8.2.3]{bogachev2007constructions}) implies that $\limsup_{n \to \infty} \xi^{n}_{t}(D_{t}^\star) \leq \xi_{t}^\star(D_{t}^\star).$ Hence, we have shown that $\xi_{t}^\star(D_{t}^\star) = 1$. 
	
	Since this holds for any $t=0,\dots,T-1$,  we hence have that $\xi_{0:T}^\star \in \mathcal{C}(\nu_{0:T}^\star)$. This completes the proof.
\end{proof}

\begin{proof}[Proof of Proposition \ref{pro:FixPntEq}\;(iii)]
    Note that $\Xi$ is a compact convex topological space. Furthermore, $\Gamma$ is non-empty, convex-valued and its graph is closed (see Proposition~\ref{pro:FixPntEq}~(i), (ii)). Therefore, by Kakutani's fixed point theorem (see, e.g., \cite[Corollary 17.55, p.~583]{CharalambosKim2006infinite}), $\Gamma$ has a fixed point $\nu^*_{0:T}$, i.e., $\nu^*_{0:T}\in \Gamma (\nu^*_{0:T})$.
\end{proof}

%%%%%%%%%%%%%%%%%%%%%%%%%%%%%%%%%%%%%%%%%%%%%%%%%%%%%%%%%%%%%%%%%%%%%%%%%%%%%%%%%%%%%%%%%%%%%%%%%%%%%%%%%%%%%%
\subsection{Proof of Theorem~\ref{thm:MFE}}
\begin{proof}[{Proof of Theorem~\ref{thm:MFE}}]
    By Proposition~\ref{pro:FixPntEq}~(iii), $\Gamma$ has a fixed point $\nu^*_{0:T}$, i.e., $\nu^*_{0:T}\in \Gamma (\nu^*_{0:T})$. 
	
	Then, since ${\nu}_{0:T}^* \in {\cal C}(\nu^*_{0:T})$, it holds that $\nu_{0,S}^* = \mu^o$. Furthermore, Lemma \ref{lem:CnuOpt} ensures that for every $t=0,\dots,T-1$, there exists $\overline{p}_t^{{\nu}^*}:S\times A \times {\cal P}(S)\ni (s_t,a_t,\mu_t)\mapsto \overline{p}_t^{{\nu}^*}(\cdot|s_t,a_t,\mu_t)\in {\cal P}(S)$ defined~by 
	\begin{align}\label{eq:kernel_nash}
		\overline{p}_t^{{\nu}^*}(\cdot|s_t,a_t,\mu_t):= \left\{
		\begin{aligned}
			&{p}_t^{{\nu}^*}(\cdot|s_t,a_t,\mu_t,\nu^*_{t+1:T, S})\quad  &&\mbox{if}\;\;t\leq T-2; \\
			&\widehat{p}_t(\cdot|s_t,a_t,\mu_t) \quad &&\mbox{if}\;\;t=T-1,
		\end{aligned}
		\right.
	\end{align}
	where $\widehat{p}_{0:T}$ is the sequence of the measurable selectors given in Lemma \ref{lem:dpp_berge}\;(i) and ${p}_{0:T-1}^{{\nu}^*}$ is the sequence of the corresponding kernels enabling ${\nu}_{0:T}^* \in {\cal C}(\nu^*_{0:T})$, i.e., for $t=0,\dots,T-2$,
	\begin{align}\label{eq:kernel_nash_sub}
		\begin{aligned}
			&\text{$ p_{t}^{{\nu}^*}(\cdot | s_t, a_t, {\nu}^*_{t:T,S}) \in \widehat{\mathfrak{P}}_{t}(s_t, a_t, {\nu}^*_{t:T,S})~$ for every $(s_t,a_t)\in S\times A$,}\\
			&\text{and}\;\;{\nu}_{t+1, S}^*(\cdot) = \int_{S \times A}  p_{t}^*(\cdot | s_t, a_t, \nu_{t:T, S}^*) \nu_{t}^*(ds_t, da_t),
		\end{aligned}
	\end{align}
	(see Definition~\ref{dfn:FixPntEq2}), and that for every $t=0,\dots,T-1$,  $\overline{p}_t^{{\nu}^*}(\cdot|s_t,a_t,\nu_{t,S}^*)$ is optimal for $\widehat{J}_t(s_t,a_t,\nu^*_{t:T,S})$ for every $(s_t,a_t)\in S\times A$.
	
	Furthermore, since ${\nu}_{0:T}^* \in {\cal B}(\nu^*_{0:T})$, Lemma \ref{lem:BnuOpt}\;(ii) ensures that for every $t=0,\dots,T-1$, there exists $\overline \pi^{\nu^*}_t:S\ni s_t \mapsto \overline \pi^{ \nu^*}_t(\cdot|s_t)\in{\cal P}(A)$ defined by 
	\begin{align}\label{eq:policy_nash}
		\overline \pi^{\nu^*}_t(\cdot|s_t):=\left\{
		\begin{aligned}
			& \pi_t^{{\nu}^*}(\cdot|s_t)\quad &&\mbox{if}\;\; w^*_{t}(s_t)>0; \\
			&\widehat \pi_t(\cdot|s_t,\nu_{t:T,S}^*) &&\mbox{else},
		\end{aligned}
		\right.
	\end{align}
	where $w^*_{t}(s_t)$ is the weight of $\nu^*_{t,S}$ at $s_t\in S$ and $\widehat \pi_{0:T}$ is the sequence of measurable selectors given in Lemma \ref{lem:dpp_berge}\;(ii), and that for every $t=0,\dots,T-1$,  it holds
	\begin{align}\label{eq:disinteg_reinteg_nash}
		{\nu}^*_t(ds_t,da_t)= \overline \pi^{ \nu^*}_t(da_t|s_t)  \nu^*_{t,S}(ds_t),
	\end{align}
	and that $\overline \pi_t^{\nu^*}(\cdot|s_t)$ is optimal for $\widehat{V}_{t}(s_t,{\nu}^*_{t:T,S})$ for every $s_t\in S$. 
	
	%Since ${\nu}_{0:T+1}^* \in {\cal C}(\nu^*_{0:T+1})$, it follows that %by Proposition~\ref{pro:FixPntEq}\;(i) (in particular, \eqref{eq:disint}, \eqref{eq:disint2}), 
	%$\nu_{0, X}^* = \mu_{0}$ and for every $t=0,\dots,T-1$, 
	%\begin{align*}
	%	\nu_{t+1,X}^*(\cdot) = \int_{X \times A} \Tilde{p}_{t}^{\nu^*}(\cdot \mid x, a,\nu^*_{t:T, X}) \nu_t^*(dx, da) = \int_{X} \int_{A} \Tilde{p}_{t}^{\nu^*}(\cdot \mid x, a,\nu^*_{t:T,X}) \pi_t^{\nu^*}(da \mid x) \nu^*_{t, X}(dx)
	%\end{align*}
	%for some fitting $\Tilde{p}_{t}^{\nu^*} \colon X \times A \times \mathcal{M}(X)^{(T-t)} \to \mathcal{M}(X)$ such that $\Tilde{p}_{t}^{\nu^*}(\cdot \mid x, a, \mu_{t:T}) \in \Tilde{\mathcal{P}}_{t}(x, a, \mu_{t:T})$ for all $x \in X$, $a \in A$, and $\mu_{t:T} \in \mathcal{M}(X)^{(T-t)}$. As in Theorem~\ref{pro:dpp}, we can set $p_{t}^{\nu^*}(\cdot \mid x, a, \mu_{t}) := \Tilde{p}_{t}^{\nu^*}(\cdot \mid x, a, \mu_{t}, \nu_{t+1:T})$ for all $x \in X$, $a \in A$, and $\mu_{t} \in \mathcal{M}(X)$. 
	
	The optimality of $\overline{p}_{0:T}^{{\nu}^*}$ and $\overline{\pi}^{\nu^*}_{0:T}$ ensures that $(\overline{\pi}^{\nu^*}_{0:T},\overline{p}_{0:T}^{{\nu}^*})$ is optimal for $V(\nu_{0:T,S}^*)$, i.e., the condition (i) given in Definition \ref{dfn:robust_mfe} holds. Furthermore, combining \eqref{eq:disinteg_reinteg_nash} with \eqref{eq:kernel_nash} and \eqref{eq:kernel_nash_sub} ensures that $(\nu^*_{0:T,S},\overline{\pi}^{\nu^*}_{0:T},\overline{p}_{0:T}^{{\nu}^*})$ satisfies condition (ii) given in Definition \ref{dfn:robust_mfe} holds. Hence $(\nu^*_{0:T,S},\overline{\pi}^{\nu^*}_{0:T},\overline{p}_{0:T}^{{\nu}^*})$ is a mean-field equilibrium of $(S,A,\mu^o,\mathfrak{P}_{0:T},r)$.
\end{proof}

%%%%%%%%%%%%%%%%%%%%%%%%%%%%%%%%%%%%%%%%%%%%%%%%%%%%%%%%%%%%%%%%%%%%%%%%%%%%%%%%%%%%%%%%%%%%%%%%%%%%%%%%%%%%%%
\section{Proof of results in Section \ref{sec:main_MNE}}\label{sec:proof:main_MNE} 
\subsection{Proof of Propositions \ref{pro:ConGNtGam_ptb} and \ref{pro:MNE2}}\label{sec:pro:ConGNtGam_ptb}
Let us provide a simple observation that plays an instrumental role in the proof of Proposition~\ref{pro:ConGNtGam_ptb}. The proof can be found in Appendix \ref{sec:apdx}.
\begin{lem}%[{\textcolor{red}{Sethuraman, but easier in our case as $X$ is finite.}}] 
	\label{lem:MarCndCon}
	Let $X$ be a finite space and $Y$ be an arbitrary Borel space. Furthermore, let~$(\Lambda_{X}^{(N)})_{N\in \mathbb{N}},$ $(\widetilde \Lambda_{X}^{(N)})_{N\in \mathbb{N}} \subseteq  {\cal P}(X)$ be such that for any mapping $f:X \rightarrow \mathbb{R}$
	\begin{align}\label{eq:weak_toward}
		\lim_{N\rightarrow \infty}\bigg|\int_Xf(x)\Lambda_{X}^{(N)}(dx) - \int_Xf(x)\widetilde \Lambda_{X}^{(N)}(dx) \bigg|=0,
	\end{align}
	and let $(\Lambda_{Y|X}^{(N)})_{N\in \mathbb{N}}$ be a sequence of stochastic kernels on $Y$ given $X$ such that for every $x \in X$
	\[
	\Lambda_{Y|X}^{(N)}(\cdot | x) \rightharpoonup \Lambda_{Y|X}(\cdot | x) \in {\cal P}(Y)\quad \mbox{as $N\rightarrow \infty$, }
	\]
	where $\Lambda_{Y|X}:X \mapsto {\cal P}(Y)$ is another stochastic kernel on $Y$ given $X$. For $N\in \mathbb{N}$, denote by
	\begin{align*}
	&\Lambda^{(N)}(dx,dy):=\Lambda_{Y|X}^{(N)}(dy|x)\Lambda_{X}^{(N)}(dx)\in {\cal P}(X\times Y),\\
	&\widetilde \Lambda_1^{(N)}(dx,dy):=\Lambda_{Y|X}^{(N)}(dy|x)\widetilde \Lambda_{X}^{(N)}(dx)\in {\cal P} (X\times Y),\\
	&\widetilde \Lambda_2^{(N)}(dx,dy):=\Lambda_{Y|X}(dy|x)\widetilde \Lambda_{X}^{(N)}(dx)\in {\cal P} (X\times Y).
	\end{align*}
	%By disintegrating each $\lambda_{n}$ as $\lambda_{n}(dy, dz) = \nu_{n}(dz \mid y) \xi_{n}(dy)$, we obtain a sequence of marginals $\{\xi_{n}\}_{n \in \N} \subset \mathcal{M}_{1}(Y)$ and, for each $y \in Y$, a sequence of conditional distributions $\{\nu_{n}(\cdot \mid y)\}_{n \in \N} \subset \mathcal{M}_{1}(Z)$. 
	%Suppose that $\Lambda_{X}^n \overset{\tau}{\to} \Lambda_X \in \mathcal{P}(X)$ as $n\rightarrow\infty$, and that 
	Then, for both $i=1$,\;2, we have that for every $g\in C_b(X\times Y)$, %, as $n\rightarrow \infty$
	\[
		\lim_{N\rightarrow \infty}\left\lvert \int_{X\times Y}   g(x, y) \Lambda^{(N)}(dx,dy)- \int_{X\times Y} g(x, y) \widetilde \Lambda_i^{(N)}(dx,dy) \right\rvert=0.
	\]
	%$d_{W_1}()\rightarrow 0$ as $n\rightarrow \infty$.
	%$\Lambda^{n} \overset{\tau}{\to} \Lambda \in {\cal P}(X\times Y)$ as $n\rightarrow \infty$ where $\Lambda(dx, dy) := \Lambda_{Y|X}(dy |x) \Lambda_X(dx)$.
\end{lem}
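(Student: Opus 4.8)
The plan is to handle $i=1$ and $i=2$ in turn, reducing the second case to the first by a triangle inequality that isolates the discrepancy between the kernels $\Lambda_{Y|X}^{(N)}$ and $\Lambda_{Y|X}$.

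First I would treat $i=1$. Since $X$ is finite, write $X = \{x_1,\dots,x_m\}$ and decompose, for any $g \in C_b(X\times Y)$,
\begin{align*}
	\int_{X\times Y} g(x,y)\,\Lambda^{(N)}(dx,dy) = \sum_{k=1}^m \Lambda_X^{(N)}(\{x_k\}) \int_Y g(x_k,y)\,\Lambda_{Y|X}^{(N)}(dy\,|\,x_k),
\end{align*}
and similarly for $\widetilde\Lambda_1^{(N)}$ with $\widetilde\Lambda_X^{(N)}$ in place of $\Lambda_X^{(N)}$. Setting $h_N(x_k) := \int_Y g(x_k,y)\,\Lambda_{Y|X}^{(N)}(dy\,|\,x_k)$, the difference becomes $\sum_{k=1}^m \big(\Lambda_X^{(N)}(\{x_k\}) - \widetilde\Lambda_X^{(N)}(\{x_k\})\big) h_N(x_k)$. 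The sequence $(h_N)_N$ is uniformly bounded by $\|g\|_\infty$, and the weights $\Lambda_X^{(N)}(\{x_k\}) - \widetilde\Lambda_X^{(N)}(\{x_k\})$ tend to $0$ as $N\to\infty$ by hypothesis \eqref{eq:weak_toward} applied to the indicator $f = \mathbf{1}_{\{x_k\}}$ (each such $f$ is a map $X\to\mathbb{R}$, and since $X$ is finite every such map is automatically in $C_b(X;\mathbb{R})$). Since the sum is finite, $m$ terms each going to zero, the whole expression vanishes, giving the claim for $i=1$.

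Next, for $i=2$, insert $\widetilde\Lambda_1^{(N)}$ as an intermediate term:
\begin{align*}
	\left| \int_{X\times Y} g\,d\Lambda^{(N)} - \int_{X\times Y} g\,d\widetilde\Lambda_2^{(N)} \right| \leq \left| \int_{X\times Y} g\,d\Lambda^{(N)} - \int_{X\times Y} g\,d\widetilde\Lambda_1^{(N)} \right| + \left| \int_{X\times Y} g\,d\widetilde\Lambda_1^{(N)} - \int_{X\times Y} g\,d\widetilde\Lambda_2^{(N)} \right|.
\end{align*}
The first term on the right vanishes by the $i=1$ case just proved. For the second term, expand over the finite set $X$ again: it equals $\big|\sum_{k=1}^m \widetilde\Lambda_X^{(N)}(\{x_k\}) \big(\int_Y g(x_k,y)\,\Lambda_{Y|X}^{(N)}(dy\,|\,x_k) - \int_Y g(x_k,y)\,\Lambda_{Y|X}(dy\,|\,x_k)\big)\big|$, which is bounded by $\sum_{k=1}^m \big|\int_Y g(x_k,\cdot)\,d\Lambda_{Y|X}^{(N)}(\cdot|x_k) - \int_Y g(x_k,\cdot)\,d\Lambda_{Y|X}(\cdot|x_k)\big|$ since the weights are at most $1$. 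For each fixed $x_k$, the map $y \mapsto g(x_k,y)$ is in $C_b(Y;\mathbb{R})$, so $\Lambda_{Y|X}^{(N)}(\cdot|x_k) \rightharpoonup \Lambda_{Y|X}(\cdot|x_k)$ forces each of the $m$ summands to zero; finiteness of the sum concludes.

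The argument is essentially routine given the finiteness of $X$; there is no real obstacle. The only point requiring a little care is the $i=1$ step, where one cannot directly invoke weak convergence of $\Lambda_X^{(N)}$ against the test function $h_N$ because $h_N$ itself depends on $N$ — but this is circumvented precisely because $X$ is finite, so testing against indicators of singletons suffices and uniform boundedness of $h_N$ does the rest.
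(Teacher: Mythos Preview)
Your proof is correct and follows essentially the same approach as the paper: the paper proves the $i=2$ case by the same triangle inequality through $\widetilde\Lambda_1^{(N)}$, bounding the first piece (your $i=1$ case, their $\operatorname{I}^{(N)}$) via $\|g\|_\infty \sum_{x\in X}|w^{(N)}(x)-\widetilde w^{(N)}(x)|$ and the second piece (their $\operatorname{II}^{(N)}$) via the pointwise weak convergence of the kernels summed over finite $X$. The only cosmetic difference is that the paper treats $i=2$ first and remarks that $i=1$ is analogous, whereas you establish $i=1$ explicitly and then reuse it.
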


Before we proceed to start proving Proposition \ref{pro:ConGNtGam_ptb}, let us briefly comment on explicit characterizations of the laws and stochastic kernels given in Definition \ref{dfn:joint_laws}.
\begin{rem}\label{rem:laws_kernels_explicit}
	Let $(\pi^{(N)}_{0:T})_{N\in \mathbb{N}}\subseteq \Pi$ be a sequence of arbitrary Markov~policies. For every $N\in \mathbb{N}$ and $i\in\{1,\dots,N\}$, 
	let ${\mathbb{P}}^{*|(N)}\in{\cal Q}( \mu_{0:T}^*,\pi^{(N)}_{0:T})$ and $\overline {\mathbb{P}}_i^{N|(N)}\in{\cal Q}^N(\mu^o,\overline \pi_{0:T,i}^{N|(N)})$ (depending on~$\pi^{(N)}_{0:T}$) be given in Definition~\ref{dfn:worst_msr}. Then the following~hold for every $t=0,\dots,T-1$:
	\begin{itemize}[leftmargin=3.em]
		\item [(i)] %in \eqref{eq:mfg_ptb_msr} and \eqref{eq:worst_MFG_Nash_ptb}. 
		The laws $\mathbb{M}_t^{*|(N)},\;\mathbb{M}_{t,i}^{N|(N)}\in {\cal P}(S\times A)$ given in Definition~\ref{dfn:joint_laws}\;(i) are characterized by
			\begin{align*}%\label{eq:joint_sa_ptb}
			\quad\begin{aligned}
				\mathbb{M}_t^{*|(N)}(ds_t,da_t)&:=\pi_t^{(N)}(da_t|s_t)\mathbb{L}^{*|(N)}_{t}(ds_t),\\
				\mathbb{M}_{t,i}^{N|(N)}(ds_t,da_t)&:=\pi_t^{(N)}(da_t|s_t)\mathbb{L}^{N|(N)}_{t,i}(ds_t),
			\end{aligned}
		\end{align*}
	where $\mathbb{L}^{*|(N)}_{t},\mathbb{L}^{N|(N)}_{t,i}\in {\cal P}(S)$ denote the law of ${s}_t$ under ${\mathbb{P}}^{*|(N)}$ and the law of ${s}_t^{i}$ under $\overline {\mathbb{P}}_i^{N|(N)}$, respectively. 
	\item [(ii)] %\footnote{}
	The stochastic kernel $\mathbb{K}^{N|(N)}_{t,i}:S\times A \ni (s_t,a_t)\mapsto \mathbb{K}^{N|(N)}_{t,i}(ds_{t+1},d\mu_t|s_t,a_t)\in {\cal P}(S\times {\cal P}(S))$ given in Definition \ref{dfn:joint_laws}\;(ii) satisfies that for every $(s_t^i,a_t^i)=(s_t,a_t)\in S\times A$,\footnote{Denote by $\overline{s}_t^{N,-i}:=(s_t^1,\dots,s_{t}^{i-1},s_t^{i+1},\dots,s_t^N)\in S^{N-1}$ the whole agents' state configurations except for the agent $i$'s state $s_t^i$ at time $t$. The same convention applies to $\overline{a}_t^{N,-i}\in A^{N-1}$. Moreover, as in Footnote \ref{fnote:perturb_i}, we apply the convention therein to $(\overline{s}_t^{N,-i},s)\in S^N$ and $(\overline{a}_t^{N,-i},a)\in A^N$.}  
	\begin{align*}%\label{eq:joint_smu_ptb}
	\qquad \begin{aligned}
				\mathbb{K}^{N|(N)}_{t,i}(ds_{t+1},d\mu_t|s_t,a_t) &:= p^{N|(N),i}_{t,i}\big(ds_{t+1}|(\overline{s}_t^{N,-i},s_t),(\overline{a}_t^{N,-i},a_t)\big)\; \overline {\pi}_t^{N-1|*}(d\overline{a}_t^{N,-i}|\overline{s}_t^{N,-i})\\
		&\quad\quad \delta_{e^N((\overline{s}_t^{N,-i},s_t))}(d\mu_t)\;\overline {\mathbb{L}}^{N|(N),-i}_{t,i}(d\overline{s}_t^{N,-i}|s_t)
			\end{aligned}
	\end{align*}
	where for every $(\overline{s}_t^{N},\overline{a}_t^{N})\in S^N\times A^N$,
	\begin{itemize}[leftmargin=1.em]
		\item [$\cdot$] $p^{N|(N),i}_{t,i}\big(\cdot|\overline{s}_t^{N},\overline{s}_t^{N}\big)\in {\cal P}(S)$ is the $i$-th marginal of $\overline{p}^{N|(N)}_{t,i}\big(\cdot|\overline{s}_t^{N},\overline{a}_t^{N}\big) \in {\cal P}(S^N)$;% on the $i$-th argument;
		\item [$\cdot$] $\overline {\pi}_t^{N-1|*}$ is the $N-1$ tuple of $\pi^*_t$ (as $\overline {\pi}_t^{N|*}$ given in Definition\;\ref{dfn:worst_msr}\;(iii));
		\item [$\cdot$] $\delta_{e^N(\overline{s}_t^{N})}\in {\cal P}(\mathcal{P}(S))$ is the Dirac measure on ${\cal P}(S)$ at $e^N(\overline{s}_t^{N})\in{\cal P}(S)$;
		\item [$\cdot$] $\overline {\mathbb{L}}^{N|(N),-i}_{t,i}:S\ni s_t \mapsto \overline {\mathbb{L}}^{N|(N),-i}_{t,i}(\cdot|s_t)\in{\cal P}(S^{N-1})$ is a stochastic kernel on $S^{N-1}$ given~$S$ so that $\overline {\mathbb{L}}^{N|(N),-i}_{t,i}(\cdot|s_t)$ is the {conditional} law of $\overline{s}_{t}^{N,-i}$ under~$\overline {\mathbb{P}}_i^{N|(N)}$ given $s_t^i=s_t\in S$. 
	\end{itemize}
	%\[
	%p^{N|\circ,i}_{t,i}\big(\cdot|(\overline{s}_t^{N,-i},s_t),(\overline{a}_t^{N,-i},a_t)\big)\in {\cal P}(S)
	%\]
	%\[
	%\delta_{e^N((\overline{s}_t^{N,-i},s_t))}\in {\cal P}(\mathcal{P}(S))
	%\] %and of $(s_t^i,a_t^i)$ under $\overline {\mathbb{P}}_i^{*,N}$, respectively.
	%Moreover, by using $\mu^{*}_{0:T}$ in the mean-field equilibrium, we denote  for every $t=0,\dots,T-1$ by 
	%\begin{align*}
	%		{\cal K}^*_t:S\times A \ni (s_t,a_t)\mapsto {\cal K}^*_t(\cdot,\cdot|s_t,a_t)\in {\cal P}(S\times {\cal P}(S)),
	%\end{align*}
	%a stochastic kernel on $S\times {\cal P}(S)$ given $S\times A$ so that
	%\begin{align*}%\label{eq:joint_smu}
	%\begin{aligned}
	%		{\cal K}^*_t(ds_{t+1},d\mu_t|s_t,a_t):= p^*_t(ds_{t+1}|s_t,a_t,\mu_t)\delta_{\mu_t^*}(d\mu_t)
	%{K}^N_{t,i}(ds_{t+1},d\mu_t|s_t,a_t) &:= {K}^{\diamond,N}_{t,i}(ds_{t+1},d\mu_t|s_t,a_t) 
	%\end{aligned}
	%\end{align*}
	%is the conditional law of $(s_{t+1},\mu_t)$ under $\mathbb{P}^*$ given $(s_t,a_t)\in S\times A$. %and of $(s_{t+1}^{i},e^N(\overline{s}_t^{N}))$ under $\overline{\mathbb{P}}^{*,N}_i$ given $(s_t^i,a_t^i)=(s_t,a_t)\in S\times A$ 
	 %${\cal M}_t^{\diamond,N},M_{t,i}^{\diamond,N}\in {\cal P}(S\times A)$ by 
	\end{itemize}
\end{rem}

\begin{proof}[Proof of Proposition \ref{pro:ConGNtGam_ptb}]
	We note that by Remark \ref{rem:identical_structure}, the notation for $\mathbb{L}^{N|(N)}_{0:T,i}$ (given in Remark~\ref{rem:laws_kernels_explicit}) can be simplified as for every $i=1,\dots,N$,  $\mathbb{L}^{N|(N)}_{0:T}:= \mathbb{L}^{N|(N)}_{0:T,i}$. Then it holds that for every~$t=0,\dots,T-1$
	\[
	\mathbb{M}_{t}^{N|(N)}(ds_t,da_t)=\pi_t^{(N)}(da_t|s_t) \mathbb{L}^{N|(N)}_{t}(ds_t),
	\]
	where $\mathbb{M}^{N|(N)}_{0:T}$ is given in Remark \ref{rem:identical_structure}.
	
	Let $\mathbb{L}_{0:T}^{*|(N)}$ be given in Remark \ref{rem:laws_kernels_explicit}\;(i). Then we claim that if the following holds for some $t\in \{0,\dots,T-2\}$: for every mapping $f:S \rightarrow \mathbb{R} $
	\begin{align}\label{eq:weak_twd}
		\lim_{N \to \infty}\bigg|\int_Sf(s_t) \mathbb{L}_t^{*|(N)}(ds_t)- \int_Sf(s_t) \mathbb{L}_t^{N|(N)}(ds_t) \bigg|=0,
	\end{align}
	then the following also holds: for every mapping $f:S \rightarrow \mathbb{R}$
	\begin{align}\label{eq:weak_twd2}
		\lim_{N \to \infty}\bigg|\int_Sf(s_{t+1}) \mathbb{L}_{t+1}^{*|(N)}(ds_{t+1})- \int_Sf(s_{t+1}) \mathbb{L}_{t+1}^{N|(N)}(ds_{t+1}) \bigg|=0.
	\end{align}
	
	Since $S$ is finite (see Assumption \ref{as:msr}) and the convergence in \eqref{eq:weak_twd} holds, we apply Lemma~\ref{lem:MarCndCon} (by setting $\mathbb{L}_t^{*|(N)} \curvearrowright \Lambda_X^{(N)} $, $\mathbb{L}_t^{N|(N)}\curvearrowright \widetilde\Lambda_X^{(N)}$, and $\pi_t^{(N)} \curvearrowright \Lambda_{Y|X}^{(N)}$ for every $N\in\mathbb{N}$) to have that for every mapping $h:S\times A\rightarrow \mathbb{R}$
	\begin{align}\label{eq:lambda_conv_ptb}
		\begin{aligned}
			&\lim_{N \to \infty}\bigg|\int_{S\times A} h(s_t,a_t)\mathbb{M}_t^{*|(N)} (ds_t,da_t)-\int_{S\times A} h(s_t,a_t)\mathbb{M}_t^{N|(N)} (ds_t,da_t)\bigg|=0.
		\end{aligned}
	\end{align}
 
	Furthermore, since $S\times A$ is finite (see Assumption~\ref{as:msr}) %and ${K}^{\diamond,N}_{t}(\cdot,\cdot|s_t,a_t)\rightharpoonup{\cal K}^*_t(\cdot,\cdot|s_t,a_t)$ as $N\rightarrow\infty$ for every $(s_t,a_t)\in S\times A$ 
	by the weak convergence given in Assumption~\ref{as:weak_conv_ptb}, 
	we apply  Lemma~\ref{lem:MarCndCon} (together with \eqref{eq:lambda_conv_ptb} and setting $\mathbb{M}_t^{N|(N)}\curvearrowright \Lambda_X^{(N)} $, $\mathbb{M}_t^{*|(N)} \curvearrowright \widetilde\Lambda_X^{(N)}$, $\mathbb{K}_t^{N|(N)} \curvearrowright \Lambda_{Y|X}^{(N)}$, and $p^*_t(ds_{t+1}|\cdot,\cdot,\mu_t)\delta_{\mu_t^*}(d\mu_t) \curvearrowright \Lambda_{Y|X}$ for every $N\in\mathbb{N}$) to have~\eqref{eq:conv_4arg_ptb}.
	
	In particular, by Definition \ref{dfn:joint_laws}\;(iii) and Remark \ref{rem:identical_structure}, the marginals of $\mathbb{Q}^{N|(N)}_{t}$ and $\mathbb{Q}^{*|(N)}_t$ with respect to $s_{t+1}$ equal $\mathbb{L}_{t+1}^{N|(N)}$ and $\mathbb{L}_{t+1}^{*|N}$, respectively. Hence, \eqref{eq:conv_4arg_ptb} ensures that \eqref{eq:weak_twd2} holds.
	
	Since $\mathbb{L}_{0}^{N|(N)} =\mathbb{L}_{0}^{*|(N)}=\mu^o$ for every $N\in \mathbb{N}$, we apply the above claim inductively to have that \eqref{eq:conv_4arg_ptb} and \eqref{eq:weak_twd} hold for every $t=0,\dots,T-1$. This completes the proof.
\end{proof}

\begin{proof}[Proof of Proposition \ref{pro:MNE2}]
	For $N\in \mathbb{N}$, let $\mathbb{Q}_{0:T}^{*|(N)},\mathbb{Q}^{N|(N)}_{0:T}$ be given in Definition \ref{dfn:joint_laws}\;(iii) and Remark \ref{rem:identical_structure}, respectively. Since the following hold for every $t=0,\dots,T-1$ that
	\begin{align*}
		\mathbb{E}^{\overline{\mathbb{P}}_1^{N|(N)}}\big[r({s}^{1}_{t}, {a}^{1}_{t}, {s}^{1}_{t+1}, {e}^{N}_{t}(\overline{s}_t^N))\big] &= \int_{S \times A \times S \times {\cal P}(S)} r(s_t, a_t, s_{t+1}, \mu_t) \mathbb{Q}^{N|(N)}_{t}(ds_t, da_t, ds_{t+1}, d\mu_t),\\
		\mathbb{E}^{{\mathbb{P}}^{*|(N)}}\big[r({s}_{t}, {a}_{t}, {s}_{t+1}, \mu_t^*)\big] &= \int_{S \times A \times S \times {\cal P}(S)} r(s_t, a_t, s_{t+1}, \mu_t) \mathbb{Q}^{*|(N)}_{t}(ds_t, da_t, ds_{t+1}, d\mu_t)
	\end{align*}
	with ${\mathbb{P}}^{*|(N)}\in {\cal Q}( \mu_{0:T}^*,\pi^{(N)}_{0:T})$ and $\overline{\mathbb{P}}_1^{N|(N)}\in {\cal Q}^N(\mu^o,\overline{\pi}_{0:T}^{N|(N)})$ given in Definition \ref{dfn:worst_msr},  
	Proposition~\ref{pro:ConGNtGam_ptb} %, $\Gamma^{N}$ %converges weakly to $\Gamma$ and  
	(together with $r \in C_{b}(S \times A \times S \times \mathcal{P}(S))$; see Assumption \ref{as:msr}\;(iii)) ensures that for every $t=0,\dots,T-1$
	\begin{align*}
		\lim_{N \to \infty} \left|\mathbb{E}^{\overline{\mathbb{P}}_1^{N|(N)}}\big[r({s}^{1}_{t}, {a}^{1}_{t}, {s}^{1}_{t+1}, {e}^{N}_{t}(\overline{s}_t^N))\big] -\mathbb{E}^{{\mathbb{P}}^{*|(N)}}\big[r({s}_{t}, {a}_{t}, {s}_{t+1}, \mu_t^*)\big]  \right|= 0.
	\end{align*}
	Hence,
	\begin{align*}
		&\lim_{N \to \infty} \left|J_1^N(\mu^o,\overline{\pi}_{0:T,1}^{N|(N)})- \mathbb{E}^{\mathbb{P}^{*|(N)}} 	\left[\sum_{t=0}^{T-1}r(s_t,a_t,s_{t+1}, \mu_t^*)\right] \right|\\
		&\quad = \lim_{N \to \infty} \bigg\lvert \sum_{t = 0}^{T-1} \mathbb{E}^{\overline{\mathbb{P}}_1^{N|(N)}}\big[r({s}^{1}_{t}, {a}^{1}_{t}, {s}^{1}_{t+1}, {e}^{N}_{t}(\overline{s}_t^N))\big]-\sum_{t=0}^{T-1} \mathbb{E}^{{\mathbb{P}}^{*|(N)}}\big[r({s}_{t}, {a}_{t}, {s}_{t+1}, \mu_t^*)\big] \bigg\rvert \\
		&\quad \leq \sum_{t = 0}^{T-1} \lim_{N \to \infty} \left|\mathbb{E}^{\overline{\mathbb{P}}_1^{N|(N)}}\big[r({s}^{1}_{t}, {a}^{1}_{t}, {s}^{1}_{t+1}, {e}^{N}_{t}(\overline{s}_t^N))\big] -\mathbb{E}^{{\mathbb{P}}^{*|(N)}}\big[r({s}_{t}, {a}_{t}, {s}_{t+1}, \mu_t^*)\big]  \right| = 0.
	\end{align*}
	This completes the proof.
\end{proof}

\subsection{Proof of Theorem \ref{thm:MNE0}}\label{sec:thm:MNE0}
\begin{proof}[Proof of Theorem \ref{thm:MNE0}]
	Let $\varepsilon > 0$. By using the same arguments presented in Remark \ref{rem:identical_structure},  it is enough to show that there exists $N(\varepsilon)\in \mathbb{N}$ such that for each $N\geq N(\varepsilon)$,
	\begin{align*}
		J^{N}_1(\mu^o,\overline \pi_{0:T}^{N|*})  + \varepsilon \geq \sup_{\pi_{0:T}\in \Pi } J^{N}_1(\mu^o,(\overline \pi_{0:T}^{N|*,-1},\pi_{0:T})),
	\end{align*}
	where $J^{N}_1$ denotes the worst-case reward for agent $i=1$.
	
	%Since the transition probabilities and the one-stage cost functions are the same for all agents, it is sufficient to prove \eqref{eq:} for Player $i=1$ only. %\textcolor{red}{(Here for us this doesn't mean that everyone has the same $p$ but rather that if we would look at Agent 2, he would get the same $p$ for himself as Agent 1 did and the same $p$'s for everyone else.)} 
	
	For each $N \geq \mathbb{N}$, let $\pi^{(N)}_{0:T}\in \Pi$ be a sequence of policies satisfying that
	\begin{align}\label{eq:MNE_step1}
		%R^{(N)}_{1}\big((\Tilde{\boldsymbol{\pi}}^{(N)}_{1}, \boldsymbol{\pi}, \dots, \boldsymbol{\pi})\big) > \sup_{\boldsymbol{\pi}' \in \boldsymbol{\Pi}} R^{(N)}_{1}\big((\boldsymbol{\pi}', \boldsymbol{\pi}, \dots, \boldsymbol{\pi})\big) - \frac{\varepsilon}{3}
		J^{N}_1(\mu^o,(\overline \pi_{0:T}^{N|*,-1},\pi_{0:T}^{(N)}))> \sup_{\pi_{0:T}\in \Pi } J^{N}_1(\mu^o,(\overline \pi_{0:T}^{N|*,-1},\pi_{0:T}))-\frac{\varepsilon}{3}.
	\end{align}
	
	%with $\mathbb{P}^{*|(N)}=\mathbb{P}({ \mu_{0:T}^*,\pi_{0:T}^{(N)},p_{0:T}^{*}})\in  {\cal Q}( \mu_{0:T}^*,\pi^{(N)}_{0:T})$.
	By Proposition~\ref{pro:dpp}\;(ii) (by replacing $\tilde \mu_{0:T}$ as $\mu^*_{0:T}$; see \eqref{eq:verification}) it holds that
	\begin{align*}%\label{eq:ConRN1Rmu}
		\begin{aligned}
			\sup_{\pi_{0:T}\in \Pi}\mathbb{E}^{\mathbb{P}({ \mu_{0:T}^*,\pi_{0:T},p_{0:T}^*})} 	\left[\sum_{t=0}^{T-1}r(s_t,a_t,s_{t+1}, \mu_t^*)\right] &= \mathbb{E}^{\mathbb{P}^*} 	\left[\sum_{t=0}^{T-1}r(s_t,a_t,s_{t+1}, \mu_t^*)\right]=V(\mu_{0:T}^*),
		\end{aligned}
	\end{align*}
	where $\mathbb{P}^*=\mathbb{P}({ \mu_{0:T}^*,\pi_{0:T}^{*},p_{0:T}^{*}})\in {\cal Q}( \mu_{0:T}^*,\pi^{*}_{0:T})$ (see Definition\;\ref{dfn:worst_msr}\;(i)).
	%(see  Definition \ref{dfn:worst_msr}\;(i)).
	
	Moreover since  $J^{N}_1(\mu^o,(\overline \pi_{0:T}^{N|*,-1},\pi_{0:T}^{(N)}))=J_1^N(\mu^o,\overline{\pi}_{0:T,1}^{N|(N)})$ and  $\mathbb{P}({ \mu_{0:T}^*,\pi_{0:T}^{(N)},p_{0:T}^{*}})=\mathbb{P}^{*|(N)}$ (see Definition \ref{dfn:worst_msr}), we apply Proposition \ref{pro:MNE2} to have
	\begin{align}\label{eq:MNE_step2}
		\begin{aligned}
			\lim_{N \to \infty} J^{N}_1(\mu^o,(\overline \pi_{0:T}^{N|*,-1},\pi_{0:T}^{(N)}))&%= \lim_{N \to \infty} J_1^N(\mu^o,\overline{\pi}_{0:T,1}^{\diamond,N})&
			=\lim_{N \to \infty}\mathbb{E}^{\mathbb{P}({ \mu_{0:T}^*,\pi_{0:T}^{(N)},p_{0:T}^{*}})} 	\left[\sum_{t=0}^{T-1}r(s_t,a_t,s_{t+1}, \mu_t^*)\right]\\
			%J_1^N(\mu^o,\overline{\pi}_{0:T,1}^{\diamond,N}) &= \lim_{N \to \infty} R_{\boldsymbol{\mu}}(\Tilde{\boldsymbol{\pi}}^{(N)}_{1}, \boldsymbol{p}) \\
			&\leq \sup_{\pi_{0:T}\in \Pi}\mathbb{E}^{\mathbb{P}({ \mu_{0:T}^*,\pi_{0:T},p_{0:T}^*})} 	\left[\sum_{t=0}^{T-1}r(s_t,a_t,s_{t+1}, \mu_t^*)\right] \\
			&= V(\mu_{0:T}^*).
			%&= \lim_{N \to \infty} R^{(N)}_{1}(\boldsymbol{\pi}, \boldsymbol{\pi}, \dots, \boldsymbol{\pi}).
		\end{aligned}
	\end{align}

	Combining \eqref{eq:MNE_step1}--\eqref{eq:MNE_step2} and Remark \ref{rem:MNE2}, we can choose $N(\varepsilon)\in \mathbb{N}$ such that for every $N\geq N(\varepsilon)$% it holds that
	\begin{align*}
		%\sup_{\boldsymbol{\pi}' \in \boldsymbol{\Pi}} R^{(N)}_{1}\big((\boldsymbol{\pi}', \boldsymbol{\pi}, \dots, \boldsymbol{\pi})\big) 
		\sup_{\pi_{0:T}\in \Pi } J^{N}_1(\mu^o,(\overline \pi_{0:T}^{N|*,-1},\pi_{0:T}))- \varepsilon &< 
		J^{N}_1(\mu^o,(\overline \pi_{0:T}^{N|*,-1},\pi_{0:T}^{(N)}))-\frac{2\varepsilon}{3}\\
		&\leq V(\mu_{0:T}^*) - \frac{\varepsilon}{3}\\
		&\leq J^{N}_1(\mu^o,\overline{\pi}_{0:T}^{N|*}).
	\end{align*}
	This completes the proof.
\end{proof}

%%%%%%%%%%%%%%%%%%%%%%%%%%%%%%%%%%%%%%%%%%%%%%%%%%%%%%%%%%%%%%%%%%%%%%%%%%%%%%%%%%%%%%%%%%%%%%%%%%%%%%%%%%%%%%
\appendix
\section{Supplementary proofs}\label{sec:apdx}
\begin{proof}[Proof of Lemma~\ref{lem:exm:cro}]
	Fix arbitrary $\lambda\geq 0$ and $c>0$. We first claim that $\mathfrak{P}_{0:T}^\lambda$ satisfies Assumption\;\ref{as:msr}\;(ii). Let $t \in \{0, \dots, T-1\}$, and let $s_t \in S$, $a_t \in A$, and $\mu_t, \tilde{\mu}_t \in \mathcal{P}(S)$ be arbitrarily chosen. Since the reference kernel $p^{o}$ does not depend on the argument $\mu$ and hence, $\mathfrak{P}^{\lambda}_t(s_t, a_t, \mu_t) = \mathfrak{P}^{\lambda}_t(s_t, a_t, \tilde \mu_t)$. Furthermore, as $\mathfrak{P}^{\lambda}_t(s_t, a_t, \mu_t)$ is a 1-Wasserstein ball around $p^{o}(\cdot \mid s_t, a_t, \mu_t)$, it is clearly non-empty, convex-valued, compact-valued. 
	
	Furthermore, since $\mathfrak{P}_t^{\lambda}(s_t, a_t, {\mu}_t)= \mathfrak{P}_t^{\lambda}(s_t, a_t, \widetilde{\mu}_t)$, for any $\mathbb{P}\in \mathfrak{P}_t^{\lambda}(s_t, a_t, {\mu}_t)$, we choose the same one $\widetilde{\P} := \P\in \mathfrak{P}_t^{\lambda}(s_t, a_t, \widetilde{\mu}_t)$ to get 
	\begin{align*}
		0=d_{W_{1}}(\P, \widetilde{\P})  \leq d_{W_{1}}(\mu, \widetilde{\mu}).
	\end{align*}
	
	It remains to show that $\mathfrak{P}^{\lambda}_t$ is continuous (i.e., upper- and lower-hemicontinuous). To that end, consider an arbitrary sequence\footnote{We denote by $\operatorname{Gr}(\mathfrak{P}_t^{\lambda})$ the graph of $\mathfrak{P}_t^{\lambda}$.} 
	\[
	\big((s_{n}, a_{n}, \mu_{n}), \P_{n}\big)_{n \in \N} \subseteq \operatorname{Gr}(\mathfrak{P}_t^{\lambda})
	\]
	such that $(s_{n}, a_{n}) \to (s, a)$ and $  \mu_{n} \rightharpoonup \mu$ as $n \to \infty$. 
	
	Since $S$ and $A$ are finite, there exists $N \in \N$ such that for every $n \geq N$ it holds that $(s_{n}, a_{n}, \mu_{n}) = (s, a, \mu_{n})$. Hence, $\P_{n} \in \mathfrak{P}_t^{\lambda}(s, a, \mu_{n}) = \mathfrak{P}_t^{\lambda}(s, a, \mu)$ for every $n \geq N$. Moreover, since $\mathfrak{P}_t^{\lambda}(s, a, \mu)$ is compact, there exists a subsequence $\left(\P_{n_{k}}\right)_{k \in \N}\subseteq \left(\P_{n}\right)_{n \in \N}$ with $\P_{n_{k}} \rightharpoonup \P \in \mathfrak{P}_t^{\lambda}(s, a, \mu)$ as $k \to \infty$. Thus, by \cite[Theorem 17.20]{CharalambosKim2006infinite}, $\mathfrak{P}_t^{\lambda}$ is upper-hemicontinuous. 
	
	Again, consider an arbitrary sequence $((s_{n}, a_{n}, \mu_{n}))_{n \in \N} \subset S \times A \times \mathcal{P}(S)$ such that $(s_{n}, a_{n}) \to (s, a)$ and $\mu_n\rightharpoonup \mu$ as $n \to \infty$ and let $\P \in \mathfrak{P}_t^{\lambda}(s, a, \mu)$. As before, there exists $N \in \N$ such that for every $n \geq N$ it holds that $(s_{n}, a_{n}, \mu_{n}) = (s, a, \mu_{n})$. Define a sequence $(\P_{n})_{n \in \N} \subseteq \mathcal{P}(S)$ by setting
	\begin{align*}
		\P_{n} :=
		\begin{cases}
			p^{o}(\cdot \mid s_{n}, a_{n}, \mu_{n}) \quad &\text{if } n < N, \\
			\P &\text{else.}
		\end{cases}
	\end{align*}
	Then, $\P_{n} \in \mathfrak{P}_t^{\lambda}(s_{n}, a_{n}, \mu_{n})$ for all $n \in \N$ and $\P_{n} \rightharpoonup \P$ as $n \to \infty$. Hence, by \cite[Theorem 17.21]{CharalambosKim2006infinite}, $\mathfrak{P}_t^{\lambda}$ is lower-hemicontinuous. Hence $\mathfrak{P}_{0:T}^\lambda$ satisfies Assumption \ref{as:msr}\;(ii), as claimed.
		
	\vspace{0.5em}
	We now claim that $r$ given in Definition \ref{dfn:exm:cro}\;(ii) satisfies Assumption\;\ref{as:msr}\;(iii). 
	
	Since $|\hat s|<4$ and $|a|<1$ for every $(\hat{s},a)\in S\times A$ (noting that $S = \{0, 1, \dots, 4\}$ and $A= \{-1, 0, 1\}$; Definition \ref{dfn:exm:cro}), there exists a constant $C_{r}:=\frac{17}{4} + \max\big\{-\log(c), \log(1 + c)\big\}>0$ satisfying that for every $s, \hat s \in S$, $a \in A$, and $\mu \in \mathcal{P}(S)$, 
	\begin{align*}
		\lvert r(s, a, \hat s, \mu) \rvert &\leq \left\lvert 1 - \frac{1}{2} \lvert \hat s - 2 \rvert\right\rvert + \frac{\lvert a \rvert}{4} + \lvert \log(\mu(\hat s) + c) \rvert \\
		&\leq 1 + \frac{1}{2} (\lvert \hat s \rvert + 2) + \frac{1}{4} + \max\big\{-\log(c), \log(1 + c)\big\}\leq C_r.
	\end{align*}
	Moreover, there exists $L_{r} := 1/c>0$ satisfying that for every $s, \hat s \in S$, $a \in A$, and $\mu, \hat \mu \in \mathcal{P}(S)$, 
	\begin{align*}
		\lvert r(s, a, \hat s, \mu) - r(s, a,  \hat s, \hat {\mu}) \rvert &= \lvert \log(\hat {\mu}(\hat s) + c) - \log(\mu(\hat s) + c) \rvert\\ %= \left\lvert \log \left(\frac{\widetilde{\mu}(s_{2}) + c}{\mu(s_{2}) + c}\right) \right\rvert \\
		&= \left\lvert \log \left(1 + \frac{\hat{\mu}(\hat s) + c}{\mu(\hat s) + c} - 1\right) \right\rvert \leq \left\lvert \frac{\hat{\mu}(\hat s) + c}{\mu(\hat s) + c}- 1 \right\rvert \\
		&= \frac{1}{\mu(\hat s) + c} \lvert \hat{\mu}(\hat s) - \mu(\hat s) \rvert \leq L_r \lvert \hat{\mu}(\hat s) - \mu(\hat s) \rvert \\
		&\leq L_r d_{W_{1}}(\mu, \hat{\mu}).
	\end{align*}
	Hence, $r$ satisfies Assumption\;\ref{as:msr}\;(iii), as claimed.
\end{proof}

\begin{proof}[Proof of Lemma \ref{lem:ExtKer}]
	By the existence of measurable selectors given in Lemma \ref{lem:dpp_berge}\;(i), we can and do choose a stochastic kernel $p'_{t} \colon S \times A \times (\mathcal{P}(S))^{T-t} \ni (s_t,a_t,\mu_{t:T}) \mapsto {p}'_{t}(\cdot|s_t,a_t,\mu_{t:T})\in  \mathcal{P}(S)$. Then define $\overline {p}_{t}: S \times A \times (\mathcal{P}(S))^{T-t}\ni (s_t,a_t,\mu_{t:T}) \mapsto \overline {p}_{t}(\cdot |s_t,a_t,\mu_{t:T}) \in \mathcal{P}(S)$ by
	\begin{align} \label{eq:ext_mbl}
		\overline{p}_{t}(\cdot | s_t, a_t, {\mu}_{t:T}) = \left\{
		\begin{aligned}
			&p_{t}(\cdot | s_t, a_t)\quad &&\text{if }\;\;{\mu}_{t:T} = \tilde{\mu}_{t:T}, \\
			&p'_{t}(\cdot | s_t, a_t, {\mu}_{t:T})\quad &&\text{else}.
		\end{aligned}
		\right.
	\end{align}
	
	It is sufficient to show that $\overline{p}_{t}$ is Borel-measurable. To that end, recall that ${\cal B}_{{\cal P}(S)}$ and ${\cal B}_{S \times A \times (\mathcal{P}(S))^{T-t}}$ denote the Borel $\sigma$-field of ${\cal P}(S)$ and $S \times A \times (\mathcal{P}(S))^{T-t}$, respectively.
	
	Let $E\in{\cal B}_{{\cal P}(S)}$. Then since  %$\Bar{p}_{t}^{-1}(E)$.
	\begin{align*}
		\overline{p}_{t}^{-1}(E) &= \left\{(s_t, a_t,  {\mu}_{t:T}) \in S \times A \times (\mathcal{P}(S))^{T-t}~\Big|~\overline{p}_{t}(\cdot | s_t, a_t,  {\mu}_{t:T}) \in E\right\} \\
		&= \left\{(s_t, a_t,  {\mu}_{t:T}) \in X \times A \times  \{\tilde{\mu}_{t:T}\} ~\Big|~\overline{p}_{t}(\cdot | s_t, a_t, {\mu}_{t:T}) \in E\right\} \\
		&\quad\cup \left\{(s_t, a_t, \mu_{t:T}) \in S \times A \times (\mathcal{P}(S))^{T-t} \setminus \{\tilde{\mu}_{t:T}\})~\Big|~\overline{p}_{t}(\cdot | s_t, a_t, {\mu}_{t:T}) \in E\right\} =: E_1\cup E_2,
	\end{align*}
	we will show that $E_1$, $E_2\in{\cal B}_{S \times A \times (\mathcal{P}(S))^{T-t}}$. 
	
	Note that by \eqref{eq:ext_mbl}, 
	\begin{align*}
		\begin{aligned}
			E_1&= \left\{(s_t, a_t) \in S \times A ~\Big|~{p}_{t}(\cdot | s_t, a_t) \in E\right\}\times  \{\tilde{\mu}_{t:T}\},  \\
			E_2&= \left\{(s_t, a_t, \mu_{t:T}) \in S \times A \times (\mathcal{P}(S))^{T-t} ~\Big|~{p}'_{t}(\cdot | s_t, a_t, {\mu}_{t:T}) \in E\right\} \\
			&\quad \setminus \left(\left\{(s_t, a_t) \in S \times A ~\Big|~ {p}'_{t}(\cdot | s_t, a_t,\tilde{\mu}_{t:T}) \in E\right\}\times  \{\tilde{\mu}_{t:T}\}\right)=: E_{2,1}\setminus E_{2,2}.
		\end{aligned}
	\end{align*}
	Since $S$ and $A$ are finite (see Assumption \ref{as:msr}\;(i)), $p_t$ is Borel-measurable. Hence this implies that $E_1 \in {\cal B}_{S \times A \times (\mathcal{P}(S))^{T-t}}$. For the same reason, it follows that $E_{2,2}\in{\cal B}_{S \times A \times (\mathcal{P}(S))^{T-t}}$. Furthermore, since  $p'_{t}$ is Borel-measurable, $E_{2,1}\in {\cal B}_{S \times A \times (\mathcal{P}(S))^{T-t}}$. %This completes the proof.
\end{proof}

\begin{proof}[Proof of Lemma \ref{lem:MarCndCon}]
	We only prove for $i=2$, as the proof for $i=1$ follows the same line of~reasoning.  For every $N\in \mathbb{N}$, denote by $w^{(N)}(x)$ the weight representing of the $x\in X$ under $\Lambda_{X}^{(N)}$, and similarly for $\widetilde w^{(N)}(x)$ under $\widetilde \Lambda_X^{(N)}$. Let $g\in C_b(X\times Y)$. By the triangle inequality,
	%Let $g \in C_{b}(Y \times Z)$ and $C > 0$ such that $\lvert g(y, z) \rvert < C$ for all $(y, z) \in Y \times Z$. Then,
	\begin{align*}
		\left\lvert \int_{X\times Y}   g(x, y) \Lambda^{(N)}(dx,dy)- \int_{X\times Y} g(x, y)  \widetilde \Lambda^{(N)}_2 (dx,dy) \right\rvert\leq  \operatorname{I}^{(N)} + \operatorname{II}^{(N)},
	\end{align*}
	where $\operatorname{I}^{(N)}$ and $\operatorname{II}^{(N)}$ are given by
	\begin{align*}
		\begin{aligned}
			\operatorname{I}^{(N)}&:= \left\lvert \int_{X}\int_Y g(x, y) \Lambda_{Y|X}^{(N)}(dy |x) \Lambda^{(N)}_X(dx) - \int_{X}\int_Y g(x, y) \Lambda^{(N)}_{Y|X}(dy |x) \widetilde \Lambda_X^{(N)}(dx) \right\rvert,\\
			\operatorname{II}^{(N)}&:=\left\lvert \int_{X}\int_Y g(x, y) \Lambda^{(N)}_{Y|X}(dy |x) \widetilde \Lambda_X^{(N)}(dx) - \int_{X}\int_Y g(x, y) \Lambda_{Y|X}(dy |x) \widetilde \Lambda_X^N(dx)\right\rvert.
		\end{aligned}
	\end{align*}
	
	We claim that $\operatorname{I}^{(N)}$ and $\operatorname{II}^{(N)}$ vanish as $N\rightarrow \infty$. Indeed, note that for every $N\in \mathbb{N}$
	\begin{align*}
		\begin{aligned}
			\operatorname{I}^{(N)}&=\bigg|\sum_{x\in X}w^{(N)}(x) \int_Y g(x, y) \Lambda_{Y|X}^{(N)}(dy |x)-\sum_{x\in X}\widetilde w^{(N)}(x) \int_Y g(x, y) \Lambda_{Y|X}^{(N)}(dy |x) \bigg|\\
			&\leq \sum_{x\in X} \left|w^{(N)}(x)-\widetilde w^{(N)}(x)\right|\int_{Y} \lvert g(x, y) \rvert \Lambda_{Y|X}^{(N)}(dy |x)< C_g \cdot \sum_{x\in X} \left|w^{(N)}(x)-\widetilde w^{(N)}(x)\right|,
		\end{aligned}
	\end{align*}
	where $C_g = \sup_{x, y \in X} \lvert g(x, y) \rvert < \infty$ (hence not depending on $N\in \mathbb{N}$) as $g\in C_b(X\times Y)$.
	
	In particular, from the convergence given in \eqref{eq:weak_toward}, %the fact that $d_{W_1}(\Lambda_{X}^n,\widetilde \Lambda_{X}^n)\rightarrow 0$ as $n\rightarrow \infty$,
	%the weak convergence (i.e., $\Lambda_{X}^n \overset{\tau}{\to} \Lambda_X$ as $n\rightarrow \infty$), 
	the finiteness of the space $X$ ensures that $\sum_{x\in X} \left|w^{(N)}(x)-\widetilde w^{(N)}(x)\right|\rightarrow 0$ as $N\rightarrow \infty$. Therefore $\operatorname{I}^{(N)}$~vanishes as $N\rightarrow \infty$.
	
	And similarly, since $\Lambda_{Y|X}^{(N)}(\cdot | x) \rightharpoonup \Lambda_{Y|X}(\cdot | x)$ as $N\rightarrow \infty$ for %almost all $x \in X$ under $\Lambda_X$ 
	every $x\in X$ and the space $X$ is finite, we can conclude that
	\begin{align*}
		\lim_{N\rightarrow \infty}\operatorname{II}^{(N)}\leq  \sum_{x \in X} \widetilde w^{(N)}(x) \bigg(\lim_{n\rightarrow \infty}\bigg| \int_{Y} g(x, y) \Lambda_{Y|X}^{(N)}(dy | x) - \int_{Y} g(x, y) \Lambda_{Y|X}(dy |x)\bigg|\bigg)=0.
	\end{align*}
	This completes the proof.
	%\begin{align*}
	%&\hspace{4mm}\lim_{n \to \infty} \bigg\lvert \int_{Y} \int_{Z} g(y, z) \nu_{n}(dz \mid y) \xi_{n}(dy) - \int_{Y} \int_{Z} g(y, z) \nu(dz \mid y) \xi(dy) \bigg\rvert \\
	%&\leq \lim_{n \to \infty} \bigg\lvert \int_{Y} \int_{Z} g(y, z) \nu_{n}(dz \mid y) \xi_{n}(dy) - \int_{Y} \int_{Z} g(y, z) \nu_{n}(dz \mid y) \xi(dy) \bigg\rvert \\
	%&\hspace{50mm}+ \bigg\lvert \int_{Y} \int_{Z} g(y, z) \nu_{n}(dz \mid y) \xi(dy) - \int_{Y} \int_{Z} g(y, z) \nu(dz \mid y) \xi(dy) \bigg\rvert \\
	%&\leq \lim_{n \to \infty} \sum_{y \in Y} \underset{\to 0}{\underbrace{\lvert w^{n}_{y} - w_{y} \rvert}} \underset{< C}{\underbrace{\int_{Y} \lvert g(y, z) \rvert \nu_{n}(dz \mid y)}} \\
	%&\hspace{50mm}+ \sum_{y \in Y} \underset{\leq 1}{\underbrace{w_{y}}} \underset{\ \ \ \ \ \ \ \ \ \ \ \to 0, \text{ if } w_{y} > 0}{\underbrace{\bigg\lvert \int_{Z} g(y, z) \nu_{n}(dz \mid y) - \int_{Z} g(y, z) \nu(dz \mid y) \bigg\rvert}} \\
	%&= 0.
	%\end{align*}
\end{proof}

%\section{Proof of Lemma~\ref{lem:exm:cro}}

\bibliographystyle{abbrv}
\bibliography{references}

\end{document}